\documentclass[12pt,reqno]{amsart}
\usepackage{graphicx,indentfirst}
\usepackage{amsmath,amssymb,mathrsfs}
\usepackage{amsthm,amscd}
\usepackage{verbatim}
\usepackage{appendix}
\usepackage{enumitem,titletoc}
\usepackage{mathtools}
\usepackage[utf8]{inputenc}
\usepackage{imakeidx}
\makeindex[columns=2, title=Alphabetical Index]
\usepackage{fancyhdr}
\usepackage{amsfonts,color}
\usepackage[all]{xy}
\usepackage{tikz-cd}
\usepackage{syntonly}
\usepackage{fancyhdr}
\usepackage{array}
\usepackage[left=2.3cm,right=2.3cm,bottom=3cm,top=3cm]{geometry}
\usepackage{microtype}

\usepackage{tikz}
\usepackage{extarrows}
\usepackage{hyperref}
\usepackage{setspace}
\allowdisplaybreaks[2]

\def\XXint#1#2#3{{\setbox0=\hbox{$#1{#2#3}{\int}$ }
		\vcenter{\hbox{$#2#3$ }}\kern-.6\wd0}}

\newtheorem{thm}{Theorem}[section]
\newtheorem{prop}[thm]{Proposition}

\newtheorem{defn}[thm]{Definition}
\newtheorem{lem}[thm]{Lemma}
\newtheorem{cor}[thm]{Corollary}
\newtheorem{conj}[thm]{Conjecture}
\newtheorem{rk}{Remark}

\allowdisplaybreaks
\newcommand{\del}{\partial}
\newcommand{\dbar}{\bar\partial}

\newcommand{\ddbar}{\sqrt{-1}\partial\bar\partial}

\newcommand{\R}{\mathbb R}
\newcommand{\rank}{{\rm rk}}
\newcommand{\htheta}{\hat{\theta}}
\newcommand{\cX}{\mathcal{X}}
\newcommand{\cE}{\mathcal{E}}
\newcommand{\cS}{\mathcal{S}}
\newcommand{\cQ}{\mathcal{Q}}
\newcommand{\cA}{\mathcal{A}}
\newcommand{\cF}{\mathcal{F}}
\newcommand{\cL}{\mathcal{L}}
\newcommand{\Tr}{{\rm Tr}}
\newcommand{\Herm}{{\rm Herm}}

\makeindex

\DeclareMathOperator{\osc}{osc}
\DeclareMathOperator{\vol}{Vol}

\definecolor{citationblue}{HTML}{008000}
\hypersetup{
	colorlinks=true,
	citecolor=citationblue,
	filecolor=green,
	linkcolor=blue,
	urlcolor=black
}

\numberwithin{equation}{section}

\author{Tristan C. Collins}
\email{\href{mailto:tristanc@math.toronto.edu}{tristanc@math.toronto.edu}}

\author{Yukai Zhang}
\email{yukai.zhang@mail.utoronto.ca}
\address{Department of Mathematics, University of Toronto, 40 St. George Street, Toronto, ON, Canada}

\title[The Deformed Vortex Equations]{The Deformed Vortex Equations and Equivariant  Stability Conditions}

\begin{document}
\begin{abstract}
 We study the higher rank deformed Hermitian-Yang-Mills (dHYM) equations for $SU(2)$-equivariant holomorphic vector bundles over $\mathcal{X}= X\times \mathbb{P}^1$ for $X$ a compact Riemann surface.  For a class of vector bundles of ``vortex type",  we establish the equivalence between existence of solutions to higher rank dHYM equations and an appropriate notion of algebro-geometric stability, called $Z$-stability.  We show that $Z$-stability for vector bundles of ``vortex type" is implied by, and conjecturally equivalent to, Bridgeland stability in $D^{b}{\rm Coh}^{SU(2)}(\mathcal{X})$. 
\end{abstract}
\maketitle
\tableofcontents
\section{Introduction}

A fundamental theme in K\"ahler geometry is the correspondence between the existence of canonical geometric structures, as governed by non-linear partial differential equations, and notions of algebro-geometric stability. The foundations of this correspondence were established by the classical Donaldson–Uhlenbeck–Yau (DUY) theorem \cite{Do83, Do85, Do87, UY}, equating the existence of a Hermitian–Yang–Mills connection on a vector bundle to Mumford–Takemoto slope stability.  More recently,  Chen–Donaldson–Sun \cite{CDS1,CDS2,CDS3} resolved the Yau–Tian–Donaldson conjecture that a Fano manifold admits a K\"ahler–Einstein metric if and only if it is K-stable.

In recent years, attention has increasingly turned toward fully nonlinear differential equations on vector bundles.  These considerations are motivated by geometry, mathematical physics, and connections with algebraic geometry. Prominent examples of such equations are the deformed Hermitian–Yang–Mills (dHYM) equation \cite{MMMS, LYZ} and more generally (polynomially) $Z$-critical connections \cite{JBM, DMS, DNST}, Demailly’s vector bundle Monge–Amp\`ere approach to the Griffiths' conjecture \cite{DemGr}, and the gauge-theoretic sector of the Hull–Strominger system describing vacuum configurations of the heterotic string \cite{Hull, Strominger, PhongSurvey}. From the perspective of analysis these equations present severe analytic challenges. While the scalar theory of fully nonlinear elliptic equations is deeply developed, there exists no general analytic framework for treating fully nonlinear elliptic systems.  Perhaps more troublingly, these nonlinear systems are not even known to be elliptic in general \cite{AshPin, ChenGhosh}. As a result, while these equations have generated considerable interest, very little is known about existence beyond the perturbative regime \cite{DMS, Leung97, LaraSaEarp}, homogeneous settings where the construction of solutions can be reduced to an algebraic problem \cite{COSD, Correa}, or for certain rank $2$ bundles where the equation can be reduced to a pair of elliptic scalar equations \cite{Pingali, Takahashi, Ghosh}.

These geometric developments parallel developments in algebraic geometry, also motivated by mathematical physics, which have led to new, generalized notions of algebro-geometric stability, namely Bridgeland stability, and it is expected that the existence of solutions to these higher rank nonlinear systems is equivalent to generalized notions of stability (see e.g. \cite{CY, CYarX, DMS}).  These notions of stability represent a dramatic departure from the classical notions of Mumford-Takemoto or Gieseker stability as holomorphic vector bundles can be destabilized by torsion sheaves, a fact which is reflected analytically in the absence of a priori curvature bounds for nonlinear systems.

In this paper we establish, for certain K\"ahler surfaces, a correspondence between existence of solutions to the higher rank deformed Hermitian-Yang-Mills equation and an algebro-geometric notion of stability closely related to Bridgeland stability. In fact, as we will show, our notion of stability is implied by, and conjecturally equivalent to, Bridgeland stability.  To our knowledge, this is the first non-perturbative existence–stability correspondence for the higher-rank deformed Hermitian–Yang–Mills equation in a setting where the equation remains a genuinely matrix-valued, nonlinear system.  Our result therefore provides the first evidence for a correspondence between Bridgeland stability and existence of solutions to the higher rank dHYM equation.  Moreover, our results provide concrete evidence that the algebraic structures encoded by Bridgeland stability play an important role in the  analytic study of fully nonlinear PDEs on holomorphic vector bundles.

To introduce our setting, let $(\cX,\omega)$ be a compact K\"ahler manifold, and let $\cE\rightarrow \cX$ be a holomorphic vector bundle.  The deformed Hermitian-Yang-Mills (dHYM) equation seeks a Hermitian metric $h$ on $\cE$ such that the curvature $F_{h}$ of the Chern connection satisfies the system of equations
\begin{equation}\label{eq: dHYMIntroduction}
{\rm Im}\left(e^{-i\htheta}\left(\omega\otimes I_{\cE} - F_{h}\right)^n\right)=0
\end{equation}
\begin{equation}\label{eq: dHYMPosIntroduction}
{\rm Re}\left(e^{-i\htheta}(\omega\otimes I_{\cE} - F_{h})^n\right)>0
\end{equation}
where the real and imaginary parts are defined using $h$, and $e^{-i\htheta}$ is a (possibly ill-defined) constant determined by ${\rm ch}(\cE)$ and $\omega$.  We refer to the constraint~\eqref{eq: dHYMPosIntroduction} as {\em the calibration constraint}, for reasons we will explain in Section~\ref{sec:background}.  

For a coherent sheaf $\cF\rightarrow \cX$ define the central charge by
\begin{equation}\label{eq: centralChargeIntro}
Z_{\cX}(\cF)= -\int_{X}e^{-\frac{i}{2\pi}\omega}ch(\cF) \in \mathbb{C}.
\end{equation}
Suppose that $\dim_{\mathbb{C}}\cX=2$.  Assuming that $Z_{\cX}(\cF) \in \mathbb{C}^*$ we define
\[
\Theta(\cF) = {\rm Arg}(Z_{\cX}(\cF))
\]
where ${\rm Arg}$ is the principal value of the argument of $Z_{\cX}(\cF)$.

\begin{defn}\label{defn: Zstableintro}
    Let $\cE\rightarrow \cX$ be a vector bundle, and assume that $\Theta(\cE) \in (0,\pi)$. We say that $\cE$ is $Z$-stable in ${\rm Coh}(\cX)$ if:
    \begin{itemize}
        \item[(i)] for any non-zero coherent sheaf $\cQ$ such that $\cE \twoheadrightarrow \cQ$, we have ${\rm Im}(Z_{\cX}(\cQ))>0$, and
        \item[(ii)] for any exact sequence of coherent sheaves
        \[
        0\rightarrow \cS \rightarrow \cE\rightarrow \cQ \rightarrow 0
        \]
        with $\Theta(\cS), \Theta(\cQ)\in (0,\pi]$ we have
        \[
        \Theta(\cS) < \Theta(\cE) \quad \text{ or equivalently } \quad \Theta(\cE) < \Theta(\cQ).
        \]
    \end{itemize}
\end{defn}

Consider a K\"ahler surface of the form $\cX= X\times \mathbb{P}^1$ with $X$ a compact Riemann surface and let $p_i$ denote projection to the $i$-th factor.  $SU(2)$ acts on $\cX$ via the standard action on $\mathbb{P}^1$.  Equip $\cX$ with the K\"ahler form $\omega_{\cX}= p_1^*\omega_{X}\oplus \frac{\sigma}{2\pi}p_2^*\omega_{FS}$ where $\omega_{X}$ is a K\"ahler form on $X$.  Let $E_1\rightarrow X$ be a holomorphic vector bundle and $E_2\rightarrow X$ a holomorphic line bundle.  We consider holomorphic vector bundles $\cE \rightarrow \cX$ arising as non-split, $SU(2)$-equivariant extensions
\[
0 \rightarrow p_1^{*}E_1 \rightarrow \mathcal{E} \rightarrow p_1^*E_2\otimes p_2^*\mathcal{O}_{\mathbb{P}^1}(2) \rightarrow 0.
\]
We say that such vector bundles are of {\em vortex type}.   Our main result is the following:

\begin{thm}\label{thm: mainTheorem}
    Let $(X,\omega_{X})$ be a compact Riemann surface, and $\cX= X\times \mathbb{P}^1$ equipped with the K\"ahler form $\omega_{\cX}= \omega_{X}\oplus \frac{\sigma}{2\pi}\omega_{FS}$.  Let $\cE\rightarrow \cX$ be an irreducible $SU(2)$-equivariant holomorphic vector bundle of vortex type with ${\rm Im}(Z_{\cX}(\cE))>0$ and $\Theta(\cE) \in (0, \arctan(\frac{\sigma}{4\pi})]\cup [\frac{\pi}{2}, \pi]$.  Then $\cE$ admits a solution of the deformed Hermitian-Yang-Mills system~\eqref{eq: dHYMIntroduction}+~\eqref{eq: dHYMPosIntroduction} if and only if $\cE$ is $SU(2)$-equivariantly $Z$-stable in ${\rm Coh}^{SU(2)}(\cX)$.
\end{thm}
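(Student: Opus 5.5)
The approach is dimensional reduction in the spirit of García-Prada's treatment of the vortex equations. An $SU(2)$-equivariant extension of vortex type corresponds to a holomorphic triple $(E_1,E_2,\phi)$ on $X$. Here $\phi\in H^0(X,{\rm Hom}(E_2,E_1))$ is nonzero because the extension is non-split, and it arises from the $SU(2)$-invariant part of $H^1(\mathcal{X}, p_1^*(E_1\otimes E_2^*)\otimes p_2^*\mathcal{O}(-2)) \cong H^0(X,E_1\otimes E_2^*)\otimes H^1(\mathbb{P}^1,\mathcal{O}(-2))$.

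\textbf{Step 1 (reducing the equation).} Take an invariant metric $h = p_1^*h_1\oplus p_1^*h_2\otimes h_{FS}^{\otimes 2}$ on the $C^\infty$ splitting. The Chern curvature is then block-matrix valued, with off-diagonal terms $\partial\phi$-type forms wedged with the invariant $(0,1)$-form on $\mathbb{P}^1$ and diagonal terms $F_{h_1}-\phi\phi^*\otimes\omega_{FS}$ and $F_{h_2}+\phi^*\phi\otimes\omega_{FS}-2\omega_{FS}$. Expanding $(\omega\otimes I-F_h)^2$ and integrating over $\mathbb{P}^1$ (everything is invariant, so this is pointwise), I expect \eqref{eq: dHYMIntroduction} to reduce to a coupled system on $X$ of deformed vortex type:
\[
\Lambda F_{h_1} + a\,\phi\phi^* = \lambda_1 I,\qquad \Lambda F_{h_2} - b\,\phi^*\phi = \lambda_2,
\]
with coefficients $a,b,\lambda_i$ depending on $\sigma$ and $\hat\theta$. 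Because $E_2$ is a line bundle, the nonlinearity introduced by the $\mathbb{P}^1$ direction enters only through scalars like $\phi^*\phi$ and $\cot\hat\theta$. Likewise, the calibration constraint \eqref{eq: dHYMPosIntroduction} should become a pointwise inequality, roughly $|\phi|^2_h<c(\sigma,\hat\theta)$, or positivity of a Hermitian endomorphism. The angle hypothesis $\Theta(\mathcal{E})\in(0,\arctan(\sigma/4\pi)]\cup[\pi/2,\pi]$ should be exactly what keeps the coefficients of the correct sign, so that the system is a genuine twisted vortex system with a convex moment map (the $\tau$-vortex equations of Bradlow--García-Prada with modified $\tau$).

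\textbf{Step 2 (reducing stability).} I will show that $SU(2)$-equivariant $Z$-stability of $\mathcal{E}$ is equivalent to $\tau'$-stability of the triple, meaning subtriples $(E_1',E_2',\phi)$ satisfy slope inequalities with a parameter $\tau'$ determined by $\sigma$ and $\Theta(\mathcal{E})$. Equivariant subsheaves of $\mathcal{E}$ are, after saturation, of vortex type built from subtriples, so computing $Z_{\mathcal{X}}$ of $p_1^*F\otimes p_2^*\mathcal{O}(k)$ reduces $\Theta$-comparisons to linear inequalities in degrees and ranks. Two things must be checked separately. First, condition (i) of Definition~\ref{defn: Zstableintro}, applied to torsion quotients such as $p_1^*E_2\otimes\mathcal{O}(2)|_{\{x\}\times\mathbb{P}^1}$ or the quotients supported where $\phi$ vanishes, is exactly the calibration/upper bound constraint. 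Second, the subtriple $(0,E_2)$ (resp. $(E_1,0)$) gives the two extremal inequalities.

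\textbf{Step 3 (existence).} With these reductions, the theorem follows from a Hitchin--Kobayashi correspondence for the reduced system. Necessity follows from the Chern--Weil argument: integrate the reduced equation against the orthogonal projection onto a subtriple, using the sign of second fundamental form terms. For sufficiency I would either run Donaldson's heat flow/Uhlenbeck--Yau continuity method adapted to triples, or reduce to the known theorem of Bradlow--García-Prada by rescaling. The main obstacle is that the calibration inequality is an open pointwise condition that must be preserved along the method, which requires a $C^0$ bound on $|\phi|^2_h$ by maximum principle on the equation $\Delta|\phi|^2\ge\ldots$. Stability against the torsion quotients from Step 2 should supply the strict inequality needed to close this estimate.
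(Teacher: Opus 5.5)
Your Step 1 is where the argument breaks, and everything downstream depends on it. The reduction does not produce a $\tau$-vortex system with constant coefficients. When you expand $(\widetilde{\omega}_{\sigma}\otimes I_{\cE}-F_A)^2$, two kinds of terms land in the diagonal blocks: the product of the $X$-part $F_{A_1}$ with the $\mathbb{P}^1$-part $\Phi\Phi^{\dagger}\omega_{FS}$, and the square of the off-diagonal term $\nabla\Phi\wedge\alpha$. So the reduced equation on $E_1$ is
\[
\cos(\htheta)\left(i\Lambda F_1+\Phi\Phi^{\dagger}\right)=\sin(\htheta)\left(I_{E_1}-\tfrac{1}{2}\{i\Lambda F_1,\Phi\Phi^{\dagger}\}+\tfrac{1}{2}\nabla\Phi(\nabla\Phi)^{\dagger}\right),
\]
and the $E_2$ equation contains $|\Phi|^2 i\Lambda F_2$ and $|\nabla\Phi|^2$. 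For ${\rm rk}(E_1)>1$ this nonlinearity is genuinely matrix-valued, not a scalar in $\Phi^{\dagger}\Phi$. The curvature couples quadratically with $\Phi\Phi^{\dagger}$, and $\nabla\Phi$ enters quadratically. No moment-map or convexity structure, and no Hitchin--Kobayashi theorem, is known for this system, so there is nothing to reduce to Bradlow--Garc\'ia-Prada by rescaling. What your Steps 2--3 actually establish is the $t=0$ endpoint of the paper's continuity path, where the anticommutator and $\nabla\Phi$ terms are switched off. There, for $\cos(\htheta)>0$, $Z$-stability is indeed $\cE\in\mathcal{T}$ plus a linear slope condition on saturated subtriples, and that is exactly what solves the Bradlow-type equation. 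The whole difficulty is getting from $t=0$ to $t=1$.

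That passage needs ingredients your proposal lacks:
\begin{itemize}
\item A maximum-principle bound for $|\Phi|^2$, which uses no stability input. The calibration constraint then follows from the equation when $\cos(\htheta)>0$, so torsion quotients play no role here. For $\Theta<\frac{\pi}{2}$, fiber-supported quotients have phase $\geq\frac{\pi}{2}$ and never destabilize.
\item A separate maximum-principle bound for $|\nabla\Phi|^2$. This is needed because $\nabla\Phi$ appears in the equation, and it is what yields curvature bounds.
\item Two Uhlenbeck--Yau blow-up arguments, in which the nonlinear terms along a diverging sequence of metrics must be controlled by integration by parts. Here $\tan(\htheta)\le\frac{\sigma}{4\pi}$ is used to absorb a cross term into the positive $|\nabla\Phi|^2$ contribution coming from the $E_2$ equation.
\item A Leray--Schauder degree argument to close the path.
\end{itemize}

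Necessity is also not a plain sign argument. For type B subtriples the Chern--Weil identity leaves the unsigned term ${\rm Re}\langle\pi^{\perp}\nabla\Phi,\beta^{\dagger}\pi\Phi\rangle$, which can be controlled pointwise only when $\cos(\htheta)\ge\frac{4\pi}{\sigma}\sin(\htheta)$. This, not a sign of vortex coefficients, is where $\arctan(\frac{\sigma}{4\pi})$ in the statement comes from.

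Finally, the range $[\frac{\pi}{2},\pi]$ is a rigidity regime handled by classification, not by your uniform approach. There the equation plus calibration force ${\rm rk}(E_1)=1$, $\Phi$ nowhere zero, $\nabla\Phi\equiv 0$, $|\Phi|^2\equiv\frac{2\pi}{\sigma}$ and $\cos(\htheta)+\frac{2\pi}{\sigma}\sin(\htheta)>0$. Stability against $\iota_*\mathcal{O}_F$ and $\iota_*\iota^*\cE$ forces the same conditions on the algebraic side, and the theorem in this range is the match between these two classifications.
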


By $SU(2)$-equivariantly $Z$-stable we mean that, in Definition~\ref{defn: Zstableintro} one only needs to check surjections $\cE\twoheadrightarrow\cQ$ in the abelian category ${\rm Coh}^{SU(2)}(\cX)$ of $SU(2)$-equivariant coherent sheaves. In fact we achieve more than what is stated in Theorem~\ref{thm: mainTheorem}: we give a complete classification of all solutions of the dHYM equation with ${\rm Im}(Z_{\cX}(\cE)) >0$ and ${\rm Re}(Z_{\cX}(\cE))\leq 0$; see Corollary~\ref{cor: classifySolRealNeg}.  This class turns out to be quite rigid, consisting only of the rank $2$ vector bundles generated by taking $E_1 \simeq E_2$.  Furthermore, we show that there are no solutions of the equation at all with $\tan(\htheta) \in [-\frac{\sigma}{2\pi},0)$. 

We prove that $Z$-stability for an $SU(2)$-equivariant holomorphic vector bundle of vortex type is implied by Bridgeland stability for the stability condition on $D^{b}{\rm Coh}^{SU(2)}(\cX)$ constructed by Arcara-Bertram \cite{AB}.  In particular, we obtain the following corollary.

\begin{cor}
    In the setting of Theorem~\ref{thm: mainTheorem}, if $\cE \in D^{b}{\rm Coh}^{SU(2)}(\cX)$ is a Bridgeland stable $SU(2)$-equivariant holomorphic vector bundle of vortex type, then $\cE$ admits an equivariant solution of the deformed Hermitian-Yang-Mills equation~\eqref{eq: dHYMIntroduction}+~\eqref{eq: dHYMPosIntroduction}.
\end{cor}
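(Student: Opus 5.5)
The corollary follows from Theorem~\ref{thm: mainTheorem} once we know that Bridgeland stability implies $SU(2)$-equivariant $Z$-stability. The plan is to use the Arcara--Bertram stability condition $\sigma_{AB}=(Z_{\cX},\mathcal{A})$ on $D^{b}{\rm Coh}^{SU(2)}(\cX)$. Here $\mathcal{A}$ is the tilt of ${\rm Coh}^{SU(2)}(\cX)$ with respect to the torsion pair $(\mathcal{T},\mathcal{F})$ cut out by the $\omega_{\cX}$-slope relative to the $B$-field implicit in $Z_{\cX}$. The argument then reduces to two things: showing that $\cE$ and its relevant quotients lie in the heart $\mathcal{A}$, and translating short exact sequences in ${\rm Coh}^{SU(2)}(\cX)$ into short exact sequences in $\mathcal{A}$.

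\textbf{Step 1.} I will show that $\cE\in\mathcal{A}$, i.e.\ that $\cE\in\mathcal{T}$, so that every $\mu_{\omega}$-HN factor of $\cE$ has slope above the tilting threshold. In the Arcara--Bertram convention, ${\rm Im}\,Z_{\cX}(\cF)$ is, up to a positive factor, $\int\omega\cdot{\rm ch}_1(\cF)$ shifted by the threshold times the rank. So membership in $\mathcal{T}$ amounts to ${\rm Im}\,Z_{\cX}>0$ for every torsion-free quotient, including every HN quotient. I will use the hypothesis ${\rm Im}\,Z_{\cX}(\cE)>0$, together with the fact that Bridgeland stability of $\cE$ (as a stable object in some shift of $\mathcal{A}$, with phase in $(0,1]$ since $\Theta(\cE)\in(0,\pi)$) forces $\cE\in\mathcal{A}$ up to shift. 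Then I will note that the vortex-type extension structure pins the shift to $0$.

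\textbf{Step 2.} I will verify condition (i) of Definition~\ref{defn: Zstableintro}. Take an equivariant surjection $\cE\twoheadrightarrow\cQ$ in ${\rm Coh}^{SU(2)}$. Since $\mathcal{T}$ is closed under quotients, $\cQ\in\mathcal{T}\subset\mathcal{A}$. Objects of $\mathcal{T}$ satisfy ${\rm Im}\,Z\ge 0$, with equality only for sheaves supported in dimension $0$, where $Z\in\mathbb{R}_{<0}$. A $0$-dimensional quotient would be a destabilizing quotient of phase $\pi$, contradicting Bridgeland stability since $\Theta(\cE)<\pi$. Hence ${\rm Im}\,Z_{\cX}(\cQ)>0$.

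\textbf{Step 3.} I will verify condition (ii). Given $0\to\cS\to\cE\to\cQ\to0$ in ${\rm Coh}^{SU(2)}$, we have $\cQ\in\mathcal{T}$ by Step 2, and $\cS$ is torsion-free. Decompose $\cS$ by its torsion pair: $0\to T\to\cS\to F\to0$ with $T\in\mathcal{T}$ and $F\in\mathcal{F}$. The long exact sequence of $\mathcal{A}$-cohomology then gives a short exact sequence $0\to T\to\cE\to\cQ'\to0$ in $\mathcal{A}$, where $\cQ'$ is an extension of $\cQ$ by $F[1]$. Bridgeland stability gives $\Theta(T)<\Theta(\cE)$. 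Since $Z(F[1])$ has ${\rm Im}\ge0$, i.e.\ lies in the upper half plane, one has $Z(\cS)=Z(T)-Z(F[1])$. Combined with $\Theta(\cS)\in(0,\pi]$, the convexity of phases in the upper half-plane then yields $\Theta(\cS)\le\Theta(T)<\Theta(\cE)$.

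\textbf{Main obstacle.} The hard part is the sign bookkeeping in Step 3 in the boundary cases: $F\neq0$ with $Z(F[1])$ real, or $\Theta(\cS)=\pi$. For these I would argue directly, using the explicit sub-objects available for vortex-type bundles, namely $p_1^*E_1$, its equivariant subsheaves $p_1^*E_1'$, and twists of $p_1^*E_2\otimes\mathcal{O}(2)$ by ideal sheaves, each of which is equivariant and pulled back from $X$ in a controlled way. Having established $Z$-stability, Theorem~\ref{thm: mainTheorem} then produces the equivariant dHYM solution in the stated range of $\Theta(\cE)$.
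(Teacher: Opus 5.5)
Your Steps 1 and 2 are essentially fine. A Bridgeland stable sheaf with ${\rm Im}\,Z_{\cX}(\cE)>0$ must lie in $\mathcal{A}$, hence in $\mathcal{T}$; it is this, not the extension structure, that pins the shift. Condition (i) is then just the definition of $\mathcal{T}$, since there are no $0$-dimensional $SU(2)$-equivariant sheaves.

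The gap is the convexity claim in Step 3. You have $Z_{\cX}(T)=Z_{\cX}(\cS)+Z_{\cX}(F[1])$ with both summands in $\mathbb{H}$. Convexity only places $\Theta(T)$ between $\Theta(\cS)$ and $\Theta(F[1])$. So $\Theta(\cS)\le\Theta(T)$ holds exactly when $\Theta(F[1])\ge\Theta(T)$, and nothing forces this: $Z(T)=1+i$, $Z(F[1])=2+\tfrac12 i$ gives $Z(\cS)=-1+\tfrac12 i$, with phase larger than that of $T$.

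This is not a boundary case. It is the generic failure mode: $\cS_{\mathcal{T}}=T$ does not destabilize, but the $\mathcal{F}$-part, with ${\rm Re}\,Z_{\cX}(F)<0$, drags $\Theta(\cS)$ past $\Theta(\cE)$. Because $0\to\cS\to\cE\to\cQ\to0$ is not exact in $\mathcal{A}$, Bridgeland stability says nothing about such $\cS$, and no formal argument in the upper half-plane will supply it. (Incidentally, $\cQ'=\cE/T$ is itself a sheaf in $\mathcal{T}$; in $\mathcal{A}$ it is a subobject of $\cQ$ with cokernel $F[1]$, not an extension of $\cQ$ by $F[1]$. This does not affect $Z_{\cX}(\cQ')$.)

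What is missing is the vortex-specific replacement step of Lemma~\ref{lem: StabreplaceTorsion}. It shows that $Z$-stability only needs testing on sequences with $\cS,\cQ\in\mathcal{T}$, which are exactly those exact in $\mathcal{A}$.

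\textbf{Case ${\rm Re}\,Z_{\cX}(\cE)>0$.} Suppose $\cS$ destabilizes but $\cS_{\mathcal{T}}$ does not. Then ${\rm Im}(Z_{\cX}(\cS_{\cF})/Z_{\cX}(\cE))>0$ together with $-Z_{\cX}(\cS_{\cF})\in\mathbb{H}$ forces ${\rm Re}\,Z_{\cX}(\cS_{\cF})<0$. So $\cS_{\cF}$ is a type A triple with ${\rm Re}<0$ and ${\rm Im}\le 0$. As in Lemma~\ref{lem: phaseToRatioStab}, this forces $\frac{4\pi}{\sigma}\deg E_2>\frac{V_X}{\pi}$. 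Then $\cS_\Phi=({\rm sat}\,\Phi(E_2),E_2,\Phi)$ lies in $\mathcal{T}$ and has $\Theta(\cS_\Phi)>\frac{\pi}{2}>\Theta(\cE)$, so it destabilizes $\cE$ in $\mathcal{A}$.

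\textbf{Case ${\rm Re}\,Z_{\cX}(\cE)\le 0$.} Here the paper tests the specific quotients by $\cS_\Phi$, by $\cS_x=(E_1(-x),E_2,\Phi')$ and by $\cS_{(x,x)}=(E_2(-x),E_2(-x),1)$, after checking that each kernel lies in $\mathcal{T}$. These give ${\rm rk}\,E_1=1$, $\Phi$ nowhere vanishing, and $\cos\htheta+\frac{2\pi}{\sigma}\sin\htheta>0$. It then concludes by Lemma~\ref{lem: stabForVeryRestr}.

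Your ``main obstacle'' paragraph gestures at explicit subobjects, but it identifies neither the right ones nor the right failure case. As written, the implication from Bridgeland stability to $Z$-stability is not established, and so neither is the corollary.
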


At the end of the introduction, we give an outline of the paper noting the particular results containing more precise versions of the results subsumed into Theorem~\ref{thm: mainTheorem}.

We emphasize that in Theorem~\ref{thm: mainTheorem} torsion sheaves can appear as bona fide destabilizing quotients, with the most important class of such objects given by holomorphic vector bundles supported on analytic subvarieties.  This is a significant departure from the classical theory of Mumford-Takemoto stability, where it suffices to consider torsion-free quotients which can be treated as vector bundles over a Zariski open set.

The strategy of the proof is to use the $SU(2)$-equivariance to reduce the system~\eqref{eq: dHYMIntroduction} on $\cX$ to a system of equations for metrics on a holomorphic triple $(E_1,E_2,\Phi)\rightarrow X$, where $E_1, E_2$ are holomorphic vector bundles, $\rank(E_2)=1$ and $\Phi \in H^0(X,E_1\otimes E_2^{\vee})$. This system is a highly nonlinear extension of the classical vortex equations \cite{Bradlow, GPVortex, BG-P}. We prove a priori estimates for these equations using a variety of techniques from nonlinear PDE, and compactness/contradiction arguments inspired by the work of Uhlenbeck-Yau \cite{UY}.

We now outline the general framework of the paper. 
\begin{itemize}
\item Section~\ref{sec:background} is an extended introduction. We discuss how the dHYM system~\eqref{eq: dHYMIntroduction} +~\eqref{eq: dHYMPosIntroduction} arises from mirror symmetry via the consideration of special Lagrangians.  We discuss in more detail previous work on existence of solutions for holomorphic line bundles and the relation to Bridgeland stability conditions.  We then outline what is known for the case of the dHYM equation for general vector bundles. Finally, we discuss the connection of the dHYM equation to nonlinear extensions of the vortex equations, which are of independent interest.
\\
\item Section~\ref{sec: dimReduction} introduces the dimensional reduction framework and reduces the dHYM system~\eqref{eq: dHYMIntroduction}+~\eqref{eq: dHYMPosIntroduction} to the {\em deformed vortex equations}; see equations~\eqref{eq: dimReduceDHYM}+~\eqref{eq: dimReduceDHYM-Pos}.  We introduce a method of continuity interpolating between a classical vortex type equation and the deformed vortex system.  We obtain formulas for the central charge~\eqref{eq: centralChargeIntro}.  We also collect several analytic consequences of the solvability of the dHYM system, and classify all bundles admitting solutions with ${\rm Re}(Z_{\cX}(\cE))\leq 0$ and ${\rm Im}(Z_{\cX}(\cE))> 0$; see Proposition~\ref{prop: charSolCosNeg}.
\\
\item Section~\ref{sec: stability} introduces the notion of $Z$-stability relevant for the dHYM equation.  We show that it suffices to test stability against a fairly simple class of objects; vector bundles and vector bundles supported over reduced $\mathbb{P}^1$ fibers of $\cX\rightarrow X$.  We define stability from two equivalent points of view; one in terms of $SU(2)$-equivariant coherent sheaves on $\cX$, along the lines of Definition~\ref{defn: Zstableintro}, and in terms of holomorphic triples on the underlying Riemann surface $X$; see Definition~\ref{def: stabOnXxP} and Definition~\ref{defn: stabOfTriple}.  We classify  all stable bundles with ${\rm Re}(Z_{\cX}(\cE))\leq 0$, finding exact agreement with the classification of solutions to the dHYM equation with the same property.  Finally, we prove the necessity of stability for angles in the range $\hat{\theta} \in (0, \arctan(\frac{\sigma}{4\pi})]\cup [\frac{\pi}{2}, \pi]$.  Together with the results of the previous section, this completes the proof of Theorem~\ref{thm: mainTheorem} for bundles with ${\rm Re}(Z_{\cX}(\cE))\leq 0$; see Corollary~\ref{cor: classifySolRealNeg}.
\\
\item Sections~\ref{sec: initialEquation}--\ref{sec: fixedPoint} are dedicated to proving that stability implies existence of solutions for bundles having ${\rm Re}(Z_{\cX}(\cE))> 0$; the main result of these sections is stated in Theorem~\ref{thm: existenceCos>0}.  In Section~\ref{sec: initialEquation}, following Bradlow \cite{Bradlow}, we prove the existence of solutions to the method of continuity at time $t=0$. Section~\ref{sec: Estimates}, in which we undertake the analysis of the deformed vortex equations, is the technical heart of the paper.  We prove that, under the stability assumption, and the assumption $\tan(\htheta) \leq \frac{\sigma}{4\pi}$ we have a priori estimates to all orders for solutions of the method of continuity.  The $C^0$ estimate for the metric on $\cE$ requires two compactness and contradiction arguments.  Technically, the most challenging aspect of these arguments is controlling the nonlinear terms in the equation along diverging sequences of metrics; see e.g. Proposition~\ref{prop: UYargumentH}.  In Section~\ref{sec: fixedPoint} we complete the proof of the main theorem via a topological degree argument.
\\
\item Section~\ref{sec: comments} investigates how our solutions fit into the existing literature on the higher rank dHYM equation.  We show by example that the deformed Hermitian-Yang-Mills equations, together with the estimates we prove, do not algebraically imply $Z$-positivity in the sense of \cite{DMS, KellerScarpa}.  We prove unconditionally that they satisfy the weaker Griffiths-$Z$-positivity condition.  This latter result depends in an essential way on the global analysis of the equation, and in particular on a priori estimates.  This should be compared with the results of Chen-Ghosh \cite{ChenGhosh} and Ballal-Pingali \cite{AshPin} which show that ellipticity for the higher rank dHYM system is not an {\em algebraic} consequence of solvability, unlike the case of the dHYM equation on line bundles. We outline an approach to studying the dimensional reduction of the dHYM equation via an infinite dimensional variational approach along the lines of Collins-Yau \cite{CY}. Finally, we discuss connections with Bridgeland stability, and prove that $Z$-stability is implied by Bridgeland stability in $D^{b}{\rm Coh}^{SU(2)}(\cX)$.
\end{itemize}
\smallskip

The assumption that $\htheta \in (0, \arctan(\frac{\sigma}{4\pi})]$ appears in exactly two places.  The first is in Lemma~\ref{lem: typeBStabNecessary} establishing the necessity of stability for quotients $0 \rightarrow \mathcal{S}\rightarrow \mathcal{E} \rightarrow \mathcal{Q} \rightarrow 0$ where $\mathcal{S}$ is the pull-back of a subbundle $S\subset E_1$.  The second appearance is in the proof of the key $C^0$ estimate in Proposition~\ref{prop: UYargumentH}. While we conjecture that equivariant $Z$-stability is equivalent to existence of solutions to the deformed Vortex equations for $\hat{\theta} \in (\arctan(\frac{\sigma}{4\pi}),\frac{\pi}{2})$, proving this result seems to require some new techniques, including possibly the development of the higher rank GIT picture, as developed in the rank $1$ case by Collins-Yau \cite{CY, CYarX}; see Conjecture~\ref{conj: stabTypeB} and the surrounding discussion, as well as Section~\ref{sec: comments}.  
\smallskip

\noindent {\bf Acknowledgements}: The authors would like to thank D. H. Phong and S.-T. Yau for their interest and encouragement.  The authors are very grateful to S. Brendle for suggesting the use of a topological degree argument in Section~\ref{sec: fixedPoint} and bringing to their attention the reference \cite{ChangGursky}.  The first author would also like to thank M. Douglas for a very helpful discussion on $\Pi$-stability and BPS $D$-branes.  T.C.C.~is supported in part by NSERC Discovery grant RGPIN-2024-518857.  The results in this paper will form part of the second author's forthcoming thesis at the University of Toronto \cite{YukaiThesis}.

\section{Background}\label{sec:background}

This section situates our main theorem within the broader literature surrounding the mirror symmetry, the deformed Hermitian–Yang–Mills equation, and stability conditions.  This area of research has seen a great deal of interest recently starting with the initial work of Jacob-Yau \cite{JacobYau} for holomorphic line bundles; see e.g. \cite{Ballal, ChanJacob, COSD, GChen, CCL, ChuLee, ChuLeeTak, CLSY,CJY, CXY, CY, CYarX, Correa, DatMetSong, DatPing, DMS, Ghosh, Jacob, JacobSheu, KellerScarpa, LaraSaEarp, Mete, Murakami, Murakami2, Pingali, Song, Tak1, Tak2, Takahashi} and the references therein.  One goal of the following exposition is to emphasize that, for higher rank bundles, one must include the calibration constraint~\eqref{eq: dHYMPosIntroduction} together with~\eqref{eq: dHYMIntroduction}.  To motivate this, we recall some basic facts about mirror symmetry and special Lagrangians.

\subsection{Mirror symmetry, special Lagrangians, and the dHYM equation}\,

Let $(\cX,g,\omega, \Omega)$ be a compact Calabi-Yau manifold with $\dim_{\mathbb{C}}\cX=n$.  A central role in type IIA string theory, and thereby mirror symmetry, is played by {\em special Lagrangians} (sLags), as introduced in the foundational work of Harvey-Lawson \cite{HL}.  

\begin{defn}
A smooth, oriented submanifold $L\subset \cX$ with $\dim_{\mathbb{R}}L=n$ is said to be special Lagrangian with phase $\htheta$
\begin{equation}\label{eq: backCalibsLag}
{\rm Re}(e^{-i\htheta}\Omega)\big|_{L} = d{\rm Vol}_{L}
\end{equation}
where $d{\rm Vol}_{L}$ is the Riemannian volume form on $L$ induced by the Ricci-flat metric $g$.
\end{defn}

Special Lagrangians are calibrated submanifolds and are thereby volume minimizing in their homology class. Typically, the special Lagrangian condition~\eqref{eq: backCalibsLag} is written in the following equivalent way:
\begin{equation}\label{eq: sLagImPart}
\omega|_{L} = 0 \quad \text{ and }\quad {\rm Im}(e^{-i\htheta}\Omega)|_{L}=0.
\end{equation}

For $L$ a smooth, connected, orientable submanifold one can show that ~\eqref{eq: sLagImPart} is equivalent to~\eqref{eq: backCalibsLag} up to possibly reversing the orientation of $L$.  However, we wish to emphasize that for special Lagrangians that are not smooth, or not connected, condition~\eqref{eq: sLagImPart} is {\em not} equivalent to being special Lagrangian, since it does not imply the calibration condition.  For example, suppose we have two Lagrangians $L_1, L_2$ of phase $\htheta$ (so in particular, $L_1,L_2$ are oriented).  For simplicity we can assume $L_1, L_2$ are disjoint, or intersect transversally.  Consider the union $L_{\pm}:= L_1 \cup \pm L_2$ where $\pm L_2$ denotes $L_2$ with possibly reversed orientation.  Then we have
\[
\omega|_{L_{\pm}}=0 \quad \text{ and } \quad {\rm Im}(e^{-i\htheta}\Omega)|_{L_{\pm}}=0
\]
however, only $L_{+}$ is calibrated by ${\rm Re}(e^{-i\htheta}\Omega)$.  Indeed, if $[L_1]=[L_2]$ in $H_{n}(X)$ (eg. if $L_1, L_2$ are fibers of a special Lagrangian fibration), then we have
\[
\int_{L_{-}}{\rm Re}(e^{-i\htheta}\Omega)=0.
\]
For this reason, the generalization of the condition~\eqref{eq: sLagImPart} beyond the smooth case (or even to smooth oriented Lagrangians) is the following

\begin{defn}\label{defn: sLagIm+Re}
    We say that $L\subset \cX$ is special Lagrangian if
    \begin{equation}\label{eq: SlagIntroDef}
    \omega|_{L}=0 \quad \text{ and } \quad {\rm Im}(e^{-i\htheta}\Omega)|_{L}=0, \quad \text{ and } \quad {\rm Re}(e^{-i\htheta}\Omega)|_{L} >0.
    \end{equation}
\end{defn}
We have been deliberately vague about what regularity assumptions should be imposed on the special Lagrangian $L$ in Definition~\ref{defn: sLagIm+Re}.  For example, one can assume that $L$ is smooth outside of a codimension $2$ set, or interpret~\eqref{eq: SlagIntroDef} weakly using the theory of currents.

\subsubsection{The deformed Hermitian-Yang-Mills equation}

We briefly summarize the origins of the deformed Hermitian-Yang-Mills equations (dHYM); we refer the reader to \cite{LYZ, MMMS, CXY} and the references therein for a more thorough treatment. The dHYM equation governs supersymmetric $D$-branes in type IIB string theory.  For complex line bundles the equations of motion were computed independently by Mari\~no-Minasian-Moore-Strominger \cite{MMMS} from an action functional point of view, and by Leung-Yau-Zaslow \cite{LYZ} by applying the Fourier-Mukai transform to a special Lagrangian section of an SYZ fibration in the setting of semi-flat mirror symmetry.  Solutions of the equations of motion correspond to Hermitian, holomorphic line bundles $(\cL,h)\rightarrow (\cX,\omega)$ such that
\begin{equation}\label{eq: backgroundDHYMRank1}
{\rm Im}\left(e^{-i\htheta}(\omega -F_h)^n\right)=0
\end{equation}
where $F_h$ is the curvature of the Chern connection associated to $h$, and $e^{-i\htheta}$ is a constant. These equations also appear in the theory of generalized geometry where they describe generalized calibrated cycles \cite{Gualtieri, Martucci-Smyth}. As mentioned in both \cite{MMMS, LYZ} the calculation of the equations of motion does not carry over to the case of non-abelian gauge groups. It has therefore been proposed  in the physics literature \cite{MinTom}, as well as in the mathematics literature \cite{CYarX, DMS} that one study the natural extension of~\eqref{eq: backgroundDHYMRank1} to higher-rank bundles. We argue that, from the perspective of mirror symmetry, this equation should be supplemented by the calibration constraint.

Suppose that $\cX$ is a compact Calabi-Yau manifold admitting a special Lagrangian fibration $\pi: \cX\rightarrow B$, and $\check{\pi}: \check{\cX}\rightarrow B$ is the mirror Calabi-Yau equipped with the dual special Lagrangian fibration. Under mirror symmetry, a vector bundle $\cE \rightarrow \cX$ admitting a solution of the higher rank dHYM equation is mirror to a special Lagrangian {\em multi-section} of the SYZ fibration $\check{\pi}$; that is, a sLag $L\subset \check{\cX}$ such that $\check{\pi}(L)=B$ and $L$ intersects the generic fiber of $\check{\pi}$ transversely in $r'>1$ points.  The details of this correspondence are not well understood in general, due to quantum corrections arising from the branching locus of the Lagrangian multi-section \cite{LYZ}.  Nevertheless, we can already draw conclusions from the case of direct sums of line bundles where the gauge groups are abelian and hence the calculations of \cite{LYZ, MMMS} carry over. Suppose $\cL_1, \cL_2$ are holomorphic line bundles over $\cX$ and consider $\cE= \cL_1 \oplus \cL_2$.  If $\cL_i$ is mirror to a Lagrangian $L_i$, then $\cE$ is mirror to the union $L_1\cup L_2$.  As we saw in the previous section, $L_1\cup L_2$ is special Lagrangian if and only if it satisfies the system~\eqref{eq: SlagIntroDef} which, by \cite{LYZ} is mirror to the system~\eqref{eq: dHYMIntroduction}+~\eqref{eq: dHYMPosIntroduction}.

Another perspective in support of the system~\eqref{eq: dHYMIntroduction}+~\eqref{eq: dHYMPosIntroduction} is based on a calibration argument generalizing an argument of Jacob-Yau \cite{JacobYau} from the rank $1$ case.  This argument can be viewed either as the formal mirror of the special Lagrangian calibration argument of Harvey-Lawson \cite{HL} or as an independent BPS condition in type IIB.  Consider the quantity
\[
\zeta(h):= e^{-i\htheta}(\omega \otimes I_{\cE}-F)^n 
\]
which is (formally) mirror to $e^{-i\htheta}\Omega|_{L}$ for the dual Lagrangian multisection $L$.  Then we have
\[
|Z_{\cX}(\cE)| = \bigg|\int_{X} e^{-i\htheta}\Tr(\zeta(h))\bigg| \leq \int_{X}|{\rm Tr}\zeta(h)|\,\omega^n \leq \int_{X}{\rm Tr}|\zeta(h)|\,\omega^n 
\]
where
\[
|\zeta(h)| := \left(\zeta(h)\zeta(h)^{\dagger}\right)^{1/2} 
\]
so that ${\rm Tr}|\zeta(h)|$ is the Schatten $1$-norm.  Tracing the equality case we have:
\begin{itemize}
    \item[(i)]${\rm Tr}|\zeta(h)| = |{\rm Tr}(\zeta(h)|$ if and only if there is a real valued function $\varphi(x)$ so that\\ ${\zeta(h) = e^{i\varphi(x)} |\zeta(h)|}$
    \item[(ii)] $\bigg|\int_{X} e^{-i\htheta}\Tr(\zeta(h))\bigg| \leq \int_{X}|{\rm Tr}\zeta(h)|\,\omega^n$ with equality if and only if $e^{i(\varphi(x)-\htheta)}=1$.
\end{itemize}
Thus, we have
\[
\int_{X}{\rm Tr}|\zeta(h)|\,\omega^n \geq |Z_{\cX}(\cE)| 
\]
with equality if and only if the equations ~\eqref{eq: dHYMIntroduction}+~\eqref{eq: dHYMPosIntroduction} hold.

The system~\eqref{eq: dHYMIntroduction} is a fully nonlinear system which may not be elliptic.  For these reasons, the higher rank dHYM equation has seen a limited amount of study.  Dervan-McCarthy-Sektnan \cite{DMS} studied a broad class of nonlinear partial differential equations on vector bundles motivated by connections to Bridgeland stability conditions on $D^{b}Coh(\cX)$ with polynomial central charges.  They prove that if a holomorphic vector bundle is {\em asymptotically $Z$-stable with respect to subbundles} then it admits a solution of the dHYM equation {\em in the large volume limit}.  Roughly speaking, the infinite volume limit of the dHYM equation is the classical Hermitian-Yang-Mills equation, while asymptotic $Z$-stability implies Mumford-Takemoto semi-stability, and so the existence result can be approached by a singular perturbation problem along the lines of \cite{Leung97}.  A key feature of this asymptotic regime is that the equation~\eqref{eq: dHYMIntroduction} becomes elliptic.  Dervan-McCarthy-Sektnan \cite{DMS} introduce a notion of subsolution, which we will call $Z$-positivity (following \cite{KellerScarpa}) for the system~\eqref{eq: dHYMIntroduction}, based on analogy with the case of line bundles \cite{CJY}. The subsolution condition implies the ellipticity of the system~\eqref{eq: dHYMIntroduction} \cite[Lemma 2.36]{DMS}, and holds automatically in the asymptotic regime \cite[Lemma 2.38]{DMS}.

One possible expectation is that $Z$-positivity, and hence ellipticity, might hold in a sufficiently small neighborhood of any solution of~\eqref{eq: dHYMIntroduction}. The pointwise examples of Chen–Ghosh \cite{ChenGhosh} and Ballal-Pingali \cite{AshPin}  show that such a conclusion cannot follow from the equation by purely algebraic considerations.  On the other hand, solutions of nonlinear PDEs are fundamentally non-local and so one might hope that the pointwise examples of \cite{ChenGhosh, AshPin} never appear in global solutions.  One outcome of our work is numerical evidence that $Z$-positivity may not hold, even at solutions of the extended system~\eqref{eq: dHYMIntroduction}+~\eqref{eq: dHYMPosIntroduction}. On the other hand, by imposing~\eqref{eq: dHYMPosIntroduction}, the system remains elliptic and a weaker notion of positivity, analogous to Griffiths positivity, does hold; see Section~\ref{sec: comments}. 

As we shall see in the course of the proof, imposing the positivity condition~\eqref{eq: dHYMPosIntroduction} in addition to~\eqref{eq: dHYMIntroduction} is essential to both the algebro-geometric and analytic theory, being closely connected with ellipticity, Bridgeland stability conditions, and torsion sheaves. 

\subsection{Stability conditions and geometric PDEs in mirror symmetry}

Thomas \cite{Th} and Thomas-Yau \cite{ThY} proposed a notion of stability for Lagrangians and predicted that a Lagrangian $L$ could be deformed by Hamiltonian deformations to a special Lagrangian if and only if $L$ is stable in their sense.  This proposal was based in part on a moment map formalism for special Lagrangians discovered by Thomas \cite{Th}, and in part on the analogy with the Donaldson-Uhlenbeck-Yau theorem, motivated by mirror symmetry.  More recently Joyce \cite{Joyce} proposed a very broad update to the Thomas-Yau conjecture in the framework of Bridgeland stability and the mean curvature flow.  Joyce's conjecture gives a remarkably detailed picture for how singularities of the Lagrangian mean curvature flow can be used to {\em construct} a canonical Bridgeland stability condition on the derived Fukaya category.  Our interest is in the following broader conjecture

\begin{conj}\label{conj: folklore}
There is a Bridgeland stability condition on $D^{b}{\rm Fuk}(\check{\cX})$ (resp. $D^{b}Coh(\cX)$) so that the isomorphism class of a Lagrangian $L$ (resp. holomorphic vector bundle $\cE$) is stable if and only if it contains a special Lagrangian (resp. $\cE$ admits a metric solving the deformed Hermitian-Yang-Mills equation).
\end{conj}

From the string theory point of view, Conjecture~\ref{conj: folklore} can be viewed as the statement that BPS $D$-branes in type IIA/B can be described either algebraically (by a stability condition) or analytically (by solutions of geometric PDEs).  From this point of view, Conjecture~\ref{conj: folklore} is extremely natural.  While the analytic  approach to BPS D-branes was pioneered in \cite{MMMS, LYZ},   the algebraic approach was initiated by Douglas-Fiol-R\"omelsberger \cite{DFR} and Douglas \cite{Doug}, inspired in part by the Donaldson-Uhlenbeck-Yau theorem \cite{Do83, Do85, Do87, UY}.  The key idea of this approach is to bypass the equations of motion for BPS branes, and instead define an algebro-geometric notion, called $\Pi$-stability, which encodes the structural features of BPS D-branes and agrees with physical expectations in various asymptotic regimes.  The proposal of \cite{DFR, Doug} can then be summarized as ``an object in $D^{b}Coh(X)$ is a supersymmetric $A/B$-brane if it is $\Pi$-stable". Bridgeland \cite{Br} placed the ideas of $\Pi$-stability on a rigorous and very general mathematical foundation \cite{Br}, developing the notion of a Bridgeland stability condition.  Following Bridgeland's pioneering work, the subject of stability conditions on categories, and particularly $D^{b}Coh(X)$ and $D^{b}{\rm Fuk}(\check{X})$, has generated a tremendous amount of interest; see, for example \cite{AB, BMT, MacriSchmidt, ChunyiLi, ChunyiLi2} and the references therein.

\subsubsection{The dHYM equation in rank $1$ and stability conditions}
The dHYM equation on line bundles was first studied in Jacob-Yau \cite{JacobYau}.  Collins-Jacob-Yau \cite{CJY} introduced a notion of subsolution for the dHYM equation, based on work of Sz\'ekelyhidi \cite{Sz} and Guan \cite{Guan} and proved that, in the supercritical phase regime, existence of a subsolution is equivalent to existence of solution to the dHYM equation.  In \cite{CJY} it was also observed that the existence of a subsolution implied certain intersection inequalities, which yield algebro-geometric obstructions to existence.  Collins-Yau \cite{CY, CYarX} developed an infinite dimensional GIT approach to the existence of solutions to the dHYM equation for line bundles $\cL \rightarrow \cX$ and developed a conjectural picture relating existence of solutions to the dHYM equation in the supercritical phase case with Bridgeland-type stability conditions.  We will give only a very rough overview of this picture, since our focus in this paper is on the higher rank case.  Roughly speaking, this theory has two components:
\begin{itemize}
    \item[(A)] $\dim_{\mathbb{C}}\cX-2$ necessary Chern number inequalities (involving $ch_1(\cL),...,ch_n(\cL)$).
    \item[(B)] Stability inequalities arising from torsion quotients, $\cL \twoheadrightarrow \cL \otimes \mathcal{O}_{Z}$.
\end{itemize}
The Chern number inequalities were proved to be necessary by Collins-Xie-Yau \cite{CXY} in dimension $3$, and by Han-Jin \cite{HanJin} in dimension $4$.  Interestingly, the Chern number inequality in dimension $3$ is identical to the improved Bogomolov-Gieseker inequality conjectured by Bayer-Macri-Toda \cite{BMT} in their study of Bridgeland stability conditions on $3$-folds.  This conjecture turns out to be false in general \cite{Schmidt}, but holds for line bundles admitting solutions of the dHYM equation \cite{CXY}. 

Beginning with a breakthrough of Chen \cite{GChen}, and subsequent works of Chu-Lee \cite{ChuLee}, Chu-Lee-Takahashi \cite{ChuLeeTak}, Datar-Pingali \cite{DatPing}, Song \cite{Song}, and Ballal \cite{Ballal} it is now understood that if the inequalities in (B) hold along a {\em test family} (see \cite{GChen} for this notion), then a solution to the rank $1$ dHYM equation exists. In complex dimension $3$ it was shown by Chu-Lee \cite{ChuLee} that the test family condition could be replaced by the Chern number inequality in $(A)$, leading to a complete resolution of the Collins-Yau conjecture in dimension $3$.

When $\dim_{\mathbb{C}}\cX=2$ the picture simplifies considerably.  There are no necessary Chern number inequalities in $(A)$. The stability inequalities in $(B)$ imply existence as a consequence of the Demailly-P\u{a}un theorem \cite{DP} and Yau’s solution of the Calabi conjecture \cite{Yau}; see \cite{CJY}.

Since Bridgeland stability conditions are known to exist in general on complex surfaces \cite{AB} it is natural to ask whether Bridgeland stable line bundles admit solutions of the dHYM equation, and vice versa.  This was taken up in \cite{CShi} on the blow-up of $\mathbb{P}^2$ at a point, and in \cite{CLSY} for certain line bundles on Weierstrass elliptic $K3$ surfaces.  Perhaps surprisingly, it was found that line bundles admitting solutions of the dHYM equation are Bridgeland stable, but not conversely.  In fact, by \cite{CLSY} Bridgeland stable line bundles can be ``very far" from admitting solutions of the dHYM equations.  

Beyond the supercritical phase case the only results we are aware of are due to Jacob-Sheu \cite{JacobSheu} and Jacob \cite{Jacob} in the Calabi symmetric setting.

\subsubsection{The dHYM equation in higher rank and stability conditions}

In comparison with the rank $1$ case very little is known for the higher rank dHYM equation.  A natural conjectural picture emerges by considering direct sums of line bundles and applying the rank-one theory. The rough conjecture is that existence of solutions to the higher rank dHYM equation on a vector bundle $\cE$ should be equivalent to
\begin{itemize}
    \item[(A)] $\dim_{\mathbb{C}}\cX-2$ necessary Chern number inequalities (involving $ch_0(\cE), ch_1(\cE),...,ch_n(\cE)$).
    \item[(B)] Stability inequalities arising from (possibly torsion) quotients, $\cE \twoheadrightarrow \cQ$.
\end{itemize}

The only result we are aware of which makes this connection is the work of Keller-Scarpa \cite{KellerScarpa} who proved that if $\cE \rightarrow \cX$ is a holomorphic vector bundle of rank $2$ over a complex surface that admits a $Z$-positive (in the sense of \cite{DMS}) solution of ~\eqref{eq: dHYMIntroduction} then the conditions arising from surjections as in $(B)$ are {\em necessary}.  The argument of Keller-Scarpa uses $Z$-positivity together with the rank $2$ assumption to identify a pointwise positive quantity which can be integrated over $X$ to give the desired inequality.  As we will see in Section~\ref{sec: stability}, proving the necessity of stability for arbitrary rank bundles without $Z$-positivity is fundamentally non-local, depending in an essential way on an integrated Bochner-type formula together with weaker notions of positivity. 

\subsubsection{Vortices}

Our approach to Theorem~\ref{thm: mainTheorem} is by dimensional reduction, using the $SU(2)$-equivariance to rewrite the equation in terms of a pair of metrics $(h_1,h_2)$ on a {\em holomorphic triple} $(E_1, E_2, \Phi)$ where $E_i\rightarrow X$ are holomorphic vector bundles $\rank(E_2)=1$ and $\Phi \in H^{0}(X, E_1\otimes E_2^{\vee})$.  As shown by Garcia-Prada \cite{GPVortex}, if one pursues this strategy in the setting of the Hermitian-Yang-Mills equation the resulting equations on $X$ are the Vortex equations, originally introduced by Bradlow \cite{Bradlow} as a generalization of the vortices studied by Jaffe-Taubes \cite{JaffeTaubes}.  The classical  equations couple the curvature of the metric on $E_1$ (resp. $E_2)$ with the natural endomorphism of $E_1$ given by $\Phi\Phi^{\dagger}$ (resp. $\Phi^{\dagger}\Phi$) in a similar way to the equations for Higgs bundles.  In contrast, the dimensionally reduced dHYM system~\eqref{eq: dHYMIntroduction}+~\eqref{eq: dHYMPosIntroduction} leads to a system in which the derivative $\nabla \Phi$ appears quadratically and the ``Higgs field" $\Phi\Phi^{\dagger}$ couples quadratically with the curvature (see~\eqref{eq: dimReduceDHYM} + ~\eqref{eq: dimReduceDHYM-Pos}); we call these equations the deformed Vortex equations.  

Pingali \cite{Pingali}, Takahashi \cite{Takahashi} and Ghosh \cite{Ghosh} studied ~\eqref{eq: dHYMIntroduction} (and related equations) on $SU(2)$-equivariant holomorphic vector bundles of rank $2$.  In this case, the system~\eqref{eq: dHYMIntroduction} reduces to two coupled scalar equations, and the stability-theoretic phenomena that arise in arbitrary rank are absent from the reduced problem.

Other nonlinear extensions of the Vortex equations have recently generated a great deal of interest, perhaps most notably the gravitating vortices; see e.g. \cite{ACGFGP, ACGFGPPing, GFPY}.

\section{Dimensional reduction of the deformed Hermitian-Yang-Mills equation}\label{sec: dimReduction}

Consider the complex surface given by $\cX := X \times \mathbb{P}^1$ and let $p_1: \cX \rightarrow X$ and $p_2:\cX \rightarrow \mathbb{P}^1$. Given $\sigma \in \mathbb{R}_{>0}$,  fix a K\"ahler form on $\cX$ by
\[
\widetilde{\omega}_{\sigma}:= p_1^*\omega \oplus \frac{\sigma}{2\pi} p_2^*\omega_{FS}
\]
where $\frac{1}{2\pi}\omega_{FS}\in c_1(\mathcal{O}_{\mathbb{P}^1}(1))$.  We have 
\[
\frac{1}{2\pi}\int_{\mathbb{P}^1}\omega_{FS}=1, \quad \int_{X}\omega :=V_{X}>0.
\]
The group $SU(2)$ acts on $\mathbb{P}^1 = SU(2)/U(1)$ by multiplication and in this section we reduce the dHYM equation ~\eqref{eq: dHYMIntroduction}+~\eqref{eq: dHYMPosIntroduction} for $SU(2)$ equivariant bundles on the complex surface $\cX= X \times \mathbb{P}^1$ to a coupled system of equations on $X$.

\subsection{$SU(2)$-equivariant holomorphic vector bundles}\label{sec: equivariantVBconstruction}

The following proposition describes equivariant complex vector bundles on $\cX$.
\begin{prop}[Garcia-Prada \cite{GPVortex}]
\label{prop: topologicalClass}
Every $SU(2)$-equivariant complex vector bundle $\mathcal{E}$ over $X\times \mathbb{P}^1$ can be equivariantly decomposed, uniquely up to isomorphism, as
\begin{equation}\label{eq: direcSumDecomp}
\mathcal{E}= \bigoplus_{i=1}^{m} \mathcal{E}_i
\end{equation}
where $\mathcal{E}_i = p_1^*E_i \otimes p_2^*\mathcal{O}_{\mathbb{P}^1}(n_i)$ with $E_i$ complex vector bundles over $X$ and $n_i\in \mathbb{Z}$ distinct.  Furthermore, if $\tilde{h}$ is any $SU(2)$-invariant Hermitian metric on $\mathcal{E}$, then the decomposition~\eqref{eq: direcSumDecomp} is orthogonal.  That is
\[
\tilde{h}= \oplus_{i=1}^{m}\tilde{h}_i
\]
where $\tilde{h}_i = p_1^*h_i \otimes p_2^*h_i'$ where $h_i$ is a Hermitian metric on $E_i$ and $h_i'$ is an $SU(2)$-invariant metric on $\mathcal{O}_{\mathbb{P}^1}(n_i)$.
\end{prop}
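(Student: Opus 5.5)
The plan is to exploit the fact that $\mathcal{X}=X\times\mathbb{P}^1$ with $SU(2)$ acting trivially on $X$ and transitively on $\mathbb{P}^1=SU(2)/U(1)$. Equivariant complex bundles over a homogeneous space $G/H$ correspond to $H$-representations, and we carry out this correspondence in families over $X$.

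\textbf{Step 1 (restriction to $U(1)$-bundles over $X$).} Fix the point $x_0=[1:0]\in\mathbb{P}^1$, whose stabilizer is $U(1)\subset SU(2)$. Restricting $\mathcal{E}$ to $X\times\{x_0\}$ gives a complex vector bundle $F\to X$. Since $U(1)$ fixes $X\times\{x_0\}$ pointwise, $F$ carries a fiberwise linear $U(1)$-action. Conversely, $\mathcal{E}\cong SU(2)\times_{U(1)}F$ as $SU(2)$-equivariant bundles over $X\times SU(2)/U(1)$, via the map $[g,v]\mapsto g\cdot v$. This map is an equivariant bundle isomorphism because $SU(2)$ acts transitively on the $\mathbb{P}^1$ factor and the action maps fibers isomorphically.

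\textbf{Step 2 (weight decomposition).} A $U(1)$-action on $F$ that is trivial on the base decomposes continuously, fiberwise, into weight spaces $F=\bigoplus_{n}F_n$, with $F_n=\{v: e^{i\theta}\cdot v=e^{in\theta}v\}$. Each projector $P_n=\frac{1}{2\pi}\int_0^{2\pi}e^{-in\theta}(e^{i\theta}\cdot)\,d\theta$ is a continuous (indeed smooth) idempotent endomorphism. Its rank is locally constant, and it is constant on components of $X$, so each $F_n$ is a subbundle. Only finitely many $n$ occur, by compactness and finite rank. Inducing, we get
\[
SU(2)\times_{U(1)}F_n \cong p_1^*F_n\otimes SU(2)\times_{U(1)}\mathbb{C}_n\cong p_1^*F_n\otimes p_2^*\mathcal{O}_{\mathbb{P}^1}(n),
\]
where, with the appropriate sign convention, $SU(2)\times_{U(1)}\mathbb{C}_n$ is the homogeneous line bundle $\mathcal{O}(n)$. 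Setting $E_i=F_{n_i}$ gives the decomposition~\eqref{eq: direcSumDecomp}.

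\textbf{Step 3 (uniqueness and metrics).} Uniqueness follows because any equivariant isomorphism restricts at $x_0$ to a $U(1)$-equivariant isomorphism, which must preserve weight spaces. For the metric statement, an $SU(2)$-invariant $\tilde h$ restricts at $x_0$ to a $U(1)$-invariant metric on $F$, and distinct weight spaces of a unitary representation are orthogonal. By invariance and transitivity, orthogonality holds at every point, so $\tilde h=\oplus\tilde h_i$. Next, an invariant metric on $p_1^*E_i\otimes p_2^*\mathcal{O}(n_i)$ is determined by its restriction $h_i$ to $E_i\otimes\mathbb{C}_{n_i}\cong E_i$ at $x_0$. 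The metric $p_1^*h_i\otimes p_2^*h_i'$, with $h_i'$ the invariant metric on $\mathcal{O}(n_i)$ normalized at $x_0$ (unique up to scale, since $SU(2)$ is transitive), is invariant and agrees with $\tilde h_i$ at $x_0$. Hence the two coincide everywhere.

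The main care is in Step 1: we must check that $SU(2)\times_{U(1)}F\to\mathcal{E}$ is well defined and smooth. Smoothness follows from smoothness of the action and the fact that $SU(2)\to\mathbb{P}^1$ is a principal $U(1)$-bundle with local sections. The rest is representation theory of $U(1)$.
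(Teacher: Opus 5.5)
Your proposal is correct and follows essentially the standard argument. The paper does not prove this result itself but cites Garcia-Prada, whose proof proceeds the same way: identify $SU(2)$-equivariant bundles on $X\times SU(2)/U(1)$ with $U(1)$-equivariant bundles on $X$ (with $U(1)$ acting trivially on the base) by restricting to $X\times\{x_0\}$, split into $U(1)$-weight subbundles, and handle the metric using orthogonality of distinct weight spaces at $x_0$ together with transitivity of the action on $\mathbb{P}^1$.
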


We emphasize that Proposition~\ref{prop: topologicalClass} classifies complex vector bundles, not holomorphic vector bundles.

\begin{defn}\label{defn: vortexBundle}
We say that an $SU(2)$ equivariant complex vector bundle $\cE\rightarrow \cX$ is a vortex bundle if
\begin{equation}\label{eq: vortexBundleChoice}
\mathcal{E}= \mathcal{E}_1\oplus \mathcal{E}_2 = p_1^{*}E_1 \oplus p_1^*E_2\otimes p_2^*\mathcal{O}_{\mathbb{P}^1}(2)
\end{equation}
where $E_i \rightarrow X$ are holomorphic vector bundles, and $\rank(E_2)=1$. 
\end{defn}
Following Garcia-Prada \cite{GPVortex}, an $SU(2)$-equivariant holomorphic structure on $\mathcal{E}\rightarrow \cX$ is determined by a triple $(\dbar_{E_1},\dbar_{E_2},\Phi)$
where $\dbar_{E_i}$ is a holomorphic structure on $E_i\rightarrow X$ for $i=1,2$, and $\Phi \in H^{0}(X, E_1\otimes E_2^{\vee})$ denotes a holomorphic map from $E_2\rightarrow E_1$.  We will make this construction explicit below, but for now we recall the following definition, due to Bradlow-Garcia-Prada \cite{BG-P}. 

\begin{defn}\label{defn: holcTriple}
    A holomorphic triple on $X$ is a triple $(E_1,E_2,\Phi)$ where $E_1, E_2$ are holomorphic vector bundles and $\Phi \in H^{0}(X,{\rm Hom}(E_2, E_1))$.
\end{defn}

We now recall the construction of the Chern connection on $\mathcal{E}$, given the data of $(\dbar_{E_1},\dbar_{E_2},\Phi)$, following \cite{GPVortex}.  Let $h_1, h_2$ be Hermitian metrics on $E_1, E_2$ and let $d_{A_i}$ denote the associated Chern connections on $E_i$.  By a slight abuse of notation, we let $d_{A_i}$ also denote the Chern connection on $p_1^*E_i$ induced by the pull-back metric.  Let $h_{FS}$ be the Fubini-Study metric on $\mathcal{O}_{\mathbb{P}^1}(1)$, and let $d_{A_{FS}}$ denote the Chern connection on $\mathcal{O}_{\mathbb{P}^1}(1)$ and recall that $\omega_{FS} = iF_{h_{FS}}$.  By abuse of notation, we also use $d_{A_{FS}}$ to denote the Chern connection on $p_2^*\mathcal{O}_{\mathbb{P}^1}(1)$.  The connection on $\cE$ is determined by
\[
d_{A} = \begin{pmatrix} d_{A_1} & \beta\\
-\beta^{\dagger}
 & d_{A_2 \otimes A^{\otimes 2}_{FS}}\end{pmatrix}
 \]
 where
 \[
 \beta = p_1^*\Phi\wedge p_2^*\alpha
 \]
and $\alpha$  is the unique up to scale $SU(2)$-invariant section of $\Lambda_{\mathbb{P}^1}^{0,1}\otimes \mathcal{O}_{\mathbb{P}^1}(-2)$.  Let us suppress the pull-backs $p_i^*$ for simplicity.  Fix a choice of $\alpha$ by setting
\[
\alpha = \sqrt{\frac{\sigma}{2\pi}}\frac{dz}{(1+|z|^2)^2} \otimes d\bar{z} \quad \text{ so that} \quad
\alpha \wedge \alpha^\dagger = \frac{\sigma}{2\pi}i \omega_{FS}.
\]
In particular, this implies
\[
\begin{aligned}
\beta \wedge \beta^{\dagger} = i \frac{\sigma}{2\pi} p_1^*(\Phi \Phi^\dagger)\otimes p_2^*\omega_{FS}\qquad \text{and}\qquad
\beta^{\dagger} \wedge \beta = -i\frac{\sigma}{2\pi}p_1^*|\Phi|_h^2\otimes p_2^*\omega_{FS}
\end{aligned}
\]
By the standard formula for the decomposition of the curvature we have
\[
F_{A} = \begin{pmatrix}F_{A_1} - \beta \wedge \beta^{\dagger} & \nabla \beta\\
-(\nabla \beta)^{\dagger} & F_{A_2}- \beta^{\dagger}\wedge \beta\end{pmatrix}.
\]
Since $\alpha$ is parallel we have that
$
\nabla \beta = p_1^*(\nabla \Phi) \wedge \alpha
$
where $\nabla$ denotes the $(1,0)$ component of the Chern connection on $E_1\otimes E_2^{\vee}\rightarrow X$ defined by the metric $h:=h_1h_2^{-1}$.  Thus, the curvature of the Chern connection on $\mathcal{E}$ is given by 
\[
F_{A} = \begin{pmatrix} p_1^*F_{A_1}- i\frac{\sigma}{2\pi} \Phi \Phi^{\dagger} \omega_{FS} & p_1^*(\nabla\Phi) \otimes \alpha\\
-\left(p_1^*\nabla\Phi\right)^{\dagger} \otimes \alpha^{\dagger} & p_1^*F_{A_2}  - 2i\omega_{FS} + i\frac{\sigma}{2\pi} |\Phi|^2_{h}\omega_{FS} \end{pmatrix}
\]
We need to compute the square of $\widetilde{\omega}_{\sigma}\otimes I_{\mathcal{E}} - F_{A}$.  Let $\Lambda$ denote the contraction with $\omega$, then
\[
\frac{(\widetilde{\omega}_{\sigma}\otimes I_{\mathcal{E}} - F_{A})^2}{\widetilde{\omega}_{\sigma}^2} = \begin{pmatrix} M_{11} & M_{12}\\ M_{21} & M_{22} \end{pmatrix}
\]
where
\begin{equation}\label{eq: curvComponentSquare}
\begin{aligned}
M_{11}&=  I_{E_1} -\Lambda F_{A_1} + i \Phi\Phi^\dagger  -\frac{1}{2}\{i\Lambda F_{A_1},\Phi\Phi^{\dagger}\}+ \frac{i}{2}\Lambda\left( \nabla^{1,0}\Phi \wedge(\nabla^{1,0}\Phi)^\dagger\right)\\
M_{12}&=M_{21}=0\\
M_{22} &= (1+\frac{4\pi i}{\sigma})I_{E_2} -(1+\frac{4\pi i}{\sigma} )\Lambda F_{A_2} -i|\Phi|^2+|\Phi|^2i\Lambda F_{A_2} +\frac{1}{2}|\nabla^{1,0}\Phi|^2.
\end{aligned}
\end{equation}
and we have used $\{A , B\}= AB+BA$ to denote the anti-commutator.  In particular, we have
\begin{equation}\label{eq: firstStepCentCharge}
\begin{aligned}
\frac{1}{2\sigma}\int_{X\times \mathbb{P}^1}{\rm Tr}(\widetilde{\omega}_{\sigma} \otimes I_{\mathcal{E}}-F_{A})^2 &=  \rank(E_1)V_{X} +2\pi i\deg(E_1)+(1+\frac{4\pi i}{\sigma})V_{X}+(1+\frac{4 \pi i}{\sigma})2\pi i\deg E_2\\
&- \int_{X}{\rm Tr}\left[(i\Lambda F_{A_1}- I_{E_1}\otimes i\Lambda F_{A_2})\Phi\Phi^\dagger\right] + \int_{X}|\nabla^{1,0}\Phi|^2 
\end{aligned}
\end{equation}
We have the following integration by parts formula using that $\Phi$ is holomorphic, and recalling that $h= h_1h_2^{-1}$ is the metric on the bundle ${\rm Hom}(E_2,E_1)$:
\[
\begin{aligned}
0&=\int_{X}\Delta|\Phi|_h^2 \\
&= \int_{X}|\nabla^{1,0}\Phi|_h^2 - \int_{X}\langle \Phi, (i\Lambda F_{A_1}-I_{E_1}\otimes i\Lambda F_{A_2}) \Phi \rangle_{h} 
\end{aligned}
\]
where we suppress the volume form. Thus, the term on the second line of~\eqref{eq: firstStepCentCharge} is zero and we obtain
\begin{equation}\label{eq: sec3CentralCharge}
\begin{aligned}
Z_{\cX}(\mathcal{E}) &:= -\int_{X\times \mathbb{P}^1} e^{-\frac{i}{2\pi}\widetilde{\omega}_{\sigma}}ch(\mathcal{E})\\
&= \frac{1}{2(2\pi)^2}\int_{X\times \mathbb{P}^{1}}\Tr (\widetilde{\omega}_{\sigma} \otimes I_{\mathcal{E}} - F_A)^2\\
&= \frac{\sigma}{2\pi} \left( \left[\frac{V_{X}}{2\pi}\left(\rank(E_1)+1\right) -\frac{4\pi }{\sigma} \deg(E_2)\right] +i\bigg[\deg(E_1)+ \deg E_2 +\frac{2 V_{X}}{\sigma}\bigg]\right)\\
\end{aligned}
\end{equation}
Throughout this paper we shall assume that
\[
Z_{\cX}(\mathcal{E}) \in \{ z \in \mathbb{C}^* : {\rm Im}(z)>0\} \cup \{z \in \mathbb{C}^*: {\rm Im}(z)=0, {\rm Re}(z)<0\}
\]
and write
\[
Z_{\cX}(\mathcal{E}) = r e^{i\hat{\theta}} \qquad r\in \mathbb{R}_{>0}.
\]
In Section~\ref{sec: stability} we will also write
\[
\Theta(\cE) := {\rm Arg}( e^{i\htheta}) = \htheta \in (0, \pi]
\]
where $\rm Arg$ denotes the principal value of the argument.

From~\eqref{eq: curvComponentSquare} we find that the dimensionally reduced deformed Hermitian-Yang-Mills equation~\eqref{eq: dHYMIntroduction}+~\eqref{eq: dHYMPosIntroduction} for $\mathcal{E}$ is the following system on the compact Riemann surface $(X,g, \omega)$: 

\begin{equation}\label{eq: dimReduceDHYM}
\begin{aligned}
&\cos(\hat{\theta})\left(i\Lambda F_{A_1} + \Phi\Phi^\dagger \right)=\sin(\hat{\theta})\left(I_{E_1}-\frac{1}{2}\{i\Lambda F_{A_1}, \Phi\Phi^\dagger\} +\frac{1}{2}g^{p\bar{j}}\nabla_p\Phi \left(\nabla_j \Phi)^{\dagger}\right)\right)\\
&\cos(\hat{\theta})\left(i\Lambda F_{A_2} +\frac{4\pi}{\sigma}I_{E_2}  -|\Phi|^2\right)=\sin(\hat{\theta})\left(I_{E_2} +\left(|\Phi|^2-\frac{4\pi}{\sigma}\right)i\Lambda F_{A_2} +\frac{1}{2}|\nabla^{1,0}\Phi|^2\right)
\end{aligned}
\end{equation}
together with the {\em calibration constraint}
\begin{equation}\label{eq: dimReduceDHYM-Pos}
\begin{aligned}
    &\cos(\hat{\theta})\left(I_{E_1}-\frac{1}{2}\{i\Lambda F_{A_1}, \Phi\Phi^\dagger\} +\frac{1}{2}g^{p\bar{j}}\nabla_p\Phi \left(\nabla_j \Phi)^{\dagger}\right)\right) + \sin(\hat{\theta})\left(i\Lambda F_{A_1} + \Phi\Phi^\dagger \right)>0\\
    &\cos(\hat{\theta})\left(I_{E_2} +\left(|\Phi|^2-\frac{4\pi}{\sigma}\right)i\Lambda F_{A_2} +\frac{1}{2}|\nabla^{1,0}\Phi|^2\right)+\sin(\hat{\theta})\left(i\Lambda F_{A_2} +\frac{4\pi}{\sigma}I_{E_2}  -|\Phi|^2\right)>0\\
\end{aligned}
\end{equation}

We call these equations {\em the deformed Vortex equations}.  

\subsection{The method of continuity}

When $\cos(\htheta)>0$ we shall solve the system~\eqref{eq: dimReduceDHYM}+~\eqref{eq: dimReduceDHYM-Pos} using the following method of continuity:

\begin{equation}\label{eq: dimReduceDHYM-MOC}
\begin{aligned}
&\cos(\hat{\theta})(i\Lambda F_{A_1} + \Phi\Phi^\dagger )=\sin(\hat{\theta})(I_{E_1}-\frac{t}{2}\{i\Lambda F_{A_1}, \Phi\Phi^\dagger\} +\frac{t}{2}g^{p\bar{j}}\nabla_p\Phi \left(\nabla_j \Phi)^{\dagger}\right)\\
&\cos(\hat{\theta})\left(i\Lambda F_{A_2} +\frac{4\pi}{\sigma}I_{E_2}  -|\Phi|^2\right)=\sin(\hat{\theta})\left(I_{E_2} +\left(t|\Phi|^2-\frac{4\pi}{\sigma}\right)i\Lambda F_{A_2} +\frac{t}{2}|\nabla^{1,0}\Phi|^2\right).
\end{aligned}
\end{equation}
where $t\in[0,1]$.

\subsection{Notation and conventions}

We briefly fix some notation, which will be used throughout the paper.  As some parts of the paper require lengthy formulas and calculations, we have tried to reduce as much as possible the notation while preserving the clarity of the exposition.
\begin{itemize}
    \item $h_i$ will denote Hermitian metrics on the vector bundles $E_i\rightarrow X$ and $F_{i}$ will denote the curvature of the Chern connections of $h_i$.
    \item $h$ will denote the metric $h_{1}h_{2}^{-1}$ on $E_1\otimes E_{2}^{\vee}$, and $F$ will denote the curvature of $h$.
    \item The $(1,0)$ component of the Chern connection is denoted by $\nabla$, while the $(0,1)$ component is denoted by $\dbar$ in a holomorphic frame, and by $\overline{\nabla}$ in a general frame.
    \item Unless otherwise noted, all covariant derivatives and norms are computed using the Chern connections induced by $h_1, h_2$ and the K\"ahler metric $\omega$ on $X$.  We use $A^{\dagger}$ to denote the Hermitian conjugate of $A$
 defined using the metrics $h_1,h_2$.
 \item Integrals are always computed using the volume form $\omega$, but to ease notation this is suppressed.  
    \item We will often write (with a minor abuse of notation) 
    \[
    i\Lambda\left(\nabla \Phi \wedge (\nabla \Phi)^{\dagger}\right) = \nabla \Phi (\nabla \Phi)^{\dagger}
    \]
    To illustrate the last two points with an example, we will write
    \[
    \int_{X}|\nabla\Phi|^2_{h_1h_{2}^{-1}\otimes \omega} \omega  = \int_{X}|\nabla\Phi|^2 = \int_{X}{\rm Tr}\left(\nabla \Phi(\nabla \Phi)^{\dagger}\right)
    \]
    \item The symbol $\{ A, B\}$ denotes the anti-commutator.
\end{itemize}

\subsection{Curvature formulas and consequences}
We collect some consequences of the equations~\eqref{eq: dimReduceDHYM-MOC},~\eqref{eq: dimReduceDHYM} and~\eqref{eq: dimReduceDHYM-Pos} which will be essential in the analysis to follow. Consider the following linear map $T_{\Phi,t} \in {\rm End}\left({\rm Hom}(E_1,E_1)\right) $ defined by
\begin{equation}\label{eq: TPhiOperator}
\begin{aligned}
 T_{\Phi,t}(A) = \cos(\htheta) A + \frac{t}{2}\sin(\htheta)\{ \Phi\Phi^{\dagger}, A\}.
\end{aligned}
\end{equation}
Provided that $\cos(\htheta)>0$ and $t \geq 0$, the map $T_{\Phi,t}$ is invertible.  Furthermore, it is clear that $T_{\Phi,t}$ preserves the space of Hermitian endomorphisms, and the space of positive definite Hermitian endomorphisms. 

\begin{lem}\label{lem: TinverseFormula}
    Assume $\cos(\htheta), \sin(\htheta)>0$ and $t \geq 0$.  Then $T_{\Phi,t}$ is invertible and
    \[
    \begin{aligned}
    T_{\Phi,t}^{-1}(A) &= \frac{1}{\cos(\htheta)} A - \frac{\frac{t}{2}\sin(\htheta)}{\cos(\htheta)\left(\cos(\htheta)+\frac{t}{2}\sin(\htheta)|\Phi|^2\right) }\{A, \Phi\Phi^{\dagger}\}\\
    &\quad + \frac{\frac{t^2}{2}\sin^2(\htheta)\langle A\Phi, \Phi \rangle }{\cos(\htheta)\left(\cos(\htheta)+\frac{t}{2}\sin(\htheta)|\Phi|^2\right)\left(\cos(\htheta)+t\sin(\htheta)|\Phi|^2\right)}\Phi \Phi^{\dagger}
    \end{aligned}
    \]
\end{lem}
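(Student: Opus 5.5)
Write $c=\cos(\htheta)$, $s=\frac{t}{2}\sin(\htheta)$ and $P=\Phi\Phi^{\dagger}$. Since $E_2$ is a line bundle, $\Phi$ is a section of $E_1\otimes E_2^{\vee}$, and $P$ is a rank-one Hermitian endomorphism of $E_1$ with
\[
P^2 = |\Phi|^2 P .
\]
The plan is to solve $T_{\Phi,t}(X)=A$ by an ansatz dictated by the algebra generated by $A$ and $P$, and then verify directly. Invertibility can be seen independently. If $\Phi\neq 0$ at a point, choose an $h_1$-orthonormal frame with $\Phi/|\Phi|$ as first vector, so that $P=\mathrm{diag}(|\Phi|^2,0,\dots,0)$. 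Then $T_{\Phi,t}$ acts on the matrix entries $A_{ij}$ by multiplication by $c+s(\lambda_i+\lambda_j)$, where $\lambda_1=|\Phi|^2$ and $\lambda_j=0$ for $j\ge 2$. These factors are $c$, $c+s|\Phi|^2$ and $c+2s|\Phi|^2$, all positive since $c>0$ and $s\ge 0$. This proves invertibility, and it already explains the three denominators in the formula. At points where $\Phi=0$ the map is just $T=c\,\mathrm{Id}$, and the formula reduces to $A/c$.

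For the formula itself, I would make the ansatz
\[
X = aA + b\{A,P\} + d\,\langle A\Phi,\Phi\rangle P .
\]
Substituting into $cX+s\{P,X\}=A$ requires computing $\{P,\{A,P\}\}$ and $\{P,P\}$. Using $P^2=|\Phi|^2P$ and $PAP=\langle A\Phi,\Phi\rangle P$, we get
\[
\{P,\{A,P\}\} = |\Phi|^2\{A,P\} + 2\langle A\Phi,\Phi\rangle P, \qquad \{P,P\}=2|\Phi|^2P .
\]
Here $\langle A\Phi,\Phi\rangle=\Phi^{\dagger}A\Phi$, with the inner product taken to be linear in the first slot; this convention should be checked against the paper's.

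Collecting terms along $A$, $\{A,P\}$ and $P$ gives three linear equations:
\[
ca=1,\qquad cb+sa+sb|\Phi|^2=0,\qquad cd+2sb+2sd|\Phi|^2=0 .
\]
These solve successively to
\[
a=\frac{1}{c},\qquad b=-\frac{s}{c(c+s|\Phi|^2)},\qquad d=\frac{2s^2}{c(c+s|\Phi|^2)(c+2s|\Phi|^2)} .
\]
With $s=\frac{t}{2}\sin(\htheta)$ we have $c+2s|\Phi|^2=\cos(\htheta)+t\sin(\htheta)|\Phi|^2$ and $2s^2=\frac{t^2}{2}\sin^2(\htheta)$, which matches the stated expression exactly.

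The only care point is bookkeeping. One must expand $\{P,\{A,P\}\}=PAP+P^2A+AP^2+PAP$ correctly, and keep track of the convention for $\langle A\Phi,\Phi\rangle$. Since $T_{\Phi,t}$ is already known to be invertible, exhibiting a right inverse suffices, so no separate left-inverse check is needed.
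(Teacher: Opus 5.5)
Your proof is correct. The ansatz $X=aA+b\{A,P\}+d\,\langle A\Phi,\Phi\rangle P$ is closed under $\{P,\cdot\}$ because $P^2=|\Phi|^2P$ and $PAP=\langle A\Phi,\Phi\rangle P$, the resulting triangular system gives exactly the stated coefficients, and the eigenframe computation (eigenvalues $c$, $c+s|\Phi|^2$, $c+2s|\Phi|^2$) shows invertibility, so your right inverse is the inverse. The paper omits the proof as ``an elementary calculation,'' and your argument is that calculation; your convention $\langle A\Phi,\Phi\rangle=\Phi^{\dagger}A\Phi$ is the one the paper uses, e.g.\ in the proof of Lemma~\ref{lem: H1CurvLem}.
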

The proof is an elementary calculation, so we omit the details.  What will be useful for our purposes is that we can rewrite the equation~\eqref{eq: dimReduceDHYM-MOC}.

\begin{lem}\label{lem: H1CurveLemSimplfiedPositiveCos}
Let $h_i$, $i=1,2$ be metrics satisfying~\eqref{eq: dimReduceDHYM-MOC} for $t\in[0,1]$, and suppose that $\cos(\htheta),\sin(\htheta)>0$.  Then ~\eqref{eq: dimReduceDHYM-MOC} is equivalent to the following equation, with $\alpha = \tan(\htheta)$
\begin{equation}
\begin{aligned}
i\Lambda F_1 &= \alpha I - \frac{1+t\alpha^2}{1+t\alpha |\Phi|^2}\Phi\Phi^{\dagger}+ \frac{t\alpha}{2}\nabla \Phi(\nabla \Phi)^{\dagger}\\
& - \frac{t^2\alpha^2}{2(2+t\alpha|\Phi|^2)}\{\Phi\Phi^{\dagger}, \nabla \Phi(\nabla \Phi)^{\dagger}\} + \frac{t^3\alpha^3 |\langle \nabla \Phi, \Phi\rangle|^2}{2(2+t\alpha |\Phi|^2)(1+t\alpha|\Phi|^2)}\Phi\Phi^{\dagger} \\
\end{aligned}
\end{equation}
\end{lem}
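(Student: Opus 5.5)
Rewrite the first equation of \eqref{eq: dimReduceDHYM-MOC} so that $i\Lambda F_1$ appears only through the operator $T_{\Phi,t}$ of \eqref{eq: TPhiOperator}, and then invert using Lemma~\ref{lem: TinverseFormula}. Moving the anticommutator term to the left-hand side, the first equation reads
\[
T_{\Phi,t}(i\Lambda F_1) = \sin(\htheta) I + \frac{t}{2}\sin(\htheta)\nabla\Phi(\nabla\Phi)^{\dagger} - \cos(\htheta)\Phi\Phi^{\dagger} =: B.
\]
Since $\cos(\htheta)>0$ and $t\geq 0$, $T_{\Phi,t}$ is invertible. Hence the equation is equivalent to $i\Lambda F_1 = T_{\Phi,t}^{-1}(B)$. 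This already gives the claimed equivalence, because each step is reversible. It remains to compute $T^{-1}_{\Phi,t}(B)$ explicitly. The second equation of \eqref{eq: dimReduceDHYM-MOC} is not involved.

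Divide by $\cos(\htheta)$ and write $\alpha=\tan(\htheta)$. Then $T_{\Phi,t} = \cos(\htheta)\,T'$ with $T'(A)=A+\frac{t\alpha}{2}\{\Phi\Phi^\dagger,A\}$, and $B=\cos(\htheta)B'$ with $B' = \alpha I + \frac{t\alpha}{2}\nabla\Phi(\nabla\Phi)^\dagger - \Phi\Phi^\dagger$. So $i\Lambda F_1 = T'^{-1}(B')$, where by Lemma~\ref{lem: TinverseFormula} (rescaled)
\[
T'^{-1}(A) = A - \frac{t\alpha}{2+t\alpha|\Phi|^2}\{A,\Phi\Phi^\dagger\} + \frac{2t^2\alpha^2\langle A\Phi,\Phi\rangle}{(2+t\alpha|\Phi|^2)(2+2t\alpha|\Phi|^2)}\Phi\Phi^\dagger.
\]
Before using it, I would check this inverse directly. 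Apply $T'$ and use that $\Phi\Phi^\dagger$ has rank one, with $(\Phi\Phi^\dagger)^2=|\Phi|^2\Phi\Phi^\dagger$ and $\Phi\Phi^\dagger A\Phi\Phi^\dagger = \langle A\Phi,\Phi\rangle\Phi\Phi^\dagger$. It is safest to re-derive the coefficients this way, since the denominators in the target formula are $2+t\alpha|\Phi|^2$ and $1+t\alpha|\Phi|^2$.

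Next, apply $T'^{-1}$ to the three pieces of $B'$ separately, using linearity.

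\textbf{The term $\alpha I$.} We have $\{I,\Phi\Phi^\dagger\}=2\Phi\Phi^\dagger$ and $\langle \Phi,\Phi\rangle = |\Phi|^2$. So $T'^{-1}(\alpha I)=\alpha I + c_1\Phi\Phi^\dagger$ for an explicit scalar $c_1$.

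\textbf{The term $-\Phi\Phi^\dagger$.} Here $\{\Phi\Phi^\dagger,\Phi\Phi^\dagger\}=2|\Phi|^2\Phi\Phi^\dagger$ and $\langle\Phi\Phi^\dagger\Phi,\Phi\rangle=|\Phi|^4$. So the image is $c_2\Phi\Phi^\dagger$. Collecting $c_1+c_2$ over the common denominator $(2+t\alpha|\Phi|^2)(1+t\alpha|\Phi|^2)$ should collapse to $-\frac{1+t\alpha^2}{1+t\alpha|\Phi|^2}$. This simplification is the only genuinely delicate algebra, and I expect it to be the main place for errors; I would verify it by clearing denominators and comparing coefficients of $|\Phi|^0$, $|\Phi|^2$ and $|\Phi|^4$.

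\textbf{The term $\frac{t\alpha}{2}\nabla\Phi(\nabla\Phi)^\dagger$.} This produces the term itself, the anticommutator term with coefficient $-\frac{t^2\alpha^2}{2(2+t\alpha|\Phi|^2)}$, and a multiple of $\Phi\Phi^\dagger$. For the last one, interpret $\langle \nabla\Phi(\nabla\Phi)^\dagger\Phi,\Phi\rangle$ as $g^{p\bar j}\langle (\nabla_j\Phi)^\dagger\Phi,(\nabla_p\Phi)^\dagger\Phi\rangle = |\langle\nabla\Phi,\Phi\rangle|^2$, using that $E_2$ has rank one. This gives the coefficient $\frac{t^3\alpha^3}{2(2+t\alpha|\Phi|^2)(1+t\alpha|\Phi|^2)}$.

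Summing the three contributions yields the stated formula.
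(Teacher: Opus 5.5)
Your proposal is correct and follows the paper's proof: rewrite the first equation of \eqref{eq: dimReduceDHYM-MOC} as $T_{\Phi,t}(i\Lambda F_1)=\sin(\htheta)I-\cos(\htheta)\Phi\Phi^{\dagger}+\frac{t}{2}\sin(\htheta)\nabla\Phi(\nabla\Phi)^{\dagger}$, then apply the inverse from Lemma~\ref{lem: TinverseFormula}. Your rescaled inverse coefficients $\frac{t\alpha}{2+t\alpha|\Phi|^2}$ and $\frac{t^2\alpha^2}{(2+t\alpha|\Phi|^2)(1+t\alpha|\Phi|^2)}$ are right, and the two $\Phi\Phi^{\dagger}$ contributions $-\frac{t\alpha^2}{1+t\alpha|\Phi|^2}$ and $-\frac{1}{1+t\alpha|\Phi|^2}$ do combine to $-\frac{1+t\alpha^2}{1+t\alpha|\Phi|^2}$, as you anticipated.
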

\begin{proof}
    This is a tedious exercise in algebra.  First rewrite the equation for $i\Lambda F_1$ from ~\eqref{eq: dimReduceDHYM-MOC} as
    \begin{equation}\label{eq: dimReduceMOCUsingToperator}
    T_{\Phi, t}(i\Lambda F_1) = \sin(\htheta) I - \cos(\htheta)\Phi\Phi^{\dagger} + \frac{t}{2}\sin(\htheta)\nabla \Phi (\nabla \Phi)^{\dagger}
    \end{equation}
    and then apply $T_{\Phi,t}^{-1}$ using the formula from Lemma~\ref{lem: TinverseFormula}.  
\end{proof}

The conclusion of Lemma~\ref{lem: H1CurveLemSimplfiedPositiveCos} will be useful later in the proof, but for now we need a more general result which applies for general values of $\cos(\htheta)$.

\begin{lem}\label{lem: H1CurvLem}
     Let $h_i$, $i=1,2$ be metrics satisfying~\eqref{eq: dimReduceDHYM-MOC} for $t\in[0,1]$.  Let $V$ be a vector in the fiber of $E_1$ orthogonal to ${\rm span}\{\Phi\}$ with respect to $h_1$. Then curvature $i\Lambda F_{1}$ of the metric $h_1$ satisfies
   
      \begin{equation}\label{eq: curvH_1PhiPhi}
 \langle i\Lambda F_{1}\Phi, \Phi \rangle =\frac{-\cos(\hat{\theta})|\Phi|^4 +\sin(\hat{\theta})\left( |\Phi|^2 + \frac{t}{2}|\langle \nabla\Phi, \Phi \rangle|^2 \right) }{\left( \cos(\hat{\theta})+\sin(\hat{\theta})t|\Phi|^2\right)},
    \end{equation}
     and
     \[
    \cos(\hat{\theta})\langle i\Lambda F_{1}V, V \rangle = \sin(\hat{\theta})\left(|V|^2+\frac{t}{2}|\langle V, \nabla\Phi \rangle|^2\right).
     \]
In particular, $\cos(\hat{\theta}) + t\sin(\hat{\theta})|\Phi|^2 \ne 0$.
\end{lem}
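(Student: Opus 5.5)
Pair the first equation of~\eqref{eq: dimReduceDHYM-MOC} against $\Phi$ and against $V$, and exploit that $\Phi\Phi^{\dagger}$ has rank one with image ${\rm span}\{\Phi\}$. Write $K := i\Lambda F_1$, a Hermitian endomorphism of $E_1$, and $N := g^{p\bar j}\nabla_p\Phi(\nabla_j\Phi)^{\dagger}$. The first equation reads
\[
\cos(\htheta)(K+\Phi\Phi^{\dagger}) = \sin(\htheta)\Big(I - \tfrac{t}{2}(K\Phi\Phi^{\dagger}+\Phi\Phi^{\dagger}K) + \tfrac{t}{2}N\Big).
\]
Note that $\langle N v,v\rangle = |\langle v,\nabla\Phi\rangle|^2$ for $v\in E_1$, with the contraction against $g$ understood. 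Also $\langle \Phi\Phi^{\dagger}v,w\rangle = \langle v,\Phi\rangle\langle\Phi,w\rangle$, so $\Phi\Phi^{\dagger}$ annihilates $V$ and sends $\Phi$ to $|\Phi|^2\Phi$.

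\emph{Pairing with $V$.} Take $\langle\cdot\, V,V\rangle$ of both sides. The term $\langle \Phi\Phi^{\dagger}V,V\rangle$ vanishes, and so does $\langle K\Phi\Phi^{\dagger}V,V\rangle$ because $\Phi^{\dagger}V=0$. For the remaining anticommutator term, Hermiticity of $K$ gives
\[
\langle \Phi\Phi^{\dagger}KV,V\rangle = \langle KV,\Phi\rangle\langle\Phi,V\rangle = 0.
\]
What is left is exactly
\[
\cos(\htheta)\langle KV,V\rangle = \sin(\htheta)\Big(|V|^2+\tfrac t2|\langle V,\nabla\Phi\rangle|^2\Big).
\]

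\emph{Pairing with $\Phi$.} Take $\langle\cdot\,\Phi,\Phi\rangle$ of both sides. On the left, $\langle\Phi\Phi^{\dagger}\Phi,\Phi\rangle=|\Phi|^4$. For the anticommutator,
\[
\langle K\Phi\Phi^{\dagger}\Phi,\Phi\rangle = |\Phi|^2\langle K\Phi,\Phi\rangle,
\]
and the same holds for the other ordering by Hermiticity, so the anticommutator contributes $t|\Phi|^2\langle K\Phi,\Phi\rangle$. Collecting the $\langle K\Phi,\Phi\rangle$ terms gives
\[
\big(\cos(\htheta)+t\sin(\htheta)|\Phi|^2\big)\langle K\Phi,\Phi\rangle = -\cos(\htheta)|\Phi|^4+\sin(\htheta)\Big(|\Phi|^2+\tfrac t2|\langle\nabla\Phi,\Phi\rangle|^2\Big).
\]
Dividing by the coefficient yields~\eqref{eq: curvH_1PhiPhi}, provided the coefficient is nonzero.

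\emph{The non-vanishing claim.} This is the only step that is not purely algebraic, and it needs a pointwise argument. Suppose $\cos(\htheta)+t\sin(\htheta)|\Phi|^2=0$ at some point. Then the identity above forces
\[
-\cos(\htheta)|\Phi|^4+\sin(\htheta)\Big(|\Phi|^2+\tfrac t2|\langle\nabla\Phi,\Phi\rangle|^2\Big)=0.
\]
If $\Phi=0$ at that point, the coefficient reduces to $\cos(\htheta)$, so $\cos(\htheta)=0$ and hence $\sin(\htheta)=\pm1$. The $V$-identity then reads $0=\pm(|V|^2+\tfrac t2|\langle V,\nabla\Phi\rangle|^2)$ for every $V$, which is impossible because $V=\Phi^{\perp}$ is the whole fiber here. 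If $\Phi\neq0$, then $t\neq0$ and $\sin(\htheta)\neq0$, and we substitute $\cos(\htheta)|\Phi|^4 = -t\sin(\htheta)|\Phi|^6$. This gives
\[
\sin(\htheta)\Big(t|\Phi|^6+|\Phi|^2+\tfrac t2|\langle\nabla\Phi,\Phi\rangle|^2\Big)=0,
\]
and the bracket is strictly positive, a contradiction. Hence the coefficient never vanishes.

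The main obstacle is only bookkeeping: getting the conjugation conventions right in $N$ and in the anticommutator terms, and tracking the degenerate case $\Phi=0$ in the last step.
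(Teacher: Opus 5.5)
Your proposal is correct and follows essentially the same route as the paper: pair the first equation of~\eqref{eq: dimReduceDHYM-MOC} with $\Phi$ and with $V$, use that $\Phi\Phi^{\dagger}$ has rank one, and rule out vanishing of $\cos(\hat\theta)+t\sin(\hat\theta)|\Phi|^2$ by observing that the right-hand side would then be strictly positive, with the case $\Phi=0$ handled separately. The only blemish is the aside ``$t\neq0$'' in the case $\Phi\neq0$, which is not justified (if $t=0$ the prefactor is just $\cos(\hat\theta)$), but nothing depends on it, since $t|\Phi|^6+|\Phi|^2+\tfrac t2|\langle\nabla\Phi,\Phi\rangle|^2\geq|\Phi|^2>0$ for every $t\geq0$.
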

\begin{proof}
    First, note that since $\sin(\htheta)>0$, if $\cos(\htheta)=0$, and $\Phi=0$ somewhere, then ~\eqref{eq: dimReduceDHYM-MOC} cannot hold.  Thus we may assume that $\cos(\htheta)=0$ and $\Phi=0$ do not hold simultaneously. Fix a point $p\in X$ such that $\Phi(p)\ne 0$.  We apply $\Phi$ to the first equation in~\eqref{eq: dimReduceDHYM-MOC}, and get
    \begin{equation}\label{eq: curvH_1Phi}
    \begin{aligned}
        \cos(\hat{\theta})(i\Lambda F_{1}\Phi + |\Phi|^2\Phi) = \sin(\hat{\theta})\left( \Phi - \frac{t}{2}|\Phi|^2i\Lambda F_{1}\Phi -\frac{t}{2}\langle i\Lambda F_{h_{1}}\Phi, \Phi \rangle \Phi + \frac{t}{2}g^{p\bar{j}} \nabla_{p}\Phi \langle \Phi, \nabla_j\Phi \rangle\right)
    \end{aligned}
    \end{equation}
    Taking the inner product with $\Phi$ yields
    \begin{equation}\label{eq: curveH_1PhiPhi0}
   \left( \cos(\hat{\theta})+\sin(\hat{\theta})t|\Phi|^2\right) \langle i\Lambda F_{1}\Phi, \Phi \rangle =\sin(\hat{\theta})\left( |\Phi|^2 + \frac{t}{2}|\langle \nabla\Phi, \Phi \rangle|^2 \right) -\cos(\hat{\theta})|\Phi|^4.
    \end{equation}
    We now observe that, since $\Phi \ne 0$, then $\cos(\hat{\theta})+\sin(\hat{\theta})t|\Phi|^2 =0$ only if $\cos(\hat{\theta}) <0$.  On the other hand, if $\cos(\hat{\theta}) <0$, then the right hand side of~\eqref{eq: curveH_1PhiPhi0} is non-negative, and hence $\cos(\hat{\theta})+\sin(\hat{\theta})t|\Phi|^2  \ne 0$ unless $\cos(\hat{\theta})=0$ and $\Phi=0$, which, as we noted, is impossible. Therefore we can divide through to obtain the first result
    \[
 \langle i\Lambda F_{1}\Phi, \Phi \rangle =\frac{-\cos(\hat{\theta})|\Phi|^4 +\sin(\hat{\theta})\left( |\Phi|^2 + \frac{t}{2}|\langle \nabla\Phi, \Phi \rangle|^2 \right) }{\left( \cos(\hat{\theta})+\sin(\hat{\theta})t|\Phi|^2\right)}.
   \]
  
    Next, if $V$ is orthogonal to $\Phi$, then applying $V$ to the first equation in~\eqref{eq: dimReduceDHYM-MOC} yields
    \begin{equation}\label{eq: curveH1orthogPhi}
    \cos(\hat{\theta})i\Lambda F_{1}V = \sin(\hat{\theta})\left(V - \frac{t}{2}\langle i\Lambda F_{1}V, \Phi \rangle \Phi  +\frac{t}{2}g^{p\bar{j}}\langle V, \nabla_{j}\Phi \rangle\nabla_{p}\Phi\right)
    \end{equation}
    Taking the inner product with $V$ and using that $V$ and $\Phi$ are orthogonal yields
    \[
    \cos(\hat{\theta})\langle i\Lambda F_{1}V, V \rangle = \sin(\hat{\theta})\left(|V|^2+\frac{t}{2}|\langle V, \nabla\Phi \rangle|^2\right)
    \]
    which completes the proof.
\end{proof}

Next we consider the curvature properties of $E_2$.

\begin{lem}\label{lem: H2CurvLem}
    Let $h_i$, $i=1,2$ be metrics satisfying~\eqref{eq: dimReduceDHYM-MOC} for $t\in[0,1]$.  The curvature $i\Lambda F_{h_{2}}$ satisfies
    \[
     i\Lambda F_{2} =\left(\frac{ -\cos(\hat{\theta})\left(\frac{4\pi}{\sigma} - |\Phi|^2\right) +\sin(\hat{\theta})\left(1 + \frac{t}{2}|\nabla \Phi|^2\right)}{\left(\cos(\hat{\theta}) + \left(\frac{4\pi}{\sigma}-t|\Phi|^2\right)\sin(\hat{\theta})\right)} \right) 
    \]
    provided $\cos(\hat{\theta}) + \left(\frac{4\pi}{\sigma}-t|\Phi|^2\right)\sin(\hat{\theta}) \ne 0$.  Furthermore, if either $\cos(\hat{\theta})\geq 0$ or $\cos(\hat{\theta})+\frac{4\pi}{\sigma} \sin(\hat{\theta}) <0$, then
\[
\cos(\hat{\theta}) + \left(\frac{4\pi}{\sigma}-t|\Phi|^2\right)\sin(\hat{\theta}) \ne 0.
\]
\end{lem}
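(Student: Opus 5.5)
Since $E_2$ is a line bundle, every endomorphism of $E_2$ is a scalar function, so $i\Lambda F_{2}$, $|\Phi|^2$ and $|\nabla\Phi|^2$ are all real-valued functions on $X$. The second equation of~\eqref{eq: dimReduceDHYM-MOC} is therefore a scalar identity that is \emph{linear} in the unknown $i\Lambda F_2$. The plan is to collect the $i\Lambda F_2$ terms on the left, isolate it, and then study when the coefficient can vanish.

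\textbf{Step 1: solve for $i\Lambda F_2$.} Moving the terms containing $i\Lambda F_2$ to the left-hand side gives
\[
\Big(\cos(\htheta) + \big(\tfrac{4\pi}{\sigma}-t|\Phi|^2\big)\sin(\htheta)\Big)\, i\Lambda F_2 = -\cos(\htheta)\Big(\tfrac{4\pi}{\sigma}-|\Phi|^2\Big) + \sin(\htheta)\Big(1+\tfrac{t}{2}|\nabla\Phi|^2\Big).
\]
Wherever the coefficient on the left is nonzero we may divide, which yields the stated formula.

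\textbf{Step 2: show the coefficient is nonzero.} Write $D := \cos(\htheta) + \big(\tfrac{4\pi}{\sigma}-t|\Phi|^2\big)\sin(\htheta)$.

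If $\cos(\htheta)+\tfrac{4\pi}{\sigma}\sin(\htheta)<0$, then since $\sin(\htheta)\geq 0$ and $t|\Phi|^2\geq 0$,
\[
D \leq \cos(\htheta)+\tfrac{4\pi}{\sigma}\sin(\htheta) < 0 .
\]

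If $\cos(\htheta)\ge 0$, argue by contradiction at a point $p$ where $D(p)=0$. Since $\htheta\in(0,\pi]$ and $\cos(\htheta)\geq 0$, we have $\sin(\htheta)>0$. Hence $D(p)=0$ forces $t|\Phi|^2 = \tfrac{4\pi}{\sigma}+\cot(\htheta) > 0$. By Step 1 the right-hand side must then also vanish at $p$. Substitute $\cos(\htheta) = -\big(\tfrac{4\pi}{\sigma}-t|\Phi|^2\big)\sin(\htheta)$ and divide by $\sin(\htheta)$. This gives
\[
\Big(\tfrac{4\pi}{\sigma}-t|\Phi|^2\Big)\Big(\tfrac{4\pi}{\sigma}-|\Phi|^2\Big) + 1 + \tfrac{t}{2}|\nabla\Phi|^2 = 0 .
\]
This identity is the crux of the argument.

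\textbf{Step 3: reach a contradiction in the case $\cos(\htheta)\ge0$.} Set $a = \tfrac{4\pi}{\sigma}$ and $x = |\Phi|^2$. From $D(p)=0$ and $\cos(\htheta)\geq 0$ we get $a - tx = -\cot(\htheta) \le 0$, so $tx \ge a$. We must rule out $(a-tx)(a-x) = -1 - \tfrac{t}{2}|\nabla\Phi|^2 \le -1$.

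Since $t\le 1$, we have $x \ge tx \ge a$, so $a - x \le 0$. Thus both factors are nonpositive and the product is $\geq 0$. This contradicts the product being $\leq -1$. Note that $t=0$ is excluded automatically, because then $D = \cos(\htheta)+a\sin(\htheta)>0$.

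Hence $D \neq 0$ in both cases. The only delicate point is the sign bookkeeping in Step 3, which uses $t\in[0,1]$ in an essential way.
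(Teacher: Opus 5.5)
Your proof is correct and follows the paper's argument. You isolate $i\Lambda F_2$ from the second equation of~\eqref{eq: dimReduceDHYM-MOC}, and in the case $\cos(\htheta)\geq 0$ you use that a vanishing prefactor forces $|\Phi|^2\geq t|\Phi|^2\geq \frac{4\pi}{\sigma}$, which makes the right-hand side nonzero; the case $\cos(\htheta)+\frac{4\pi}{\sigma}\sin(\htheta)<0$ is handled the same way. The only difference is that the paper reads off $-\cos(\htheta)(\frac{4\pi}{\sigma}-|\Phi|^2)+\sin(\htheta)(1+\frac{t}{2}|\nabla\Phi|^2)>0$ directly, so your substitution for $\cos(\htheta)$ is an unnecessary but harmless detour.
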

\begin{proof}
     Proceeding as in the proof of Lemma~\ref{lem: H1CurvLem} we compute
    \begin{equation}\label{eq: H2curv1}
    \begin{aligned}
    \left(\cos(\hat{\theta}) + \left(\frac{4\pi}{\sigma}-t|\Phi|^2\right)\sin(\hat{\theta})\right)i\Lambda F_{2} &= -\cos(\hat{\theta})\left(\frac{4\pi}{\sigma} - |\Phi|^2\right) \\
    &\quad +\sin(\hat{\theta})\left(1 + \frac{t}{2}|\nabla \Phi|^2 \right)
    \end{aligned}
    \end{equation}
    As in the case of Lemma~\ref{lem: H1CurvLem}, we consider the conditions under which the prefactor on the left-hand side can vanish.  First, if $\cos(\hat{\theta})\geq 0$ and the prefactor vanishes, then, since $\sin(\hat{\theta})> 0$ we must have $|\Phi|^2 \geq \frac{4\pi}{\sigma}$.  But this implies that the right hand side of~\eqref{eq: H2curv1} is strictly positive.  Thus, we conclude that
    \[
     \left(\cos(\hat{\theta}) + \left(\frac{4\pi}{\sigma}-t|\Phi|^2\right)\sin(\hat{\theta})\right) \ne 0 \quad \text{ if } \cos(\hat{\theta})\geq 0
    \]
    Similarly, if $\cos(\hat{\theta})+\frac{4\pi}{\sigma} \sin(\hat{\theta}) <0$, then clearly the prefactor can never vanish.
\end{proof}

\subsection{The calibration constraint}

In this section we examine the properties of solutions of the system~\eqref{eq: dimReduceDHYM-MOC}.

\begin{lem}\label{lem: simplifiedPos}
  Suppose $(h_1, h_2)$ solve the system~\eqref{eq: dimReduceDHYM}.  Then the calibration constraint~\eqref{eq: dimReduceDHYM-Pos} is equivalent to the following:
\begin{equation}\label{eq: simplifedPositivity}
\begin{aligned}
i\Lambda F_1 + \Phi\Phi^{\dagger} &>0\\
i\Lambda F_2 + \frac{4\pi}{\sigma} - |\Phi|^2 &>0
\end{aligned}
\end{equation}
\end{lem}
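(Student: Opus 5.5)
The plan is to use the equations~\eqref{eq: dimReduceDHYM} to rewrite the calibration constraint as a positive multiple of the endomorphisms in~\eqref{eq: simplifedPositivity}. No analysis is needed. I will work with $\sin(\htheta)>0$, i.e.\ ${\rm Im}(Z_{\cX}(\cE))>0$, the standing assumption of the main theorem. The boundary phase $\htheta=\pi$ (${\rm Im}\,Z_{\cX}(\cE)=0$, $\sin\htheta=0$) must be excluded or treated separately: there the argument below breaks down and the claimed equivalence need not hold.

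For $E_1$, set
\[
A_1:= i\Lambda F_1+\Phi\Phi^{\dagger}, \qquad B_1:= I_{E_1}-\tfrac12\{i\Lambda F_1,\Phi\Phi^\dagger\}+\tfrac12 g^{p\bar j}\nabla_p\Phi(\nabla_j\Phi)^\dagger .
\]
Both are $h_1$-Hermitian endomorphisms of $E_1$. The first equation of~\eqref{eq: dimReduceDHYM} reads $\cos(\htheta)A_1=\sin(\htheta)B_1$. The first line of~\eqref{eq: dimReduceDHYM-Pos} reads $\cos(\htheta)B_1+\sin(\htheta)A_1>0$.

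Since $\sin(\htheta)>0$, I solve for $B_1=\frac{\cos(\htheta)}{\sin(\htheta)}A_1$ and substitute. This gives
\[
\cos(\htheta)B_1+\sin(\htheta)A_1=\Big(\frac{\cos^2(\htheta)}{\sin(\htheta)}+\sin(\htheta)\Big)A_1=\frac{1}{\sin(\htheta)}A_1 .
\]
Multiplication by the positive scalar $1/\sin(\htheta)$ preserves positive definiteness in both directions. Hence the first constraint holds if and only if $i\Lambda F_1+\Phi\Phi^\dagger>0$. This argument covers all signs of $\cos(\htheta)$, including $\cos(\htheta)=0$, so no case split on the cosine is needed.

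For the line bundle $E_2$, set
\[
A_2:=i\Lambda F_2+\tfrac{4\pi}{\sigma}-|\Phi|^2, \qquad B_2:=1+\big(|\Phi|^2-\tfrac{4\pi}{\sigma}\big)i\Lambda F_2+\tfrac12|\nabla\Phi|^2 .
\]
These are real functions, and the second equation of~\eqref{eq: dimReduceDHYM} is $\cos(\htheta)A_2=\sin(\htheta)B_2$. The identical computation gives $\cos(\htheta)B_2+\sin(\htheta)A_2=A_2/\sin(\htheta)$. So the second constraint is equivalent to $A_2>0$, and the two lines of~\eqref{eq: simplifedPositivity} follow.

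The only real subtlety is the degenerate phase $\sin(\htheta)=0$. There the equations force $A_1=0$ and $A_2=0$, and the constraint becomes $-B_1>0$ and $-B_2>0$, which is not the same condition as~\eqref{eq: simplifedPositivity}. I would therefore state explicitly that the lemma is used with $\sin(\htheta)>0$.
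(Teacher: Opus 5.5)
Your argument is exactly the paper's: using $\sin(\htheta)>0$, solve \eqref{eq: dimReduceDHYM} for the term multiplied by $\sin(\htheta)$ there, namely $B_1=\cot(\htheta)\,(i\Lambda F_1+\Phi\Phi^{\dagger})$ (and likewise on $E_2$), and substitute, so the calibration constraint becomes $\frac{1}{\sin(\htheta)}(\cdot)>0$. One correction to your closing caveat: at $\htheta=\pi$ the equivalence does not fail. There the equations force $A_1=0$ and $A_2=0$, while $B_1=I_{E_1}+(\Phi\Phi^{\dagger})^2+\frac12 g^{p\bar j}\nabla_p\Phi(\nabla_j\Phi)^{\dagger}\ge I_{E_1}$ and $B_2=1+(|\Phi|^2-\frac{4\pi}{\sigma})^2+\frac12|\nabla\Phi|^2\ge 1$. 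So the constraint $-B_i>0$ and \eqref{eq: simplifedPositivity} are both false, and the lemma holds trivially there.
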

\begin{proof}
    This is straightforward.  We just explain the proof of the first conclusion, with the second being the same.  Since $\sin(\htheta)>0$ we can use equation~\eqref{eq: dimReduceDHYM} to write
    \[
    I_{E_1} - \frac{1}{2}\{i\Lambda F_1, \Phi\Phi^{\dagger}\} + \frac{1}{2}g^{-1}\nabla \Phi (\nabla \Phi)^{\dagger} = \frac{\cos(\htheta)}{\sin(\htheta)}(i\Lambda F_1 + \Phi\Phi^{\dagger})
    \]
    Plugging this into~\eqref{eq: dimReduceDHYM-Pos} yields that, in combination with ~\eqref{eq: dimReduceDHYM}, the calibration constraint is equivalent to
    \[
    \frac{1}{\sin(\htheta)} (i\Lambda F_1 + \Phi\Phi^{\dagger}) >0.
    \]
\end{proof}

We now analyze the solvability of the system~\eqref{eq: dimReduceDHYM} and~\eqref{eq: dimReduceDHYM-Pos} in the case that $\cos(\htheta)\leq 0$.  

\begin{lem}\label{lem: postivityConstraints}
    Suppose that a triple $(E_1,E_2,\Phi)$ admits a solution of the dimension reduced deformed Hermitian-Yang-Mills system~\eqref{eq: dimReduceDHYM}+~\eqref{eq: dimReduceDHYM-Pos} with $\cos(\htheta)\leq 0$.  Then $\{\Phi= 0\} = \emptyset$ and the following inequalities hold pointwise on $X$:
    \begin{equation}\label{eq: consequenceOfPositivity}
    \cos(\htheta) + |\Phi|^2 \sin(\htheta) >0, \quad \text{ and } \quad \cos(\htheta) + \left(\frac{4\pi}{\sigma}- |\Phi|^2\right)\sin(\htheta) >0.
    \end{equation}
    In particular, we have $\cos(\htheta) + \frac{2\pi}{\sigma} \sin(\htheta) >0$.
\end{lem}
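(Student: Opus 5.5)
The plan is to combine the simplified form of the calibration constraint from Lemma~\ref{lem: simplifiedPos} with the explicit curvature identities of Lemma~\ref{lem: H1CurvLem} and Lemma~\ref{lem: H2CurvLem} at $t=1$. Throughout I use $\sin(\htheta)>0$, which is the standing situation of those lemmas given ${\rm Im}(Z_{\cX}(\cE))>0$. By Lemma~\ref{lem: simplifiedPos}, the system~\eqref{eq: dimReduceDHYM}+\eqref{eq: dimReduceDHYM-Pos} gives pointwise
\[
i\Lambda F_1 + \Phi\Phi^{\dagger} >0, \qquad i\Lambda F_2 + \frac{4\pi}{\sigma} - |\Phi|^2 >0 .
\]

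\emph{Step 1: $\Phi$ has no zeros.} Suppose $\Phi(p)=0$. The first equation of~\eqref{eq: dimReduceDHYM} then reads $\cos(\htheta)\, i\Lambda F_1 = \sin(\htheta) I_{E_1}$ at $p$.
\begin{itemize}
\item If $\cos(\htheta)=0$, this says $\sin(\htheta) I_{E_1}=0$, which is impossible.
\item If $\cos(\htheta)<0$, then $i\Lambda F_1 = \tan(\htheta) I_{E_1}<0$ at $p$. This contradicts $i\Lambda F_1 + \Phi\Phi^\dagger = i\Lambda F_1>0$ there.
\end{itemize}
Hence $\{\Phi=0\}=\emptyset$.

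\emph{Step 2: the first inequality.} Since $\Phi\neq 0$, pair the first positivity condition with $\Phi$: $\langle i\Lambda F_1\Phi,\Phi\rangle + |\Phi|^4>0$. Lemma~\ref{lem: H1CurvLem} (with $t=1$) guarantees $D_1:=\cos(\htheta)+\sin(\htheta)|\Phi|^2\neq 0$ and gives $\langle i\Lambda F_1\Phi,\Phi\rangle$ explicitly. Substituting, the $\cos(\htheta)|\Phi|^4$ terms cancel and one gets
\[
\langle i\Lambda F_1\Phi,\Phi\rangle + |\Phi|^4 = \frac{\sin(\htheta)\left(|\Phi|^2+|\Phi|^6+\tfrac12|\langle\nabla\Phi,\Phi\rangle|^2\right)}{D_1}.
\]
The numerator is strictly positive, so positivity of the left side forces $D_1>0$.

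\emph{Step 3: the second inequality.} Set $a:=\frac{4\pi}{\sigma}-|\Phi|^2$ and $D_2:=\cos(\htheta)+a\sin(\htheta)$. First I rule out $D_2=0$ at a point. If it vanished, then $a=-\cot(\htheta)\geq 0$, so $-\cos(\htheta)\,a\geq 0$. Identity~\eqref{eq: H2curv1} would then read $0=-\cos(\htheta)a+\sin(\htheta)(1+\tfrac12|\nabla\Phi|^2)>0$, a contradiction. (This is the step not covered verbatim by Lemma~\ref{lem: H2CurvLem}, whose hypotheses exclude part of the $\cos(\htheta)\le 0$ range.) With $D_2\neq0$, the formula of Lemma~\ref{lem: H2CurvLem} gives
\[
i\Lambda F_2 + a = \frac{\sin(\htheta)\left(1+\tfrac12|\nabla\Phi|^2+a^2\right)}{D_2}.
\]
The numerator is positive, so the second positivity condition forces $D_2>0$.

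\emph{Step 4: the final inequality.} Adding $D_1>0$ and $D_2>0$ gives $2\cos(\htheta)+\frac{4\pi}{\sigma}\sin(\htheta)>0$, i.e.\ $\cos(\htheta)+\frac{2\pi}{\sigma}\sin(\htheta)>0$.

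The only delicate points are the non-vanishing of the denominators $D_1,D_2$ when $\cos(\htheta)\le 0$, handled in Steps 2 and 3. The rest is the algebraic cancellation, which makes each numerator manifestly $\sin(\htheta)$ times a positive quantity.
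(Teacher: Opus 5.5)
Your argument is the paper's own: the simplified calibration constraint of Lemma~\ref{lem: simplifiedPos} paired with the $t=1$ curvature identities of Lemmas~\ref{lem: H1CurvLem} and~\ref{lem: H2CurvLem}, with the same cancellations. Your direct check that $D_2\neq 0$ is a genuine improvement: the paper divides by $D_2$ in the regime $\cos(\htheta)<0\le \cos(\htheta)+\frac{4\pi}{\sigma}\sin(\htheta)$, which Lemma~\ref{lem: H2CurvLem} does not cover.

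One small correction in Step 1: at a zero of $\Phi$ the first equation reads $\cos(\htheta)\,i\Lambda F_1=\sin(\htheta)\bigl(I_{E_1}+\frac12\nabla\Phi(\nabla\Phi)^{\dagger}\bigr)$, because $\nabla\Phi$ need not vanish there (it does not at a simple zero). The extra term is positive semidefinite, so both of your cases go through unchanged.
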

\begin{proof}
    By Lemma~\ref{lem: simplifiedPos},  a solution of the system~\eqref{eq: dimReduceDHYM}+~\eqref{eq: dimReduceDHYM-Pos} satisfies
    \[
    \begin{aligned}
    i\Lambda F_1 + \Phi\Phi^\dagger >0\\
    i\Lambda F_2 + \frac{4\pi}{\sigma} - |\Phi|^2 >0
    \end{aligned}
    \]
    If there is a point $p\in X$ where $\Phi(p)= 0$ then~\eqref{eq: dimReduceDHYM}+~\eqref{eq: dimReduceDHYM-Pos} implies that, at $p$, there
    \[
    \cos(\htheta)i\Lambda F_1 = \sin(\htheta) (I_{E_1} + \frac{1}{2}\nabla \Phi (\nabla \Phi)^{\dagger}) \qquad \text{ and } \quad  i\Lambda F_1 >0
    \]
    But these equations are clearly inconsistent since $\sin(\htheta)>0$ and $\cos(\htheta) \leq 0$.  Thus we may assume $\Phi$ never vanishes. Fix $p\in X$ and apply the first equation to $\Phi(p)e_2$ for $e_2$ a unit vector in the fiber of $(E_2)_p$, to get
    \[
    \langle i\Lambda F_1 \Phi, \Phi \rangle + |\Phi|^4 >0
    \]
    On the other hand, by Lemma~\ref{lem: H1CurvLem}, and more specifically equation~\eqref{eq: curvH_1PhiPhi}, we have
    \[
    \begin{aligned}
    0 &< \langle i\Lambda F_1 \Phi, \Phi \rangle + |\Phi|^4 \\
    &= \frac{ -\cos(\htheta)|\Phi|^4 + \sin(\htheta)\left(|\Phi|^2 + \frac{1}{2} |\langle \nabla \Phi, \Phi\rangle|^2\right)}{\cos(\htheta) + \sin(\htheta)|\Phi|^2} + |\Phi|^4\\
    &= \frac{ \sin(\htheta)|\Phi|^6 + \sin(\htheta)\left(|\Phi|^2 + \frac{1}{2} |\langle \nabla \Phi, \Phi\rangle|^2\right)}{\cos(\htheta) + \sin(\htheta)|\Phi|^2}
    \end{aligned}
    \]
    and since $\sin(\htheta)>0$ we conclude that $\cos(\htheta) + \sin(\htheta)|\Phi|^2>0$. 

    Next we consider the equation for $F_2$.  By Lemma~\ref{lem: H2CurvLem} we have
    \[
    \begin{aligned}
    0 &< i\Lambda F_2+ \frac{4\pi}{\sigma} - |\Phi|^2 \\
    &= \frac{-\cos(\htheta)(\frac{4\pi}{\sigma}-|\Phi|^2) + \sin(\htheta)(1+\frac{1}{2}|\nabla \Phi|^2)}{\cos(\htheta) + (\frac{4\pi}{\sigma} -|\Phi|^2)\sin(\htheta)} + \frac{4\pi}{\sigma} - |\Phi|^2\\
    &= \frac{\sin(\htheta)(\frac{4\pi}{\sigma}-|\Phi|^2)^2 + \sin(\htheta)(1+\frac{1}{2}|\nabla \Phi|^2)}{\cos(\htheta) + (\frac{4\pi}{\sigma} -|\Phi|^2)\sin(\htheta)}
    \end{aligned}
    \]
    and hence $\cos(\htheta) + (\frac{4\pi}{\sigma} -|\Phi|^2)\sin(\htheta)>0$.  Finally, by adding these two results we conclude that
    \[
    \cos(\htheta) + \frac{2\pi}{\sigma} \sin(\htheta) >0.
    \]
\end{proof}

The next result classifies solutions of the system~\eqref{eq: dimReduceDHYM}+~\eqref{eq: dimReduceDHYM-Pos} in the case that $\cos(\htheta) \leq 0$.

\begin{prop}\label{prop: charSolCosNeg}
    A triple $(E_1, E_2, \Phi)$ solves the system~\eqref{eq: dimReduceDHYM}+~\eqref{eq: dimReduceDHYM-Pos} with $\cos(\htheta)\leq 0$ if and only if 
    \begin{enumerate}
        \item[(i)] $\cos(\htheta) + \frac{2\pi}{\sigma} \sin(\htheta) >0$,
        \item[(ii)] $E_1$ has rank $1$, and $\Phi \in H^{0}(X, {\rm Hom}(E_2, E_1))$ is never vanishing.  In particular, $E_2 \simeq E_1$, and
        \item[(iii)] we have
        \[
        i\Lambda F_1 = i\Lambda F_2 = \frac{\sin(\htheta) - \frac{2\pi}{\sigma} \cos(\htheta)}{\cos(\htheta) + \frac{2\pi}{\sigma} \sin(\htheta)}
        \]
        and $|\Phi|^2 \equiv \frac{2\pi}{\sigma} $.
    \end{enumerate}
\end{prop}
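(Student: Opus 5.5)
The plan is to prove the ``only if'' direction by combining Lemma~\ref{lem: postivityConstraints} with an integration argument, and then to verify the ``if'' direction by direct substitution. Item (i), and the fact that $\Phi$ never vanishes, are already contained in Lemma~\ref{lem: postivityConstraints}. For (ii), note that $\Phi$ is a nowhere vanishing section of ${\rm Hom}(E_2,E_1)$. So if $\rank(E_1)\ge 2$, then at each point we can choose $V\perp \Phi$ with $V\neq 0$. Lemma~\ref{lem: H1CurvLem} (with $t=1$) then gives
\[
\cos(\htheta)\langle i\Lambda F_1 V,V\rangle=\sin(\htheta)\bigl(|V|^2+\tfrac12|\langle V,\nabla\Phi\rangle|^2\bigr)>0 .
\]
This contradicts $\cos(\htheta)=0$ outright. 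If instead $\cos(\htheta)<0$, it forces $\langle i\Lambda F_1V,V\rangle<0$. I expect this alone does not contradict the calibration constraint $i\Lambda F_1+\Phi\Phi^\dagger>0$, because $\Phi\Phi^\dagger V=0$ for $V\perp\Phi$. So I would instead look at the orthogonal complement $\Phi^\perp\subset E_1$, a smooth subbundle of rank $\rank E_1-1$, and integrate. Its quotient-type curvature is bounded above by the compression of $i\Lambda F_1$, so its degree is negative. On the other hand, $E_1/\Phi(E_2)$ is a holomorphic quotient bundle with a smooth identification with $\Phi^\perp$, whose curvature is \emph{at least} the compression of $i\Lambda F_1$ (quotients increase curvature). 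Since that gives no contradiction either, the contradiction has to come from somewhere else. I would compare with $\cos(\htheta)\le 0$ forcing ${\rm Re}\,Z\le 0$ in~\eqref{eq: sec3CentralCharge}: this gives $\frac{V_X}{2\pi}(\rank E_1+1)\le \frac{4\pi}{\sigma}\deg E_2$. I would then integrate the trace of the first equation of~\eqref{eq: dimReduceDHYM} using Lemma~\ref{lem: H1CurvLem} on $\Phi^\perp$ and on $\Phi$, together with the second equation (Lemma~\ref{lem: H2CurvLem}), and aim for an inequality incompatible with this. This is the step I expect to be the main obstacle.

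Once $\rank E_1=1$, a nowhere vanishing holomorphic $\Phi\colon E_2\to E_1$ is an isomorphism, so $E_1\simeq E_2$. Then $h=h_1h_2^{-1}$ is a metric on a trivial bundle, and $|\Phi|^2$ is a function with $i\Lambda(F_1-F_2)=-\Delta\log|\Phi|^2$ (up to a sign convention). For (iii) I would write $u=|\Phi|^2$ and $\nabla\Phi=\partial\log u\cdot\Phi$, so that $|\langle\nabla\Phi,\Phi\rangle|^2=u|\nabla\Phi|^2$. Substituting the formulas from Lemmas~\ref{lem: H1CurvLem} and~\ref{lem: H2CurvLem} into $i\Lambda F_1-i\Lambda F_2$ then produces a scalar elliptic equation for $\log u$ of the form $\Delta\log u=G(u,|\nabla u|^2)$. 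I would apply the maximum principle at the maximum and at the minimum of $u$, where $\nabla u=0$. The expression $G(u,0)$ equals
\[
\frac{-\cos u^2+\sin u}{\cos+\sin u}-\frac{-\cos(\tfrac{4\pi}{\sigma}-u)+\sin}{\cos+(\tfrac{4\pi}{\sigma}-u)\sin}.
\]
The denominators are positive by Lemma~\ref{lem: postivityConstraints}, and I would show that $G(u,0)$ is strictly monotone in $u$ with unique zero $u=\frac{2\pi}{\sigma}$; one checks that the map $s\mapsto\frac{\sin-\cos s}{\cos+\sin s}$ is monotone. The maximum principle at the max and at the min of $u$ then pins $u\equiv\frac{2\pi}{\sigma}$, hence $\nabla\Phi=0$. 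Plugging back into Lemma~\ref{lem: H1CurvLem} gives the stated constant value of $i\Lambda F_1=i\Lambda F_2$.

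For the converse, take $E_1\simeq E_2$ with $h_1,h_2$ chosen so that $i\Lambda F_1=i\Lambda F_2$ equals the stated constant. Such metrics exist by Hodge theory, provided the constant is consistent with the degree; this consistency follows from the formula $Z=re^{i\htheta}$ in~\eqref{eq: sec3CentralCharge}. Scale so that $|\Phi|^2=\frac{2\pi}{\sigma}$; then $\nabla\Phi=0$. Direct substitution verifies~\eqref{eq: dimReduceDHYM}, and condition (i) together with Lemma~\ref{lem: simplifiedPos} verifies the calibration constraint.
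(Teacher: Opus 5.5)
There is a genuine gap at the step $\rank(E_1)=1$. It comes from misreading the calibration constraint, not from a missing technique. For $V\perp\Phi$ you have $\Phi^\dagger V=0$, so $\langle\Phi\Phi^\dagger V,V\rangle=|\Phi^\dagger V|^2=0$ and
\[
\langle (i\Lambda F_1+\Phi\Phi^\dagger)V,V\rangle=\langle i\Lambda F_1V,V\rangle .
\]
By Lemma~\ref{lem: simplifiedPos} the left side is strictly positive for $V\neq 0$. Lemma~\ref{lem: H1CurvLem} with $\cos(\htheta)<0$ gives $\langle i\Lambda F_1V,V\rangle=\frac{\sin(\htheta)}{\cos(\htheta)}\bigl(|V|^2+\frac12|\langle V,\nabla\Phi\rangle|^2\bigr)<0$, which is an immediate contradiction. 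The vanishing of $\Phi\Phi^\dagger$ on $\Phi^\perp$ is exactly why the argument works: nothing is left to compensate the negativity of $i\Lambda F_1$ in those directions. This one-line argument is how the paper concludes. You instead replace it by an integration argument (compressions of $i\Lambda F_1$ on $\Phi^\perp$, comparison with ${\rm Re}(Z_{\cX}(\cE))\leq 0$) that you never close and yourself flag as the main obstacle. Both the identification $E_1\simeq E_2$ and the scalar equation for $\log|\Phi|^2$ rest on $\rank(E_1)=1$, so the necessity direction is not established as written.

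With that step repaired, the rest is correct and is essentially the paper's argument: the maximum principle at the extrema of $|\Phi|^2$, where holomorphicity and rank one force $\nabla\Phi=0$. Your monotonicity observation is a clean way to get the sign. With $g(s)=\frac{\sin(\htheta)-\cos(\htheta)s}{\cos(\htheta)+\sin(\htheta)s}$ one has $g'(s)=-(\cos(\htheta)+\sin(\htheta)s)^{-2}$. At critical points, with the paper's sign convention, $\Delta\log u=g(\frac{4\pi}{\sigma}-u)-g(u)$. This is strictly increasing on the interval where Lemma~\ref{lem: postivityConstraints} makes both denominators positive, and it recovers the paper's factor $\frac{u(2u-\frac{4\pi}{\sigma})}{(\cdots)(\cdots)}$. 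Two points need care, though. Your displayed $G(u,0)$ mixes $\langle i\Lambda F_1\Phi,\Phi\rangle=u\,g(u)$ with $i\Lambda F_2=g(\frac{4\pi}{\sigma}-u)$, which is missing the factor $u$, so as written it does not vanish at $u=\frac{2\pi}{\sigma}$. Also, the sign cannot be left ``up to a sign convention'': the opposite sign would make the inequalities at the maximum and minimum useless. The converse, including the degree consistency coming from $Z_{\cX}(\cE)=re^{i\htheta}$, is fine.
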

\begin{proof}
    Let us first prove necessity.  The necessity of $(i)$ is already proved in Lemma~\ref{lem: postivityConstraints}.  By Lemma~\ref{lem: postivityConstraints}, in particular the first equation of~\eqref{eq: consequenceOfPositivity}, we see that if $\cos(\htheta) \leq 0$, then $\Phi$ never vanishes.  Let us assume that $E_1$ has rank at least $2$.  Let $p \in X$, and $e_2$ be a unit vector in the fiber of $E_2$ at $p$.  Let $V$ be a vector in the fiber of $E_1$ orthogonal to $\Phi(e_2)$.  Applying Lemma~\ref{lem: H1CurvLem} with $t=1$ we have
    \[
    \cos(\htheta) \langle i\Lambda F_1 V, V \rangle = \sin(\htheta)\left( |V|^2 + \frac{1}{2} |\langle V, \nabla \Phi \rangle|^2\right).
    \]
    Since the right hand side is strictly positive we immediately obtain a contradiction if $\cos(\htheta)=0$.  If $\cos(\htheta)<0$, then we must have
    \[
    \langle i\Lambda F_1 V, V\rangle <0
    \]
    but this contradicts ~\eqref{eq: dimReduceDHYM-Pos}, by Lemma~\ref{lem: simplifiedPos}.  It follows that $E_1$ has rank $1$ and $\Phi \in {\rm Hom}(E_2, E_1)$ is non-vanishing.

    Now consider $|\Phi|^2$.  Let $x_* \in X$ be a point where $|\Phi|^2$ achieves its minimum, and let $x^*$ be a point where $|\Phi|^2$ achieves its maximum.  Since $\Phi$ is holomorphic, at the points $x_{*}$ and $x^*$ we have $\langle \nabla \Phi, \Phi \rangle =0$, but since $\Phi$ is non-vanishing and $E_1$ has rank $1$, this implies $\nabla \Phi =0$ at $x_*$ and $x^*$.  We compute at $x_*$
    \[
    0 \leq \Delta |\Phi|^2(x_*) = -\langle i\Lambda F_1 \Phi, \Phi \rangle +\langle i\Lambda F_2 \Phi, \Phi \rangle.
    \]
    We combine~\eqref{eq: curvH_1PhiPhi} and Lemma~\ref{lem: H2CurvLem} to get
    \[
    \begin{aligned}
    0 &\leq \frac{\cos(\htheta)|\Phi|^4 -\sin(\htheta)|\Phi|^2}{\cos(\htheta)+\sin(\htheta)|\Phi|^2} +\frac{\sin(\htheta)|\Phi|^2 -\cos(\htheta)|\Phi|^2\left(\frac{4\pi}{\sigma}-|\Phi|^2\right)}{\cos(\htheta) + (\frac{4\pi}{\sigma}-|\Phi|^2)\sin(\htheta)}\\
    &= \frac{|\Phi|^2(2|\Phi|^2-\frac{4\pi}{\sigma})}{(\cos(\htheta) + (\frac{4\pi}{\sigma}-|\Phi|^2)\sin(\htheta))(\cos(\htheta)+|\Phi|^2\sin(\htheta))}.
    \end{aligned}
    \]
    Since the denominator is positive by Lemma~\ref{lem: postivityConstraints} we see that $\inf_{X} |\Phi|^2 \geq \frac{2\pi}{\sigma}$.  On the other hand, the same argument applied at $x_*$ yields $\sup_{X}|\Phi|^2 \leq \frac{2\pi}{\sigma}$.  Thus we get $|\Phi|^2 \equiv \frac{2\pi}{\sigma}$, and $\nabla \Phi \equiv 0$.  Finally, we get
    \[
    i\Lambda F_2 = i\Lambda F_1 = \frac{\sin(\htheta) - \frac{2\pi}{\sigma} \cos(\htheta)}{\cos(\htheta) + \frac{2\pi}{\sigma} \sin(\htheta)}
    \]
    This establishes the necessity of $(i)-(iii)$.  The sufficiency of $(i)-(iii)$ is clear.
\end{proof}

\section{Stability}\label{sec: stability}

This section defines the notion of stability relevant for existence of solutions to the deformed Hermitian-Yang-Mills system in our setting.  One can either view stability in terms of $SU(2)$-equivariant coherent sheaves on $\cX= X \times \mathbb{P}^1$, or in terms of holomorphic triples $(E_1,E_2,\Phi)$ on the Riemann surface $X$. We shall present both points of view.

The section is organized as follows:
\begin{itemize}
\item Step 1: $Z$-stability from the point of view of equivariant coherent sheaves.
\\
\item Step 2: $Z$-stability from the point of view of holomorphic triples.
\\
\item Step 3: Classify stable bundles of vortex type with ${\rm Re}(Z_{\cX}(\cE))\leq 0$ and obtain practical simplifications of $Z$-stability. In particular, show that it suffices to check stability against a small class of quotients.
\\
\item Step 4: Study some consequences of stability.
\\
\item Step 5: Prove the necessity of stability for the deformed Vortex equations.
\end{itemize}

\subsection{Stability of $SU(2)$-equivariant bundles on $\cX=X\times \mathbb{P}^1$}

As in Section~\ref{sec: dimReduction} we consider the $SU(2)$-equivariant vector bundle $\mathcal{E} \rightarrow \cX$ given by a holomorphic triple $(E_1, E_2, \Phi)$ where $E_2$ has rank $1$.  Stability is determined by testing a suitable notion of central charge against quotient objects for $\mathcal{E}$.  There are several new features of the relevant stability condition.  The first, and most striking in relation to the notion of slope stability, is the need to test stability against torsion quotients.  The second feature is the natural appearance of a torsion class, which further refines the stability notion. 

Let ${\rm Coh}^{SU(2)}(\mathcal{X})$ denote the abelian category of $SU(2)$-equivariant coherent sheaves on $\mathcal{X}$.  We consider the central charge
\[
{\rm Coh}^{SU(2)}(\mathcal{X}) \ni \mathcal{F} \mapsto Z_{\cX}(\mathcal{F}) = -\int_{\cX}e^{-\frac{\sqrt{-1}}{2\pi}\widetilde{\omega}_{\sigma}}ch(\mathcal{F}).
\]
The central charge $Z_{\cX}$ has many nice properties inherited from the total Chern character $ch$; most crucially, it is additive on exact sequences.

Since it will be useful later, we compute a simple example of the central charge evaluated on a torsion sheaf.  Let $F \simeq \mathbb{P}^1$ be a fiber of the projection map $ p_1:\mathcal{X} : = X\times \mathbb{P}^1 \rightarrow X
$ and let $\iota: F \hookrightarrow \cX$ be the inclusion.  Let $Q \rightarrow F$ be a vector bundle.  We view $\cQ:= \iota_*Q$ as a torsion sheaf on $\mathcal{X}$.  The Chern character of $\cQ$ is given by
\[
ch(\cQ) = \iota_*ch(Q)
\]
and so we have 
\begin{equation}\label{eq: centralChargeTorsionSheaf}
Z_{\cX}(\mathcal{Q}) = -\int_{\mathbb{P}^1}e^{-\frac{\sqrt{-1}}{2\pi}\iota^*\widetilde{\omega}_{\sigma}} ch(Q).
\end{equation}
We define the restricted upper half-space to be
\begin{equation}\label{eq: restUpperHS}
\mathbb{H}:= \{ z \in \mathbb{C} : {\rm Im}(z)>0\} \cup \{z \in \mathbb{C}: {\rm Im}(z)=0, {\rm Re}(z)<0\}
\end{equation}
We can now give the definition of $Z$-stability.  First we give the definition of a {\em torsion class}.

\begin{defn}\label{defn: torsionClass}
    Define the torsion class $\mathcal{T}\subset {\rm Coh}^{SU(2)}(\cX)$ to be the set of $SU(2)$-equivariant sheaves $\mathcal{F}$ such that, either
    \begin{itemize}
        \item[(i)] $\mathcal{F}$ is torsion, or
        \item[(ii)]for every surjection $\mathcal{F} \twoheadrightarrow \mathcal{Q} \ne 0$ in $Coh^{SU(2)}(\mathcal{X})$ we have ${\rm Im}(Z_{\cX}(\mathcal{Q}))>0$.
    \end{itemize}
\end{defn}

\begin{rk}\label{rk: torsionTerminology}
The ``torsion class" $\mathcal{T}$ is precisely the torsion class appearing in the construction of the tilted heart of the Bridgeland stability condition on a projective surface constructed by Arcara-Bertram \cite{AB}. 
\end{rk}

\begin{rk}\label{rk: torsionClassQuotients}
$\mathcal{T}$ is trivially closed under quotients.  That is, if $\mathcal{F} \in \mathcal{T}$, and $\mathcal{F}\twoheadrightarrow \mathcal{Q}$, then it follows that $\mathcal{Q} \in \mathcal{T}$.
\end{rk}

The next lemma shows that $SU(2)$-equivariant torsion sheaves are in the torsion class $\mathcal{T}$.

\begin{lem}\label{lem: torsionIsTorsion}
    If $\mathtt{T} \in {\rm Coh}^{SU(2)}(\cX)$ is a torsion sheaf, then ${\rm Im}(Z_{\cX}(\mathtt{T}))>0$.
\end{lem}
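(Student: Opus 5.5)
The plan is to compute ${\rm Im}(Z_{\cX}(\mathtt{T}))$ directly in terms of $c_1(\mathtt{T})$, and then use $SU(2)$-equivariance to show that $c_1(\mathtt{T})$ is a nonzero effective combination of fibers of $p_1$. Throughout I assume $\mathtt{T}\neq 0$, which is implicit in the statement.

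\emph{Step 1: reduce ${\rm Im}(Z_{\cX}(\mathtt{T}))$ to an intersection number.} Since $\mathtt{T}$ is torsion on the surface $\cX$, we have ${\rm ch}_0(\mathtt{T})=0$. Expanding $e^{-\frac{i}{2\pi}\widetilde{\omega}_\sigma}{\rm ch}(\mathtt{T})$ in degree $4$ gives
\[
Z_{\cX}(\mathtt{T}) = -\int_{\cX}{\rm ch}_2(\mathtt{T}) + \frac{i}{2\pi}\int_{\cX}\widetilde{\omega}_\sigma\wedge c_1(\mathtt{T}).
\]
So it suffices to show that $\int_{\cX}\widetilde{\omega}_\sigma\wedge c_1(\mathtt{T})>0$.

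\emph{Step 2: identify $c_1(\mathtt{T})$ via the support.} For a torsion sheaf on a smooth surface, $c_1(\mathtt{T})=\sum_j m_j[C_j]$. Here the $C_j$ are the one-dimensional irreducible components of ${\rm Supp}(\mathtt{T})$, and the $m_j>0$ are the lengths of $\mathtt{T}$ at their generic points. Zero-dimensional components and embedded points contribute nothing. This is exactly where equivariance enters, and I expect it to be the main point, since a skyscraper sheaf $\mathcal{O}_p$ would give $Z=-1$ and ${\rm Im}(Z)=0$.

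\emph{Step 3: use $SU(2)$-invariance of the support.} Because $\mathtt{T}$ is $SU(2)$-equivariant, ${\rm Supp}(\mathtt{T})$ is an $SU(2)$-invariant closed analytic subset of $\cX$. $SU(2)$ acts trivially on $X$ and transitively on $\mathbb{P}^1=SU(2)/U(1)$, so the orbits are exactly the fibers $\{x\}\times\mathbb{P}^1$. Hence ${\rm Supp}(\mathtt{T})$ is a union of fibers. It cannot be all of $\cX$, since $\mathtt{T}$ is torsion, so it is a finite nonempty union $F_1\cup\dots\cup F_k$ of fibers. In particular it is of pure dimension one, every irreducible component is some $F_j$, and $c_1(\mathtt{T})=\sum_j m_j[F_j]$ with $k\geq 1$ and all $m_j\geq 1$.

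\emph{Step 4: compute.} Since $p_1^*\omega$ restricts to zero on a fiber,
\[
\int_{\cX}\widetilde{\omega}_\sigma\wedge[F_j] = \int_{\mathbb{P}^1}\frac{\sigma}{2\pi}\omega_{FS} = \sigma.
\]
Therefore
\[
{\rm Im}(Z_{\cX}(\mathtt{T})) = \frac{\sigma}{2\pi}\sum_j m_j > 0.
\]
This agrees with formula~\eqref{eq: centralChargeTorsionSheaf} in the special case $\mathtt{T}=\iota_*Q$, where the imaginary part is $\frac{\sigma}{2\pi}\rank(Q)$. As an alternative to Step 2, one could filter $\mathtt{T}$ by equivariant subsheaves with subquotients pushed forward from (thickened) fibers and use additivity of $Z_{\cX}$. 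The $c_1$ argument above is cleaner, however.
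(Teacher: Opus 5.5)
Your argument is correct, but it takes a different route from the paper.

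\textbf{The paper's route.} The paper splits $\mathtt{T}$ into summands, each supported on a single fiber $F$. It filters each summand by the $\mathcal{I}$-adic filtration $\mathcal{I}^k\mathtt{T}$ and uses equivariance to see that every graded piece $\mathtt{T}_k$ is a vector bundle $Q$ on $F\simeq\mathbb{P}^1$. It then computes $Z_{\cX}(\iota_*Q)$ explicitly, with imaginary part $\frac{\sigma}{2\pi}\rank(Q)$, and concludes by additivity.

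\textbf{Your route.} You bypass the filtration and the local-freeness of the graded pieces. Instead you quote the general fact that $c_1$ of a torsion sheaf on a smooth surface is the one-dimensional cycle of its support. Equivariance then enters only to show that the support is a nonempty finite union of fibers.

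\textbf{How they relate.} The two computations agree: $\sum_k\rank(\mathtt{T}_k)$ is exactly the length $m$ of $\mathtt{T}$ at the generic point of $F$. So the paper's filtration is, in effect, a hands-on proof of the cycle formula in this special case.

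\textbf{What each buys.} Your version is shorter and shows clearly that equivariance is needed only to exclude zero-dimensional sheaves. Indeed, your argument gives ${\rm Im}(Z_{\cX}(\mathtt{T}))>0$ for any torsion sheaf whose support has a one-dimensional component, since every curve has positive $\widetilde{\omega}_\sigma$-degree. The cost is relying on the cycle formula as a black box. The paper's route uses more structure but yields more. It expresses the full Chern character of $\mathtt{T}$, including the ${\rm ch}_2$ term that governs ${\rm Re}(Z_{\cX})$, as a sum of the contributions ${\rm ch}(\iota_*\mathtt{T}_k)$ of vector bundles on a fiber. That same filtration is reused in the proof of Proposition~\ref{prop: simplifiedStability} to bound the phase $\Theta(\mathtt{T})$, which a $c_1$-only computation like yours does not see.
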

\begin{proof}
Any $SU(2)$-equivariant torsion sheaf on $\mathcal{X}$ has reduced support equal to a disjoint union of fibers of $p_1$.  Since the central charge is linear with respect to direct sums, it suffices to check the case where the support is a single fiber.  Let $\mathcal{I}$ denote the ideal sheaf of the fiber $F$ and let $\iota: F_ \hookrightarrow \cX$ be the inclusion.  Consider the $\mathcal{I}$-adic filtration
\[
\mathtt{T}\supset \mathcal{I}\mathtt{T} \supset \mathcal{I}^2\mathtt{T} \cdots \supset \mathcal{I}^N\mathtt{T} =0
\]
Let $\mathtt{T}_k = \mathcal{I}^k\mathtt{T}/\mathcal{I}^{k+1}\mathtt{T}$ denote the successive quotients.  Then $\mathtt{T}_k$  is a vector bundle $Q\rightarrow F$.  For this example we have
\begin{equation}\label{eq: centralChargeTorsionExpanded}
 Z_{\cX}(\iota_*Q) =  (i \frac{\sigma}{2\pi}\int_{\mathbb{P}^1} \rank(Q)\omega_{FS} -\int_{\mathbb{P}^1}
ch_1(Q))
\end{equation}
hence ${\rm Im}(Z_{\cX}(\mathtt{T}))>0$.  The result follows from the additivity of the central charge.
\end{proof}

\begin{defn}
   For any sheaf $\cF \in {\rm Coh}^{SU(2)}(\cX)$ with $Z_{\cX}(\cF) \in \mathbb{C}^*$, we define
    \[
   \Theta(\cF) = {\rm Arg}(Z_{\cX}(\cF))
    \]
    where ${\rm Arg} \in (-\pi, \pi]$ denotes the principal branch of the argument.  In particular, we have
    \[
    \Theta: \mathcal{T} \rightarrow (0, \pi]
    \]
\end{defn}

\begin{defn}\label{def: stabOnXxP}
    Let $\mathcal{E}\rightarrow \mathcal{X}$ be an $SU(2)$-equivariant vector bundle with $Z_{\mathcal{X}}(\mathcal{E})\in \mathbb{H}$.  We say that $\mathcal{E}$ is $Z$-stable if
    \begin{itemize}
    \item[(i)] $\mathcal{E} \in \mathcal{T}$, and
    \item[(ii)] for every exact sequence 
    $
    0\rightarrow \cS\rightarrow \cE \rightarrow \cQ \rightarrow 0
    $
    in ${\rm Coh}^{SU(2)}(\mathcal{X})$ with $\cQ \ne 0$ and $Z_{\cX}(\cS) \in \mathbb{H}$ we have
    \[
    \Theta(\mathcal{Q}) > \Theta(\mathcal{E}) \quad \text{ or, equivalently }\quad \Theta(\cS) < \Theta(\cE).
    \]  
    \end{itemize}
\end{defn}

\begin{rk}
    Note that since $\cE \in \mathcal{T}$, we have both $Z_{\cX}(\cS), Z_{\cX}(\cQ) \in \mathbb{H}$.
\end{rk}

The notion of $Z$-stability defined in Definition~\ref{def: stabOnXxP} differs from the notion of $Z$-stability considered by Dervan-McCarthy-Sektnan \cite{DMS} and Keller-Scarpa \cite{KellerScarpa}.  The notion of stability considered in {\em loc. cit.} is tailored to the  ``large volume limit", while $Z$-stability in the sense of Definition~\ref{def: stabOnXxP} is necessarily not perturbative.  The main differences are: $(1)$ the appearance of the torsion class $\mathcal{T}$, and $(2)$ the restriction to exact sequences 
\begin{equation}\label{eq: stabDiscussionExactSeq}
    0\rightarrow \cS\rightarrow \cE \rightarrow \cQ \rightarrow 0
    \end{equation}
    in ${\rm Coh}^{SU(2)}(\mathcal{X})$ with $Z_{\cX}(\cS), Z_{\cX}(\cQ) \in \mathbb{H}$. These restrictions are not merely cosmetic, and are closely related to the notion of Bridgeland stability; see Section~\ref{sec: comments}.  Indeed, if one allows {\em all} exact sequences in ${\rm Coh}^{SU(2)}(\mathcal{X})$, then it is not hard to check that there are no stable bundles $\cE$ of vortex type with $\Theta(\cE) \in [\frac{\pi}{2}, \pi]$.

\begin{rk}
In Lemma~\ref{lem: StabreplaceTorsion} we show that the notion of $Z$-stability for vortex-type triples can be further refined to consider only exact sequences as in~\eqref{eq: stabDiscussionExactSeq} for which $\cS,\cQ \in \mathcal{T}$.  This refinement is essential for establishing the connection between $Z$-stability and  Bridgeland stability.  One could therefore take this weaker notion of stability as the definition in general. By Lemma~\ref{lem: StabreplaceTorsion}, Theorem~\ref{thm: mainTheorem}  continues to hold under this weaker notion of stability.
\end{rk}

The remainder of this section examines and reformulates $Z$-stability into an equivalent form which is more adapted to our analytic estimates in Section~\ref{sec: Estimates}. It is helpful to introduce some terminology for discussing stability.

\begin{defn}\label{defn: admissible}
    If $\cE \in \mathcal{T}$ and $\cE \twoheadrightarrow \cQ \ne 0$, we say that $\cQ$ is admissible if the exact sequence  $
    0\rightarrow \cS\rightarrow \cE \rightarrow \cQ \rightarrow 0
    $
    in ${\rm Coh}^{SU(2)}(\mathcal{X})$ has $Z_{\cX}(\cS) \in \mathbb{H}$.
\end{defn}

\begin{rk}
    If $\cE\twoheadrightarrow \cQ$ and  ${\rm Im}(Z_{\cX}(\cQ))>0$, then $\Theta(\cQ) >\Theta(\cE)$ is equivalent to ${\rm Im}\left(\frac{Z_{\cX}(\cQ)}{Z_{\cX}(\cE)}\right)>0$, and it is also equivalent to ${\rm Im}\left(\frac{Z_{\cX}(\cS)}{Z_{\cX}(\cE)}\right)<0$.
\end{rk}

\subsection{$Z$-stable triples}
We now discuss the notion of $Z$-stability from the point of view of the holomorphic triple $\mathcal{E}= (E_1,E_2,\Phi)$ over $X$.  This formulation of stability in terms of triples will be indispensable in the analysis to follow.

Recall the following definition, after Bradlow-Garcia-Prada \cite{BG-P}:

\begin{defn}
    Let $(E_1,E_2,\Phi)$ be a holomorphic triple over $X$.  A holomorphic triple $(E_1',E_2',\Phi')$ is said to be a subtriple of $(E_1,E_2,\Phi)$ if 
    \begin{itemize}
        \item[(i)] $E_i'$ is a coherent subsheaf of $E_i$ for $i=1,2$, and
        \item[(ii)] the following diagram commutes
    \[
\begin{tikzcd}
   E_2' \arrow[d, " "] \arrow[r, "\Phi'"] & E_1' \arrow[d, " "] \\
   E_2 \arrow[r, "\Phi "] & E_1 
\end{tikzcd}
\]
\end{itemize}
We say that the holomorphic subtriple $(E_1',E_2',\Phi')$ is saturated if $E_i'$ is a sub-bundle of $E_i$ for $i=1,2$.
\end{defn}

By \cite{BG-P} holomorphic subtriples are in one-to-one correspondence with $SU(2)$-invariant coherent sub-sheaves of $\mathcal{E}\rightarrow \mathcal{X}$.  In our present setting, with ${\rm rk}(E_2)=1$, there are two types of subtriples, depending on whether $\rank(E_2') =0, 1$.  It will be convenient to distinguish the two.
\begin{defn}\label{defn: subObTypeA/B}
    Let $\mathcal{E}= (E_1, E_2, \Phi)$ be a holomorphic triple with ${\rm rk}(E_2)=1$.  We say that a subtriple $\mathcal{S} = (E_1', E_2', \Phi')$ is:
    \begin{enumerate}
        \item[(i)] of type A if ${\rm rk}(E_2')=1$.
        \item[(ii)] of type B if ${\rm rk}(E_2')=0$
    \end{enumerate}
\end{defn}
If $\mathcal{E}= (E_1,E_2, \Phi)$ is a holomorphic triple then the calculation in Section~\ref{sec: dimReduction} yields
\begin{equation}\label{eq: TripleCentralCharge}
Z_{\mathcal{X}}(\cE) =  \frac{\sigma}{2\pi} \left( \left[\frac{V_{X}}{2\pi}\left(\rank(E_1)+\rank(E_2)\right) -\frac{4\pi }{\sigma} \deg(E_2)\right] +i\bigg[\deg(E_1)+ \deg(E_2) +\frac{2 V_{X}}{\sigma}{\rm rk}(E_2)\bigg]\right)
\end{equation}
In particular, if $\mathcal{S} = (E_1', 0, 0)$ is a subtriple of $\cE$ of type B, then
\begin{equation}\label{eq: subTripleCentralChargeTypeB}
Z_{\mathcal{X}}(\cS) = \frac{\sigma}{2\pi}\left( \frac{V_X}{2\pi} {\rm rk}(E_1') +i\deg(E_1')\right)
\end{equation}
With these formulas we can translate stability in the sense of Definition~\ref{def: stabOnXxP} into a definition of stability for triples. 

\begin{defn}\label{defn: stabOfTriple}
We say that a holomorphic triple $\mathcal{E}= (E_1, E_2, \Phi)$ with ${\rm Im}(Z_{\cX}(\cE)) > 0$ is $Z$-stable if for every non-zero proper holomorphic subtriple $\mathcal{S} = (E_1', E_2',\Phi')$ we have:
\begin{enumerate}
\item[(i)] $
{\rm Im}\left(Z_{\mathcal{X}}(\cS)\right) < {\rm Im}\left(Z_{\mathcal{X}}(\cE)\right),
$ and,
\item[(ii)] If $Z_{\cX}(\cS) \in \mathbb{H}$, then ${\rm Im}\left(\frac{Z_{\cX}(\cS)}{Z_{\cX}(\cE)}\right)<0$
\end{enumerate}
\end{defn}

The following trivial lemma proves the equivalence of Definition~\ref{def: stabOnXxP} and Definition~\ref{defn: stabOfTriple}. 

\begin{lem}
   Let $\mathcal{E}\rightarrow \mathcal{X}$ be an $SU(2)$-equivariant vector bundle with ${\rm Im}(Z_{\mathcal{X}}(\mathcal{E}))>0$.  Then $\cE$ is $Z$-stable in the sense of Definition~\ref{def: stabOnXxP} if and only if $\cE$ is stable in the sense of Definition~\ref{defn: stabOfTriple}.
\end{lem}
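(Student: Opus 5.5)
The plan is to pass between the two definitions using the Bradlow--Garc\'ia-Prada correspondence \cite{BG-P}. This correspondence identifies $SU(2)$-invariant coherent subsheaves $\cS\subset\cE$ with holomorphic subtriples $(E_1',E_2',\Phi')$. We then read off central charges from~\eqref{eq: TripleCentralCharge}, which applies to any subtriple (the computation in Section~\ref{sec: dimReduction} is topological, or equivalently follows from additivity of $ch$). Throughout we use additivity of $Z_{\cX}$ on an exact sequence $0\to\cS\to\cE\to\cQ\to 0$ in ${\rm Coh}^{SU(2)}(\cX)$, so that $Z_{\cX}(\cQ)=Z_{\cX}(\cE)-Z_{\cX}(\cS)$. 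Nonzero quotients $\cQ$ correspond exactly to proper subtriples $\cS$, with $\cS=0$ allowed.

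First we match condition (i). Condition (i) of Definition~\ref{def: stabOnXxP} says $\cE\in\mathcal{T}$. Since $\cE$ is a vector bundle it is not torsion, so this means ${\rm Im}\,Z_{\cX}(\cQ)>0$ for every nonzero equivariant quotient. By additivity this is the same as ${\rm Im}\,Z_{\cX}(\cS)<{\rm Im}\,Z_{\cX}(\cE)$ for every proper subsheaf $\cS$. For $\cS=0$ this reduces to the hypothesis ${\rm Im}\,Z_{\cX}(\cE)>0$. For nonzero proper $\cS$ it is precisely condition (i) of Definition~\ref{defn: stabOfTriple}.

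Next we match condition (ii). Assume condition (i) holds in either formulation; by the previous paragraph it then holds in both. Take a proper subobject $\cS$ with $Z_{\cX}(\cS)\in\mathbb{H}$. Then ${\rm Im}\,Z_{\cX}(\cQ)>0$, so both $\Theta(\cQ)$ and $\Theta(\cE)$ lie in $(0,\pi)$. By the Remark following Definition~\ref{defn: admissible}, $\Theta(\cQ)>\Theta(\cE)$ is equivalent to ${\rm Im}\bigl(Z_{\cX}(\cS)/Z_{\cX}(\cE)\bigr)<0$. Two edge cases remain.
\begin{itemize}
\item The case $\cS=0$ does not arise, since $Z_{\cX}(0)=0\notin\mathbb{H}$. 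Definition~\ref{def: stabOnXxP}(ii) therefore only sees nonzero proper $\cS$, which is exactly what Definition~\ref{defn: stabOfTriple}(ii) tests.
\item The case $\Theta(\cS)=\pi$ is covered by the same inequality. Here $Z_{\cX}(\cS)$ is a negative real number, and ${\rm Im}\bigl(Z_{\cX}(\cS)/Z_{\cX}(\cE)\bigr)<0$ holds automatically.
\end{itemize}

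Combining the two steps, the two definitions coincide. The only point needing care is the subsheaf/subtriple dictionary, in particular that non-saturated subsheaves correspond to non-saturated subtriples and carry the same Chern data. This we take from \cite{BG-P} together with the additivity of $ch$; everything else is bookkeeping.
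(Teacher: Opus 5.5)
Your argument is correct and is essentially the paper's proof written out in detail. Additivity of $Z_{\cX}$ turns condition (i) of Definition~\ref{defn: stabOfTriple} into $\cE\in\mathcal{T}$. Once (i) holds, the Remark after Definition~\ref{defn: admissible} turns condition (ii) into $\Theta(\cE/\cS)>\Theta(\cE)$ for admissible quotients.

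Your second bullet, however, has a sign error. If $Z_{\cX}(\cS)=-c$ with $c>0$, then ${\rm Im}\bigl(Z_{\cX}(\cS)/Z_{\cX}(\cE)\bigr)=c\,{\rm Im}\,Z_{\cX}(\cE)/|Z_{\cX}(\cE)|^2>0$. So such an $\cS$ always destabilizes in both definitions, consistent with $\Theta(\cS)=\pi>\Theta(\cE)$; it does not satisfy the inequality automatically. The case is still covered, because the Remark's equivalence only uses ${\rm Im}\,Z_{\cX}(\cQ)>0$ and ${\rm Im}\,Z_{\cX}(\cE)>0$. You can simply delete that bullet.
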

\begin{proof}
    By additivity of $Z_{\cX}$ on exact sequences, property $(i)$ is equivalent to the statement that $\cE \in \mathcal{T}$.  Similarly, property $(ii)$ is equivalent to the statement that if $\cE\twoheadrightarrow\cE/\cS$ is admissible, then $\Theta(\cE/\cS)> \Theta(\cE)$.
\end{proof}

\subsection{Classification and reduction results}
We prove a lemma classifying stable vortex bundles with large phase.  This can be viewed as the algebro-geometric version of Proposition~\ref{prop: charSolCosNeg}.

\begin{lem}\label{lem: stabForTorsion}
    Suppose $\mathcal{E}\in \mathcal{T}$ is an $SU(2)$ equivariant vector bundle determined by a triple $(E_1, E_2, \Phi)$ with $\rank(E_2)=1$. 
    \begin{itemize}
    \item[(i)] Suppose that $\Theta(\cE) \in(0,\frac{\pi}{2})$.  If $F\simeq \mathbb{P}^1$ is a fiber of $X\times \mathbb{P}^1\rightarrow X$, $Q\rightarrow F$ is a vector bundle, $\cQ=\iota_{*}Q$ and $\iota^*\mathcal{E}\rightarrow \mathcal{Q}\rightarrow 0$, then
    \[
    \Theta(\mathcal{E}) < \frac{\pi}{2} \leq  \Theta(\mathcal{Q}),
    \]
    \item[(ii)]If $\cE$ is stable with $\Theta(\cE) \in [\frac{\pi}{2}, \pi]$, then $\cE$ has rank $2$, and $E_1\simeq E_2$ via the map $\Phi \in H^{0}(X, {\rm Hom}(E_2, E_1))$, which  is nowhere vanishing.  Furthermore, $\Theta(\cE) \in [\frac{\pi}{2}, \frac{\pi}{2} + \arctan(\frac{2\pi}{\sigma}))$.
    \end{itemize}
\end{lem}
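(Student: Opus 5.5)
The plan is to prove both parts by restricting to a single fiber and writing down explicit equivariant quotients. Fix $p\in X$, let $F=\{p\}\times\mathbb{P}^1$, and consider $\iota^*\cE$. It sits in an equivariant extension
\[
0\rightarrow E_1|_p\otimes\mathcal{O}_{\mathbb{P}^1}\rightarrow \iota^*\cE\rightarrow E_2|_p\otimes \mathcal{O}_{\mathbb{P}^1}(2)\rightarrow 0 .
\]
Its class is $\Phi(p)\otimes[\alpha]$, where $[\alpha]$ generates the one-dimensional $SU(2)$-invariant part of ${\rm Ext}^1(\mathcal{O}(2),\mathcal{O})=H^1(\mathcal{O}(-2))$.

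\textbf{Part (i).} Since $H^1(\mathbb{P}^1,\mathcal{O})=0$, global sections of $\mathcal{O}(2)$ lift to $\iota^*\cE$, and $\mathcal{O}^{r}$ is globally generated. Hence $\iota^*\cE$ is globally generated, and so is every quotient bundle $Q$ of it. This gives $\deg Q\geq 0$. By~\eqref{eq: centralChargeExpanded}, i.e.~\eqref{eq: centralChargeTorsionExpanded}, $Z_{\cX}(\iota_*Q)$ has imaginary part a positive multiple of $\rank(Q)$ and real part $-\deg Q\leq 0$. So $\Theta(\cQ)\in[\frac{\pi}{2},\pi)$. Combined with the hypothesis $\Theta(\cE)<\frac{\pi}{2}$, this proves (i).

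\textbf{Part (ii): $E_1$ has rank $1$.} Assume $\Theta(\cE)\in[\frac{\pi}{2},\pi]$, and suppose for contradiction that $\rank E_1\geq 2$. Pick $p$ and a nonzero functional $\lambda\in (E_1|_p)^\vee$ with $\lambda(\Phi(p))=0$. Pushing the extension out along $\lambda$ gives the split extension $\mathcal{O}\oplus\mathcal{O}(2)$, because its class is $\lambda(\Phi(p))[\alpha]=0$. This yields an equivariant surjection $\cE\twoheadrightarrow\iota^*\cE\twoheadrightarrow \iota_*\mathcal{O}_F=:\cQ$. Here $Z_{\cX}(\cQ)$ is purely imaginary with positive imaginary part, so $\Theta(\cQ)=\frac{\pi}{2}\leq\Theta(\cE)$. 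If $\cQ$ is admissible, this contradicts Definition~\ref{def: stabOnXxP}.

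Admissibility means $Z_{\cX}(\cS)=Z_{\cX}(\cE)-Z_{\cX}(\cQ)\in\mathbb{H}$. We have ${\rm Re}\,Z_{\cX}(\cS)={\rm Re}\,Z_{\cX}(\cE)\leq 0$, so this reduces to ruling out ${\rm Im}\,Z_{\cX}(\cS)<0$, and excluding ${\rm Im}=0={\rm Re}$. This is the step I expect to be the main obstacle. I would first try the computation using~\eqref{eq: TripleCentralCharge}: the condition ${\rm Re}\,Z_{\cX}(\cE)\leq0$ forces $\deg E_2\geq \frac{\sigma V_X(\rank E_1+1)}{8\pi^2}>0$, and $\cE\in\mathcal{T}$ controls $\deg E_1$ from below via the quotient $\cE\twoheadrightarrow \cE_2$. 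Together these should make ${\rm Im}\,Z_{\cX}(\cE)$ exceed the fixed amount ${\rm Im}\,Z_{\cX}(\iota_*\mathcal{O}_F)$. If this fails, the fallback is to replace $\cQ$ by a larger admissible quotient, for example $\cE\twoheadrightarrow p_1^*(E_1/L)$ for a suitable line subbundle $L\supset{\rm im}\,\Phi$, and compare phases directly.

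\textbf{Part (ii): $\Phi$ nowhere vanishing, $E_1\simeq E_2$.} Once $\rank E_1=1$, suppose $\Phi(p)=0$ at some $p$. Then the same pushout argument with $\lambda={\rm id}$ gives the split restriction $\iota^*\cE\simeq\mathcal{O}\oplus\mathcal{O}(2)$ and the quotient $\iota_*\mathcal{O}_F$ of phase $\frac{\pi}{2}$, which is a contradiction as before. So $\Phi$ is nowhere vanishing and $E_1\simeq E_2$ via $\Phi$.

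\textbf{Part (ii): the phase bound.} Put $d=\deg E_1=\deg E_2$. The bound $\Theta(\cE)<\frac{\pi}{2}+\arctan(\frac{2\pi}{\sigma})$ is equivalent to $\cos\htheta+\frac{2\pi}{\sigma}\sin\htheta>0$, that is, ${\rm Re}\,Z_{\cX}+\frac{2\pi}{\sigma}{\rm Im}\,Z_{\cX}>0$. Substituting into~\eqref{eq: TripleCentralCharge}, the $d$-terms cancel ($-\frac{4\pi d}{\sigma}+\frac{2\pi}{\sigma}\cdot 2d=0$). What remains is the positive quantity $\frac{\sigma}{2\pi}\bigl(\frac{V_X}{\pi}+\frac{4\pi V_X}{\sigma^2}\bigr)$, which completes (ii).
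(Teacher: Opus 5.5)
There is a genuine gap: you never show that the test quotient $\cE\twoheadrightarrow\iota_*\mathcal{O}_F$ is admissible. You isolate this step correctly, but the mechanism you propose for it fails. Since $Z_{\cX}(\iota_*\mathcal{O}_F)=i\frac{\sigma}{2\pi}$, the kernel satisfies
\[
{\rm Im}\,Z_{\cX}(\cS)=\frac{\sigma}{2\pi}\Bigl(\deg E_1+\deg E_2-1+\frac{2V_X}{\sigma}\Bigr).
\]
On the other hand, ${\rm Im}\,Z_{\cX}(\cE)>0$ only gives $\deg E_1+\deg E_2+\frac{2V_X}{\sigma}>0$, so you really do need a lower bound on $\deg E_1$. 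The quotient $\cE\twoheadrightarrow p_1^*E_2\otimes p_2^*\mathcal{O}_{\mathbb{P}^1}(2)$ (kernel $(E_1,0,0)$) cannot supply one: membership in $\mathcal{T}$ only yields $\deg E_2+\frac{2V_X}{\sigma}>0$, which is a statement about $E_2$.

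The missing idea, which is how the paper proceeds, is to use $L_\Phi={\rm sat}(\Phi(E_2))\simeq E_2(D)$, where $D$ is the zero divisor of $\Phi$. There are three cases:
\begin{itemize}
\item[(a)] If $L_\Phi\neq E_1$, apply $\cE\in\mathcal{T}$ to the quotient $(E_1/L_\Phi,0,0)$ by $\cS_\Phi=(L_\Phi,E_2,\Phi)$. This gives $\deg(E_1/L_\Phi)>0$, hence $\deg E_1>\deg E_2+\deg D>0$.
\item[(b)] If $L_\Phi=E_1$, then $\deg E_1=\deg E_2+\deg D>0$.
\item[(c)] If $\Phi\equiv 0$, the quotient $p_1^*E_1$ (kernel $(0,E_2,0)$) gives $\deg E_1>0$ directly.
\end{itemize}
With $\deg E_1$ and $\deg E_2$ both positive integers, ${\rm Im}\,Z_{\cX}(\cS)>0$ and the quotient is admissible. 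Your nowhere-vanishing step (``a contradiction as before'') relies on exactly this bound, including the case $\Phi\equiv 0$. So as written, neither the rank-one claim nor the nowhere-vanishing claim of (ii) is established.

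Your fallback would actually rescue the rank step. The quotient $p_1^*(E_1/L_\Phi)$ (or $p_1^*E_1$ if $\Phi\equiv0$) has central charge in the open first quadrant: its real part is proportional to $\rank(E_1/L_\Phi)>0$, and its imaginary part is positive by $\cE\in\mathcal{T}$. Its phase is therefore $<\frac{\pi}{2}\le\Theta(\cE)$. Its kernel $\cS_\Phi$ is admissible because $\deg L_\Phi\ge\deg E_2>0$. This is the route taken in Lemma~\ref{lem: StabreplaceTorsion}. However, you leave it as a suggestion, and it does not cover the vanishing step.

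The rest of your argument is sound.
\begin{itemize}
\item[(i)] Your proof of (i) via global generation of $\iota^*\cE$ is correct and more explicit than the paper's ``straightforward to check''.
\item[(ii)] Your treatment of the phase bound is a genuine simplification. The paper obtains $\cos\htheta+\frac{2\pi}{\sigma}\sin\htheta>0$ by testing stability against $\iota_*\iota^*\cE$, which needs another admissibility check. You observe instead that once $E_1\simeq E_2$ are line bundles, ${\rm Re}\,Z_{\cX}(\cE)+\frac{2\pi}{\sigma}{\rm Im}\,Z_{\cX}(\cE)=\frac{\sigma}{2\pi}\bigl(\frac{V_X}{\pi}+\frac{4\pi V_X}{\sigma^2}\bigr)>0$, independently of the degree. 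That bound therefore needs no stability input at all.
\end{itemize}
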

\begin{proof}
The main point is to prove $(ii)$, since $(i)$ is trivial. $\mathcal{E}$ is determined by the triple $(E_1,E_2, \Phi)$.  Choose any point $x\in X$, and let $F= p_1^{-1}(x)\simeq \mathbb{P}^1$.  If $\Phi(x) \ne 0$ then
\begin{equation}\label{eq: torsionQuotientPhiNeZero}
\iota^*\mathcal{E} \simeq \mathcal{O}_{\mathbb{P}^1}^{\oplus {\rm rk}(E_1)-1} \oplus \mathcal{O}_{\mathbb{P}^1}(1)^{\oplus 2}.
\end{equation}
If instead $\Phi(x)=0$ then
\begin{equation}
\label{eq: torsionQuotientPhiequalZero}
\iota^*\mathcal{E} \simeq \mathcal{O}_{\mathbb{P}^1}^{\oplus {\rm rk}(E_1)} \oplus \mathcal{O}_{\mathbb{P}^1}(2).
\end{equation}
Therefore if either ${\rm rk}(E_1) >1$ or ${\rm rk}(E_1)=1$ and $x\in \{\Phi=0\}$, then we have a surjection 
\[
\iota^*\mathcal{E} \rightarrow \mathcal{O}_{\mathbb{P}^1} \rightarrow 0.
\]
This yields an exact sequence
\begin{equation}\label{eq: classStableLargePhaseSeq1}
0\rightarrow \cS \rightarrow \mathcal{E}\rightarrow \iota_*\mathcal{O}_{F}\rightarrow 0
\end{equation}
If we knew that $Z_{\cX}(\cS) \in \mathbb{H}$ then we could test the stability of $\cE$ against this exact sequence.  Let us assume $Z_{\cX}(\cS) \in \mathbb{H}$, and we will return to this point at the end of the proof. We compute
\[
Z_{\cX}(\iota_{*}\mathcal{O}_{\mathbb{P}^1}) = \sqrt{-1}\frac{\sigma}{2\pi}
\]
(e.g. by ~\eqref{eq: centralChargeTorsionExpanded}), we get $\Theta(\iota_*\mathcal{O}_{\mathbb{P}^1}) = \frac{\pi}{2}$.  It is straightforward to check, that any other $SU(2)$-equivariant quotient arising from the options~\eqref{eq: torsionQuotientPhiNeZero} and~\eqref{eq: torsionQuotientPhiequalZero} has phase at least $\frac{\pi}{2}$. This finishes the proof of $(i)$.

Proceeding to $(ii)$, we see that if $\mathcal{E}$ is stable and $\Theta(\cE)\in [\frac{\pi}{2}, \pi]$, then necessarily $E_1$ and $E_2$ have  rank $1$, and $\Phi$ is non-vanishing.  In this case we have $\iota^*\cE\simeq \mathcal{O}_{\mathbb{P}^1}(1)^{\oplus 2}$. Now
\[
Z_{\cX}(\iota_{*}\iota^*\mathcal{E}) = \sqrt{-1}\frac{\sigma}{\pi} -2
\]
and so
\[
{\rm Im}\left(\frac{Z_{\cX}(\iota_*\iota^*\cE)}{Z_{\cX}(\mathcal{E})}\right) \propto \frac{\sigma}{2\pi}\cos(\htheta) +\sin(\htheta).
\]
Consider the exact sequence
\begin{equation}\label{eq: classStableLargePhaseSeq2}
0\rightarrow \cS \rightarrow \cE \rightarrow \iota_*\iota^*\cE \rightarrow 0
\end{equation}
If $Z_{\cX}(\cS) \in \mathbb{H}$, then we conclude that the stability of $\mathcal{E}$  implies $\cos(\htheta) +\frac{2\pi}{\sigma}\sin(\htheta) >0$.

We are therefore reduced to proving that, if $\Theta(\cE) \in [\frac{\pi}{2},\pi]$, then in the exact sequences~\eqref{eq: classStableLargePhaseSeq1} and~\eqref{eq: classStableLargePhaseSeq2} we have $Z_{\cX}(\cS)\in \mathbb{H}$.  From~\eqref{eq: sec3CentralCharge} we see that we must have $\deg(E_2)>0$.  We claim that $\deg(E_1)>0$ as well.  First assume that $\Phi \not\equiv 0$.  Let $L= {\rm sat}(\Phi(E_2))$ be the saturation of $\Phi(E_2) \subset E_1$.  We have $L_{\Phi} \simeq E_2(D)$ for an effective divisor $D\subset X$ corresponding to the zeros of $\Phi$.  Consider the $SU(2)$-equivariant subbundle $\cS_{\Phi}\subset \cE$ given by the holomorphic triple $(L,E_2,\Phi)$. If $\mathcal{S}_{\Phi} \ne \cE$ then since $\cE \in \mathcal{T}$ we have
\[
{\rm deg}(E_1/L_{\Phi})>0
\]
but then we have
\[
\deg(E_1) = \deg(E_1/L_{\Phi}) + \deg(L_{\Phi}) >\deg(E_2)+\deg(D)
\]
If $E_1/L_{\Phi}=0$, then $\deg(E_1)= \deg(E_2)+\deg(D)>0$.  Finally, if $\Phi \equiv 0$ then $E_2\otimes \mathcal{O}_{\mathbb{P}^1}(2)$ is a subbundle of $\cE$ and we again conclude from $\cE \in \mathcal{T}$ that $\deg(E_1)>0$.

Now we can prove $Z_{\cX}(\cS) \in \mathbb{H}$ in the exact sequences~\eqref{eq: classStableLargePhaseSeq1} and~\eqref{eq: classStableLargePhaseSeq2}.  In both cases, the additivity of the central charge on exact sequences gives
\[
{\rm Im}\left(Z_{\cX}(\cS)\right) =  \frac{\sigma}{2\pi}\left(\deg(E_1)+\deg(E_2) -r+ \frac{2V_{X}}{\sigma}\right)
\]
where $r\in\{1,2\}$. Since $\deg(E_1),\deg(E_2) \in \mathbb{Z}_{>0}$ we conclude that ${\rm Im}Z_{\cX}(\cS)>0$ as desired. 
\end{proof}

Next we prove a lemma which says that, to check stability in the sense of Definition~\ref{def: stabOnXxP} it suffices to check a small subset of relatively simple coherent sheaves.

\begin{prop}\label{prop: simplifiedStability}
Let $\mathcal{E}$ be an $SU(2)$-equivariant vector bundle defined by a holomorphic triple $(E_1, E_2,\Phi)$ with $\rank(E_2)=1$, and ${\rm Im}(Z_{\cX}(\cE))>0$. Then $\mathcal{E}$ is Z-stable in the sense of Definition~\ref{def: stabOnXxP} if and only if the following hold:
\begin{itemize}
    \item[(i)] If $F\simeq \mathbb{P}^1$ is a fiber of $X\times \mathbb{P}^1\rightarrow X$, $Q\rightarrow F$ is a vector bundle, $\cQ= \iota_{*}Q$ and $\iota^*\mathcal{E}\rightarrow \mathcal{Q}\rightarrow 0$ is admissible then
    \[
    \Theta(\mathcal{E}) < \Theta(\mathcal{Q}),
    \]
    and,
    \item[(ii)] For every $SU(2)$-equivariant quotient bundle $\mathcal{E}\twoheadrightarrow \mathcal{Q} \ne 0$, we have ${\rm Im}(Z_{\cX}(\cQ))>0$ and, if $\cQ$ is admissible, then
    \[
    \Theta(\cE) < \Theta(\mathcal{Q}).
    \]
\end{itemize}
\end{prop}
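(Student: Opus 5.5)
The ``only if'' direction is immediate from Definition~\ref{def: stabOnXxP}: conditions (i) and (ii) only test special admissible quotients of $\cE$, and $\cE\in\mathcal{T}$ gives ${\rm Im}(Z_{\cX}(\cQ))>0$ for every nonzero quotient. The content is the converse. I would pass to the triple picture of Definition~\ref{defn: stabOfTriple}. Let $\cS=(E_1',E_2',\Phi')$ be an arbitrary proper subtriple with quotient $\cQ=\cE/\cS$. The plan is to reduce $\cQ$ to the two model classes by splitting $\cQ$ into its torsion and torsion-free parts, and then using additivity of $Z_{\cX}$.

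\textbf{Step 1: saturation.} Let $\bar{E}_i'$ be the saturation of $E_i'$ in $E_i$. Since $\Phi(E_2')\subset E_1'$, continuity gives $\Phi(\bar E_2')\subset \bar E_1'$, so $\bar\cS=(\bar E_1',\bar E_2',\Phi)$ is a saturated subtriple, i.e.\ an equivariant subbundle. Its quotient $\bar\cQ=\cE/\bar\cS$ is an $SU(2)$-equivariant quotient bundle, or zero. We then have the exact sequence
\[
0\rightarrow \bar\cS/\cS \rightarrow \cQ \rightarrow \bar\cQ\rightarrow 0,
\]
in which $\mathtt{T}:=\bar\cS/\cS$ is an equivariant torsion sheaf. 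By Lemma~\ref{lem: torsionIsTorsion}, ${\rm Im}(Z_{\cX}(\mathtt{T}))>0$ whenever $\mathtt{T}\neq 0$. The first half of (ii) gives ${\rm Im}(Z_{\cX}(\bar\cQ))>0$ when $\bar\cQ\neq 0$. Hence ${\rm Im}(Z_{\cX}(\cQ))>0$, which shows $\cE\in\mathcal{T}$. This settles part (i) of Definition~\ref{defn: stabOfTriple}.

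\textbf{Step 2: the phase inequality.} Now assume $\cQ$ is admissible, i.e.\ $Z_{\cX}(\cS)\in\mathbb{H}$. Note $\mathtt{T}$ is supported on finitely many fibres, and its $\mathcal{I}$-adic graded pieces are vector bundles $\iota_*Q_k$ on fibres. The natural route is to realize the torsion contribution as a quotient of restrictions. The sheaf $\cE/\cS$ maps onto $\cE|_{F}/(\text{image of }\cS)$ on each fibre $F$ in the support, so one can filter $\cQ$ as an iterated extension whose pieces are $\bar\cQ$ and fibre-quotients $\iota_*Q$ of $\iota^*\cE$ (or of $\iota^*$ of an intermediate subsheaf).

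Since phases in $(0,\pi]$ are convex under addition, $\Theta(\cQ)>\Theta(\cE)$ follows once each piece has phase $>\Theta(\cE)$. For $\bar\cQ$ this is (ii), and for fibre pieces it is (i). The catch is that admissibility must hold for each piece in order to invoke the hypotheses.

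\textbf{Main obstacle.} The difficulty is exactly the admissibility bookkeeping. Hypotheses (i) and (ii) apply only to admissible quotients, and the intermediate kernels ($\bar\cS$, or the kernels of $\cE\to\iota_*Q$) need not lie in $\mathbb{H}$ even when $\cS$ does. My fallback is to argue case by case on ${\rm Im}$ of the kernels. If ${\rm Im}\,Z_{\cX}(\bar\cS)\le 0$ with $Z_{\cX}(\bar\cS)\notin\mathbb{H}$, then ${\rm Im}\,Z_{\cX}(\bar\cQ)\ge {\rm Im}\,Z_{\cX}(\cE)$. In that case $\Theta(\bar\cQ)$ vs.\ $\Theta(\cE)$ can be compared directly through the real parts from \eqref{eq: TripleCentralCharge}, or else one shows ${\rm Re}\,Z_{\cX}(\bar\cS)>0$ forces $\Theta(\bar\cQ)>\Theta(\cE)$. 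Torsion pieces are treated in the same way, using \eqref{eq: centralChargeTorsionExpanded} and the explicit splittings \eqref{eq: torsionQuotientPhiNeZero}, \eqref{eq: torsionQuotientPhiequalZero}, together with Lemma~\ref{lem: stabForTorsion}(i) when $\Theta(\cE)<\pi/2$. I expect this phase-by-phase verification of admissibility to be where the real work lies.
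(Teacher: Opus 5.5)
Your Step 1 is correct and matches the paper's first step: $\bar\cS$ is exactly the kernel of $\cE\to\mathcal{V}$, where $0\to\mathtt{T}\to\cQ\to\mathcal{V}\to 0$ is the torsion/torsion-free decomposition of $\cQ$. Equivariance forces $\mathcal{V}=\bar\cQ$ to be a bundle.

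However, the obstacle you single out is not one. Admissibility of $\bar\cQ$ is automatic: $Z_{\cX}(\bar\cS)=Z_{\cX}(\cS)+Z_{\cX}(\mathtt{T})$ with ${\rm Im}\,Z_{\cX}(\cS)\geq 0$ and ${\rm Im}\,Z_{\cX}(\mathtt{T})>0$ by Lemma~\ref{lem: torsionIsTorsion}. Hence $Z_{\cX}(\bar\cS)\in\mathbb{H}$, and (ii) gives $\Theta(\bar\cQ)>\Theta(\cE)$ with no case analysis.

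The genuine problem is $\mathtt{T}$. Its $\mathcal{I}$-adic pieces are \emph{subquotients} of $\iota^*\cE$, not quotients of it: $\iota^*\mathtt{T}\hookrightarrow\iota^*\cQ$ by Tor-vanishing against the locally free $\bar\cQ$, and the higher pieces are quotients of $\iota^*\mathtt{T}$. So hypothesis (i) does not apply to them.

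When $\Theta(\cE)<\frac{\pi}{2}$ this is harmless, but the reason is structural rather than (i). The bundle $\iota^*\cE$ is an equivariant extension of $\mathcal{O}_{\mathbb{P}^1}(2)$ by $\mathcal{O}_{\mathbb{P}^1}^{\oplus r}$, so every equivariant subquotient has $ch=m_0\,ch(\mathcal{O}_{\mathbb{P}^1})+m_2\,ch(\mathcal{O}_{\mathbb{P}^1}(2))$. Hence $\Theta(\mathtt{T})\geq\frac{\pi}{2}>\Theta(\cE)$, and convexity finishes. This is the paper's Case 1, which never uses (i).

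When $\Theta(\cE)\in[\frac{\pi}{2},\pi]$ your piecewise strategy cannot work, because torsion pieces of phase $\leq\Theta(\cE)$ genuinely occur in quotients of $Z$-stable bundles. Take $\cE=(L,L,\Phi)$ with $\Phi$ nowhere zero, $\deg L\geq 2$ and $\Theta(\cE)\in[\frac{\pi}{2},\frac{\pi}{2}+\arctan(\frac{2\pi}{\sigma}))$; this is $Z$-stable by Lemma~\ref{lem: stabForVeryRestr}. Take the admissible subtriple $\cS=(L(-x),0,0)$.
\begin{itemize}
\item The torsion part of $\cE/\cS$ is $\iota_*\mathcal{O}_F$, of phase exactly $\frac{\pi}{2}$.
\item It is not a quotient of $\iota^*\cE\simeq\mathcal{O}_{\mathbb{P}^1}(1)^{\oplus 2}$, so (i) says nothing about it.
\item $\Theta(\cE/\cS)>\Theta(\cE)$ holds only because of slack in the torsion-free part, equivalently because ${\rm Re}\,Z_{\cX}(\cS)>0$.
\end{itemize}
No argument comparing each piece separately to $\Theta(\cE)$ can detect this.

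The missing idea is the paper's classification step. In this regime, apply (i) to the two specific fibre quotients $\cE\twoheadrightarrow\iota_*\mathcal{O}_F$ and $\cE\twoheadrightarrow\iota_*\iota^*\cE$. These are admissible because $\Theta(\cE)\geq\frac{\pi}{2}$ and $\cE\in\mathcal{T}$ force $\deg E_1,\deg E_2>0$. As in Lemma~\ref{lem: stabForTorsion}(ii), this forces $\rank(E_1)=1$, $E_1\simeq E_2$, $\Phi$ nowhere vanishing, and $\cos(\htheta)+\frac{2\pi}{\sigma}\sin(\htheta)>0$. Then check stability globally against the explicit list of subtriples $(L(-D),0,0)$ and $(L(-D_1),L(-D_2),\Phi')$, which is Lemma~\ref{lem: stabForVeryRestr}.
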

\begin{proof}
Clearly stability in the sense of Definition~\ref{def: stabOnXxP} implies $(i)$ and $(ii)$.  For the converse, let $\mathcal{Q}$ be any non-zero $SU(2)$-equivariant coherent sheaf and suppose we have
\begin{equation}\label{eq: simpleStabexSeq1}
0\rightarrow \cS \rightarrow \cE\rightarrow \cQ \rightarrow 0
\end{equation}
We have an exact sequence
\[
0 \rightarrow \mathtt{T}\rightarrow \mathcal{Q} \rightarrow \mathcal{V} \rightarrow 0
\]
where $\mathtt{T}$ is an $SU(2)$-equivariant torsion sheaf and $\mathcal{V}$ is an equivariant torsion-free sheaf.  Since the singular set of $\mathcal{V}$ consists of points in $\cX$, but $\mathcal{V}$ is $SU(2)$-equivariant, this implies that $\mathcal{V}$ is a vector bundle.  Thus, by $(ii)$ we have ${\rm Im}\left(Z_{\cX}(\mathcal{V})\right) >0$. Since $\mathtt{T}$ is torsion $\mathtt{T}\in \mathcal{T}$ by Lemma~\ref{lem: torsionIsTorsion}, and so ${\rm Im}(Z_{\cX}(\cQ))>0$.  Hence $\cE \in \mathcal{T}$.  

From now on assume that the kernel $\cS$ in~\eqref{eq: simpleStabexSeq1} has $Z_{\cX}(\cS) \in \mathbb{H}$, so $\cQ$ is admissible. Let $\cS'= {\rm Ker}(\cE\twoheadrightarrow \mathcal{V})$.  Then
\[
Z_{\cX}(\cS')= Z_{\cX}(\cS) +Z_{\cX}(\mathtt{T})
\]
and using again the $\mathtt{T}\in \mathcal{T}$ by Lemma~\ref{lem: torsionIsTorsion} we conclude that $\cE\twoheadrightarrow \mathcal{V}$ is admissible. Therefore, by property $(ii)$ of the proposition we have
\[
\Theta(\cE) < \Theta(\mathcal{V}),
\]
which in particular yields that ${\rm Im}(Z_{\cX}(\mathcal{V}))>0$.
If $\mathtt{T}=0$ we are done, so we assume this is not the case.  By $SU(2)$-equivariance, the reduced support of $\mathtt{T}$ is a disjoint union of $\mathbb{P}^1$ fibers  $F_\beta\subset \mathcal{X}$.  Write
$
\mathtt{T}= \bigoplus_{\beta=1}^n \mathtt{T}_{(\beta)}
$
where the reduced support of $T_{(\beta)}$ is a single fiber $F_\beta$.  Since 
$
Z_{\cX}(\mathtt{T}) = \sum_{\beta=1}^{n} Z(\mathtt{T}_{(\beta)})
$
it suffices to treat each fiber separately.  Thus, for simplicity we assume $n=1$, and let $x\in X$ and $\mathcal{I}$ denote the ideal sheaf of the fiber $F_x = p_1^{-1}(x)$ and let $\iota: F_{x} \hookrightarrow \cX$ be the inclusion.  We consider the $\mathcal{I}$-adic filtration
\[
\mathtt{T}\supset \mathcal{I}\mathtt{T} \supset \mathcal{I}^2\mathtt{T} \cdots \supset \mathcal{I}^N\mathtt{T} =0
\]
Let $\mathtt{T}_k = \mathcal{I}^k\mathtt{T}/\mathcal{I}^{k+1}\mathtt{T}$, and note that $\mathtt{T}_0 = \iota^*\mathtt{T}$.  Since $F_{x}$ is a fiber we have $\mathcal{I}^k/\mathcal{I}^{k+1} = \mathcal{O}_{F_{x}}$, and hence
\[
\mathtt{T}_0 \simeq \mathtt{T}_0 \otimes_{\mathcal{O}_{F_{x}}} \mathcal{I}^k/\mathcal{I}^{k+1} \rightarrow \mathtt{T}_k \rightarrow 0.
\]
Since tensor product is right exact  \cite[\href{https://stacks.math.columbia.edu/tag/00M0}{Tag 00M0}]{stacks-project}, tensoring by $\mathcal{O}_{\mathcal{X}}/\mathcal{I}$ yields the sequence
\[
\iota^*\cE\rightarrow \iota^*\mathcal{Q}\rightarrow 0
\]
On the other hand, since $\mathcal{V}$ is a vector bundle, torsion vanishing \cite[\href{https://stacks.math.columbia.edu/tag/00M5}{Tag 00M5}]{stacks-project} yields 
\[
0\rightarrow \iota^*\mathtt{T} \rightarrow \iota^*\mathcal{Q} \rightarrow \iota^*\mathcal{V} \rightarrow 0
\]
We now consider two cases:

\smallskip\noindent {\bf Case 1}: Assume that $\Theta(\cE) \in (0, \frac{\pi}{2})$.  We have
\[
0\rightarrow \mathcal{O}_{\mathbb{P}^1}^{\oplus r} \rightarrow \iota^*\mathcal{E} \rightarrow \mathcal{O}_{\mathbb{P}^1}(2) \rightarrow 0 
\]
where $r \geq 1$.  Since we have an $SU(2)$-equivariant surjection $\iota^*\cE \rightarrow \iota^*\cQ\rightarrow 0$, we obtain
\[
0\rightarrow \mathcal{O}_{\mathbb{P}^1}^{\oplus r'} \rightarrow \iota^*\mathcal{Q} \rightarrow \mathcal{O}_{\mathbb{P}^1}(2)^{\oplus a} \rightarrow 0 
\]
where $r' \leq r$ and $ a\in \{0,1\}$, and by convention $\mathcal{O}_{\mathbb{P}^1}(2)^{\oplus 0}=0$.  Since $\mathtt{T}_0$ is an equivariant subbundle of $\iota^*\cQ$ we obtain that $\mathtt{T}_0$ sits in an exact sequence
\[
0\rightarrow \mathcal{O}_{\mathbb{P}^1}^{\oplus r_0} \rightarrow \mathtt{T}_0\rightarrow \mathcal{O}_{\mathbb{P}^1}(2)^{\oplus a_0} \rightarrow 0 
\]
where $a_0\in \{ 0,1\}$, and $r_0 \leq r'$.  Thus we have
\[
ch(\mathtt{T}_0) = r_0ch(\mathcal{O}_{\mathbb{P}^1}) + a_0ch(\mathcal{O}_{\mathbb{P}^1}(2))
\]
Furthermore, thanks to the equivariant surjection $\mathtt{T}_0 \twoheadrightarrow \mathtt{T}_k$, the same is true of $\mathcal{T}_k$; namely $\mathtt{T}_k$ sits in an exact sequence
\[
0\rightarrow \mathcal{O}_{\mathbb{P}^1}^{\oplus r_k} \rightarrow \mathtt{T}_k\rightarrow \mathcal{O}_{\mathbb{P}^1}(2)^{\oplus a_k} \rightarrow 0 
\]
where $a_k\in\{ 0,1\}$, and $r_k \leq r_0$.  Since
\[
ch(\mathtt{T}) = \sum_{k=0}^{N-1}ch(\mathtt{T}_k) = m_0ch(\mathcal{O}_{\mathbb{P}^1}) + m_2ch(\mathcal{O}_{\mathbb{P}^1}(2))
\]
for $m_0,m_2 \in \mathbb{Z}_{\geq 0}$ not both zero we obtain
\[
\Theta(\mathtt{T}) \geq  \frac{\pi}{2}
\]
and the result follows.

\bigskip
{\bf Case 2:}  Assume that $\Theta(\cE) \in[\frac{\pi}{2}, \pi]$.  We have already shown that $\cE \in \mathcal{T}$. Recall that the conclusion of Lemma~\ref{lem: stabForTorsion} part $(ii)$ followed from checking stability against the admissible quotients
\[
\begin{aligned}
    &\cE \twoheadrightarrow \iota_{*}\mathcal{O}_{F}\\
    &\cE \twoheadrightarrow \iota_*\iota^*\cE
    \end{aligned}
\]
where $F\simeq \mathbb{P}^1$ is a fiber.  These are precisely the types of objects permitted by $(ii)$ and so we conclude again that $\rank(\cE)=2$, $E_1 \simeq E_2 =:L$ and $\{\Phi=0\} = \emptyset$ and, crucially $\Theta(\cE) \in [\frac{\pi}{2}, \frac{\pi}{2} + \arctan(\frac{2\pi}{\sigma}))$.  The conclusion now follows from Lemma~\ref{lem: stabForVeryRestr} below.
\end{proof}

\begin{lem}\label{lem: stabForVeryRestr}
Suppose that $\cE=(E_1,E_2,\Phi)$ is a vortex type triple with $\rank(E_1)=1$, $E_1 \simeq E_2$, $\{\Phi=0\} = \emptyset$ and $\Theta(\cE) \in [\frac{\pi}{2}, \frac{\pi}{2} + \arctan(\frac{2\pi}{\sigma}))$.  Then $\cE$ is $Z$-stable.
\end{lem}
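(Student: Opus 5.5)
The plan is to verify the two conditions of Definition~\ref{defn: stabOfTriple} directly. This is legitimate because that definition is equivalent to Definition~\ref{def: stabOnXxP}, and because, by \cite{BG-P}, equivariant subsheaves of $\cE$ correspond exactly to holomorphic subtriples. Since $E_1\simeq E_2$ are line bundles and $\Phi$ is a nowhere vanishing isomorphism, every subtriple can be listed explicitly, so no general theory is needed. Write $L:=E_1\simeq E_2$ and $d:=\deg L$. By \eqref{eq: TripleCentralCharge},
\[
Z_{\cX}(\cE)=\frac{\sigma}{2\pi}\left(\Big[\frac{V_X}{\pi}-\frac{4\pi}{\sigma}d\Big]+i\Big[2d+\frac{2V_X}{\sigma}\Big]\right).
\]
The hypothesis $\Theta(\cE)\in[\frac{\pi}{2},\pi)$ gives ${\rm Re}\,Z_{\cX}(\cE)\le 0$, hence $d\ge \frac{\sigma V_X}{4\pi^2}>0$, and so ${\rm Im}\,Z_{\cX}(\cE)>0$.

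First I classify the nonzero proper subtriples $\cS=(E_1',E_2',\Phi')$. Nonzero subsheaves of a line bundle on a curve are $L(-D)$ with $D$ an effective divisor, so $E_1'\in\{0, L(-D_1)\}$ and $E_2'\in\{0,L(-D_2)\}$.
\begin{itemize}
\item[(a)] Type B: $\cS=(L(-D_1),0,0)$ with $a:=\deg D_1\ge 0$.
\item[(b)] Type A: $\cS=(L(-D_1),L(-D_2),\Phi|)$. Because $\Phi$ is an isomorphism, the condition $\Phi(E_2')\subset E_1'$ forces $D_1\le D_2$. In particular $a:=\deg D_1\le b:=\deg D_2$, and properness means $b>0$.
\end{itemize}
The case $E_1'=0$, $E_2'\neq 0$ is impossible, because $\Phi$ is injective.

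For type B, formula \eqref{eq: subTripleCentralChargeTypeB} gives $Z_{\cX}(\cS)=\frac{\sigma}{2\pi}(\frac{V_X}{2\pi}+i(d-a))$. Condition (i) reads $d-a<2d+\frac{2V_X}{\sigma}$, which holds because $d>0$. For condition (ii), the real part of $Z_{\cX}(\cS)$ is positive, so $Z_{\cX}(\cS)\in\mathbb{H}$ forces $d-a>0$. Then $\Theta(\cS)\in(0,\frac{\pi}{2})$, and therefore $\Theta(\cS)<\frac{\pi}{2}\le\Theta(\cE)$, which is exactly condition (ii).

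For type A, I would work with the quotient $\cQ=\cE/\cS$, which is a torsion sheaf supported on fibers over $D_1\cup D_2$. Additivity of $Z_{\cX}$ gives
\[
Z_{\cX}(\cQ)=\frac{\sigma}{2\pi}\left(-\frac{4\pi}{\sigma}b+i(a+b)\right).
\]
Since $b>0$ we get ${\rm Im}\,Z_{\cX}(\cQ)>0$, which is condition (i). Moreover ${\rm Re}\,Z_{\cX}(\cQ)<0$, so
\[
\Theta(\cQ)=\frac{\pi}{2}+\arctan\Big(\frac{4\pi b}{\sigma(a+b)}\Big)\ge \frac{\pi}{2}+\arctan\Big(\frac{2\pi}{\sigma}\Big)>\Theta(\cE),
\]
where the middle inequality uses $a\le b$ and the last uses the hypothesis on $\Theta(\cE)$. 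Whenever $\cQ$ is admissible, this gives $\Theta(\cQ)>\Theta(\cE)$, which by the remark after Definition~\ref{defn: admissible} is equivalent to ${\rm Im}(Z_{\cX}(\cS)/Z_{\cX}(\cE))<0$.

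The only delicate point is the type A bookkeeping. The constraint $D_1\le D_2$ coming from $\Phi$ being an isomorphism is precisely what yields the bound $\frac{b}{a+b}\ge\frac12$, and this is where the upper bound $\frac{\pi}{2}+\arctan(\frac{2\pi}{\sigma})$ on $\Theta(\cE)$ enters. The remaining care is in the sign conventions for the real part of $Z_{\cX}$, namely checking that $-\frac{4\pi}{\sigma}\deg E_2'$ is the only term in ${\rm Re}\,Z_{\cX}$ that does not cancel in the quotient.
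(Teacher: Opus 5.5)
Your proof is correct and follows essentially the paper's route. You enumerate the subtriples $(L(-D),0,0)$ and $(L(-D_1),L(-D_2),\Phi')$ with $D_1\le D_2$, and you use $\cos(\Theta(\cE))\le 0$ together with $\deg D_1\le \deg D_2$. Your phase formula $\Theta(\cQ)=\frac{\pi}{2}+\arctan\bigl(\frac{4\pi b}{\sigma(a+b)}\bigr)$ is a repackaging of the paper's inequality $\cos(\htheta)(d_1+d_2)+\frac{4\pi}{\sigma}d_2\sin(\htheta)>0$. You also check condition (i) of Definition~\ref{defn: stabOfTriple} explicitly, i.e.\ that $\cE\in\mathcal{T}$, which the paper's proof does not verify.
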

\begin{proof}
Let $L=E_2\simeq E_1$. In this restrictive setting, the only equivariant coherent sub-sheaves of $\cE$ of type B are given by a triple $(L(-D), 0,0)$ where $D$ is an effective divisor.  The sub-sheaves of type A correspond to holomorphic triples $(L(-D_1), L(-D_2), \Phi')$ where $D_1, D_2$ are effective divisors, $D_2 \geq D_1$ and $\Phi'$ is the defining section of $D_2-D_1$.  We check each case separately.
\smallskip

{\bf Case 1:} Sub-sheaves of type B
\smallskip

\noindent In this case $\mathcal{S}=(L(-D),0,0)$ with $D$ effective so by~\eqref{eq: TripleCentralCharge} we have
\[
Z_{\cX}(\cS) = \frac{\sigma}{2\pi}\left(\frac{V_{X}}{2\pi} + i (\deg(E_1) -\deg(D)\right)
\]
If $Z_{\cX}(\cS) \in \mathbb{H}$ we get $\deg(D) \leq \deg(E_1)$. It suffices to show that
\[
\cos(\Theta(\cE))(\deg(E_1)-\deg(D)) -\frac{V_{X}}{2\pi}\sin(\Theta(\cE)) \leq 0 
\]
but this is clear since $\cos(\Theta(\cE)) \leq 0$ and $\deg(D) \leq \deg(E_1)$.
\smallskip

{\bf Case 2:} Sub-sheaves of type A:
\smallskip

\noindent In this case $\mathcal{S}$ is induced by $(L(-D_1), L(-D_2), \Phi')$ where $D_1, D_2$ are effective divisors, $D_2 \geq D_1$ and $\Phi'$ is the defining section of $D_2-D_1$. Since $\mathcal{S}$ is proper we must have $D_2>0$.  For simplicity let $L=E_1=E_2$. To ease notation let us denote $\ell = \deg(L)$ and $d_i= \deg(D_i)$ and $\htheta =\Theta(\cE)$. In this case, by~\eqref{eq: TripleCentralCharge} we have
\[
Z_{\cX}(\cS) = \frac{\sigma}{2\pi}\left(\frac{V_{X}}{2\pi}\cdot 2 -\frac{4\pi}{\sigma}(\ell-d_2) +i\left(2\ell-d_1-d_2 + \frac{2V_{X}}{\sigma}\right)\right).
\]
It suffices to show that
\begin{equation}\label{eq: typeAsimplestabSuff}
\cos(\htheta)(d_1+d_2) +\frac{4\pi}{\sigma}d_2\sin(\htheta)>0.
\end{equation}
Since $\cos(\Theta(\cE)) \leq 0$, and $d_1 \leq d_2$ we get
\[
\cos(\htheta)(d_1+d_2)  \geq 2\cos(\htheta)d_2
\]
which yields
\[
\cos(\htheta)(d_1+d_2) +\frac{4\pi}{\sigma}d_2\sin(\htheta) \geq 2d_2(\cos(\htheta)+\frac{2\pi}{\sigma}\sin(\htheta))>0.
\]
where we used the key fact that $\htheta \in [\frac{\pi}{2}, \frac{\pi}{2} + \arctan(\frac{2\pi}{\sigma}))$.
\end{proof}

As a consequence of Lemma~\ref{lem: stabForTorsion} and Proposition~\ref{prop: charSolCosNeg} we obtain the equivalence between $Z$-stability and existence of solutions in the ``hypercritical" regime.

\begin{cor}\label{cor: classifySolRealNeg}
    Suppose $\mathcal{E} \rightarrow \mathcal{X}$ has ${\rm Im}(Z_{\mathcal{X}}(E))>0$ and ${\rm Re}(Z_{\mathcal{X}}(\mathcal{E}))\leq 0$.  Then $\mathcal{E}$ is $Z$-stable in the sense of Definition~\ref{def: stabOnXxP} if and only if $\mathcal{E}$ admits a solution of the deformed Hermitian-Yang-Mills system.
\end{cor}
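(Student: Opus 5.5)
The plan is to pass through the common intermediate description given by Lemma~\ref{lem: stabForTorsion}(ii) and Proposition~\ref{prop: charSolCosNeg}. The hypotheses ${\rm Im}(Z_{\cX}(\cE))>0$ and ${\rm Re}(Z_{\cX}(\cE))\leq 0$ mean $\htheta=\Theta(\cE)\in[\frac{\pi}{2},\pi)$, so $\sin(\htheta)>0$ and $\cos(\htheta)\leq 0$. Both sides of the claimed equivalence will be shown equivalent to one condition $(\ast)$: $E_1$ has rank $1$, $\Phi\in H^0(X,{\rm Hom}(E_2,E_1))$ is nowhere vanishing (so $E_1\simeq E_2=:L$), and $\cos(\htheta)+\frac{2\pi}{\sigma}\sin(\htheta)>0$. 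For $\htheta\in[\frac{\pi}{2},\pi)$ the last inequality is exactly $\htheta<\frac{\pi}{2}+\arctan(\frac{2\pi}{\sigma})$.

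\emph{Stability implies existence.} If $\cE$ is $Z$-stable, Lemma~\ref{lem: stabForTorsion}(ii) gives $(\ast)$ directly. Its proof tests $\cE$ against the admissible quotients $\iota_*\mathcal{O}_F$ and $\iota_*\iota^*\cE$; admissibility there uses $\cE\in\mathcal{T}$, which is part of stability. Then produce the solution by hand. Fix any metric $h_2$ on $L$ and set $h_1:=\frac{\sigma}{2\pi}|\Phi|^{-2}_{h_2\text{-induced}}\,h_2$ transported via $\Phi$, so that $|\Phi|^2_{h_1h_2^{-1}}\equiv \frac{2\pi}{\sigma}$; this is possible because $\Phi$ is nowhere zero. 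This forces $\nabla\Phi\equiv 0$, since $\Phi$ is then a holomorphic isometry of line bundles up to a constant, and $F_1=F_2$. It remains to solve the Hermitian--Einstein equation $i\Lambda F_2=c$ with $c=\frac{\sin\htheta-\frac{2\pi}{\sigma}\cos\htheta}{\cos\htheta+\frac{2\pi}{\sigma}\sin\htheta}$, by a conformal change solving a Poisson equation on $X$. The obstruction is the degree constraint $2\pi\deg L=cV_X$. Check it against \eqref{eq: sec3CentralCharge}: with $\rank E_1=1$ and $\deg E_1=\deg E_2=\ell$, compute $\tan\htheta$ and confirm it matches $c$. I expect this to be a short algebraic identity; it is also automatic, since integrating \eqref{eq: dimReduceDHYM} reproduces the central charge. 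Sufficiency in Proposition~\ref{prop: charSolCosNeg} then says these metrics solve \eqref{eq: dimReduceDHYM}+\eqref{eq: dimReduceDHYM-Pos}, hence the dHYM system on $\cE$.

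\emph{Existence implies stability.} If a solution exists, Proposition~\ref{prop: charSolCosNeg} (necessity) gives (i)--(iii), in particular $(\ast)$. Lemma~\ref{lem: stabForVeryRestr} then yields $Z$-stability. Its hypotheses are exactly $(\ast)$ together with $\htheta\in[\frac{\pi}{2},\frac{\pi}{2}+\arctan(\frac{2\pi}{\sigma}))$, and the latter is equivalent to $\cos\htheta+\frac{2\pi}{\sigma}\sin\htheta>0$ in this range.

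\emph{Main obstacle.} The substantive step is the bridge in the first direction: confirming that $(\ast)$ together with the given numerics makes the constant $c$ in Proposition~\ref{prop: charSolCosNeg}(iii) compatible with $\deg L$. One also has to check the boundary case $\htheta=\pi$. There ${\rm Im}Z=0$, so it is excluded by hypothesis; on the other side, $\htheta$ near $\pi$ violates $(\ast)$, so the two sides agree there too. Everything else is a direct citation of the preceding lemmas.
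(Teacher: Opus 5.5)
Your proposal is correct and follows the paper's proof of Corollary~\ref{cor: classifySolRealNeg} essentially verbatim. One direction is Lemma~\ref{lem: stabForTorsion}(ii), then the explicit metrics (Hermitian--Einstein on $L$, the other metric a constant multiple transported by $\Phi$ so that $|\Phi|^2\equiv\frac{2\pi}{\sigma}$, with the degree identity $2\pi\deg L=cV_X$ read off from \eqref{eq: sec3CentralCharge}), then the sufficiency part of Proposition~\ref{prop: charSolCosNeg}; the other direction is the necessity part of Proposition~\ref{prop: charSolCosNeg} followed by Lemma~\ref{lem: stabForVeryRestr}. The only blemishes are cosmetic: the constant in your formula for $h_1$ should be $\frac{2\pi}{\sigma}$, not $\frac{\sigma}{2\pi}$; and the degree identity should be checked directly (computing $\tan\htheta$ from \eqref{eq: sec3CentralCharge} with $\rank E_1=1$, $\deg E_1=\deg E_2=\ell$ gives exactly $2\pi\ell=cV_X$) rather than by integrating the equation you are trying to solve.
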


\begin{proof}
    By Lemma~\ref{lem: stabForTorsion} if $\mathcal{E}$ is stable with ${\rm Im}(Z_{\mathcal{X}}(E))>0$ and ${\rm Re}(Z_{\mathcal{X}}(\mathcal{E}))\leq 0$, then $E_1 \simeq E_2$, $\Phi \in H^{0}(X, {\rm Hom}(E_2, E_1))$ is never vanishing and 
    \[
    \cos(\htheta) + \frac{2\pi}{\sigma}\sin(\htheta)>0
    \]
    It follows that 
    \[
    \deg(E_1) = \deg(E_2) =\frac{V_X}{2\pi}\left(\frac{ \sin(\htheta)-\frac{2\pi}{\sigma}\cos(\htheta) }{\cos(\htheta) + \frac{2\pi}{\sigma}\sin(\htheta)}\right)
    \]
    Take $h_1$ on $E_1$ so that $i\Lambda F_1= \frac{2\pi}{V_X} \deg E_1$.  Choose $h_2= e^{c}h_1$ for a suitably chosen constant $c \in \mathbb{R}$ so that $|\Phi|^2 = \frac{2\pi}{\sigma}$. By Proposition~\ref{prop: charSolCosNeg}, $\mathcal{E}$ admits a solution of the deformed Hermitian-Yang-Mills system.

    Conversely, if $\mathcal{E}$ has ${\rm Im}(Z_{\mathcal{X}}(E))>0$ and ${\rm Re}(Z_{\mathcal{X}}(\mathcal{E}))\leq 0$ and admits a solution of the system~\eqref{eq: dimReduceDHYM} +~\eqref{eq: dimReduceDHYM-Pos} then Proposition~\ref{prop: charSolCosNeg} implies that $E_1\simeq E_2$ are line bundles, $\Phi \in H^{0}(X, {\rm Hom}(E_2, E_1))$ is never vanishing, and  $\cos(\htheta) + \frac{2\pi}{\sigma}\sin(\htheta)>0$.  By  Lemma~\ref{lem: stabForVeryRestr} $\cE$ is $Z$-stable.
\end{proof}

We now provide a simplified criterion for stability which applies in our setting. Since the case ${\rm Re}(Z_{\cX}(E_1,E_2,\Phi)) \leq 0$ is completely understood by Lemma~\ref{lem: stabForTorsion}, and Corollary~\ref{cor: classifySolRealNeg}, we shall focus only on the case of triples with ${\rm Re}(Z_{\cX}(E_1,E_2,\Phi)) > 0$.  

\begin{lem}\label{lem: Re>0ReduceStabSaturated}
    Let $\cE= (E_1, E_2, \Phi)$ be a holomorphic triple with ${\rm rk}(E_2)=1$ and $\Theta(\cE) \in (0, \frac{\pi}{2})$.  Then, in order to check the stability of  $\cE$ in the sense of Definition~\ref{defn: stabOfTriple}, it suffices to check the stability of $\cE$ against saturated subtriples.
\end{lem}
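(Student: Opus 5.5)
Given an arbitrary non-zero proper subtriple $\cS=(E_1',E_2',\Phi')$, I would pass to its saturation $\bar\cS=(\bar E_1',\bar E_2',\bar\Phi')$. Here $\bar E_i'$ is the saturation of $E_i'$ in $E_i$; on the curve $X$ this is a subbundle of the same rank, and $\deg\bar E_i'\ge \deg E_i'$ with equality iff $E_i'=\bar E_i'$. Before that, I would check that $\Phi$ maps $\bar E_2'$ into $\bar E_1'$, so that $\bar\cS$ is again a subtriple. This holds because the composition $\bar E_2'\to E_1\to E_1/\bar E_1'$ vanishes on $E_2'$. Since $\bar E_2'/E_2'$ is torsion and $E_1/\bar E_1'$ is torsion-free, the map vanishes identically. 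Also $\bar\cS$ has the same type (A or B) as $\cS$, and it is proper unless $\bar\cS=\cE$. I will handle that exceptional case separately.

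By \eqref{eq: TripleCentralCharge}, write $\delta_i=\deg\bar E_i'-\deg E_i'\ge0$, not both zero. Then
\[
Z_\cX(\bar\cS)-Z_\cX(\cS)=\frac{\sigma}{2\pi}\Big(-\tfrac{4\pi}{\sigma}\delta_2+i(\delta_1+\delta_2)\Big),
\]
so the difference has imaginary part $\frac{\sigma}{2\pi}(\delta_1+\delta_2)>0$ and real part $\le0$. Condition (i) of Definition~\ref{defn: stabOfTriple} then transfers immediately: ${\rm Im}Z(\cS)<{\rm Im}Z(\bar\cS)<{\rm Im}Z(\cE)$. For (ii), write $Z(\cE)=re^{i\htheta}$ with $\htheta\in(0,\pi/2)$. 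Then ${\rm Im}(w/Z(\cE))\propto \cos\htheta\,{\rm Im}w-\sin\htheta\,{\rm Re}w$. For $w=Z(\bar\cS)-Z(\cS)$ this is $\frac{\sigma}{2\pi}\big(\cos\htheta(\delta_1+\delta_2)+\frac{4\pi}{\sigma}\sin\htheta\,\delta_2\big)\ge0$. Hence ${\rm Im}(Z(\cS)/Z(\cE))\le{\rm Im}(Z(\bar\cS)/Z(\cE))$, which is $<0$ as long as (ii) applies to $\bar\cS$.

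The main obstacle is the hypothesis in (ii) that requires $Z(\cS)\in\mathbb H$. We only know $Z(\cS)\in\mathbb H$, and need $Z(\bar\cS)\in\mathbb H$ in order to use stability of $\bar\cS$. Since ${\rm Im}Z(\bar\cS)\ge{\rm Im}Z(\cS)\ge0$, the only bad case would be ${\rm Im}Z(\bar\cS)=0$. That forces ${\rm Im}Z(\cS)=0$ and $\delta_1+\delta_2=0$, i.e.\ $\cS$ is already saturated, so the case does not occur. If ${\rm Im}Z(\cS)>0$ then ${\rm Im}Z(\bar\cS)>0$ and $\bar\cS$ qualifies. Thus saturated-subtriple stability gives (ii) for $\cS$.

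It remains to treat $\bar\cS=\cE$. Then $\cS$ has full ranks and (i) for $\cS$ is ${\rm Im}Z(\cS)<{\rm Im}Z(\cE)$, which holds since $\delta_1+\delta_2>0$. For (ii), the computation above gives ${\rm Im}(Z(\cS)/Z(\cE))=-{\rm Im}(w/Z(\cE))<0$ directly, because $\cos\htheta>0$ and $\delta_1+\delta_2>0$ make the bracket strictly positive. This step is where the assumption $\htheta\in(0,\pi/2)$ is genuinely used, so the lemma follows.
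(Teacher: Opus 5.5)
Your proof is correct, but it takes a different route from the paper's.

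The paper deduces the lemma from Proposition~\ref{prop: simplifiedStability}, which works with quotient sheaves on $\cX$:
\begin{itemize}
\item a general equivariant quotient is split as $0\to\mathtt{T}\to\cQ\to\mathcal{V}\to 0$, where $\mathtt{T}$ is torsion supported on $\mathbb{P}^1$-fibers;
\item an $\mathcal{I}$-adic filtration shows that $ch(\mathtt{T})$ is a nonnegative combination of $ch(\mathcal{O}_{\mathbb{P}^1})$ and $ch(\mathcal{O}_{\mathbb{P}^1}(2))$, so these torsion pieces have phase at least $\frac{\pi}{2}>\Theta(\cE)$ (Lemma~\ref{lem: stabForTorsion}(i));
\item only vector bundle quotients, i.e.\ saturated subtriples, are then left to test.
\end{itemize}

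You instead stay on the curve and compare a subtriple with its saturation, using only the degree formula~\eqref{eq: TripleCentralCharge}. The underlying fact is the same. Your difference $Z_{\cX}(\bar{\cS})-Z_{\cX}(\cS)=-2\delta_2+i\frac{\sigma}{2\pi}(\delta_1+\delta_2)$ is exactly $\delta_1 Z_{\cX}(\iota_*\mathcal{O}_F)+\delta_2 Z_{\cX}(\iota_*\mathcal{O}_F(2))$, which is the central charge of the fiber-supported torsion sheaf $\bar{\cS}/\cS$.

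Your version gives an elementary, self-contained argument. In it, the admissibility issue ($Z_{\cX}(\cS)\in\mathbb{H}\Rightarrow Z_{\cX}(\bar{\cS})\in\mathbb{H}$) and the degenerate case $\bar{\cS}=\cE$ are checked by hand. The paper's version gives the more general reduction of Proposition~\ref{prop: simplifiedStability}, valid for arbitrary quotient sheaves and also for $\Theta(\cE)\geq\frac{\pi}{2}$. That reduction is reused later, e.g.\ in Proposition~\ref{prop: stabNecessaryCosPos}.

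One small correction to your closing remark. The hypothesis $\htheta<\frac{\pi}{2}$ is already needed in the general transfer of condition (ii), not only when $\bar{\cS}=\cE$. The quantity $\cos\htheta(\delta_1+\delta_2)+\frac{4\pi}{\sigma}\sin\htheta\,\delta_2$ is negative when $\cos\htheta<0$ and $\delta_2=0<\delta_1$.
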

\begin{proof}
The proof is a consequence of Lemma~\ref{lem: stabForTorsion} and Proposition~\ref{prop: simplifiedStability}.
Since we are assuming that $\Theta(\cE) \in (0, \frac{\pi}{2})$ we conclude from Lemma~\ref{lem: stabForTorsion} part $(i)$ that the first condition of Proposition~\ref{prop: simplifiedStability} is satisfied.  The second condition of Proposition~\ref{prop: simplifiedStability} is precisely equivalent to checking stability of $\cE$ against saturated subtriples. 
\end{proof}

We end this section by noting the following easy lemma.

\begin{lem}\label{lem: PhiCannotVanishIDentically}
    Suppose $\cE=(E_1,E_2,\Phi)$ is $Z$-stable.  Then $\Phi \not\equiv 0$.
\end{lem}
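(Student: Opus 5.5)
The plan is to argue by contradiction: assume $\Phi\equiv 0$ and exhibit a destabilizing subobject. When $\Phi\equiv 0$ the triple splits, and $\cE \simeq p_1^*E_1 \oplus p_1^*E_2\otimes p_2^*\mathcal{O}_{\mathbb{P}^1}(2)$ is a direct sum of $SU(2)$-equivariant bundles. Both $\cS_1=(E_1,0,0)$ and $\cS_2=(0,E_2,0)$ are then proper nonzero subtriples, and each summand is also a quotient of $\cE$. By \eqref{eq: TripleCentralCharge} their charges are
\[
Z_{\cX}(\cS_1)=\frac{\sigma}{2\pi}\Big(\frac{V_X}{2\pi}\rank(E_1)+i\deg(E_1)\Big),\qquad
Z_{\cX}(\cS_2)=\frac{\sigma}{2\pi}\Big(\frac{V_X}{2\pi}-\frac{4\pi}{\sigma}\deg(E_2)+i\big(\deg(E_2)+\tfrac{2V_X}{\sigma}\big)\Big),
\]
and additivity gives $Z_{\cX}(\cE)=Z_{\cX}(\cS_1)+Z_{\cX}(\cS_2)$.

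First apply condition (i) of Definition~\ref{defn: stabOfTriple} to each of $\cS_1$ and $\cS_2$. Since each is a proper subtriple, we get ${\rm Im}\,Z_{\cX}(\cS_j)<{\rm Im}\,Z_{\cX}(\cE)$ for $j=1,2$. Additivity then forces ${\rm Im}\,Z_{\cX}(\cS_2)>0$ and ${\rm Im}\,Z_{\cX}(\cS_1)>0$. So both $Z_{\cX}(\cS_j)$ lie in the open upper half plane, which is contained in $\mathbb{H}$.

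Next apply condition (ii) to both subobjects. This gives ${\rm Im}\big(Z_{\cX}(\cS_j)/Z_{\cX}(\cE)\big)<0$ for $j=1,2$. Summing over $j$ and using additivity yields
\[
{\rm Im}\big(Z_{\cX}(\cE)/Z_{\cX}(\cE)\big)={\rm Im}(1)=0<0,
\]
which is a contradiction. Equivalently, two nonzero vectors in the upper half plane whose sum is $Z_{\cX}(\cE)$ cannot both have argument strictly smaller than ${\rm Arg}\,Z_{\cX}(\cE)$. Hence $\Phi\not\equiv 0$.

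The only delicate step is confirming that both summands are genuine proper nonzero subtriples with $Z_{\cX}\in\mathbb{H}$, so that Definition~\ref{defn: stabOfTriple}(ii) applies to each. This holds because $\rank(E_2)=1$ and $E_1\neq 0$ (the bundle $\cE$ is of vortex type), and the $\mathbb{H}$-membership was established in the previous step from condition (i). No analytic input is needed.
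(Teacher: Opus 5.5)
Your proof is correct and is the paper's argument: test stability against the subtriples $(E_1,0,0)$ and $(0,E_2,0)$, and use condition (i) together with additivity to place both charges in the upper half plane. Condition (ii) summed over the two subtriples then yields ${\rm Im}(1)<0$, so you have written out in full the details the paper's one-line proof leaves implicit.
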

\begin{proof}
    Suppose $\cE$ is stable and $\Phi \equiv 0$.  One obtains a contradiction by checking stability against the subtriples $(E_1,0,0)$ and $(0,E_2,0)$ and using the additivity of the central charge.
\end{proof}

\subsection{Consequences of stability}
In this section we check stability against several ``canonical" subtriples.  The only result from this section that is required for our later purposes is Proposition~\ref{prop: stableImpliesSimple}. On the other hand, the results here give the flavor of the geometric constraints satisfied by $Z$-stable triples, and may play a role in future questions concerning moduli spaces of $Z$-stable triples; see e.g. \cite{BG-P, GPVortex, BD1, BD2, BDGPW, BDW, Thaddeus} for work on the moduli space of stable triples arising from the Vortex equations.    

\begin{lem}\label{lem: StabE_1CanonSub}
    Suppose $\mathcal{E}=(E_1,E_2,\Phi)$ is $Z$-stable with $\htheta= \Theta(\cE) \in (0,\frac{\pi}{2})$. Let $E_{1}'\subset E_1$ be a non-trivial holomorphic sub-bundle and consider the subtriple $\cS =(E_1',0,0)$.  Then 
    \[
    \cos(\htheta) \deg(E_1') - \sin(\htheta) \frac{V_X}{2\pi} {\rm rk}(E_1')<0.
    \]
\end{lem}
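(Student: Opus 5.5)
The plan is to test Definition~\ref{defn: stabOfTriple}(ii) directly against the type B subtriple $\cS=(E_1',0,0)$ and read off the inequality. By~\eqref{eq: subTripleCentralChargeTypeB},
\[
Z_{\cX}(\cS)=\frac{\sigma}{2\pi}\left(\frac{V_X}{2\pi}{\rm rk}(E_1')+i\deg(E_1')\right),
\]
and $Z_{\cX}(\cE)=re^{i\htheta}$ with $r>0$. Hence
\[
{\rm Im}\left(\frac{Z_{\cX}(\cS)}{Z_{\cX}(\cE)}\right)=\frac{\sigma}{2\pi r}\left(\cos(\htheta)\deg(E_1')-\sin(\htheta)\frac{V_X}{2\pi}{\rm rk}(E_1')\right).
\]
So if $Z_{\cX}(\cS)\in\mathbb{H}$, stability condition (ii) gives exactly the claimed strict inequality.

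It remains to handle the case $Z_{\cX}(\cS)\notin\mathbb{H}$. Since ${\rm Re}(Z_{\cX}(\cS))=\frac{\sigma}{2\pi}\frac{V_X}{2\pi}{\rm rk}(E_1')>0$ (as $E_1'\neq 0$), the only way to fail membership in $\mathbb{H}$ is ${\rm Im}(Z_{\cX}(\cS))\le 0$, that is, $\deg(E_1')\le 0$. In that case $\cos(\htheta)\deg(E_1')\le 0$ because $\cos(\htheta)>0$ for $\htheta\in(0,\frac{\pi}{2})$. Also $\sin(\htheta)\frac{V_X}{2\pi}{\rm rk}(E_1')>0$ because $\sin(\htheta)>0$, ${\rm rk}(E_1')\geq 1$ and $V_X>0$. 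So the left-hand side is strictly negative without appealing to stability at all.

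Finally, I need $\cS$ to be a nonzero proper subtriple so that Definition~\ref{defn: stabOfTriple} applies. It is nonzero by assumption. It is proper because it omits $E_2\neq 0$. The commuting diagram holds trivially, since $E_2'=0$ and $\Phi'=0$. There is no real obstacle here; the only point needing care is the split into the cases $\deg(E_1')>0$ and $\deg(E_1')\le 0$, since condition (ii) is only imposed when $Z_{\cX}(\cS)\in\mathbb{H}$.
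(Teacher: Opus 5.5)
Your proposal is correct and follows the paper's argument: when $\deg(E_1')>0$ you apply stability condition (ii) to the subtriple $(E_1',0,0)$ and use~\eqref{eq: subTripleCentralChargeTypeB}, and when $\deg(E_1')\le 0$ the inequality holds trivially because $\cos(\htheta)>0$ and $\sin(\htheta)>0$. Your extra checks, that $\cS$ is a nonzero proper subtriple and that ${\rm Re}(Z_{\cX}(\cS))>0$ so the case $Z_{\cX}(\cS)\notin\mathbb{H}$ is exactly $\deg(E_1')\le 0$, fill in details the paper leaves implicit.
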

\begin{proof}
Assuming that $\deg(E_1')>0$ we apply the definition of $Z$-stability to the saturated  subtriple $\mathcal{S} = (E_1',0,0)$. The lemma follows from~\eqref{eq: subTripleCentralChargeTypeB}.  On the other hand, if $\deg(E_1')\leq 0$ then the inequality is trivially true.
\end{proof}

Next, let $L= {\rm sat}(\Phi(E_2))$ be the saturation of $\Phi(E_2)$. We consider the subtriples $(L, E_2, \Phi)$ and $(L, 0, 0)$.

\begin{lem}\label{lem: StabE_2CanonSub}
    Suppose that the triple $\mathcal{E} = (E_1, E_2, \Phi)$ is $Z$-stable with $\Theta(\cE) \in (0,\frac{\pi}{2})$.  For each $p \in X$ let $m_p$ denote the multiplicity of the zero of $\Phi$ at $p$.  Then we have:
    \begin{itemize}
    \item[(i)]
    \[
    \deg(E_2) + \frac{2V_X}{\sigma}>0
    \]
        \item[(ii)] If $\Phi(E_2)\neq E_1$, then
        \[
        \deg(E_1) + \frac{2V_{X}}{\sigma} > \sum_p m_p,
        \]
        and
        \[
        \cos(\htheta)(\deg(E_2) +\sum_p m_p) < \sin(\htheta) \frac{V_{X}}{2\pi}.
        \]
        \item[(iii)] Furthermore, if $\Phi(E_2)\neq E_1$, and $2\deg(E_2) +\sum_p m_p + \frac{2V_{X}}{\sigma}>0$, then
        \[
        \cos(\htheta)\left(2\deg(E_2) +\sum_p m_p + \frac{2V_{X}}{\sigma}\right) <\sin(\htheta)\left( \frac{V_{X}}{\pi} - \frac{4\pi}{\sigma}\deg(E_2)\right)
        \]
    \end{itemize}
\end{lem}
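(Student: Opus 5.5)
The plan is to test $Z$-stability, in the form of Definition~\ref{defn: stabOfTriple}, against three canonical subtriples. These are $(E_1,0,0)$, the type B subtriple $(L,0,0)$, and the type A subtriple $(L,E_2,\Phi)$, where $L={\rm sat}(\Phi(E_2))$. By Lemma~\ref{lem: PhiCannotVanishIDentically} we have $\Phi\not\equiv 0$, so $L$ is a line subbundle of $E_1$. Moreover $L\simeq E_2(D)$ with $D=\sum_p m_p\,p$, so $\deg L=\deg(E_2)+\sum_p m_p$. All central charges are then computed from~\eqref{eq: TripleCentralCharge} and~\eqref{eq: subTripleCentralChargeTypeB}.

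For (i), I use the subtriple $\cS=(E_1,0,0)$. It is a subtriple, since the diagram commutes trivially, and it is proper because ${\rm rk}(E_2)=1$. Condition (i) of Definition~\ref{defn: stabOfTriple}, namely ${\rm Im}Z_{\cX}(\cS)<{\rm Im}Z_{\cX}(\cE)$, reads $\deg(E_1)<\deg(E_1)+\deg(E_2)+\frac{2V_X}{\sigma}$. This is exactly (i).

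For (ii), I use $\cS=(L,0,0)$, which is nonzero and proper (it is of type B). Condition (i) of Definition~\ref{defn: stabOfTriple} gives
\[
\deg(E_2)+\sum_p m_p<\deg(E_1)+\deg(E_2)+\frac{2V_X}{\sigma},
\]
which is the first inequality. For the second inequality I apply Lemma~\ref{lem: StabE_1CanonSub} with $E_1'=L$, a rank one subbundle. This yields $\cos(\htheta)\deg L<\sin(\htheta)\frac{V_X}{2\pi}$, as required. Equivalently, one can argue directly: if $\deg L>0$ then $Z_{\cX}(\cS)\in\mathbb{H}$ and condition (ii) applies; otherwise the inequality is trivial because $\cos(\htheta)>0$.

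For (iii), I use the type A subtriple $\cS=(L,E_2,\Phi)$. The hypothesis $\Phi(E_2)\neq E_1$ is to be read as $L\neq E_1$, which makes $\cS$ proper. By~\eqref{eq: TripleCentralCharge},
\[
Z_{\cX}(\cS)=\tfrac{\sigma}{2\pi}\Big(\tfrac{V_X}{\pi}-\tfrac{4\pi}{\sigma}\deg(E_2)+i\big(2\deg(E_2)+\textstyle\sum_p m_p+\tfrac{2V_X}{\sigma}\big)\Big).
\]
The assumed positivity of $2\deg(E_2)+\sum_p m_p+\frac{2V_X}{\sigma}$ gives ${\rm Im}Z_{\cX}(\cS)>0$, so $Z_{\cX}(\cS)\in\mathbb{H}$ and condition (ii) of Definition~\ref{defn: stabOfTriple} applies. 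That condition is ${\rm Im}(Z_{\cX}(\cS)e^{-i\htheta})<0$, that is, $\cos(\htheta){\rm Im}Z_{\cX}(\cS)-\sin(\htheta){\rm Re}Z_{\cX}(\cS)<0$, which is exactly the stated inequality.

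The only delicate point is the bookkeeping of properness and admissibility: one must confirm that each subtriple is nonzero and proper, and that $Z_{\cX}(\cS)\in\mathbb{H}$ whenever condition (ii) is invoked. Beyond that, the argument is direct substitution into the central charge formulas.
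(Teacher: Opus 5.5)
Your proposal is correct and is essentially the paper's proof: the paper tests stability against the same three subtriples $(E_1,0,0)$, $(L,0,0)$ and $(L,E_2,\Phi)$ with $L={\rm sat}(\Phi(E_2))$. It obtains (i) and the first inequality of (ii) from the imaginary-part condition (phrased there as $\cE\in\mathcal{T}$ plus additivity of $Z_{\cX}$), the second inequality of (ii) from Lemma~\ref{lem: StabE_1CanonSub}, and (iii) from the phase condition. Your reading of the hypothesis $\Phi(E_2)\neq E_1$ as $L\neq E_1$ is the right one: it is exactly what makes $(L,E_2,\Phi)$ proper, and when $L=E_1$ the inequality in (iii) reduces to ${\rm Im}\bigl(Z_{\cX}(\cE)e^{-i\htheta}\bigr)<0$, which is false.
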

\begin{proof}
    To prove $(i)$, consider the subtriple $\cS= (E_1, 0 ,0)$. Since $\cE \in \mathcal{T}$ we have ${\rm Im}(Z_{\cX}(\cE/\cS)) >0$.  We deduce $(i)$ from the additivity of the central charge on exact sequences.  For $(ii)$ and $(iii)$ we may assume that ${\rm Re}(Z_{\cX}(\cE))>0$, for otherwise the statement is vacuous by Lemma~\ref{lem: stabForTorsion}.  Define the line bundle $L={\rm sat}(\Phi(E_2))$, and consider the subtriples $\mathcal{S} = (L, E_2,\Phi)$ and $\mathcal{S}' = (L, 0, 0)$.  By direct calculation we have
    \[
    \deg(L) = \deg(E_2) + \sum_{p \in \{\Phi=0\}} m_p
    \]
    where $m_p$ is the multiplicity of the zero of $\Phi$ at $p$.  More precisely, $m_p$ is chosen so that, if $z$ is a local coordinate on $X$ vanishing at $p$, then $\widetilde{\Phi}:= z^{-m_p}\Phi$ is holomorphic near $p$, and ${\rm rk}(\widetilde{\Phi}) =1$ near $p$.  Since $\cE \in \mathcal{T}$ and the quotient $\cE/\cS'$ is non-trivial, we obtain $(ii)$ from Lemma~\ref{lem: StabE_1CanonSub} applied to the triple $\cS'$.  For $(iii)$, assuming that
    \[
    {\rm Im}\left(Z_{\mathcal{X}}(L, E_2,\Phi)\right) = 2\deg(E_2)+ \sum_{p \in \{\Phi=0\}}  m_p +2\frac{V_{X}}{\sigma}>0,
    \]
    stability applied to $\cS$ implies
    \[
    \cos(\hat{\theta})\left(2\deg(E_2)+ \sum_{p \in \{\Phi=0\}}  m_p +2\frac{V_{X}}{\sigma}\right)-\sin(\hat{\theta})\left(\frac{V_{X}}{2\pi}2 -\frac{4\pi}{\sigma}\deg(E_2))\right) <0
    \]
\end{proof}

Recall \cite{BG-P} that a global holomorphic endomorphism of a triple $\mathcal{E}= (E_1,E_2,\Phi)$ is a pair of maps $\alpha_1 \in H^{0}(X, {\rm End}(E_1))$, and $\alpha_2 \in H^{0}(X, {\rm End}(E_2))$ such that
\begin{equation}\label{eq: commutesWithPhi}
\Phi \circ \alpha_2 = \alpha_1\circ \Phi
\end{equation}
Such pairs $(\alpha_1,\alpha_2)$ are equivalently described in terms of a $SU(2)$-equivariant holomorphic endomorphism of $\mathcal{E}\rightarrow \mathcal{X}$.

\begin{defn}\label{defn: PhiSimple}
    We say that a triple $\mathcal{E} = (E_1,E_2,\Phi)$ is $\Phi$-simple if the only global holomorphic endomorphisms of the triple are multiples of the identity. 
\end{defn}

\begin{prop}\label{prop: stableImpliesSimple}
    Suppose that the triple $\mathcal{E}$ is $Z$-stable in the sense of Definition~\ref{def: stabOnXxP}.  Then $\mathcal{E}$ is simple.  Equivalently, the triple $(E_1, E_2,\Phi)$ is $\Phi$-simple in the sense of Definition~\ref{defn: PhiSimple}.
\end{prop}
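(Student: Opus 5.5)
The approach is the standard ``stable implies simple'' argument, adapted to the tilted setting. Let $f=(\alpha_1,\alpha_2)$ be a global holomorphic endomorphism of the triple, equivalently an $SU(2)$-equivariant holomorphic endomorphism of $\cE\rightarrow\cX$. Since $X$ is compact, $\alpha_2\in H^0(X,{\rm End}(E_2))=H^0(X,\mathcal{O}_X)$ is a constant $\lambda$, because $E_2$ is a line bundle. Replacing $f$ by $f-\lambda\,{\rm Id}$, we may assume $\alpha_2=0$. It then suffices to show that any endomorphism with $\alpha_2=0$ vanishes.

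With $\alpha_2=0$, equation~\eqref{eq: commutesWithPhi} gives $\alpha_1\circ\Phi=0$. The image of $f$ is therefore the equivariant subsheaf $\mathcal{I}=(\alpha_1(E_1),0,0)$ of type B. Its kernel $\mathcal{K}$ is the type A subtriple $(\ker\alpha_1,E_2,\Phi)$, which makes sense since $\Phi(E_2)\subset\ker\alpha_1$. Suppose $\alpha_1\neq 0$. We have an exact sequence $0\to\mathcal{K}\to\cE\to\mathcal{I}\to 0$, and $\mathcal{I}\hookrightarrow\cE$ is a nonzero proper subtriple. Stability condition (i), namely $\cE\in\mathcal{T}$, applied to the quotient $\mathcal{I}$ gives ${\rm Im}\,Z_{\cX}(\mathcal{I})>0$, so $Z_{\cX}(\mathcal{I})\in\mathbb{H}$. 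Since $\mathcal{I}$ is also a subobject, Definition~\ref{defn: stabOfTriple}(ii) gives $\Theta(\mathcal{I})<\Theta(\cE)$.

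Now view $\mathcal{I}$ as a quotient. If $Z_{\cX}(\mathcal{K})\in\mathbb{H}$, stability gives $\Theta(\mathcal{I})>\Theta(\cE)$, a contradiction. So the main obstacle is the admissibility of the quotient $\cE\twoheadrightarrow\mathcal{I}$, i.e.\ showing $Z_{\cX}(\mathcal{K})\in\mathbb{H}$. Here I would split into cases on $\Theta(\cE)$.

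\textbf{Case $\Theta(\cE)\ge\pi/2$.} Lemma~\ref{lem: stabForTorsion}(ii) forces ${\rm rk}(E_1)=1$ with $\Phi$ an isomorphism. Then $\alpha_1\circ\Phi=0$ gives $\alpha_1=0$ directly.

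\textbf{Case $\Theta(\cE)\in(0,\pi/2)$.} Here ${\rm Re}\,Z_{\cX}(\mathcal{I})=\frac{\sigma}{2\pi}\frac{V_X}{2\pi}{\rm rk}(\mathcal{I})>0$, and by Lemma~\ref{lem: StabE_1CanonSub} the phase of $\mathcal{I}$ is strictly less than $\Theta(\cE)$. Additivity gives $Z_{\cX}(\mathcal{K})=Z_{\cX}(\cE)-Z_{\cX}(\mathcal{I})$, and since $\mathcal{I}$ has smaller phase, ${\rm Im}\bigl(Z_{\cX}(\mathcal{K})/Z_{\cX}(\cE)\bigr)>0$. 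If ${\rm Im}\,Z_{\cX}(\mathcal{K})>0$, then $\mathcal{K}$ is an admissible subobject with larger phase, contradicting stability. It remains to rule out ${\rm Im}\,Z_{\cX}(\mathcal{K})\le 0$.

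For this I would use $\mathcal{I}$ a second time, as a subsheaf of $\mathcal{K}$. Indeed $\alpha_1(E_1)\subset\ker\alpha_1$ whenever $\alpha_1$ is nilpotent. If $\alpha_1$ is not nilpotent, I would first reduce: its characteristic polynomial has constant coefficients, so take an eigenvalue $\mu$ of $\alpha_1$. Then $\alpha_1-\mu$ is an endomorphism of $E_1$ whose composite with $\Phi$ is $-\mu\Phi$, so it is not a triple endomorphism unless $\mu=0$. Instead I would use the generalized eigenspace decomposition $E_1=\bigoplus_\mu E_1^{\mu}$, which is holomorphic since the coefficients are constant. Here $\Phi(E_2)\subset E_1^{0}$, so each $E_1^{\mu}$ with $\mu\neq0$ is a direct summand type B subtriple with complementary type A subtriple. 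This splits $\cE$ as a direct sum of subtriples, and stability applied to both summands yields contradictory phase inequalities, since both lie in $\mathbb{H}$ by $\cE\in\mathcal{T}$ applied to each as a quotient. This reduces to $\alpha_1$ nilpotent. Then $\mathcal{I}\subset\mathcal{K}$, so ${\rm Im}\,Z_{\cX}(\mathcal{K})\ge{\rm Im}\,Z_{\cX}(\mathcal{I})$ plus the imaginary part of $\mathcal{K}/\mathcal{I}$, a subquotient. I would control the latter using $\cE\in\mathcal{T}$ applied to the quotient $\cE/\mathcal{I}$, together with the torsion bound of Lemma~\ref{lem: torsionIsTorsion}, to obtain ${\rm Im}\,Z_{\cX}(\mathcal{K})>0$. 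Then $\alpha_1=0$ and $f=\lambda\,{\rm Id}$, as required.
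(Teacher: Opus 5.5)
You set up the right reduction, the same as the paper's. You subtract $\lambda\,{\rm Id}$ so that $\alpha_2=0$, then compare the type A kernel $\mathcal{K}=(\ker\alpha_1,E_2,\Phi)$ with the type B image $\mathcal{I}=(\alpha_1(E_1),0,0)$. You also correctly see that everything hinges on ${\rm Im}\,Z_{\cX}(\mathcal{K})>0$.

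Your argument for that inequality has a gap. The step ``$\alpha_1(E_1)\subset\ker\alpha_1$ whenever $\alpha_1$ is nilpotent'' is false: it requires $\alpha_1^2=0$, and a $3\times 3$ Jordan block is a counterexample (passing to $\alpha_1^{k-1}$ would repair this). The final step is also never carried out. $\cE\in\mathcal{T}$ constrains quotients of $\cE$, not the subobject $\mathcal{K}/\mathcal{I}\subset\cE/\mathcal{I}$, and Lemma~\ref{lem: torsionIsTorsion} plays no role there. As written, the decisive inequality is asserted rather than proved.

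The missing idea is a one-line identity, and it is exactly the paper's proof. You already observed that $\mathcal{I}$ is a nonzero proper subtriple. So $\cE/\mathcal{I}\neq 0$ is a quotient of $\cE\in\mathcal{T}$, and ${\rm Im}\,Z_{\cX}(\cE/\mathcal{I})>0$. By additivity,
\[
Z_{\cX}(\cE/\mathcal{I})=Z_{\cX}(\cE)-Z_{\cX}(\mathcal{I})=Z_{\cX}(\mathcal{K}),
\]
so ${\rm Im}\,Z_{\cX}(\mathcal{K})>0$ immediately. Symmetrically, ${\rm Im}\,Z_{\cX}(\mathcal{I})={\rm Im}\,Z_{\cX}(\cE/\mathcal{K})>0$. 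This is just property (i) of Definition~\ref{defn: stabOfTriple} applied to both subtriples.

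Hence both $\mathcal{K}$ and $\mathcal{I}$ are admissible, and stability gives ${\rm Im}\bigl(Z_{\cX}(\mathcal{K})/Z_{\cX}(\cE)\bigr)<0$ and ${\rm Im}\bigl(Z_{\cX}(\mathcal{I})/Z_{\cX}(\cE)\bigr)<0$. Adding these contradicts $Z_{\cX}(\mathcal{K})+Z_{\cX}(\mathcal{I})=Z_{\cX}(\cE)$.

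This argument works for every phase. So you can drop the case split on $\Theta(\cE)$ (which parallels the paper's harmless preliminary reduction to $\cos(\htheta)>0$), the generalized eigenspace decomposition, and the nilpotency reduction. Your direct-sum argument for nonzero eigenvalues is in fact this same idea. The point is that it needs no actual splitting, only the numerical identities $Z_{\cX}(\mathcal{K})=Z_{\cX}(\cE/\mathcal{I})$ and $Z_{\cX}(\mathcal{I})=Z_{\cX}(\cE/\mathcal{K})$.
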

\begin{proof}
    By Corollary~\ref{cor: classifySolRealNeg} the result is clear if $\cos(\htheta)\leq 0$.  Therefore we may assume that $\cos(\htheta)>0$.  Suppose $(\alpha_1,\alpha_2) \in H^{0}(X,{\rm End}(E_1) \oplus {\rm End}(E_2))$ satisfies~\eqref{eq: commutesWithPhi}.  Since ${\rm rk}E_2=1$, $\alpha_2= \lambda$ for some $\lambda \in \mathbb{C}$.  Then \eqref{eq: commutesWithPhi} implies that $\alpha_1 \circ \Phi = \lambda \Phi$.  Let 
    \[
    \begin{aligned}
    E_1' &= {\rm Ker}(\alpha_1 -\lambda I_{E_1}) \subset E_1\\
    E_{1}'' &= {\rm Im}(\alpha_1 - \lambda I_{E_1}) \subset E_1.
    \end{aligned}
    \]
    For the sake of obtaining a contradiction, assume that $\alpha_1 \ne \lambda I_{E_1}$, so that $E_1''$ is a non-trivial subbundle of $E_1$. Consider the non-trivial holomorphic triples
   \[
    \mathcal{S'} = (E_1', E_2,\Phi) \qquad \mathcal{S}'' = (E_1'', 0, 0).
    \]
    As usual, we also denote by $\mathcal{S}', \mathcal{S}''$ the $SU(2)$-equivariant holomorphic vector bundles over $\mathcal{X}$ induced by the associated triples.  By additivity of the central charge we have
     \begin{equation}\label{eq: addCentralChargeSubQuotient}
    Z_{\mathcal{X}}(\mathcal{S}') + Z_{X}(\mathcal{S}'') = Z_{X}(\mathcal{E}) 
    \end{equation}
    Since both $\cS',\cS''$ are subtriples, property $(i)$ of Definition~\ref{defn: stabOfTriple} and the additivity of the central charge implies
    \[
    {\rm Im}\left(Z_{\mathcal{X}}(\mathcal{S}')\right)>0, \quad \text{and} \quad {\rm Im}\left( Z_{\mathcal{X}}(\mathcal{S}'')\right)>0.
    \]
    We now apply the definition of stability to get
    \[
    {\rm Im}\left(\frac{Z_{\mathcal{X}}(\mathcal{S}')}{Z_{X}(\mathcal{E}) }\right)<0, \quad \text{ and }\quad  {\rm Im}\left(\frac{Z_{\mathcal{X}}(\mathcal{S}'')}{Z_{X}(\mathcal{E}) }\right)<0 
    \]
    contradicting ~\eqref{eq: addCentralChargeSubQuotient}.
\end{proof}

\subsection{Necessity of stability}

The aim of this section is to prove that $Z$-stability, in the sense of Definition~\ref{def: stabOnXxP} or Definition~\ref{defn: stabOfTriple} is necessary for the existence of a solution to the deformed Hermitian-Yang-Mills system.  By Corollary~\ref{cor: classifySolRealNeg}, it suffices to consider the case when $\cos(\htheta)>0$, which implies, by Lemma~\ref{lem: stabForTorsion} and Proposition~\ref{prop: simplifiedStability}, that we can consider only torsion-free quotients.  Necessity of stability for torsion quotients has been studied by Keller-Scarpa \cite{KellerScarpa} and McCarthy \cite{JBM}.  McCarthy \cite{JBM} proved that if $\cE$ admits a $Z$-positive solution of the dHYM equation in the sense of \cite{DMS,  KellerScarpa, JBM} (see Definition~\ref{defn: Zpositive}) then $\cE$ is necessarily stable for torsion quotients of the forms $\cE \twoheadrightarrow \cE\otimes \mathcal{O}_{\mathcal{D}}$ for $\mathcal{D}\subset \cX$ a divisor.  Keller-Scarpa \cite{KellerScarpa} considered the case when ${\rm dim}_{\mathbb{C}} \cX=2$; assuming $Z$-positivity, they proved the necessity of stability for surjections $\cE\twoheadrightarrow \iota_*\cQ$ where $\iota:\mathcal{V}\hookrightarrow \cX$ is a reduced and irreducible analytic subset of dimension $1$, and $\cQ\rightarrow \mathcal{V}$ is a rank $1$ quotient of $\iota^*\cE$. 

\begin{lem}\label{lem: torsionClassNecessary}
Suppose $\cE=(E_1,E_2,\Phi)$ is a holomorphic triple with $\rank(E_2)=1$, and suppose that ${\rm Im}(Z_{\cX}(\cE))>0$, ${\rm Re}(Z_{\cX}(\cE))>0$.  Suppose that $\cE$ admits a solution of the system~\eqref{eq: dimReduceDHYM}+~\eqref{eq: dimReduceDHYM-Pos}.  Then $\cE \in \mathcal{T}$.  That is, for every surjection $\cE \twoheadrightarrow \cQ$ in ${\rm Coh}^{SU(2)}(\cX)$ we have ${\rm Im}(Z_{\cX}(\cQ))>0$.
\end{lem}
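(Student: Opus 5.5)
The plan is to reduce the statement to a small class of quotients, exactly as in the proof of Proposition~\ref{prop: simplifiedStability}, and then verify the remaining inequalities by Chern--Weil theory using the pointwise positivity supplied by the equations.

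\textbf{Reduction to quotient triples.} Given a surjection $\cE\twoheadrightarrow\cQ$ in ${\rm Coh}^{SU(2)}(\cX)$, write $0\to\mathtt{T}\to\cQ\to\mathcal{V}\to0$ with $\mathtt{T}$ torsion and $\mathcal{V}$ torsion-free. By equivariance $\mathcal{V}$ is a vector bundle, and by Lemma~\ref{lem: torsionIsTorsion} we have ${\rm Im}Z_{\cX}(\mathtt{T})>0$ whenever $\mathtt{T}\neq 0$. By additivity of $Z_{\cX}$ it therefore suffices to prove ${\rm Im}Z_{\cX}(\mathcal{V})>0$ for every nonzero equivariant quotient bundle $\mathcal{V}$.

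Such quotients correspond to saturated subtriples $\cS=(E_1',E_2',\Phi')$, and the quotient is the triple $(Q_1,Q_2,\bar\Phi)$ with $Q_i=E_i/E_i'$. By \eqref{eq: TripleCentralCharge},
\[
{\rm Im}Z_{\cX}(\mathcal{V})=\frac{\sigma}{2\pi}\Big(\deg Q_1+\deg Q_2+\frac{2V_X}{\sigma}\rank Q_2\Big).
\]
We split into the two cases of Definition~\ref{defn: subObTypeA/B}.

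\textbf{Case A ($E_2'=E_2$).} Here $Q_2=0$ and $\Phi(E_2)\subset E_1'$, with $Q_1\neq0$. We must show $\deg Q_1>0$. Identify $Q_1$ smoothly with $(E_1')^{\perp_{h_1}}$ and use the Chern--Weil formula for quotient bundles:
\[
\deg Q_1=\frac{1}{2\pi}\int_X\Big({\rm Tr}\big(\pi\, i\Lambda F_1\,\pi\big)+|\beta|^2\Big)\ \geq\ \frac{1}{2\pi}\int_X{\rm Tr}\big(\pi\, i\Lambda F_1\,\pi\big),
\]
where $\pi$ is the orthogonal projection onto $(E_1')^\perp$ and $\beta$ is the second fundamental form. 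Every vector in $(E_1')^\perp$ is orthogonal to $\Phi$. Since $\cos(\htheta),\sin(\htheta)>0$, Lemma~\ref{lem: H1CurvLem} with $t=1$ gives $\langle i\Lambda F_1V,V\rangle>0$ for such nonzero $V$. Hence the integrand is strictly positive and $\deg Q_1>0$.

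\textbf{Case B ($E_2'=0$).} Here $\cS=(E_1',0,0)$ and the quotient is $(Q_1,E_2,\bar\Phi)$. We need
\[
\deg Q_1+\deg E_2+\frac{2V_X}{\sigma}>0.
\]
Write $\deg E_2+\frac{2V_X}{\sigma}=\frac{1}{2\pi}\int_X\big(i\Lambda F_2+\frac{4\pi}{\sigma}\big)$ (this uses $V_X=\int_X\omega$) and bound $\deg Q_1$ from below as in Case~A. Then add and subtract $|\pi\Phi|^2={\rm Tr}(\pi\Phi\Phi^\dagger\pi)\le|\Phi|^2$. This bounds the sum below by
\[
\frac{1}{2\pi}\int_X\Big({\rm Tr}\big(\pi(i\Lambda F_1+\Phi\Phi^\dagger)\pi\big)+\big(i\Lambda F_2+\tfrac{4\pi}{\sigma}-|\Phi|^2\big)\Big).
\]
Both terms are pointwise positive by the calibration constraint in the form of Lemma~\ref{lem: simplifiedPos}; the first is nonnegative, and strictly positive if $Q_1\neq0$. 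So the inequality holds, including when $Q_1=0$.

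\textbf{Main obstacle.} I expect the main difficulty to be the bookkeeping in Case~B: checking that the Chern--Weil inequality for $\deg Q_1$ and the calibration constraint combine with the right signs and constants, in particular the normalization $\frac{4\pi}{\sigma}$ against $\frac{2V_X}{\sigma}$. In Case~A the point is simply that Lemma~\ref{lem: H1CurvLem} already gives pointwise positivity on $\Phi^\perp$.
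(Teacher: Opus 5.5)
Your proposal is correct and follows essentially the same route as the paper: reduce to saturated subtriples, then integrate the traced curvature inequality over the quotient $E_1/E_1'$ using the Chern--Weil formula for quotients, adding the $E_2$ constraint in type B. The differences are minor: you carry out the reduction via the torsion/torsion-free decomposition of $\cQ$ and Lemma~\ref{lem: torsionIsTorsion} rather than by noting that ${\rm Im}Z_{\cX}(\cS)$ increases under saturation, and in type A you obtain positivity on $\Phi^{\perp}$ from Lemma~\ref{lem: H1CurvLem} (using $\cos(\htheta)>0$) rather than from the calibration constraint $i\Lambda F_1+\Phi\Phi^{\dagger}>0$; both work.
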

\begin{proof}
    By Lemma~\ref{lem: simplifiedPos}, when~\eqref{eq: dimReduceDHYM} holds, the calibration constraint~\eqref{eq: dimReduceDHYM-Pos} is equivalent to
    \[
    \begin{aligned}
i\Lambda F_1 + \Phi\Phi^{\dagger} &>0\\
i\Lambda F_2 + \frac{4\pi}{\sigma} - |\Phi|^2 &>0
\end{aligned}
    \]
    Let $\cS\subset \cE$ be a subtriple and $\cQ=\cE/\cS$ be the quotient sheaf.  It suffices to prove that ${\rm Im}(Z_{\cX}(\cS))< {\rm Im}(Z_{\cX}(\cE))$.  Recall that if $\cS= (E_1',E_2',\Phi')$, then
    \[
    {\rm Im}(Z_{\cX}(\cS)) \propto \deg(E_1')+\deg(E_2')+ \frac{2V_{X}}{\sigma}\rank(E_2').
    \]
    Since this expression increases by passing to the saturation, we can assume that $\cS$ is a saturated subtriple of $\cE$.  Suppose $\cS$ is a saturated subtriple of type $A$, so that $E_2'=E_2$.  Let $Q=E_1/E_1'$.   Let $\pi$ denote the orthogonal projection to $E_1'$ and let $\pi^{\perp}$ be the projection to $Q$.  Applying $\pi^{\perp}$ to the above inequality for $F_1$, taking the trace and integrating yields
    \[
    \deg Q - \frac{1}{2\pi}\int_{X}|\beta_{Q}|^2>0
    \]
    where $\beta_{Q}$ is the second fundamental form of $Q$ and we used that $\Phi(E_2) \subset E_1'$ and so $\pi^{\perp}\Phi=0$.  Thus, $\deg(Q)>0$.  Since  ${\rm Im}(Z_{\cX}(\cQ)) \propto \deg Q$ when $\cS$ is of type A, the result follows.

    Next assume $\cS$ is of type B, so $\cS= (E_1',0,0)$.  Let $\pi^{\perp}$ again denote the projection to $Q= E_1/E_1'$.  Then we have
    \[
    i\Lambda F_{Q} - |\beta|^2 + |\pi^{\perp}\Phi|^2 + i\Lambda F_2 + \frac{4\pi}{\sigma} - |\Phi|^2 >0
    \]
    In particular we obtain
    \[
    \deg Q + \deg E_2 + \frac{2V_{X}}{\sigma} > \frac{1}{2\pi}\int_{X} |\pi^{\perp}\Phi|^2+|\beta_{Q}|^2
    \]
    Since ${\rm Im}(Z_{\cX}(\cQ)) \propto \deg Q + \deg E_2 + \frac{2V_{X}}{\sigma} $, the proof is complete.
\end{proof}

Before proceeding to consider stability for sub-objects, we prove a lemma which gives an equivalent formulation of stability in the case that $\cos(\htheta)>0$.

\begin{lem}\label{lem: phaseToRatioStab}
Suppose $\cE=(E_1,E_2,\Phi)$ is a holomorphic triple of vortex type with ${\rm Im}Z_{\cX}(\cE)>0$ and ${\rm Re}(Z_{\cX}(\cE))>0$.  Then $\cE$ is $Z$-stable if and only if:
\begin{itemize}
    \item[(i)]$\cE \in \mathcal{T}$ and,
    \item[(ii)]for every non-zero saturated subtriple $\cS=(S_1,S_2,\Phi')$ we have
\begin{equation}\label{eq: slopeTypecondition}
{\rm Im}\left(\frac{Z_{X}(\cS)}{Z_{\cX}(\cE)}\right)<0.
\end{equation}
\end{itemize}
\end{lem}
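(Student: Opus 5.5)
The plan is to reduce Lemma~\ref{lem: phaseToRatioStab} to the triple formulation of Definition~\ref{defn: stabOfTriple} and then remove the admissibility restriction, using ${\rm Re}(Z_{\cX}(\cE))>0$. Since $\Theta(\cE)\in(0,\frac{\pi}{2})$, Lemma~\ref{lem: Re>0ReduceStabSaturated} says that stability need only be tested against saturated subtriples. Condition (i) of Definition~\ref{defn: stabOfTriple} for all subtriples is exactly $\cE\in\mathcal{T}$, by additivity of $Z_{\cX}$; this is the argument of the lemma following Definition~\ref{defn: stabOfTriple}. So the whole content is to show that, once $\cE\in\mathcal{T}$ holds, condition (ii) of Definition~\ref{defn: stabOfTriple} on saturated subtriples is equivalent to \eqref{eq: slopeTypecondition} for \emph{all} non-zero saturated subtriples, not only those with $Z_{\cX}(\cS)\in\mathbb{H}$.

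The direction ``(i)+(ii) $\Rightarrow$ stable'' is immediate, since (ii) imposes \eqref{eq: slopeTypecondition} on a larger class of saturated subtriples than Definition~\ref{defn: stabOfTriple} requires. For the reverse, assume $\cE$ is $Z$-stable. Then $\cE\in\mathcal{T}$ by definition. Take a non-zero saturated subtriple $\cS$.
\begin{itemize}
\item If $Z_{\cX}(\cS)\in\mathbb{H}$, then \eqref{eq: slopeTypecondition} is exactly condition (ii) of Definition~\ref{defn: stabOfTriple}.
\item If $Z_{\cX}(\cS)\notin\mathbb{H}$, then either ${\rm Im}Z_{\cX}(\cS)<0$, or ${\rm Im}Z_{\cX}(\cS)=0$ with ${\rm Re}Z_{\cX}(\cS)\geq 0$.
\end{itemize}
Write $Z_{\cX}(\cE)=re^{i\htheta}$ with $\htheta\in(0,\frac{\pi}{2})$. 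Then
\[
{\rm Im}\bigl(Z_{\cX}(\cS)/Z_{\cX}(\cE)\bigr)=r^{-1}\bigl(\cos(\htheta){\rm Im}Z_{\cX}(\cS)-\sin(\htheta){\rm Re}Z_{\cX}(\cS)\bigr).
\]
In the second sub-case ${\rm Im}Z_{\cX}(\cS)=0$ and ${\rm Re}Z_{\cX}(\cS)\ge0$, so this quantity is $\le 0$, with strict inequality unless $Z_{\cX}(\cS)=0$. In the first sub-case we need $\cos(\htheta){\rm Im}Z_{\cX}(\cS)<\sin(\htheta){\rm Re}Z_{\cX}(\cS)$. Since the left side is negative, it suffices to show ${\rm Re}Z_{\cX}(\cS)\ge 0$ whenever ${\rm Im}Z_{\cX}(\cS)\le 0$.

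This sign claim is the main step, and I would check it with \eqref{eq: TripleCentralCharge} separately for the two types of subtriple.
\begin{itemize}
\item For type B, $\cS=(E_1',0,0)$ and \eqref{eq: subTripleCentralChargeTypeB} gives ${\rm Re}Z_{\cX}(\cS)=\frac{\sigma}{2\pi}\frac{V_X}{2\pi}{\rm rk}(E_1')>0$ for $E_1'\neq0$. This settles both sub-cases, and shows $Z_{\cX}(\cS)\neq0$.
\item For type A, $\cS=(E_1',E_2',\Phi')$ with ${\rm rk}(E_2')=1$, so ${\rm Re}Z_{\cX}(\cS)\propto \frac{V_X}{2\pi}({\rm rk}E_1'+1)-\frac{4\pi}{\sigma}\deg E_2'$. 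Since $\cS$ is saturated, $E_2'=E_2$, and therefore ${\rm Re}Z_{\cX}(\cS)-{\rm Re}Z_{\cX}(\cE)=-\frac{\sigma}{2\pi}\frac{V_X}{2\pi}{\rm rk}(E_1/E_1')$.
\end{itemize}
This type A difference may be negative, so positivity of ${\rm Re}$ is not automatic there, and I would argue with the quotient instead. The quotient $\cQ=\cE/\cS$ corresponds to the triple $(E_1/E_1',0,0)$. Because $\cE\in\mathcal{T}$, ${\rm Im}Z_{\cX}(\cQ)>0$, and clearly ${\rm Re}Z_{\cX}(\cQ)>0$. So $Z_{\cX}(\cQ)$ has phase in $(0,\frac{\pi}{2})$. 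The condition ${\rm Im}(Z_{\cX}(\cS)/Z_{\cX}(\cE))<0$ is equivalent to ${\rm Im}(Z_{\cX}(\cQ)/Z_{\cX}(\cE))>0$, that is, $\Theta(\cQ)>\htheta$. The obstacle is that this does not follow from the phase of $\cQ$ lying in $(0,\frac{\pi}{2})$ alone. I would therefore handle it via Definition~\ref{defn: stabOfTriple}(ii) applied to $\cS$, which is allowed whenever $Z_{\cX}(\cS)\in\mathbb{H}$. When ${\rm Im}Z_{\cX}(\cS)\le0$ instead, we have ${\rm Im}Z_{\cX}(\cQ)\ge{\rm Im}Z_{\cX}(\cE)$, while ${\rm Re}Z_{\cX}(\cQ)=\frac{\sigma}{2\pi}\frac{V_X}{2\pi}{\rm rk}(E_1/E_1')$. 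I expect this case to require the most care: one must verify that the vector $Z_{\cX}(\cQ)$ lies strictly counterclockwise of $Z_{\cX}(\cE)$. I would do this using ${\rm Re}Z_{\cX}(\cS)\geq 0$ when ${\rm Im}Z_{\cX}(\cS)\le 0$, derived from ${\rm rk}E_1'\ge0$ together with the bound $\deg E_2+\frac{2V_X}{\sigma}>0$ of Lemma~\ref{lem: StabE_2CanonSub}(i).
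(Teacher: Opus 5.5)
Your reduction matches the paper's: by Lemma~\ref{lem: Re>0ReduceStabSaturated} only saturated subtriples matter, and type B subtriples are harmless because ${\rm Re}(Z_{\cX}(\cS))>0$. Everything therefore hinges on a saturated type A subtriple $\cS=(S_1,E_2,\Phi)$ with ${\rm Im}(Z_{\cX}(\cS))\le 0$. The step you propose for this case does not work. By \eqref{eq: TripleCentralCharge},
\[
{\rm Re}(Z_{\cX}(\cS))\ge 0 \quad\Longleftrightarrow\quad \frac{4\pi}{\sigma}\deg(E_2)\le \frac{V_X}{2\pi}\bigl(\rank(S_1)+1\bigr),
\]
which is an \emph{upper} bound on $\deg(E_2)$.

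The inputs you cite, $\rank(S_1)\ge 0$ and $\deg(E_2)+\frac{2V_X}{\sigma}>0$, only bound $\deg(E_2)$ from below. The condition ${\rm Im}(Z_{\cX}(\cS))\le 0$ gives only $\deg(E_2)\le -\deg(S_1)-\frac{2V_X}{\sigma}$. That is useless, because nothing controls $\deg(S_1)$ from below once $\rank(S_1)\ge 2$: $S_1$ contains ${\rm sat}(\Phi(E_2))$, but the rest of $S_1$ may have very negative degree. Even ${\rm Re}(Z_{\cX}(\cE))>0$ only yields $\frac{4\pi}{\sigma}\deg(E_2)<\frac{V_X}{2\pi}(\rank(E_1)+1)$, which is weaker when $\rank(S_1)<\rank(E_1)$. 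So your sign claim cannot be obtained numerically, and stability has to be used again, against a different subtriple.

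The missing idea, which is the paper's argument, is to test stability on $\cS_{\Phi}=(L,E_2,\Phi)$, where $L={\rm sat}(\Phi(E_2))=E_2(D)$ with $D\ge 0$.

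Suppose ${\rm Re}(Z_{\cX}(\cS))\le 0$. Since $\Phi\not\equiv 0$ by Lemma~\ref{lem: PhiCannotVanishIDentically} and $\Phi(E_2)\subset S_1$, we have $\rank(S_1)\ge 1$. Hence $\frac{4\pi}{\sigma}\deg(E_2)\ge\frac{V_X}{2\pi}(\rank(S_1)+1)\ge\frac{V_X}{\pi}>0$. It follows that
$${\rm Re}(Z_{\cX}(\cS_{\Phi}))=\frac{\sigma}{2\pi}\bigl(\frac{V_X}{\pi}-\frac{4\pi}{\sigma}\deg(E_2)\bigr)\le 0$$
and
$${\rm Im}(Z_{\cX}(\cS_{\Phi}))=\frac{\sigma}{2\pi}\bigl(2\deg(E_2)+\deg(D)+\frac{2V_X}{\sigma}\bigr)>0.$$
So $Z_{\cX}(\cS_{\Phi})\in\mathbb{H}$ with $\Theta(\cS_{\Phi})\ge\frac{\pi}{2}>\Theta(\cE)$. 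Moreover $\cS_{\Phi}\ne\cE$, because ${\rm Re}(Z_{\cX}(\cE))>0$. This contradicts $Z$-stability.

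Hence ${\rm Re}(Z_{\cX}(\cS))>0$ for every proper saturated type A subtriple. This gives \eqref{eq: slopeTypecondition} whenever ${\rm Im}(Z_{\cX}(\cS))\le 0$. It also excludes the case $Z_{\cX}(\cS)=0$, which your sub-case analysis left open.
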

\begin{proof}
We first prove that the assumptions $(i)$ and $(ii)$ imply $Z$-stability.  Since $(i)$ gives $\cE \in \mathcal{T}$, it suffices to observe that~\eqref{eq: slopeTypecondition} implies the angle condition of Definition~\ref{def: stabOnXxP}.

Showing the angle condition of Definition~\ref{def: stabOnXxP} implies ~\eqref{eq: slopeTypecondition} requires more work.  The concern is that, in principle, $\cE$ could admit subtriples with $Z_{X}(\cS)$ having large negative real component and bounded imaginary component. These objects are not valid objects for testing stability in Definition~\ref{def: stabOnXxP}, and they may not satisfy ~\eqref{eq: slopeTypecondition}.  We show that these objects cannot exist.

Precisely, suppose that $\cE$ satisfies the angle inequality of Definition~\ref{def: stabOnXxP}. Let $\cS=(S,0,0)$ be a subtriple of type $B$. We need to show that
\[
{\rm Im}\left(\frac{Z_{X}(\cS)}{Z_{\cX}(\cE)}\right)= \cos(\htheta) \deg(S)-\sin(\htheta)\frac{V_{X}}{2\pi}\rank(S)<0.
\]
If $\deg(S)\leq 0$ this is clear, while if $\deg(S)>0$ then it follows from $Z$-stability.

Now consider a saturated subtriple $\cS=(S,E_2,\Phi)$ of type A. The desired inequality is a consequence of $Z$-stability if $Z_{\cX}(\cS)\in \mathbb{H}$, while if ${\rm Re}(Z_{\cX}(\cS))>0$, then~\eqref{eq: slopeTypecondition} holds , even when ${\rm Im}(Z_{\cX}(\cS))<0$.  We only need to address the case
\[
{\rm Re}(Z_{\cX}(\cS))<0 \quad \text{ and }\quad  {\rm Im}(Z_{\cX}(\cS))<0.
\]
From~\eqref{eq: TripleCentralCharge} we see that ${\rm Re}(Z_{\cX}(\cS))<0$ implies
\begin{equation}\label{eq: degE2hugecontr}
\frac{4\pi}{\sigma} \deg(E_2) > \frac{V_{X}}{2\pi}(\rank(S)+1) > \frac{V_{X}}{\pi}.
\end{equation}
Now Let $L= {\rm sat}(\Phi(E_2))= E_2(D)$ for $D$ and effective divisor. Consider the subtriple $\cS_{\Phi}:= (L, E_2,\Phi)$.  Then
\[
Z_{\cX}(\cS_{\Phi}) = \left(\frac{V_{X}}{\pi} - \frac{4\pi}{\sigma}\deg(E_2)\right) +i \left(2\deg(E_2) + \deg(D)+  \frac{2V_{X}}{\sigma}\right).
\]
But since $\deg(E_2)> \frac{V_{X}}{\pi}>0$ by~\eqref{eq: degE2hugecontr} we see that $Z_{\cX}(\cS_{\Phi}) \in \mathbb{H}$ and has $\Theta(\cS_{\Phi}) \in (\frac{\pi}{2}, \pi]$.  This contradicts the $Z$-stability of $\cE$.
\end{proof}

Next we consider condition $(ii)$ of Definition~\ref{def: stabOnXxP}. We prove the necessity of stability for bundles of vortex type.  Perhaps surprisingly, the proofs are not local, depending in an essential way on the global structure via integration-by-parts.  In particular, our arguments are rather different than those of Keller-Scarpa \cite{KellerScarpa}.  We split our analysis into two cases, depending on whether we consider sub-objects of type A or type B, in the sense of Definition~\ref{defn: subObTypeA/B}.

\begin{prop}\label{prop: stabTypeANecessary}
    Suppose that $\cE= (E_1, E_2, \Phi)$ admits a solution of the system~\eqref{eq: dimReduceDHYM}+~\eqref{eq: dimReduceDHYM-Pos} with $\cos(\htheta)>0$.  Let $\cS = (S,E_2,\Phi')$ be a saturated subtriple of $(E_1,E_2,\Phi)$ of type A.  Then
        \[
         {\rm Im}\left(\frac{Z_{X}(\cS)}{Z_{\cX}(\cE)}\right)\leq 0
        \]
    with equality if and only if $E_{1}$ splits orthogonally and holomorphically as a direct sum $E_{1} = S \oplus (E_1/S)$ with $\Phi= \Phi' \oplus \{0\}$. In this case we have a splitting of $SU(2)$-equivariant holomorphic vector bundles $\mathcal{E} = \cS \oplus \cE/\cS$, where $\cS, \cE/\cS'$ admit $SU(2)$-equivariant solutions of the dHYM system~\eqref{eq: dHYMIntroduction}+~\eqref{eq: dHYMPosIntroduction} with
        \[
        \Theta(\cS)=\Theta(\cE/\cS)
        \]
\end{prop}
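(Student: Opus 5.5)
The plan is to reduce the inequality to a statement about the quotient $Q:=E_1/S$ and then prove it by projecting the first equation of~\eqref{eq: dimReduceDHYM} onto $Q$, tracing, and integrating, as in the proof of Lemma~\ref{lem: torsionClassNecessary}.

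\textbf{Reduction to the quotient.} Since $\cS$ is of type A, the quotient triple is $\cE/\cS=(Q,0,0)$. By~\eqref{eq: subTripleCentralChargeTypeB},
\[
Z_{\cX}(\cE/\cS)=\frac{\sigma}{2\pi}\Big(\frac{V_X}{2\pi}\rank(Q)+i\deg(Q)\Big).
\]
By additivity $Z_{\cX}(\cS)=Z_{\cX}(\cE)-Z_{\cX}(\cE/\cS)$. Hence ${\rm Im}(Z_{\cX}(\cS)/Z_{\cX}(\cE))\le 0$ is equivalent, after multiplying by $|Z_{\cX}(\cE)|$, to
\[
\cos(\htheta)\deg(Q)-\sin(\htheta)\frac{V_X}{2\pi}\rank(Q)\ \ge\ 0. \qquad (\ast)
\]

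\textbf{Projecting the equation.} Let $\pi$ be the $h_1$-orthogonal projection onto $S$ and $\pi^{\perp}=I-\pi$, so $Q\cong S^{\perp}$ smoothly. Because $\Phi(E_2)\subset S$, we have $\pi^{\perp}\Phi=0$. This kills three terms after applying $\pi^{\perp}(\cdot)\pi^{\perp}$ and tracing:
\[
\Tr\big(\pi^{\perp}\Phi\Phi^{\dagger}\pi^{\perp}\big)=0,\qquad \Tr\big(\pi^{\perp}\{i\Lambda F_1,\Phi\Phi^{\dagger}\}\pi^{\perp}\big)=0,
\]
the second because $\Phi\Phi^{\dagger}\pi^{\perp}=0=\pi^{\perp}\Phi\Phi^{\dagger}$. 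The first equation of~\eqref{eq: dimReduceDHYM} then gives
\[
\cos(\htheta)\Tr\big(\pi^{\perp} i\Lambda F_1\pi^{\perp}\big)=\sin(\htheta)\Big(\rank(Q)+\tfrac12|\pi^{\perp}\nabla\Phi|^2\Big).
\]
Let $\beta$ be the second fundamental form of $S\subset E_1$. The standard Chern--Weil formula for quotient bundles gives
\[
\Tr\big(\pi^{\perp} i\Lambda F_1\pi^{\perp}\big)=\Tr(i\Lambda F_Q)-|\beta|^2 ,
\]
with the sign to be checked against the convention already used in Lemma~\ref{lem: torsionClassNecessary}. Integrating over $X$ and using $\int\Tr(i\Lambda F_Q)=2\pi\deg Q$ yields
\[
\cos(\htheta)\Big(2\pi\deg Q-\int_X|\beta|^2\Big)=\sin(\htheta)\Big(V_X\rank Q+\tfrac12\int_X|\pi^{\perp}\nabla\Phi|^2\Big).
\]
Since $\cos(\htheta),\sin(\htheta)>0$, this gives $(\ast)$ and hence the inequality. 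Equality holds exactly when $\beta\equiv0$ and $\pi^{\perp}\nabla\Phi\equiv0$. In fact, differentiating $\pi^{\perp}\Phi=0$ gives $\pi^{\perp}\nabla\Phi=-(\nabla\pi^{\perp})\Phi$, which is controlled by $\beta$, so it vanishes once $\beta=0$.

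\textbf{Equality case.} If $\beta=0$, then $S^{\perp}$ is a holomorphic subbundle, and $E_1=S\oplus S^{\perp}$ orthogonally and holomorphically with $S^{\perp}\cong Q$. Since $\pi^{\perp}\Phi=0$, we get $\Phi=\Phi'\oplus 0$. The connection, $\Phi\Phi^{\dagger}$ and $\nabla\Phi(\nabla\Phi)^{\dagger}$ are then all block diagonal, so~\eqref{eq: dimReduceDHYM} and~\eqref{eq: dimReduceDHYM-Pos} restrict blockwise:
\begin{itemize}
\item $(h_1|_S,h_2)$ solves the deformed vortex system for $\cS$;
\item $h_1|_Q$ solves the reduced equation for $(Q,0,0)$, namely $\cos(\htheta)\,i\Lambda F_Q=\sin(\htheta) I$, with positivity.
\end{itemize}
Both solutions have the same angle $\htheta$. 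Integrating the traces, as in the derivation of~\eqref{eq: sec3CentralCharge}, shows that each $Z$ has argument $\htheta$, so $\Theta(\cS)=\Theta(\cE/\cS)$. Conversely, such a splitting forces equality in $(\ast)$.

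\textbf{Main obstacle.} I expect the delicate points to be the sign conventions in the quotient curvature formula and the precise identification of $\pi^{\perp}\nabla\Phi$ through $\beta$. Both have to come out so that every error term appears with a favorable sign. Given the vanishing $\pi^{\perp}\Phi=0$, the remaining computation is routine.
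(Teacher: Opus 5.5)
Your proposal is correct and follows essentially the same route as the paper. You reduce to the quotient $(Q,0,0)$, compress the first equation of~\eqref{eq: dimReduceDHYM} by $\pi^{\perp}$ using $\pi^{\perp}\Phi=0$, then trace and integrate to get $\cos(\htheta)\deg Q-\frac{V_X}{2\pi}\sin(\htheta)\rank(Q)=\frac{\cos(\htheta)}{2\pi}\int_X|\beta|^2+\frac{\sin(\htheta)}{4\pi}\int_X|\pi^{\perp}\nabla\Phi|^2$. The paper substitutes $\pi^{\perp}\nabla\Phi=-\beta^{\dagger}\Phi$ directly, which is your observation that this term dies once $\beta=0$, and your calibration argument for $\Theta(\cS)=\Theta(\cE/\cS)$ fills in the step the paper calls ``easy''.
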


\begin{proof}
Let $\pi$ denote the $h_{1}$ orthogonal projection to $S\subset E_1$, and let $\pi^{\perp}$ denote the orthogonal projection to $S^{\perp}\sim Q:= E_1/S$. Let $h_{1,S}, h_{1,Q}$ denote the induced metrics on $S, Q$ respectively and let $\beta$ denote the second fundamental form of $S\subset E_1$.  Note that by additivity of the central charge, the desired result is equivalent to proving
\[
 {\rm Im}\left(\frac{Z_{X}(\cQ)}{Z_{\cX}(\cE)}\right)\geq 0
 \]
 where $\cQ=(E_1/S,0,0)$ is the quotient triple. We apply $\pi^{\perp}$ to both sides of~\eqref{eq: dimReduceDHYM} to obtain
\[
\begin{aligned}
    &\cos(\hat{\theta})( i\Lambda F_{h_{1,Q}} - i\Lambda \beta^{\dagger}\wedge \beta ) \\
    &= \sin(\hat{\theta})(I_{Q} + \frac{1}{2}g^{p\bar{j}}\left(\pi^{\perp}\nabla_{p}\Phi \right)\left(\pi^{\perp}\nabla_{j}\Phi\right)^{\dagger})
\end{aligned}
\]
where we used that $\pi^{\perp}\Phi=0$.  Now since $\pi\Phi=\Phi$ we have $
\pi^{\perp}\nabla\Phi = -\beta^{\dagger}\Phi
$
and hence, upon tracing and integrating we obtain
\[
\cos(\htheta)(2\pi \deg(Q) - \int_{X}|\beta|^2)=\sin(\htheta)\left({\rm rk}(Q) V_{X} + \frac{1}{2}\int_{X}|\beta^{\dagger}\Phi|^2\right).
\]
Therefore,
\begin{equation}\label{eq: typeAquotientDelta}
\cos(\htheta)\deg(Q) - \frac{V_{X}}{2\pi}\sin(\htheta){\rm rk}(Q) = \frac{\cos(\htheta)}{2\pi} \int|\beta|^2 + \frac{1}{4\pi}\sin(\htheta)\int_{X}|\beta^{\dagger}\Phi|^2
\end{equation}
and so, since $\sin(\htheta), \cos(\htheta)>0$ we get
\[
\cos(\htheta)\deg(Q) - \frac{V_{X}}{2\pi}\sin(\htheta)\rank(Q)  \geq 0
\]
with equality if and only if $\beta \equiv 0$.  By the additivity of the central charge, this yields $\Theta(S) \leq \Theta(\cE)$ with equality if and only if $\beta \equiv 0$, in which case $E_1 = S \oplus S^{\perp}$ is a holomorphic and orthogonal splitting. It follows easily that $h_{1,S}$ and $h_{1,Q}$ both solve the deformed Hermitian-Yang-Mills system ~\eqref{eq: dimReduceDHYM}+~\eqref{eq: dimReduceDHYM-Pos} with the same phase $\hat{\theta}$.
\end{proof}

We now prove necessity for sub-bundles of type $B$.

 \begin{lem}\label{lem: typeBStabNecessary}
    Suppose that $\cE= (E_1, E_2, \Phi)$ admits a solution of the system~\eqref{eq: dimReduceDHYM}+~\eqref{eq: dimReduceDHYM-Pos} with $\cos(\htheta)>0$.  Let $\cS =(S,0,0)$ be a saturated subtriple of $(E_1,E_2,\Phi)$.  Suppose that either:
    \begin{itemize}
        \item[(i)]  $\Phi(E_2) \subset S$, or
        \item[(ii)] $\cos(\htheta) \geq \frac{4\pi}{\sigma}\sin(\htheta)$
        \end{itemize}
     Then we have
    \[
   {\rm Im}\left(\frac{Z_{\cX}(\cS)}{Z_{\cX}(\cE)}\right) \leq 0
    \]
    with equality if and only if $E_1$ splits metrically and holomorphically as $S\oplus E_1/S$ and $\Phi(E_2)\subset E_1/S$.
\end{lem}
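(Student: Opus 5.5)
Since $\cS=(S,0,0)$, formula~\eqref{eq: subTripleCentralChargeTypeB} shows that the claim is equivalent to
\[
\cos(\htheta)\deg(S) - \frac{V_X}{2\pi}\sin(\htheta)\rank(S)\leq 0 .
\]
I will argue directly on $S$, mirroring the type A computation in Proposition~\ref{prop: stabTypeANecessary} but now projecting onto the subbundle. Let $\pi$ be the $h_1$-orthogonal projection onto $S$ and $\beta$ the second fundamental form, so that $i\Lambda F_S = \pi(i\Lambda F_1)\pi + (\text{a nonpositive term } -\beta\beta^\dagger)$ and $\deg S = \frac{1}{2\pi}\int_X \Tr(\pi\, i\Lambda F_1\,\pi) - \frac{1}{2\pi}\int_X|\beta|^2$. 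The plan is to compute $\Tr(\pi\, i\Lambda F_1)$ from the first equation of~\eqref{eq: dimReduceDHYM}, integrate, and show that $\cos(\htheta)\cdot 2\pi\deg S - \sin(\htheta)\rank(S)V_X$ equals minus a sum of nonnegative integrals.

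\emph{Case (i).} Here $\Phi(E_2)\subset S$, so $\pi\Phi=\Phi$. Tracing $\pi(\cdot)\pi$ of the first equation gives
\[
\cos\,\Tr\pi\, i\Lambda F_1 + \cos|\Phi|^2 = \sin\Big(\rank S - \Re\langle i\Lambda F_1\Phi,\Phi\rangle + \tfrac12|\pi\nabla\Phi|^2\Big).
\]
Using $\pi\nabla\Phi = \nabla\Phi - \pi^\perp\nabla\Phi$ with $\pi^\perp\nabla\Phi = -\beta^\dagger\Phi$, and substituting $\langle i\Lambda F_1\Phi,\Phi\rangle$ from the Bochner identity $\Delta|\Phi|^2 = |\nabla\Phi|^2 - \langle i\Lambda F_1\Phi,\Phi\rangle + i\Lambda F_2|\Phi|^2$, I get after integration
\[
\sin\int\langle i\Lambda F_1\Phi,\Phi\rangle = \sin\int\big(|\nabla\Phi|^2 + i\Lambda F_2|\Phi|^2\big).
\]
Then I insert Lemma~\ref{lem: H2CurvLem} for $i\Lambda F_2$. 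The goal is an identity
\[
\cos\deg S - \tfrac{V_X}{2\pi}\sin\rank S = -\tfrac{\cos}{2\pi}\int|\beta|^2 - (\text{nonnegative terms}).
\]
The terms to control are $\cos|\Phi|^2 - \sin\big(|\nabla\Phi|^2 + i\Lambda F_2|\Phi|^2\big) + \tfrac{\sin}{2}|\nabla\Phi|^2 - \tfrac{\sin}{2}|\beta^\dagger\Phi|^2$. Their sign should follow from the second equation, which rewrites $\cos|\Phi|^2 - \sin\, i\Lambda F_2|\Phi|^2$ as $|\Phi|^2$ times an expression involving $\cos(i\Lambda F_2 + \tfrac{4\pi}{\sigma}) - \sin(1+\dots)$, together with the positivity $i\Lambda F_2 + \tfrac{4\pi}{\sigma} - |\Phi|^2 > 0$ from Lemma~\ref{lem: simplifiedPos}. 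A cleaner alternative may be to observe that in case (i), $\cS$ is the complement of the type A quotient: $\cE/\cS = (E_1/S, E_2, 0)$. One can then apply the type A argument to $\cS' = (S, E_2, \Phi)$ and combine it with an explicit integral identity for $E_2$. I would try that first, integrating the $E_2$ equation as well.

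\emph{Case (ii).} In general $\pi\Phi\neq\Phi$. I expect the anticommutator term $-\tfrac12\Tr\pi\{i\Lambda F_1,\Phi\Phi^\dagger\}$ to produce cross terms $\langle i\Lambda F_1\pi\Phi,\pi\Phi\rangle$, and these cannot be signed locally. The approach is to bound them using Lemma~\ref{lem: H1CurvLem} (decomposing along $\Phi$ and $\Phi^\perp$) and the calibration positivity $i\Lambda F_1 + \Phi\Phi^\dagger > 0$. The hypothesis $\cos\geq\tfrac{4\pi}{\sigma}\sin$ should be exactly what absorbs the $|\Phi|^2$-weighted terms, using the bound $|\Phi|^2 < \tfrac{4\pi}{\sigma} + i\Lambda F_2$ and the fact that $\int i\Lambda F_2$ is controlled by $\deg E_2$.

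This case is the main obstacle. Finding the precise integration-by-parts identity that yields a manifestly nonpositive right-hand side under $\cos\ge\frac{4\pi}{\sigma}\sin$ is where I expect most of the difficulty. Tracking the equality case is easier: every discarded term contains $|\beta|^2$ or $|\pi^\perp\Phi|$-type quantities, so equality forces $\beta\equiv0$ and $\pi\Phi = 0$. That gives the orthogonal holomorphic splitting $E_1 = S\oplus E_1/S$ with $\Phi(E_2)\subset E_1/S$.
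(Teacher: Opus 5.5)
Case (ii) is a genuine gap, and the tools you sketch for it would not close it.

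\textbf{Why the sketched approach fails.} A pointwise bound on the cross terms using Lemma~\ref{lem: H1CurvLem} and $i\Lambda F_1+\Phi\Phi^{\dagger}>0$ cannot work. After projecting the first equation onto $S$, the right-hand side contains $+\frac{\sin(\htheta)}{2}|\pi\nabla\Phi|^2$. At points where $\pi\Phi=0$ but $\nabla(\pi\Phi)=\pi\nabla\Phi\neq0$, every term linear or quadratic in $\pi\Phi$ vanishes, so nothing controls it pointwise. An integral bound on $i\Lambda F_2$ through $\deg E_2$ is also not the relevant mechanism.

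\textbf{The missing step.} Use the Bochner identity you already used in case (i), but with $\pi$ inserted, i.e.\ applied to $|\pi\Phi|^2$ rather than $|\Phi|^2$. Since $\Phi$ is holomorphic and $S$ is a holomorphic subbundle,
\[
\int_X\mathrm{Re}\langle i\Lambda F_1\Phi,\pi\Phi\rangle=\int_X i\Lambda F_2\,|\pi\Phi|^2+\int_X|\pi\nabla\Phi|^2-\int_X\mathrm{Re}\langle\pi^{\perp}\nabla\Phi,\beta^{\dagger}\pi\Phi\rangle .
\]
Substituting this, $2\pi\big(\cos(\htheta)\deg S-\frac{V_X}{2\pi}\sin(\htheta)\rank(S)\big)$ equals
\[
-\frac{\sin(\htheta)}{2}\int_X|\pi\nabla\Phi|^2-\cos(\htheta)\int_X|\beta|^2-\int_X\big(\sin(\htheta)\,i\Lambda F_2+\cos(\htheta)\big)|\pi\Phi|^2+\sin(\htheta)\int_X\mathrm{Re}\langle\pi^{\perp}\nabla\Phi,\beta^{\dagger}\pi\Phi\rangle .
\]
The second equation can be rewritten as
\[
\sin(\htheta)\,i\Lambda F_2+\cos(\htheta)=\frac{1+\frac{\sin^2(\htheta)}{2}|\nabla\Phi|^2}{D},\qquad D:=\cos(\htheta)+\Big(\frac{4\pi}{\sigma}-|\Phi|^2\Big)\sin(\htheta)>0,
\]
where $D>0$ comes from the calibration constraint. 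By Cauchy--Schwarz, the last (unsigned) term is at most $\int_X\frac{\sin^2(\htheta)}{4\cos(\htheta)}|\nabla\Phi|^2|\pi\Phi|^2+\cos(\htheta)\int_X|\beta|^2$. The $|\beta|^2$ part cancels exactly. The remaining part is absorbed by the gradient piece $\frac{\sin^2(\htheta)}{2D}|\nabla\Phi|^2|\pi\Phi|^2$ precisely when $D\le 2\cos(\htheta)$, and hypothesis (ii) guarantees this. That is how (ii) enters.

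\textbf{Case (i).} You follow the paper's route, but the signs are off. The $|\Phi|^2$-terms combine to $-\int_X(\cos(\htheta)+\sin(\htheta)\,i\Lambda F_2)|\Phi|^2$, not to $\cos|\Phi|^2-\sin\,i\Lambda F_2|\Phi|^2$. The positivity you need is the one displayed above.

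Your alternative for case (i) is valid:
\begin{itemize}
\item Proposition~\ref{prop: stabTypeANecessary} applied to $(S,E_2,\Phi)$ gives $\mathrm{Im}(Z_{\cX}(E_1/S,0,0)/Z_{\cX}(\cE))\ge0$.
\item The integrated $E_2$ equation gives $\mathrm{Im}(Z_{\cX}(0,E_2,0)/Z_{\cX}(\cE))\ge0$ for every solution.
\item $Z_{\cX}(\cE)-Z_{\cX}(\cS)$ is the sum of these two central charges, so adding the two identities reproduces the direct one.
\end{itemize}
It cannot reach case (ii), however, since there $(S,E_2,\Phi)$ is not a subtriple.

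\textbf{Equality case in (ii).} Cauchy--Schwarz consumes all of $\cos(\htheta)\int_X|\beta|^2$. So you must first deduce $\pi\Phi\equiv0$ from the leftover term $-\int_X|\pi\Phi|^2/D$. Only then does the cross term vanish, after which $\beta\equiv0$ follows. Note also that the quantity forced to vanish is $\pi\Phi$, not $\pi^{\perp}\Phi$.
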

\begin{proof}
    Let $\pi$ denote the projection to $S\subset E_1$.  Applying $\pi$ to~\eqref{eq: dimReduceDHYM}, taking the trace and integrating yields
    \[
    \cos(\htheta)\left(2\pi \deg(S) +\int_{X} |\beta|^2+ \int_{X}|\pi\Phi|^2\right) = \sin(\htheta)\left({\rm rk}(S)V_{X} - \int_{X}{\rm Re}(\langle i\Lambda F_1 \Phi, \pi \Phi \rangle) + \frac{1}{2}|\pi \nabla \Phi|^2 \right)
    \]
    We need to simplify the term $\int_{X}{\rm Re}(\langle i\Lambda F_1 \Phi, \pi \Phi \rangle$.  By integration by parts we have
    \[
    \begin{aligned}
        \int_{X}{\rm Re}(\langle i\Lambda F_1 \Phi, \pi \Phi \rangle&= \int_{X}i\Lambda F_2|\pi\Phi|^2 +\int_{X} {\rm Re}\langle \nabla \Phi, \nabla(\pi \Phi) \rangle\\
        &= \int_{X}i\Lambda F_2|\pi\Phi|^2 +\int_{X}|\pi \nabla \Phi|^2- {\rm Re}\langle \pi^{\perp}\nabla \Phi, \beta^{\dagger}\pi \Phi \rangle.
    \end{aligned}
    \]
    Therefore, we have
    \begin{equation}\label{eq: specCasetypeBhardPart}
    \begin{aligned}
    2\pi \left(\cos(\htheta)\deg(S)- \sin(\htheta){\rm rk}(S)\frac{V_X}{2\pi}\right) &=  \sin(\htheta)\left(  \int_{X} {\rm Re}\langle \pi^{\perp}\nabla \Phi, \beta^{\dagger}\pi \Phi \rangle - \frac{1}{2}|\pi \nabla \Phi|^2 \right)\\&\quad - \cos(\htheta)\int|\beta|^2
     -\int_{X} \left(\sin(\htheta)i\Lambda F_2 + \cos(\htheta)\right)|\pi\Phi|^2 
    \end{aligned}
    \end{equation}
    Now we observe that by~\eqref{eq: dimReduceDHYM}, or Lemma~\ref{lem: H2CurvLem} we have
    \begin{equation}\label{eq: locF2eqStabNec}
    \sin(\htheta) i\Lambda F_2 + \cos(\htheta) =\frac{\left( 1+ \frac{\sin^2(\htheta)}{2}|\nabla \Phi|^2\right)}{\cos(\htheta) + \left(\frac{4\pi}{\sigma}-|\Phi|^2\right)\sin(\htheta)}
    \end{equation}
    Therefore, the only term for which the sign is unclear is the term ${\rm Re}\langle \pi^{\perp}\nabla \Phi, \beta^{\dagger}\pi \Phi \rangle$.  We now consider case $(i)$.  If $\Phi(E_2) \subset S$, then $\pi \Phi = \Phi$, and $\pi^{\perp}\Phi = 0$.  In this case
    \[
    \pi^{\perp}\nabla \Phi = -\beta^{\dagger}\pi\Phi
    \]
    and so
    \begin{equation}\label{eq: DeltaBEasyCase}
    \begin{aligned}
    2\pi \left(\cos(\htheta)\deg(S)- \sin(\htheta){\rm rk}(S)\frac{V_X}{2\pi}\right) &=  -\sin(\htheta)\left(  \int_{X} |\beta^{\dagger}\pi \Phi|^2 + \frac{1}{2}|\pi \nabla \Phi|^2 \right)- \cos(\htheta)\int|\beta|^2\\
    &\quad -\int_{X}\left(\sin(\htheta)i\Lambda F_2 +\cos(\htheta)\right)|\pi\Phi|^2
    \end{aligned}
    \end{equation}
    from which the result follows using~\eqref{eq: locF2eqStabNec}.  Since $\pi\Phi=\Phi$ we see that equality is never achieved unless $\Phi=0$ and $\beta=0$.  Now we suppose that $\pi^{\perp}\Phi \ne 0$.  We bound  
    \[
    \sin(\htheta)\big|{\rm Re}\langle \pi^{\perp}\nabla \Phi, \beta^{\dagger}\pi \Phi \rangle\big| \leq \frac{\epsilon^2\sin^2(\htheta)}{2}|\pi^{\perp}\nabla \Phi|^2|\pi\Phi|^2 + \frac{1}{2\epsilon^2}|\beta|^2
    \]
    Choose $\frac{1}{2\epsilon^2} = \cos(\htheta)$.  Then
  \[
    \sin(\htheta)\big|{\rm Re}\langle \pi^{\perp}\nabla \Phi, \beta^{\dagger}\pi \Phi \rangle\big| \leq \frac{\sin^2(\htheta)}{4\cos(\htheta)}|\pi^{\perp}\nabla \Phi|^2|\pi\Phi|^2 + \cos(\htheta)|\beta|^2
    \]  
    We attempt to absorb the $\nabla \Phi$ term using the corresponding term from~\eqref{eq: locF2eqStabNec}.  In order for this to be possible, we need
    \[
    \frac{1}{\cos(\htheta) + (\frac{4\pi}{\sigma}-|\Phi|^2)\sin(\htheta)} \geq  \frac{1}{2\cos(\htheta)}
    \]
    or in other words,
    \[
    \cos(\htheta) \geq (\frac{4\pi}{\sigma}-|\Phi|^2)\sin(\htheta),
    \]
    which holds unconditionally provided $\cos(\htheta) \geq \frac{4\pi}{\sigma}\sin(\htheta)$.  Note that equality cannot hold unless $\pi \Phi =0$ and $\beta=0$. This establishes $(ii)$.
\end{proof}

The difficulty in extending the proof of necessity in Lemma~\ref{lem: typeBStabNecessary} to the full range of parameters is controlling the term
\begin{equation}\label{eq: typeBbadTerm}
{\rm Re}\langle \pi^{\perp}\nabla \Phi, \beta^{\dagger}\pi \Phi \rangle
\end{equation}
appearing in~\eqref{eq: specCasetypeBhardPart}.  The challenge is that this term has no obvious sign and is {\em linear} in $\pi\Phi$, while the helpful negative terms are quadratic in $\pi\Phi$.  Thus the negative terms cannot control~\eqref{eq: typeBbadTerm} in the region where $|\pi\Phi|$ is small.  In the proof of Lemma~\ref{lem: typeBStabNecessary}, we used a pointwise argument via Cauchy-Schwarz, which requires $\cos(\htheta) \geq \frac{4\pi}{\sigma}\sin(\htheta)$.  Beyond this regime we expect that proving the necessity of stability may require a more elaborate approach.  Indeed, the natural approach to controlling~\eqref{eq: typeBbadTerm} is to integrate by parts to obtain curvature terms and then invoke the system~\eqref{eq: dimReduceDHYM}.  Unfortunately, this introduces further second fundamental form terms with unfavorable signs and the resulting off-diagonal curvature terms also do not have any clear sign.  Instead, extending Lemma~\ref{lem: typeBStabNecessary} to the full range of parameters may require developing an infinite-dimensional GIT approach along the lines of Collins-Yau \cite{CY, CYarX}; we explain this briefly in Section~\ref{sec: comments}.  It is interesting to note that a similar issue arises in the estimates in Section~\ref{sec: Estimates} in the blow-up argument of Proposition~\ref{prop: UYargumentH} in which one extracts, by a contradiction argument, a subtriple of type $A$.

\begin{conj}\label{conj: stabTypeB}
    Suppose that $\cE = (E_1, E_2, \Phi)$ admits a solution of the deformed Hermitian-Yang-Mills system with $\cos(\htheta)>0$.  Suppose that $\cS= (S,0,0)$ is a saturated subtriple of type B.  Then
    \[
    \Theta(\cS) \leq \Theta(\cE)
    \]
    with equality if and only if $E_1$ splits orthogonally and holomorphically as $E_1= S \oplus Q$, and $\Phi \in H^{0}(X,Q\otimes E_2^{\vee})$.
\end{conj}

Finally, we can prove the necessity of stability.

\begin{prop}\label{prop: stabNecessaryCosPos}
    Suppose that $\cE= (E_1, E_2, \Phi)$ is equivariantly irreducible and admits a solution of the system~\eqref{eq: dimReduceDHYM}+~\eqref{eq: dimReduceDHYM-Pos} with $\hat{\theta}\in(0, \arctan(\frac{\sigma}{4\pi})] \cup[\frac{\pi}{2}, \pi]$.  Then $\cE$ is $Z$-stable in the sense of Definition~\ref{def: stabOnXxP}.
\end{prop}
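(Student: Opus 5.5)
The proof splits according to the phase. When $\hat\theta\in[\frac{\pi}{2},\pi]$, i.e.\ $\cos(\htheta)\leq 0$, the statement is exactly the "solution implies stable" half of Corollary~\ref{cor: classifySolRealNeg}. By Proposition~\ref{prop: charSolCosNeg}, a solution forces $E_1\simeq E_2$ to be line bundles, $\Phi$ to be nowhere vanishing, and $\cos(\htheta)+\frac{2\pi}{\sigma}\sin(\htheta)>0$. Lemma~\ref{lem: stabForVeryRestr} then gives $Z$-stability. So all the work is in the range $\hat\theta\in(0,\arctan(\frac{\sigma}{4\pi})]$, where $\cos(\htheta)>0$ and $\tan(\htheta)\leq\frac{\sigma}{4\pi}$.

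In that range I would use the reformulation in Lemma~\ref{lem: phaseToRatioStab}. It suffices to show (i) $\cE\in\mathcal{T}$ and (ii) ${\rm Im}(Z_{\cX}(\cS)/Z_{\cX}(\cE))<0$ for every nonzero proper saturated subtriple $\cS$. By Lemma~\ref{lem: Re>0ReduceStabSaturated} and Proposition~\ref{prop: simplifiedStability}, torsion quotients are automatic here. Condition (i) is Lemma~\ref{lem: torsionClassNecessary}.

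For (ii) I treat the two types of subtriple separately.
\begin{itemize}
\item If $\cS$ is of type A, Proposition~\ref{prop: stabTypeANecessary} gives ${\rm Im}(Z_{\cX}(\cS)/Z_{\cX}(\cE))\leq 0$.
\item If $\cS=(S,0,0)$ is of type B, the hypothesis $\tan(\htheta)\leq\frac{\sigma}{4\pi}$ is exactly condition (ii) of Lemma~\ref{lem: typeBStabNecessary}, namely $\cos(\htheta)\geq\frac{4\pi}{\sigma}\sin(\htheta)$. That lemma then gives the non-strict inequality.
\end{itemize}

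The remaining step, and the only real subtlety, is upgrading these to strict inequalities using equivariant irreducibility.
\begin{itemize}
\item \emph{Type A.} Equality forces an orthogonal holomorphic splitting $E_1=S\oplus E_1/S$ with $\Phi=\Phi'\oplus 0$. This gives an $SU(2)$-equivariant holomorphic decomposition $\cE=\cS\oplus\cE/\cS$, contradicting irreducibility, since $\cS$ is proper and nonzero.
\item \emph{Type B.} Equality forces $E_1=S\oplus E_1/S$ holomorphically with $\Phi(E_2)\subset E_1/S$. Then $\cE$ splits equivariantly as $(S,0,0)\oplus(E_1/S,E_2,\Phi)$, again a contradiction.
\end{itemize}
One point to check carefully is that the equality characterization in Lemma~\ref{lem: typeBStabNecessary} covers both of its cases. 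In case (i) of that lemma, equality forces $\Phi=0$ and $\beta=0$. Then $(0,E_2,0)$ is an equivariant summand, contradicting irreducibility, or alternatively contradicting Lemma~\ref{lem: PhiCannotVanishIDentically}-type reasoning.

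With strict inequality established for every saturated subtriple, Lemma~\ref{lem: phaseToRatioStab} yields $Z$-stability.
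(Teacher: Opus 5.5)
Your proposal is correct and follows the paper's proof: the $\cos(\htheta)\leq 0$ case goes through Corollary~\ref{cor: classifySolRealNeg}, and for $\cos(\htheta)>0$ you combine Lemma~\ref{lem: torsionClassNecessary}, the reduction to saturated subtriples via Proposition~\ref{prop: simplifiedStability} and Lemma~\ref{lem: phaseToRatioStab}, Proposition~\ref{prop: stabTypeANecessary}, and case (ii) of Lemma~\ref{lem: typeBStabNecessary}, with irreducibility excluding the equality cases. Your separate treatment of case (i) of the type B lemma is unnecessary, since case (ii) already covers every type B subtriple when $\tan(\htheta)\leq\frac{\sigma}{4\pi}$, but it does no harm.
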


\begin{proof}
If $\cos(\htheta)\leq 0$ then the conclusion follows from the stronger result, Corollary~\ref{cor: classifySolRealNeg}.  We may therefore assume that $\cos(\htheta)>0$. That $\cE\in \mathcal{T}$ follows from Lemma~\ref{lem: torsionClassNecessary}.  By Proposition~\ref{prop: simplifiedStability} it suffices to check stability for surjections $\cE\twoheadrightarrow \cQ$ in ${\rm Coh}^{SU(2)}(\cX)$ where $\cQ$ is either a vector bundle, or a vector bundle supported over a $\mathbb{P}^1$-fiber.  By Lemma~\ref{lem: stabForTorsion}, the latter objects can never destabilize.  Thus it suffices to consider saturated subtriples.  By Lemma~\ref{lem: phaseToRatioStab} $Z$-stability is equivalent to proving $ {\rm Im}\left(\frac{Z_{\cX}(\cS)}{Z_{\cX}(\cE)}\right) <0$ for every saturated subtriple. The result now follows from Proposition~\ref{prop: stabTypeANecessary}, and Lemma~\ref{lem: typeBStabNecessary} since irreducibility implies that the equality cases of these results cannot occur.
\end{proof}

\section{Solving the initial equation}\label{sec: initialEquation}

Sections~\ref{sec: initialEquation}-\ref{sec: fixedPoint} are dedicated to proving the following theorem:

\begin{thm}\label{thm: existenceCos>0}
    Suppose $\cE=(E_1,E_2,\Phi)$ is an irreducible holomorphic triple of vortex type.  Suppose that ${\rm Re}(Z_{\cX}(\cE))>0$ and ${\rm Im}(Z_{\cX}(\cE))>0$ and $\tan(\htheta)\leq \frac{\sigma}{4\pi}$.  If $\cE$ is $Z$-stable in the sense of Definition~\ref{def: stabOnXxP}, then $\cE$ admits a solution of the deformed Vortex equations~\eqref{eq: dimReduceDHYM}+~\eqref{eq: dimReduceDHYM-Pos}.
\end{thm}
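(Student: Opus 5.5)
We will prove Theorem~\ref{thm: existenceCos>0} by the method of continuity~\eqref{eq: dimReduceDHYM-MOC} in $t\in[0,1]$, combined with a priori estimates and a degree argument. Write $\alpha=\tan(\htheta)\in(0,\frac{\sigma}{4\pi}]$.

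\textbf{Step 1: the equation at $t=0$.} At $t=0$ the system decouples into a vortex-type system: $i\Lambda F_1+\Phi\Phi^\dagger=\alpha I$, and a scalar equation for $h_2$ of the form $(1+\frac{4\pi}{\sigma}\alpha)i\Lambda F_2=\alpha-\frac{4\pi}{\sigma}+|\Phi|^2$. Following Bradlow \cite{Bradlow} and Garc\'ia-Prada \cite{GPVortex}, this is a $\tau$-vortex equation for the triple after rescaling. We will check that $Z$-stability (through Lemma~\ref{lem: phaseToRatioStab} and the inequalities of Lemmas~\ref{lem: StabE_1CanonSub} and~\ref{lem: StabE_2CanonSub}) implies the relevant $\tau$-stability. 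We then solve by a Donaldson-functional or Uhlenbeck--Yau continuity argument, using $\Phi$-simplicity (Proposition~\ref{prop: stableImpliesSimple}) for uniqueness and invertibility of the linearization.

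\textbf{Step 2: openness and the structure of the linearized operator.} For $t>0$, Lemma~\ref{lem: H1CurveLemSimplfiedPositiveCos} and Lemma~\ref{lem: H2CurvLem} rewrite the system as $i\Lambda F_1=\mathcal{N}_1(t,\Phi,\nabla\Phi)$ and $i\Lambda F_2=\mathcal{N}_2(t,\Phi,\nabla\Phi)$. The right-hand sides depend only on first derivatives of $h$ through $\nabla\Phi$. The system is therefore a quasilinear elliptic system, with principal part the Laplacian, and the associated operator is Fredholm of index zero. We do not try to prove invertibility of the linearization along the path. Instead we follow the suggestion attributed to Brendle and use a Leray--Schauder degree, as in \cite{ChangGursky}. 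The degree at $t=0$ is nonzero because the $t=0$ solution is unique and nondegenerate by $\Phi$-simplicity. Homotopy invariance then yields a solution at $t=1$, provided no solutions escape to the boundary of the relevant set. That reduces everything to uniform a priori estimates in $t$, together with preservation of the calibration constraint~\eqref{eq: simplifedPositivity}. Since $\cos(\htheta)>0$, we have $\cos+ (\frac{4\pi}{\sigma}-t|\Phi|^2)\sin\neq 0$ by Lemma~\ref{lem: H2CurvLem}, and positivity follows from the rewritten equations as in Lemma~\ref{lem: simplifiedPos}.

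\textbf{Step 3: a priori estimates.} We first normalize $\det$ or the trace, and bound $|\Phi|^2$ from above by the maximum principle applied to $\Delta|\Phi|^2=|\nabla\Phi|^2-\langle (i\Lambda F_1-i\Lambda F_2)\Phi,\Phi\rangle$. Here we use Lemma~\ref{lem: H1CurvLem} and Lemma~\ref{lem: H2CurvLem}, and the hypothesis $\alpha\le\frac{\sigma}{4\pi}$ keeps the denominators controlled. Next we bound $|\nabla\Phi|$ by a Bochner formula for $|\nabla\Phi|^2$ combined with the quadratic structure of the right-hand side. With $\Phi$ in $C^1$, the curvatures $i\Lambda F_i$ are bounded.

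The crux is the $C^0$ bound on $h_1$ relative to a background metric; this is the main obstacle. We would argue by contradiction in the style of Uhlenbeck--Yau \cite{UY}. Suppose $\sup\log{\rm tr}(h_1 h_{1,0}^{-1})\to\infty$ along a sequence. Normalizing $s=\log(h_1h_{1,0}^{-1})$, we extract a weak $L^2_1$ limit of $u_j=s_j/\|s_j\|$, whose eigenvalue projections define a weakly holomorphic filtration of $E_1$. By the Uhlenbeck--Yau regularity theorem these are saturated subsheaves, and compatibility with $\Phi$ produces a subtriple. The delicate part is to pass the nonlinear terms $\{\Phi\Phi^\dagger,\nabla\Phi(\nabla\Phi)^\dagger\}$ to the limit along the diverging metrics. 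Integrating the equation against $u$ should yield a Chern--Weil inequality of the form ${\rm Im}(Z(\cS)/Z(\cE))\ge0$ for some saturated subtriple. This contradicts Lemma~\ref{lem: phaseToRatioStab}.

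The contradiction is cleanest when the extracted subtriple is of type A, mirroring Proposition~\ref{prop: stabTypeANecessary}. For type B pieces the cross term $\langle\pi^\perp\nabla\Phi,\beta^\dagger\pi\Phi\rangle$ reappears. We would absorb it by the Cauchy--Schwarz argument of Lemma~\ref{lem: typeBStabNecessary}, using $\cos(\htheta)\ge\frac{4\pi}{\sigma}\sin(\htheta)$. A separate, easier contradiction argument is needed for the $C^0$ bound on the scalar $h_2$ (equivalently on the overall scaling), using $\cE\in\mathcal{T}$ and Lemma~\ref{lem: StabE_2CanonSub}.

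With $C^0$ in hand, higher estimates follow from standard elliptic bootstrapping, since the right-hand sides are smooth functions of $(\Phi,\nabla\Phi)$. This closes the degree argument and produces a solution at $t=1$ satisfying~\eqref{eq: dimReduceDHYM}+\eqref{eq: dimReduceDHYM-Pos}.
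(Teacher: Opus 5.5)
Your architecture matches the paper's: Bradlow at $t=0$, maximum-principle bounds for $|\Phi|$ and $|\nabla\Phi|$, an Uhlenbeck--Yau blow-up for $C^0$, and a Leray--Schauder degree. The $C^0$ step, which you rightly call the crux, has a genuine gap. Your plan transplants the \emph{necessity} arguments into the blow-up. There, however, you need the limiting subtriple to destabilize, so the $\Phi$-terms that were favorable for necessity become unfavorable.

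Suppose the blow-up yields a type A subtriple $(S,E_2,\Phi)$ with quotient $Q$, and you test the equation for $F_{1,\ell}$ against $\tilde u_\ell^{\mathtt{s}}\approx\pi^{\perp}$. You are left with $+\frac{t_\ell}{2}\sin(\htheta)\int_X{\rm Tr}\bigl(\nabla^\ell\Phi(\nabla^\ell\Phi)^{\dagger_\ell}\tilde u_\ell^{\mathtt{s}}\bigr)$ on the side that must be shown $\leq 0$ to contradict $\cos(\htheta)\deg Q>\frac{V_X}{2\pi}\sin(\htheta)\rank(Q)$. In Proposition~\ref{prop: stabTypeANecessary} this is the helpful term $\frac12\sin(\htheta)|\beta^\dagger\Phi|^2$. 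Here it hurts, and it does not vanish in the limit, since $\tilde u_\infty\Phi=0$ does not force $\langle\tilde u_\ell^{\mathtt{s}}\nabla^\ell\Phi,\nabla^\ell\Phi\rangle_{h_\ell}\to 0$. So type A is the hard case, not the clean one.

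Proposition~\ref{prop: UYargumentH} handles it in two steps. First, the integration by parts \eqref{eq: integrationByPartsUY} trades this term for a nonnegative one plus the cross term $t_\ell\sin(\htheta){\rm Re}\langle\nabla^\ell\Phi,(\nabla^\ell\tilde u_\ell^{\mathtt{s}})\Phi\rangle_{h_\ell}$. Second, the cross term is controlled on Lusin--Egorov sets by splitting into large and small eigenspaces of $\tilde u_\ell$. This uses Lemma~\ref{lem: calculus}, the Uhlenbeck--Yau excess (Lemma~\ref{lem: UYExcessFormula}), and absorption into the positive $|\nabla^\ell\Phi|^2$ term contributed by $i\Lambda F_{2,\ell}$. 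That absorption needs $(\frac{4\pi}{\sigma}-t|\Phi|^2)\sin(\htheta)\leq\cos(\htheta)$, which the paper guarantees uniformly via $\tan(\htheta)\leq\frac{\sigma}{4\pi}$. This is where the hypothesis enters — in the type A blow-up, not the type B one.

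Your type B plan fails for the mirror reason. Testing against $I-\tilde u_{1,\ell}^{\mathtt{s}}\approx\pi$, the obstructions are $-\cos(\htheta){\rm Tr}\bigl(\Phi\Phi^{\dagger_\ell}(I-\tilde u_{1,\ell}^{\mathtt{s}})\bigr)$ and the anticommutator term. The integrated identity behind Lemma~\ref{lem: typeBStabNecessary} converts the anticommutator into $|\pi\Phi|^2$ and $|\pi\nabla\Phi|^2$ terms that now carry the wrong sign. Also, along a diverging sequence there is no genuine second fundamental form with respect to $h_\ell$ on which to run the pointwise Cauchy--Schwarz.

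What makes type B easy in the paper is the \emph{order} of two separate blow-ups. First bound $h_1h_2^{-1}$ from above; the bound on $|\Phi|_{h_\ell}$ forces $\Phi(E_2)\subset S$, which is why that blow-up is type A. Only then blow up the normalized pair $(h_1,h_2)$, in Proposition~\ref{prop: UYargumentTypeB}. The first bound gives $u_{1,\ell}\leq C_0u_{2,\ell}$, and a Green's function argument gives $\tilde u_{2,\ell}\geq\delta'>0$. Together these yield Lemma~\ref{lem: PhiDaggerGoesToZero}, $|\Phi^{\dagger_\ell}(I-\tilde u_{1,\ell}^{\mathtt{s}})|_{h_\ell}^2\leq C\mathtt{s}^2$. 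This kills both obstructions with no restriction on $\htheta$, while the gradient term has the good sign. A single blow-up of $h_1$, plus a separate argument for $h_2$ via $\cE\in\mathcal{T}$, provides neither this input nor any control over whether $\Phi(E_2)$ lies in the destabilizing piece.

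Two smaller points. The $|\Phi|$ bound does not use $\tan(\htheta)\leq\frac{\sigma}{4\pi}$. It does need $\inf_X|\Phi|^2\leq\frac{4\pi}{\sigma t}$, preserved along the path, to make $\cos(\htheta)+(\frac{4\pi}{\sigma}-t|\Phi|^2)\sin(\htheta)$ positive rather than merely nonzero. That positivity, with the resulting $\sup_X|\Phi|^2$ bound, is what the calibration constraint and the choice of domains $\widehat{\Omega}_{1,\alpha}(\mu_i,R)$ in the degree argument rely on.
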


We approach this via the method of continuity introduced in Section~\ref{sec: dimReduction}. We comment that the restriction $\tan(\htheta)\leq \frac{\sigma}{4\pi}$ is only used in Section~\ref{subsec: EstFromStab}.  In this section we briefly explain how to solve the equations~\eqref{eq: dimReduceDHYM-MOC} when $\cos(\htheta)>0$ and $t=0$.  This follows from arguments of Bradlow \cite{Bradlow}, building on works of Simpson \cite{Simpson}, Donaldson \cite{Do83, Do85, Do87} and Uhlenbeck-Yau \cite{UY}.  

\begin{thm}\label{thm: solveInitialEquation}
Suppose $\cE= (E_1,E_2,\Phi)$ is an irreducible triple of vortex type with ${\rm Re}(Z_{\cX}(\cE))>0$, and $ {\rm Im}(Z_{\cX}(\cE)) >0$.  Then there exists metrics $(h_{1},h_{2})$ on $(E_1,E_2)$ solving 
\begin{equation}\label{eq:t0-vortex}
\begin{aligned}
(i\Lambda F_{1} + \Phi\Phi^{\dagger}) &= \tan(\htheta)I_{E_1}\\
(1 + \frac{4\pi}{\sigma}\tan(\htheta))i\Lambda F_2 - |\Phi|^2 &= \tan(\htheta)-\frac{4\pi}{\sigma}
\end{aligned}
\end{equation}
if and only if $(E_1,E_2,\Phi)$ is $Z$-stable in the sense of Definition~\ref{def: stabOnXxP} or equivalently Definition~\ref{defn: stabOfTriple}.  Furthermore, the solution is unique up to rescaling $(h_{1},h_{2})\mapsto (e^{c}h_{1},e^{c}h_{2}) $.
\end{thm}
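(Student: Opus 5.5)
The equations \eqref{eq:t0-vortex} are a weighted version of the coupled vortex equations of Bradlow and Garcia-Prada \cite{Bradlow, GPVortex, BG-P}. Write $\alpha=\tan(\htheta)$ and $c=(1+\frac{4\pi}{\sigma}\alpha)^{-1}>0$. The system becomes
\[
i\Lambda F_1+\Phi\Phi^\dagger=\alpha I_{E_1},\qquad i\Lambda F_2-c|\Phi|^2=c\left(\alpha-\tfrac{4\pi}{\sigma}\right).
\]
The weight $c$ cannot be removed by rescaling $h_1$, $h_2$ or $\Phi$. So rather than quoting the unweighted theorem directly, I would rerun the Simpson--Bradlow argument for this system, following \cite{Bradlow, Simpson}. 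The plan has three parts: (a) a numerical consistency check together with the identification of the relevant slope condition, (b) necessity, and (c) existence and uniqueness.

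\emph{Step (a).} Trace the first equation, and $c^{-1}$ times the second, and integrate. The $\int|\Phi|^2$ terms cancel, which gives a single linear relation between $\deg E_1,\deg E_2,\rank E_1$ and $\alpha$. I would check that this relation is exactly ${\rm Im}(e^{-i\htheta}Z_{\cX}(\cE))=0$ from \eqref{eq: TripleCentralCharge}, so $\alpha=\tan\Theta(\cE)$ is forced. Next, for a saturated subtriple $\cS=(S_1,S_2,\Phi')$, I would form the weighted slope
\[
\deg S_1+c^{-1}\deg S_2-\tfrac{V_X}{2\pi}\alpha\bigl(\rank S_1+\tfrac{4\pi}{\sigma}c^{-1}\cdots\bigr)
\]
that the equations naturally produce, i.e.\ the analogue of $\tau$-stability. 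I expect this to be a positive multiple of $\cos(\htheta)^{-1}{\rm Im}(Z_{\cX}(\cS)/Z_{\cX}(\cE))$, treating types A and B separately. By Lemma~\ref{lem: phaseToRatioStab}, together with Lemma~\ref{lem: Re>0ReduceStabSaturated}, this identifies $Z$-stability with the condition that the weighted slope is negative on every proper saturated subtriple. I would need to check that condition (i), $\cE\in\mathcal{T}$, is automatic, as in Lemma~\ref{lem: torsionClassNecessary}.

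\emph{Step (b): necessity.} This is the standard second fundamental form computation, done exactly as in Proposition~\ref{prop: stabTypeANecessary} and Lemma~\ref{lem: typeBStabNecessary}, but with the linear $t=0$ equation. Project the first equation onto $S_1$ (or onto $Q=E_1/S_1$), take the trace and integrate. For type B subtriples the cross term is now $-\int|\pi\Phi|^2\le0$, so it has a good sign and no restriction on $\htheta$ is needed. Strict inequality then follows from irreducibility.

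\emph{Step (c): existence and uniqueness.} Fix $h_2$ up to a conformal factor and reduce to an equation for the pair $(h_1,h_2)=(h_1^0H,\,h_2^0e^{f})$. Consider the Donaldson-type functional
\[
M(H,f)=M_{\rm Don}(H)+c^{-1}M_{\rm Don}(e^f)+\int(|\Phi|^2_{H,f}-|\Phi|^2_0)-\int\tau\text{-terms},
\]
whose Euler--Lagrange equations are \eqref{eq:t0-vortex}. Solve by heat flow or continuity with a regularizing parameter $\epsilon$, as in \cite{Simpson, Bradlow}. The key point is a $C^0$ bound, i.e.\ properness of $M$ along normalized paths. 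This is the main obstacle, and I would handle it by Simpson's contradiction argument. If $\sup\log{\rm Tr}H\to\infty$, normalize $s=\log(H)$ and extract an $L^2_1$ weak limit $s_\infty$. Its eigenvalues are constant, and its spectral projections are weakly holomorphic (Uhlenbeck--Yau regularity). Because of the $\Phi$-coupling, each projection $\pi_\lambda$ must be compatible with $\Phi$, which yields saturated subtriples. The weighted slope of some such subtriple must then be nonnegative, contradicting stability. The relevant $\Phi$-term enters with the good (convex) sign $\int|\Phi|^2$, so it only helps.

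Uniqueness follows from strict convexity of $M$ along geodesics $H_s=He^{su}$. Strict convexity degenerates only for $u$ parallel and commuting with $\Phi$, i.e.\ a triple endomorphism. By Proposition~\ref{prop: stableImpliesSimple} such an endomorphism is a multiple of the identity, which gives exactly the rescaling $(e^ch_1,e^ch_2)$.
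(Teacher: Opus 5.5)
Your proposal is correct and follows essentially the paper's route. Necessity comes from projecting the first equation onto $S$ or $E_1/S$ and integrating, where the type B cross term $-\int_X|\pi\Phi|^2$ now has a good sign; sufficiency comes from Bradlow's version of the Donaldson--Simpson functional with weight $1+\frac{4\pi}{\sigma}\tan(\htheta)$ on $\mathcal{D}_2$, properness from stability via a Simpson-type blow-up giving a $\Phi$-compatible destabilizing filtration, and uniqueness from convexity along geodesics plus Proposition~\ref{prop: stableImpliesSimple}. For the record, the weighted slope you left incomplete is $\deg S_1+c^{-1}\deg S_2-\frac{V_X}{2\pi}\bigl(\alpha\,\rank S_1+(\alpha-\frac{4\pi}{\sigma})\rank S_2\bigr)=\frac{2\pi}{\sigma}\bigl({\rm Im}\,Z_{\cX}(\cS)-\alpha\,{\rm Re}\,Z_{\cX}(\cS)\bigr)$, and the condition $\cE\in\mathcal{T}$ you flagged follows from its negativity on saturated subtriples together with ${\rm Re}\,Z_{\cX}(\cE)>0$, by the argument in the proof of Lemma~\ref{lem: phaseToRatioStab} (testing against $\cS_{\Phi}=({\rm sat}(\Phi(E_2)),E_2,\Phi)$, or against $(0,E_2,0)$ if $\Phi\equiv 0$).
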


The necessity of stability follows easily from the standard second fundamental form calculations.

\begin{lem}
    Suppose $\cE= (E_1,E_2,\Phi)$ is a holomorphic triple with $\rank(E_2)=1$ and ${\rm Re}(Z_{\cX}(\cE))>0$, and $ {\rm Im}(Z_{\cX}(\cE)) >0$.  Suppose there exist Hermitian metrics $(h_1,h_2)$ solving~\eqref{eq:t0-vortex}.  Then for any saturated subtriple $\cS\subset \cE$ we have
    \[
    {\rm Im}\left(\frac{Z_{\cX}(\cS)}{Z_{\cX}(\cE)}\right)<0.
    \]
\end{lem}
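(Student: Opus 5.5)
The plan is to reduce both subtriple types to a second fundamental form computation on $E_1$ alone, in the same spirit as Proposition~\ref{prop: stabTypeANecessary} and Lemma~\ref{lem: typeBStabNecessary}. The $t=0$ equations~\eqref{eq:t0-vortex} are linear in the curvature, so the computation is much simpler there: no $\nabla\Phi$ or anticommutator terms appear.

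First I would record the normalization. Take the trace of the first equation in~\eqref{eq:t0-vortex} and integrate, which gives $2\pi\deg E_1+\int_X|\Phi|^2=\tan(\htheta)\,{\rm rk}(E_1)V_X$. Integrating the second equation gives $(1+\frac{4\pi}{\sigma}\tan\htheta)2\pi\deg E_2-\int_X|\Phi|^2=(\tan\htheta-\frac{4\pi}{\sigma})V_X$. Adding the two reproduces
\[
\deg E_1+\deg E_2+\tfrac{2V_X}{\sigma}=\tan(\htheta)\Big[\tfrac{V_X}{2\pi}({\rm rk}E_1+1)-\tfrac{4\pi}{\sigma}\deg E_2\Big].
\]
By~\eqref{eq: TripleCentralCharge}, this says the constant $\htheta$ in~\eqref{eq:t0-vortex} is exactly $\Theta(\cE)$. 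Hence ${\rm Im}(Z_{\cX}(\cS)/Z_{\cX}(\cE))$ is a positive multiple of $\cos\htheta\,{\rm Im}Z_{\cX}(\cS)-\sin\htheta\,{\rm Re}Z_{\cX}(\cS)$, and additivity lets me pass freely between $\cS$ and $\cE/\cS$.

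\emph{Type B}, $\cS=(S,0,0)$. Let $\pi$ be the $h_1$-orthogonal projection onto $S$ and $\beta$ the second fundamental form. Compose the first equation with $\pi$, take the trace, and use $\Tr(\pi\, i\Lambda F_1\pi)=\Tr(i\Lambda F_S)+|\beta|^2$. Integrating, I expect
\[
2\pi\deg S+\int_X|\beta|^2+\int_X|\pi\Phi|^2=\tan(\htheta)\,{\rm rk}(S)V_X .
\]
Hence
\[
\cos\htheta\deg S-\sin\htheta\tfrac{V_X}{2\pi}{\rm rk}S=-\tfrac{\cos\htheta}{2\pi}\int_X\big(|\beta|^2+|\pi\Phi|^2\big)\le 0,
\]
which by~\eqref{eq: subTripleCentralChargeTypeB} is the desired sign.

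\emph{Type A}, $\cS=(S,E_2,\Phi')$ saturated. Here $\pi^\perp\Phi=0$. Apply $\pi^\perp$ to the first equation; the $\Phi\Phi^\dagger$ term drops out and the quotient curvature formula $\Tr(\pi^\perp i\Lambda F_1\pi^\perp)=\Tr(i\Lambda F_Q)-|\beta|^2$ gives
\[
2\pi\deg Q-\int_X|\beta|^2=\tan(\htheta)\,{\rm rk}(Q)V_X .
\]
Since $\cE/\cS=(Q,0,0)$ has $Z_{\cX}\propto \frac{V_X}{2\pi}{\rm rk}Q+i\deg Q$, this yields ${\rm Im}(Z_{\cX}(\cE/\cS)/Z_{\cX}(\cE))=\frac{\cos\htheta}{2\pi}\int_X|\beta|^2\ge0$. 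Additivity then gives the inequality for $\cS$.

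The main obstacle is strictness, since both identities only give $\le 0$. Equality forces $\beta\equiv0$, together with $\pi\Phi\equiv0$ in type B. Then $E_1=S\oplus S^\perp$ holomorphically and orthogonally, with $\Phi$ landing entirely in one summand, so $\cE$ splits as a direct sum of $SU(2)$-equivariant subtriples. I would rule this out using the irreducibility implicit in the setting of Theorem~\ref{thm: solveInitialEquation}, equivalently $\Phi$-simplicity as in Proposition~\ref{prop: stableImpliesSimple}: a splitting produces a non-scalar endomorphism $(\mathrm{id}_S\oplus 0,\ \lambda)$ of the triple. Without that hypothesis the lemma should be read with $\le$, with equality exactly in the split case.
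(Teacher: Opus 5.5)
Your proposal is correct and is essentially the paper's argument. You project the first equation of \eqref{eq:t0-vortex} onto $S^{\perp}$ for type A (where $\pi^{\perp}\Phi=0$) or onto $S$ for type B, then trace and integrate to obtain $2\pi\deg Q-\int_X|\beta|^2=\tan(\hat{\theta})\,\mathrm{rk}(Q)V_X$ and $2\pi\deg S+\int_X(|\beta|^2+|\pi\Phi|^2)=\tan(\hat{\theta})\,\mathrm{rk}(S)V_X$. Your attention to strictness is warranted: the paper's sketch only gives $\le 0$, and equality really occurs for split triples, so an irreducibility hypothesis (as in Theorem~\ref{thm: solveInitialEquation}) is needed for the strict inequality.
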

\begin{proof}
    We sketch the proof since it is very similar to the calculation in the previous section.  If $\cS=(S,E_2,\Phi)$ is of type $A$, then let $\pi^{\perp}$ denote the $h_1$-orthogonal projection to $S^{\perp}\subset E_1$.  Applying $\pi^{\perp}$ to the equation for $F_1$, tracing and integrating yields
    \[
    2\pi \deg(Q)-\int_{X}|\beta|^2 = V_{X}\tan(\htheta)\rank(Q)
    \]
    which yields the desired result.  If $\cS=(S,0,0)$ is a subtriple of type $B$ then applying $\pi$ to the first equation yields
    \[
    2\pi\deg(S) + \int_{X}|\beta|^2 +\int_{X}|\pi\Phi|^2= \tan(\htheta)V_{X}\rank(S)
    \]
    which again yields the desired result thanks to Lemma~\ref{lem: phaseToRatioStab}.
\end{proof}

By Lemma~\ref{lem: phaseToRatioStab} we see that $Z$-stability is a necessary condition for solving ~\eqref{eq:t0-vortex}. We briefly outline the proof of sufficiency, which follows essentially verbatim from Bradlow's work \cite{Bradlow}.  For more details, we refer the reader to the second author's Ph.D. thesis \cite{YukaiThesis}.  Fix background Hermitian metrics \(h_{1,0},h_{2,0}\), and for simplicity, denote by $\lambda =\tan(\htheta)>0$.  Let
\(\mathcal D_i(h_{i,0},h_i)\) denote the Donaldson functional on \(E_i\). If \(h(t)\) is any smooth path from \(h_{i,0}\) to \(h_i\), then
\begin{equation}
\label{eq:t0-donaldson-path-integral}
  \mathcal D_i(h_{i,0},h_i)
  :=
  \int_0^1
  \int_X\operatorname{tr}\bigl(u(t)\sqrt{-1}\Lambda F_{h(t)}\bigr)\,\omega\,dt,
  \qquad
  \dot h(t)=h(t)u(t).
\end{equation}
This definition is independent of the chosen path. Since \(E_2\) is a line bundle, \(u_2\) is a real valued function.  We define
\begin{equation}
\label{eq:t0-functional}
\begin{aligned}
  \mathcal M(h_1,h_2)
  :=&\ \mathcal D_1(h_{1,0},h_1)
      -\lambda\int_X \log\det(h_{1,0}^{-1}h_1)\,\omega  \\
    &+(1+\frac{4\pi}{\sigma}\lambda)\,\mathcal D_2(h_{2,0},h_2)
      -(\lambda-\frac{4\pi}{\sigma})\int_X \log(h_{2,0}^{-1}h_2)\,\omega
      +\int_X |\Phi|^2_{h_1h_2^{-1}}\,\omega .
\end{aligned}
\end{equation}

A straightforward calculation shows that the first variation of $\mathcal{M}$ satisfies
\begin{equation}
\label{eq:t0-first-variation}
\begin{aligned}
  \frac{d}{ds}\mathcal M(h_1(s),h_2(s))\bigg|_{s=0}
  =&\int_X \operatorname{tr}
  \bigl(u_1(\sqrt{-1}\Lambda F_{1}+\Phi\Phi^\dagger-\lambda I_{E_1})\bigr)\,\omega  \\
  &+\int_X u_2
  \bigl((1+\frac{4\pi}{\sigma}\lambda)\sqrt{-1}\Lambda F_{2}-|\Phi|^2-(\lambda-\frac{4\pi}{\sigma})\bigr)\,\omega .
\end{aligned}
\end{equation}
It follows that sufficiently regular critical points of \(\mathcal M\) are exactly the solutions of
\eqref{eq:t0-vortex}.
This variational identification allows us to approach existence by a variational approach.  An important feature of $\mathcal{M}$ is that it is convex along $1$-parameter subgroups.  

\begin{lem}
\label{lem:t0-functional-convexity}
Let $ h_i(s)=h_i e^{s u_i}$ be a geodesic in the space of Hermitian metrics on $(E_1,E_2)$, where \(u_i\) is Hermitian with respect to
\(h_i\).  Then, for every \(s\),
\begin{equation}
\label{eq:t0-second-variation}
\begin{aligned}
  \frac{d^2}{ds^2}\mathcal M(h_1(s),h_2(s))
  =
  &\int_X |\bar\partial_{E_1}u_1|^2_{h_1(s)}\,\omega
  +\bigl(1+\frac{4\pi}{\sigma}\lambda\bigr)\int_X |\bar\partial u_2|^2\,\omega   \\
  &+\int_X |u_1\Phi-\Phi u_2|^2_{h_1(s)h_2(s)^{-1}}\,\omega .
\end{aligned}
\end{equation}
In particular, since \(1+\frac{4\pi}{\sigma}\lambda>0\), the functional
\(\mathcal M\) is convex along geodesics.
\end{lem}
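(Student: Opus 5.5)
The plan is to compute the second derivative of each of the five terms of $\mathcal M$ along $h_i(s)=h_ie^{su_i}$ separately. Along such a path $h_i(s)^{-1}\dot h_i(s)=u_i$ is constant in $s$, since $u_i$ commutes with $e^{su_i}$. By the path independence in \eqref{eq:t0-donaldson-path-integral}, we have
\[
\frac{d}{ds}\mathcal D_i(h_{i,0},h_i(s))=\int_X\operatorname{tr}\bigl(u_i\,\sqrt{-1}\Lambda F_{h_i(s)}\bigr)\omega .
\]
The two $\log\det$ terms become $\int_X \operatorname{tr}(\log h_{1,0}^{-1}h_1)\,\omega+s\int_X\operatorname{tr}(u_1)\,\omega$, and similarly for $E_2$. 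These are affine in $s$, so they contribute nothing to the second variation.

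For the Donaldson terms, differentiate once more. The standard formula for the variation of Chern curvature is $\frac{d}{ds}F_{h(s)}=\bar\partial\partial_{h(s)}u$. It gives
\[
\frac{d}{ds}\int_X\operatorname{tr}\bigl(u\,\sqrt{-1}\Lambda F_{h(s)}\bigr)\omega=\int_X\operatorname{tr}\bigl(u\,\sqrt{-1}\Lambda\bar\partial\partial_{h(s)}u\bigr)\omega .
\]
Integrating by parts on the compact Riemann surface turns this into $\int_X|\bar\partial u|^2_{h(s)}\omega$. Here one uses that $u$ is $h(s)$-Hermitian, so $(\bar\partial u)^\dagger=\partial_{h(s)}u$. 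Applied to $E_1$ this gives the first term of \eqref{eq:t0-second-variation}. Applied to $E_2$ with the coefficient $1+\frac{4\pi}{\sigma}\lambda$ it gives the second; there $u_2$ is a real function and the formula reduces to the scalar identity $\int -\Delta u_2\cdot u_2=\int|\bar\partial u_2|^2$.

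For the Higgs term, write $|\Phi|^2_{h_1(s)h_2(s)^{-1}}=\operatorname{tr}\bigl(\Phi^\dagger_{s}\Phi\bigr)$ with $\Phi^\dagger_s=h_2(s)^{-1}\Phi^{*}h_1(s)$, where $*$ is the conjugate transpose in fixed frames. The $s$-derivative of this adjoint is $\Phi^\dagger_s u_1-u_2\Phi^\dagger_s$, so the first derivative is $\operatorname{tr}\bigl(\Phi^\dagger u_1\Phi\bigr)-u_2|\Phi|^2$. This matches \eqref{eq:t0-first-variation}. Differentiating again gives
\[
\operatorname{tr}\bigl(\Phi^\dagger u_1^2\Phi\bigr)-2u_2\operatorname{tr}\bigl(\Phi^\dagger u_1\Phi\bigr)+u_2^2|\Phi|^2 ,
\]
using that the scalar $u_2$ commutes with everything. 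Because $u_1$ is $h_1(s)$-Hermitian and $u_2$ is real, this equals $|u_1\Phi-\Phi u_2|^2_{h_1(s)h_2(s)^{-1}}$. Summing the contributions gives \eqref{eq:t0-second-variation}. Every term is nonnegative because $\lambda=\tan(\htheta)>0$, which yields convexity.

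The only step requiring care is the Donaldson-functional second variation: getting the sign and adjoint conventions right so that the boundary-free integration by parts yields $+|\bar\partial u|^2$ rather than its negative. I would handle this by working in a local holomorphic frame, where $\sqrt{-1}\Lambda\bar\partial\partial_h u$ is $-\Delta_{\bar\partial}$-type, and checking the sign against the line-bundle case.
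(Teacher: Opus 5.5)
Your proposal is correct. The paper gives no proof of this lemma: it states it as part of its sketch of Bradlow's argument and defers details to the thesis. Your term-by-term computation is exactly the standard calculation it relies on: $h_i(s)^{-1}\dot h_i(s)=u_i$ is constant, the log-det terms are affine, each Donaldson term contributes $\int|\bar\partial u_i|^2$ via $\tfrac{d}{ds}F_{h(s)}=\bar\partial\partial_{h(s)}u$ and integration by parts, and differentiating $\Phi^{\dagger}_s$ twice gives $|u_1\Phi-\Phi u_2|^2$.

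Your sign worry resolves as you expect. By the K\"ahler identities, $\sqrt{-1}\Lambda\bar\partial\partial_h u=\partial_h^*\partial_h u$. Since $u$ is $h$-Hermitian, the integrated term is therefore $+\int_X|\partial_h u|^2=+\int_X|\bar\partial u|^2$.
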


It is well-known that the functional $\mathcal{M}$ is the Kempf-Ness functional associated to a certain infinite dimensional GIT problem, though we will not need this point of view.  The key result is the following, which is adapted from \cite[Proposition 3.6.1]{Bradlow}

\begin{prop}\label{prop: properness}
    Suppose that $\cE=(E_1,E_2,\Phi)$ is stable in the sense of Definition~\ref{def: stabOnXxP}.  Then there are constants $C,\epsilon>0$ so that, for any $W^{2,p}$ $(p>2)$ $h_{i,0}$-Hermitian endomorphisms $u_i$ of $E_i$ satisfying $\int_{X}{\rm Tr}(u_1) + {\rm Tr}(u_2)=0$ we have
    \begin{equation}\label{eq: propernessEstimate}
    \mathcal{M}(h_{1,0}e^{u_1}, h_{2,0}e^{u_2}) \geq \epsilon \left(\|u_1\|_{L^{1}} + \left( 1 + \frac{4\pi}{\sigma}\lambda\right)\|u_{2}\|_{L^{1}}\right) -C
    \end{equation}
\end{prop}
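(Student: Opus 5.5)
The plan is to follow the Donaldson/Uhlenbeck--Yau/Bradlow contradiction scheme, adapted to triples. The key structural ingredients are the convexity of $\mathcal M$ along geodesics (Lemma~\ref{lem:t0-functional-convexity}) and the first variation formula \eqref{eq:t0-first-variation}.

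First I reduce to a weak inequality along rays. Normalize $\int_X \Tr(u_1)+\Tr(u_2)=0$. Write $u_i = \ell v_i$ with $\ell = \|u_1\|_{L^1}+(1+\frac{4\pi}{\sigma}\lambda)\|u_2\|_{L^1}$ and $v=(v_1,v_2)$ of unit normalized $L^1$-norm. Then $f(s)=\mathcal M(h_{1,0}e^{sv_1},h_{2,0}e^{sv_2})$ is convex in $s$. Its slope at $s=0$ is bounded in terms of $\sup|v|$, and the standard Simpson/Bradlow device handles this slope. Concretely, we may restrict to the set where $\sup|u|\le C_1$ times the $L^1$-norm plus $C_2$, since the $C^0$--$L^1$ estimate for the perturbed equation ($\Delta$ applied to $\log\Tr(h_0^{-1}h)$ with the $\Phi$ term having a favorable sign) gives this. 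It therefore suffices to rule out a sequence $u^{(k)}$ with $\ell_k\to\infty$ and $\mathcal M(u^{(k)})\le \ell_k/k + C$.

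Next comes the limit extraction. Using the rewriting
\[
\mathcal M(h_0e^{u}) = \int_X \Tr\big(\Lambda_0 u\big) + \int_X\langle \Psi(u)\dbar u,\dbar u\rangle + \int_X|\Phi|^2_{h_0e^{u}}
\]
with Donaldson's function $\Psi(x,y)=(e^{y-x}-y+x-1)/(y-x)^2$, together with the $L^\infty$ bound on $v^{(k)}=u^{(k)}/\ell_k$, I get a uniform $L^2_1$ bound on $v^{(k)}$. Here $\Lambda_0$ collects $i\Lambda F_{h_{i,0}}$ minus the constants $\lambda$ and $\lambda-\frac{4\pi}{\sigma}$ with the weights. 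I then pass to a weak $L^2_1$ limit $v_\infty=(v_{1,\infty},v_{2,\infty})\neq 0$. Following Uhlenbeck--Yau, this limit satisfies, for every $\zeta:\R^2\to\R$ with $\zeta(x,y)>0$ whenever $x>y$,
\[
\int_X\Tr(\Lambda_0 v_\infty)+\int_X\langle\zeta(v_\infty)\dbar v_\infty,\dbar v_\infty\rangle\le 0,
\]
and also $\int_X |v_{1,\infty}\Phi-\Phi v_{2,\infty}|_+$-type control, since the term $\int|\Phi|^2_{h_0e^{\ell v}}$ grows exponentially unless $\Phi$ maps into the nonpositive eigenspaces of $v_{1,\infty}-v_{2,\infty}$. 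Because $E_2$ is a line bundle, $v_{2,\infty}$ is a function, and the argument shows it is constant. The eigenvalues of $v_{1,\infty}$ are constant as well, and the spectral projections $\pi_\alpha$ onto the eigenvalues $\le\mu_\alpha$ are weakly holomorphic. They therefore define saturated subsheaves $S_\alpha\subset E_1$ by Uhlenbeck--Yau regularity.

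Finally, I produce the destabilizing subtriple. The $\Phi$-term forces $\Phi(E_2)\subset S_\alpha$ exactly when $\mu_\alpha\ge v_{2,\infty}$. So the filtration gives subtriples $(S_\alpha,E_2,\Phi)$ of type A for large $\mu_\alpha$ and $(S_\alpha,0,0)$ of type B for small $\mu_\alpha$. The weak inequality then becomes the standard Abel summation $\mu_{\max}\,\mathrm{Im}Z(\cE)$-normalized expression minus $\sum(\mu_{\alpha+1}-\mu_\alpha)\,\mathrm{Im}\big(Z(\cS_\alpha)/Z(\cE)\big)\le 0$, with nonnegative coefficients. This contradicts \eqref{eq: slopeTypecondition}, which holds for all saturated subtriples by Lemma~\ref{lem: phaseToRatioStab}; the trace normalization removes the top term. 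Note that using Lemma~\ref{lem: phaseToRatioStab} rather than the raw angle condition is what allows arbitrary saturated subtriples to appear. The constant $\epsilon$ then comes from compactness.

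The main obstacle is this last step: correctly bookkeeping the $|\Phi|^2$ term in the blow-up limit so that the type of each subtriple in the filtration is dictated by the position of $v_{2,\infty}$ among the eigenvalues of $v_{1,\infty}$, and checking that the weights $(1+\frac{4\pi}{\sigma}\lambda)$ reproduce exactly $\mathrm{Im}(Z(\cS)/Z(\cE))$ from \eqref{eq: TripleCentralCharge}.
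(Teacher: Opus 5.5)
Your strategy is the one the paper invokes: Bradlow's contradiction argument, in which failure of properness produces, in the blow-up limit, a filtration by saturated subtriples violating \eqref{eq: slopeTypecondition}. The weights you worry about do match. Take a subtriple $\cS=(S_1,S_2,\Phi')$ with projections $(\pi_1,\pi_2)$. The linear part of $\mathcal{M}$ evaluated on $(\pi_1,\pi_2)$ equals, up to the term $\int_X|\dbar\pi_1|^2$,
$\kappa(\cS)=(2\pi\deg S_1-\lambda V_X\rank S_1)+(1+\frac{4\pi}{\sigma}\lambda)2\pi\deg S_2-(\lambda-\frac{4\pi}{\sigma})V_X\rank S_2$.
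This is a positive multiple of ${\rm Im}(Z_{\cX}(\cS)/Z_{\cX}(\cE))$ and vanishes for $\cS=\cE$.

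The genuine gap is your first step. The proposition is asserted for \emph{arbitrary} normalized $W^{2,p}$ pairs $(u_1,u_2)$. For such pairs the inequality $\sup|u|\le C_1\|u\|_{L^1}+C_2$ is not available. The subharmonicity argument for $\log\Tr(h_0^{-1}h)$ needs an a priori $L^p$ bound on the vortex error $i\Lambda F_{h_1}+\Phi\Phi^{\dagger}-\lambda I_{E_1}$ and on its $E_2$ analogue. That is why the main estimates of Simpson and Bradlow are stated only for metrics with such a bound. Indeed, adding a normalized thin spike $fI_{E_1}$ of height $N$ on a disc of area $N^{-3}$ changes $\|u\|_{L^1}$ by $O(N^{-2})$ but $\sup|u|$ by about $N$, so the ratio $\sup|v^{(k)}|$ cannot be assumed bounded along your sequence. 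Your uniform $L^2_1$ bound on $v^{(k)}$ is derived from that sup bound. The weight $\ell\Psi(\ell x,\ell y)$ is only of size $(x-y)^{-1}$ when $x>y$, so it degenerates where $v^{(k)}$ is large. Hence the compactness step, and with it the contradiction, is unsupported for the statement as written. The convexity remark does not help: convexity of $s\mapsto\mathcal{M}(h_0e^{sv})$ only gives $\mathcal{M}(h_0e^{u})\ge\mathcal{M}(h_0)-C\|u\|_{L^1}$. What you have actually sketched is the estimate on the restricted class of metrics with bounded vortex error. That suffices for a heat-flow existence proof, but it is not the proposition.

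One route to the full statement on a curve seems to be the following.
\begin{enumerate}
\item The diagonal entries of $\dbar v^{(k)}$ in an eigenframe carry weight $\ell_k/2$, so $\int_X|\nabla|v^{(k)}||^2=O(\ell_k^{-1})$.
\item Hence $|v^{(k)}|$ is bounded in every $L^q$.
\item Combined with $\ell\Psi(\ell x,\ell y)\ge c\min(1,|x-y|^{-1})$ for $\ell\ge1$, H\"older gives uniform $W^{1,p}$ bounds for every $p<2$.
\item These give strong $L^1$ convergence, a nonzero limit, and constancy of the limit's eigenvalues. You can then run Simpson's lower-semicontinuity argument with compactly supported $\zeta$.
\end{enumerate}

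Minor points:
\begin{itemize}
\item The limiting inequality requires $0<\zeta(x,y)<(x-y)^{-1}$ for $x>y$, not merely positivity.
\item The top term of your Abel summation vanishes because the linear part of $\mathcal{M}$ annihilates $(I_{E_1},1)$, i.e. $\kappa(\cE)=0$ by the choice $\lambda=\tan\htheta$. The trace normalization is what rules out $v_\infty$ being a multiple of the identity.
\item The bookkeeping is cleanest if you filter $E_1\oplus E_2$ by the joint eigenvalues $\nu_1<\dots<\nu_n$ of $(v_{1,\infty},v_{2,\infty})$, with $c=v_{2,\infty}$ inserted into the list. Each step $\cS_j$ is a subtriple because $\Phi$ lands in the eigenspaces of $v_{1,\infty}$ with eigenvalue $\le c$.
\item This forces $\Phi(E_2)\subset S_\alpha$ also for the largest $\mu_\alpha<c$, not ``exactly when $\mu_\alpha\ge c$''.
\item With this filtration the limit inequality reads $-\sum_j(\nu_{j+1}-\nu_j)\kappa(\cS_j)\le0$. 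Every $\kappa(\cS_j)<0$ by Lemma~\ref{lem: phaseToRatioStab}, which gives the contradiction.
\end{itemize}
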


We refer the reader to the second author's Ph.D. thesis \cite{YukaiThesis} for complete details.  The idea, following the philosophy of GIT, is that if the properness estimate~\eqref{eq: propernessEstimate} does not hold, then one can produce a destabilizing subtriple of $\cE=(E_1,E_2,\Phi)$.  With Proposition~\ref{prop: properness} in hand, Theorem~\ref{thm: solveInitialEquation} follows easily by standard arguments.

\section{A priori estimates}\label{sec: Estimates}

In this section we prove the a priori estimates which will lead to our main existence result, Theorem~\ref{thm: existenceCos>0}.  Let us record the setting in which our estimates take place.  We assume we have metrics $h_1$ on $E_1$, and $h_2$ on $E_2$ and denote by $h= h_1h_2^{-1}$ the metric on $E_1\otimes E_2^{\vee} = {\rm Hom}(E_2, E_1)$.  We fix $\Phi \in H^{0}(X, {\rm Hom}(E_2, E_1))$ and let $\Phi^\dagger$ be the smooth section of ${\rm Hom}(E_1, E_2)$ induced by $\Phi$ and $h$.  We suppose that this data satisfies~\eqref{eq: dimReduceDHYM-MOC} for some $t\in[0,1]$. We assume throughout that $\cos(\htheta)>0$.  The estimates proceed as follows:

\begin{itemize}
    \item Step 1: Establish a bound for $\sup_{X}|\Phi|^2$ by the maximum principle.
    \\
    \item Step 2: Establish a bound for $\sup_{X}|\nabla \Phi|^2$ by the maximum principle.  As a consequence, we deduce uniform curvature bounds.
    \\
    \item Step 3: Using stability, prove a uniform upper bound for the  metric $h=h_1h_{2}^{-1}$ by a contradiction argument and the Uhlenbeck-Yau technique.  This step requires $\tan(\htheta)\leq \frac{\sigma}{4\pi}$.
    \\
    \item Step 4: Using another contradiction argument prove uniform upper and lower bounds for the  metrics $(h_1,h_{2})$ subject to a suitable normalization.
\end{itemize}

\subsection{Bounds for $|\Phi|$ and $|\nabla \Phi|$}
As a first step we prove a strong $C^0$ bound for $\Phi$.

\begin{lem}\label{lem: conditionalPhiBound}
    Suppose $(h_1, h_2, \Phi)$ solve the system~\eqref{eq: dimReduceDHYM-MOC} for $t\in[0,1]$.  Assume that $\cos(\hat{\theta}) >0$ and $\inf_{X}|\Phi|^2 \leq \frac{4\pi}{\sigma t}$. Then we have the following estimates
    \begin{itemize}
        \item[(i)]
    \[
    \sup_{X} |\Phi|^2 \leq \frac{4\pi}{\sigma f(\htheta, t)} \leq  \begin{cases} \frac{2\pi}{\sigma} & \text{ if } \tan(\htheta) \leq \frac{2\pi}{\sigma}\\
    \frac{2\pi}{\sigma \cos(\htheta)}\frac{1}{\cos(\htheta) + \frac{2\pi}{\sigma}\sin(\htheta)} & \text{ if }\tan(\htheta) > \frac{2\pi}{\sigma}
    \end{cases}
    \]
    where
    \[
    f(\htheta, t)= \left(2\cos(\hat{\theta})\left(\cos(\hat{\theta}) +\frac{2\pi}{\sigma}\sin(\hat{\theta})\right) + 2t\sin(\hat{\theta})\left(\sin(\hat{\theta})- \frac{2\pi}{\sigma}\cos(\hat{\theta})\right)\right).
    \]
    \item[(ii)] Furthermore, we have 
    \[
        \cos(\hat{\theta}) + \left(\frac{4\pi}{\sigma} -t|\Phi|^2\right) \sin(\hat{\theta}) > \cos(\hat{\theta}) + \frac{2\pi}{\sigma}\sin(\hat{\theta}) >0.
        \]
        \end{itemize}
\end{lem}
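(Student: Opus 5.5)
The plan is a maximum principle argument for $|\Phi|^2$ at its maximum point $x^*$, in the spirit of the proof of Proposition~\ref{prop: charSolCosNeg}.

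Since $\Phi$ is holomorphic, at $x^*$ we have $\langle\nabla\Phi,\Phi\rangle=0$, and
\[
0\ge \Delta|\Phi|^2(x^*) = |\nabla\Phi|^2 - \langle i\Lambda F_1\Phi,\Phi\rangle + i\Lambda F_2|\Phi|^2 .
\]
I would substitute the closed formulas for the curvature terms: equation~\eqref{eq: curvH_1PhiPhi} of Lemma~\ref{lem: H1CurvLem}, where the $|\langle\nabla\Phi,\Phi\rangle|^2$ term drops out at $x^*$, and the formula of Lemma~\ref{lem: H2CurvLem}. Writing $s=|\Phi|^2(x^*)$, $c=\cos(\htheta)$, $\varsigma=\sin(\htheta)$, $a=\frac{4\pi}{\sigma}$, this gives
\[
0\ge |\nabla\Phi|^2\Big(1+\frac{t\varsigma s}{2D_2}\Big) + \frac{c s^2-\varsigma s}{c+t\varsigma s} + s\,\frac{-c(a-s)+\varsigma}{D_2},\qquad D_2=c+(a-ts)\varsigma .
\]
Both the $\varsigma s$ terms and the gradient term should then be harmless or favourable, provided the denominators $c+t\varsigma s$ and $D_2$ are positive.

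The first denominator is positive because $c>0$. The second is the main obstacle: Lemma~\ref{lem: H2CurvLem} only shows that $D_2\neq 0$ everywhere. Since $X$ is connected, $D_2$ then has a constant sign on $X$. At a minimum point of $|\Phi|^2$ the hypothesis $\inf_X|\Phi|^2\le\frac{4\pi}{\sigma t}$ gives $a-t|\Phi|^2\ge 0$, so $D_2\ge c>0$ there. Hence $D_2>0$ on all of $X$. This is exactly where the hypothesis enters.

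With both denominators positive and the gradient term dropped (its coefficient is positive), I would multiply through by the denominators, divide by $s>0$, and cancel the $\varsigma$ terms. I expect this to collapse to an inequality of the form $s\, f(\htheta,t)\le a\cdot(\text{positive})$, that is, $s\le \frac{4\pi}{\sigma f(\htheta,t)}$, after checking by direct algebra that the coefficient matches $f$. Here I would use $c^2+\varsigma^2=1$ to recognise the terms $2c(c+\frac{2\pi}{\sigma}\varsigma)+2t\varsigma(\varsigma-\frac{2\pi}{\sigma}c)$. The displayed case bound in (i) then follows by minimising $f$ over $t\in[0,1]$, since $f$ is affine in $t$. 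If $\tan(\htheta)\le\frac{2\pi}{\sigma}$, the $t$-coefficient is at most zero, the minimum is at $t=1$, and $f(\htheta,1)=2$, giving the bound $\frac{2\pi}{\sigma}$. Otherwise the minimum is at $t=0$, giving the second expression.

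For (ii), I would use (i) to get $t|\Phi|^2\le \frac{4\pi}{\sigma f}$ (the factor $t\le 1$ only helps) and show $\frac{4\pi}{\sigma f}\varsigma<\frac{2\pi}{\sigma}\varsigma$ wherever it matters. In the first case the bound $\sup|\Phi|^2\le\frac{2\pi}{\sigma}$ gives the inequality directly, and strictness needs a separate check. In the second case I would verify by elementary trigonometry that $a-\frac{4\pi}{\sigma f}>\frac{2\pi}{\sigma}$, possibly by bounding $tf^{-1}$ jointly in $t$ rather than using the crude minimum. Positivity of $c+\frac{2\pi}{\sigma}\varsigma$ is immediate since $c,\varsigma>0$.
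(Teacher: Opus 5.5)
Your treatment of (i) is the paper's argument. You apply the maximum principle at the maximum of $|\Phi|^2$ using \eqref{eq: curvH_1PhiPhi} and Lemma~\ref{lem: H2CurvLem}. You get positivity of $D_2=\cos(\htheta)+(\frac{4\pi}{\sigma}-t|\Phi|^2)\sin(\htheta)$ from $D_2\neq 0$, connectedness, and the hypothesis at a minimum point. You then minimize the affine function $t\mapsto f(\htheta,t)$. The algebra does collapse to exactly $s\,f(\htheta,t)\le\frac{4\pi}{\sigma}$, and $f>0$ on $[0,1]$ because $f(\htheta,0)>0$ and $f(\htheta,1)=2$.

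Part (ii) has a gap. Your main route discards the factor $t$ and needs $\frac{4\pi}{\sigma f}<\frac{2\pi}{\sigma}$, i.e.\ $f(\htheta,t)>2$. When $\tan(\htheta)>\frac{2\pi}{\sigma}$ this fails for every $t\in[0,1]$. Indeed, $f$ is then increasing in $t$ with $f(\htheta,1)=2$, so $f\le 2$, and the inequality $\frac{4\pi}{\sigma}-\frac{4\pi}{\sigma f}>\frac{2\pi}{\sigma}$ you intend to verify is false.

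Your fallback of bounding $t/f$ jointly is the right fix, and it is exactly what the paper does. Since $\partial_t\bigl(t/f(\htheta,t)\bigr)=f(\htheta,0)/f(\htheta,t)^2>0$, we have $t/f\le 1/f(\htheta,1)=\frac12$. Hence $t|\Phi|^2\le\frac{2\pi}{\sigma}$ on $X$ in both regimes, which gives $D_2\ge\cos(\htheta)+\frac{2\pi}{\sigma}\sin(\htheta)$.

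Your ``separate check'' of strictness cannot succeed, because strictness is false at $t=1$. Take $E_1=E_2=\mathcal{O}_X$, $\Phi=1$, $h_1=1$, $h_2=\frac{\sigma}{2\pi}$, so that $F_1=F_2=0$, $\nabla\Phi=0$ and $|\Phi|^2\equiv\frac{2\pi}{\sigma}$. Take $\htheta=\arctan(\frac{2\pi}{\sigma})$, which is the phase of $Z_{\cX}(\cE)$ for this triple. Then both equations of \eqref{eq: dimReduceDHYM-MOC} with $t=1$ reduce to $\frac{2\pi}{\sigma}\cos(\htheta)=\sin(\htheta)$, and the hypotheses of the lemma hold. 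Yet $D_2=\cos(\htheta)+\frac{2\pi}{\sigma}\sin(\htheta)$ exactly.

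So what is true, and what the paper's own argument actually proves, is the non-strict version of the first inequality in (ii). It is strict for $t<1$, by strict monotonicity of $t/f$. The non-strict form is all that is used later, since it already bounds $D_2$ uniformly away from zero.
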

\begin{proof}
    We compute
    \[
    \Delta|\Phi|^2 = |\nabla \Phi|^2 -\langle \Phi, i\Lambda F \Phi\rangle
    \]
    where $F$ is the curvature of $h=h_1h_2^{-1}$. Assume that $|\Phi|^2$ attains a maximum at $x_*\in X$.   We combine Lemma~\ref{lem: H1CurvLem} with Lemma~\ref{lem: H2CurvLem} to obtain a formula for $\langle \Phi, i\Lambda F \Phi\rangle$.  Note that since $x_*$ is the maximum we have that $\langle \nabla \Phi, \Phi \rangle =0$ at $x_*$, so Lemma~\ref{lem: H1CurvLem} (or, more straightforwardly, equation~\eqref{eq: curvH_1PhiPhi}) together with ~\eqref{lem: H2CurvLem} yields
    \[
    \begin{aligned}
    0 &\geq |\nabla \Phi|^2+ \frac{\cos(\hat{\theta})|\Phi|^4 -\sin(\hat{\theta})|\Phi|^2 }{\left( \cos(\hat{\theta})+\sin(\hat{\theta})t|\Phi|^2\right)} +\frac{\sin(\hat{\theta})(1+\frac{t}{2}|\nabla \Phi|^2)|\Phi|^2 -\cos(\hat{\theta})\left(\frac{4\pi}{\sigma}-|\Phi|^2\right)|\Phi|^2}{\cos(\hat{\theta} )+\left(\frac{4\pi}{\sigma}-t|\Phi|^2\right)\sin(\hat{\theta})}\\
    &=|\nabla \Phi|^2\left( \frac{\cos(\hat{\theta}) + (\frac{4\pi}{\sigma} -\frac{t}{2}|\Phi|^2)\sin(\hat{\theta})}{\cos(\hat{\theta}) + (\frac{4\pi}{\sigma}-t|\Phi|^2)\sin(\hat{\theta})}\right)\\
    &+|\Phi|^2\frac{\left(2\cos(\hat{\theta})\left(\cos(\hat{\theta}) +\frac{2\pi}{\sigma}\sin(\hat{\theta})\right) + 2t\sin(\hat{\theta})\left(\sin(\hat{\theta})- \frac{2\pi}{\sigma}\cos(\hat{\theta})\right)\right)|\Phi|^2-\frac{4\pi}{\sigma}}{\left(\cos(\hat{\theta} )+\left(\frac{4\pi}{\sigma}-t|\Phi|^2\right)\sin(\hat{\theta})\right)\left(\cos(\hat{\theta})+\sin(\hat{\theta})t|\Phi|^2\right)}\\
    \end{aligned}
    \] 
  Since $\cos(\htheta)>0$ and $\inf_{X}|\Phi|^2 \leq \frac{4\pi}{\sigma t}$ we know that $\cos(\hat{\theta})+\left(\frac{4\pi}{\sigma}-t|\Phi|^2\right)\sin(\hat{\theta}) >0$ somewhere on $X$ and hence by Lemma~\ref{lem: H2CurvLem} it must be positive everywhere on $X$.  Therefore
  \[
  \left( \frac{\cos(\hat{\theta}) + (\frac{4\pi}{\sigma} -\frac{t}{2}|\Phi|^2)\sin(\hat{\theta})}{\cos(\hat{\theta}) + (\frac{4\pi}{\sigma}-t|\Phi|^2)\sin(\hat{\theta})}\right)> 0.
  \]
  Furthermore, we have that
    \[
    f(\htheta,t):= \left(2\cos(\hat{\theta})\left(\cos(\hat{\theta}) +\frac{2\pi}{\sigma}\sin(\hat{\theta})\right) + 2t\sin(\hat{\theta})\left(\sin(\hat{\theta})- \frac{2\pi}{\sigma}\cos(\hat{\theta})\right)\right)
    \]
    satisfies
    \[
    f(\htheta,t)\geq \begin{cases} 2 & \text{ if } \tan(\htheta) \leq \frac{2\pi}{\sigma}\\
    2\cos(\htheta)(\cos(\htheta)+\frac{2\pi}{\sigma}\sin(\htheta) & \text { if } \tan(\htheta) > \frac{2\pi}{\sigma}.
    \end{cases}
    \]
 The maximum principle now leads to the desired estimate for $\sup_{X}|\Phi|^2$.  Finally, we bound
 \[
 \cos(\htheta) + (\frac{4\pi}{\sigma}-t|\Phi|^2)\sin(\htheta) \geq \cos(\htheta) +\frac{4\pi}{\sigma} \left(1 -\frac{t}{ f(\htheta,t)}\right)\sin(\htheta)
 \]
 Using calculus one shows that $\frac{t}{f(\htheta,t)}$ is maximized when $t=1$, and $f(\htheta,1)=2$.  The final claim follows.
\end{proof}

As a consequence we obtain the following simplification, which is special to the case $\cos(\htheta)>0$; the calibration constraint~\eqref{eq: dimReduceDHYM-Pos} is a consequence of the equation~\eqref{eq: dimReduceDHYM} provided $\inf_{X}|\Phi|^2 < \frac{4\pi}{\sigma}$.  This is very convenient for the analysis as we do not need to control the calibration constraint along the method of continuity.

\begin{lem}\label{lem: positivityPreservedAlongMOC}
    If $\cos(\htheta)>0$ and $(h_1,h_2)$ solve~\eqref{eq: dimReduceDHYM}, with $\inf_{X}|\Phi|^2 < \frac{4\pi}{\sigma}$ then~\eqref{eq: dimReduceDHYM-Pos} holds.
\end{lem}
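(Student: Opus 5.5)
The plan is to use Lemma~\ref{lem: simplifiedPos}. Since $(h_1,h_2)$ solve~\eqref{eq: dimReduceDHYM} (that is,~\eqref{eq: dimReduceDHYM-MOC} with $t=1$) and $\sin(\htheta)>0$, the calibration constraint~\eqref{eq: dimReduceDHYM-Pos} is equivalent to the two inequalities
\[
i\Lambda F_1+\Phi\Phi^{\dagger}>0, \qquad i\Lambda F_2+\tfrac{4\pi}{\sigma}-|\Phi|^2>0 .
\]
I will prove these two inequalities separately. The first is an algebraic statement about the operator $T_{\Phi,1}$. The second uses the positive denominator supplied by Lemma~\ref{lem: conditionalPhiBound}.

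For the $E_2$ component, apply Lemma~\ref{lem: conditionalPhiBound} with $t=1$. Its hypothesis $\inf_X|\Phi|^2\le \frac{4\pi}{\sigma}$ is implied by our assumption $\inf_X|\Phi|^2<\frac{4\pi}{\sigma}$. Part (ii) of that lemma then gives, everywhere on $X$,
\[
D:=\cos(\htheta)+\Big(\tfrac{4\pi}{\sigma}-|\Phi|^2\Big)\sin(\htheta)>0 .
\]
Because $D\neq 0$, Lemma~\ref{lem: H2CurvLem} applies and gives the formula for $i\Lambda F_2$. Adding $\frac{4\pi}{\sigma}-|\Phi|^2$ to that formula is exactly the computation already carried out in the proof of Lemma~\ref{lem: postivityConstraints}. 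It produces
\[
i\Lambda F_2+\tfrac{4\pi}{\sigma}-|\Phi|^2=\frac{\sin(\htheta)\big(\tfrac{4\pi}{\sigma}-|\Phi|^2\big)^2+\sin(\htheta)\big(1+\tfrac12|\nabla\Phi|^2\big)}{D}.
\]
The numerator is strictly positive because $\sin(\htheta)>0$, and $D>0$, so the second inequality holds.

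For the $E_1$ component, set $A:=i\Lambda F_1+\Phi\Phi^{\dagger}$. By~\eqref{eq: dimReduceMOCUsingToperator} with $t=1$,
\[
T_{\Phi,1}(i\Lambda F_1)=\sin(\htheta)I-\cos(\htheta)\Phi\Phi^{\dagger}+\tfrac12\sin(\htheta)\nabla\Phi(\nabla\Phi)^{\dagger}.
\]
A direct computation gives $T_{\Phi,1}(\Phi\Phi^{\dagger})=\cos(\htheta)\Phi\Phi^{\dagger}+\sin(\htheta)|\Phi|^2\Phi\Phi^{\dagger}$. Since $T_{\Phi,1}$ is linear, adding these two identities yields
\[
T_{\Phi,1}(A)=\sin(\htheta)\Big(I+\tfrac12\nabla\Phi(\nabla\Phi)^{\dagger}+|\Phi|^2\Phi\Phi^{\dagger}\Big)>0 .
\]
It remains to show that $T_{\Phi,1}(A)>0$ forces $A>0$ when $\cos(\htheta)>0$. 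The operator $A$ is Hermitian, so suppose it has a unit eigenvector $v$ with eigenvalue $\mu\le 0$. Then
\[
\langle T_{\Phi,1}(A)v,v\rangle=\cos(\htheta)\mu+\sin(\htheta)\mu\,|\Phi^{\dagger}v|^2\le 0,
\]
which contradicts $T_{\Phi,1}(A)>0$. Hence $A>0$, and both inequalities hold.

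The only step needing care is the $E_2$ inequality, because the denominator $D$ could in principle vanish or change sign. This is precisely where the assumption $\inf_X|\Phi|^2<\frac{4\pi}{\sigma}$ is used: through Lemma~\ref{lem: conditionalPhiBound} it keeps $D$ positive everywhere. The $E_1$ part is purely algebraic and uses only $\cos(\htheta),\sin(\htheta)>0$.
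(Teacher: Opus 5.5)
Your proposal is correct and follows the paper's proof essentially step for step. You reduce via Lemma~\ref{lem: simplifiedPos}, handle the $E_2$ inequality with Lemma~\ref{lem: H2CurvLem} and the positive denominator from Lemma~\ref{lem: conditionalPhiBound}(ii), and handle the $E_1$ inequality by computing $T_{\Phi,1}(i\Lambda F_1+\Phi\Phi^{\dagger})$ so that the $-\cos(\hat{\theta})\Phi\Phi^{\dagger}$ term cancels. Your eigenvector argument for $T_{\Phi,1}(A)>0\Rightarrow A>0$ is a genuine improvement: the paper justifies this step by saying $T_{\Phi}$ preserves positive definite endomorphisms, which is literally the converse implication.
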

\begin{proof}
    Recall that by Lemma~\ref{lem: simplifiedPos}, when ~\eqref{eq: dimReduceDHYM} holds, the calibration constraint \eqref{eq: dimReduceDHYM-Pos} simplifies to
    \[
    \begin{aligned}
        i\Lambda F_1 + \Phi\Phi^{\dagger}&>0\\
        i\Lambda F_2 + \frac{4\pi}{\sigma} - |\Phi|^2 &>0
    \end{aligned}
    \]
    By Lemma~\ref{lem: H2CurvLem} we have
    \[
    i\Lambda F_2 + \frac{4\pi}{\sigma} - |\Phi|^2= \sin(\htheta)\frac{ (\frac{4\pi}{\sigma}-|\Phi|^2)^2 + (1+\frac{1}{2}|\nabla \Phi|^2)}{\cos(\htheta)+(\frac{4\pi}{\sigma} -|\Phi|^2)\sin(\htheta)}.
    \]
    Thanks to Lemma~\ref{lem: conditionalPhiBound} part $(ii)$ and the positivity of $\cos(\htheta), \sin(\htheta)$, this expression is plainly positive.  To analyze the positivity of the first expression we use the linear operator $T_{\Phi,1}:= T_{\Phi}$ introduced in equation~\eqref{eq: TPhiOperator}.  Since $T_{\Phi}$ preserves the space of Hermitian, positive definite endomorphisms, it suffices to prove that $T_{\Phi}(i\Lambda F_1 +\Phi\Phi^{\dagger})>0$.  Writing ~\eqref{eq: dimReduceDHYM-MOC} in terms of $T_{\Phi}$ as in~\eqref{eq: dimReduceMOCUsingToperator} we have
    \[
    \begin{aligned}
    T_{\Phi}(i\Lambda F_1)&= \sin(\htheta)I-\cos(\htheta)\Phi\Phi^{\dagger} + \frac{1}{2}\sin(\htheta)\nabla \Phi (\nabla \Phi)^{\dagger}\\
    T_{\Phi}(\Phi\Phi^{\dagger}) &=  \cos(\htheta)\Phi\Phi^{\dagger} + \sin(\htheta)|\Phi|^2\Phi\Phi^{\dagger}
    \end{aligned}
    \]
   Observing that the only negative term, $\cos(\htheta)\Phi\Phi^{\dagger}$ cancels in the sum, we get the desired positivity.
\end{proof}

\begin{rk}
    Throughout the estimates of this section we shall need to assume $\inf_{X}|\Phi|^2 < \frac{4\pi}{t\sigma}$ in order to apply Lemma~\ref{lem: conditionalPhiBound}. This is a harmless assumption, as it clearly holds when $t=0$ (or if $\Phi$ has a zero), and is preserved along the method of continuity by the conclusion of Lemma~\ref{lem: conditionalPhiBound}.
\end{rk}

An immediate corollary of Lemmas~\ref{lem: H1CurvLem} and~\ref{lem: H2CurvLem} and Lemma~\ref{lem: conditionalPhiBound} is the following bound for the curvature of $E_1\otimes E_2^{\vee}$, which will play a key role in obtaining the estimates of $|\nabla \Phi|$ below.

\begin{cor}\label{cor: curvUpperBound}
Assume that $\cos(\hat{\theta})>0$ and $(h_1,h_2)$ solve ~\eqref{eq: dimReduceDHYM-MOC} with $\inf_{X}|\Phi|^2 <\frac{4\pi}{t\sigma}$.  Then there is a uniform constant $C$ so that the following bound holds for the curvature of the metric $h_1h_2^{-1}$ on $E_1\otimes E_2^{\vee}$
\[
-C(1+|\nabla \Phi|^2) \leq \langle i\Lambda F \Phi, \Phi \rangle \leq C + \frac{1}{2}|\nabla \Phi|^2.
\]
\end{cor}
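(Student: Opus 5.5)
The bound will follow by writing $F$, the curvature of $h=h_1h_2^{-1}$, as
\[
i\Lambda F = i\Lambda F_1\otimes I - I\otimes i\Lambda F_2 ,
\]
so that, pairing against $\Phi$ (viewed as a section of $E_1$ after fixing a unit vector in the line $E_2$),
\[
\langle i\Lambda F\Phi,\Phi\rangle = \langle i\Lambda F_1\Phi,\Phi\rangle - i\Lambda F_2\,|\Phi|^2 .
\]
I then bound the two pieces separately, using the explicit formulas already available.

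For the first term I use~\eqref{eq: curvH_1PhiPhi} from Lemma~\ref{lem: H1CurvLem}. Since $\cos(\htheta),\sin(\htheta)>0$ and $t\ge 0$, its denominator satisfies $\cos(\htheta)+t\sin(\htheta)|\Phi|^2\ge\cos(\htheta)>0$.

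\emph{Lower bound for the first term.} The only possibly negative contribution is $-\cos(\htheta)|\Phi|^4/(\cdots)$, which is bounded below by $-|\Phi|^4\ge -C$ by Lemma~\ref{lem: conditionalPhiBound}(i).

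\emph{Upper bound for the first term.} Here I bound
\[
\frac{\sin(\htheta)\,\tfrac t2|\langle\nabla\Phi,\Phi\rangle|^2}{\cos(\htheta)+t\sin(\htheta)|\Phi|^2}
\le \frac{\tfrac t2\sin(\htheta)|\nabla\Phi|^2|\Phi|^2}{t\sin(\htheta)|\Phi|^2}=\frac12|\nabla\Phi|^2,
\]
using Cauchy--Schwarz and dropping $\cos(\htheta)$ from the denominator. When $t|\Phi|^2=0$ this term vanishes, so the inequality still holds. The remaining term $\sin(\htheta)|\Phi|^2/\cos(\htheta)$ is at most $C$. This is precisely where the sharp coefficient $\tfrac12$ in the claimed upper bound comes from, so it is the step where care is needed.

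For the second term, $-i\Lambda F_2|\Phi|^2$, I use Lemma~\ref{lem: H2CurvLem}. By Lemma~\ref{lem: conditionalPhiBound}(ii) its denominator is bounded below by $\cos(\htheta)+\frac{2\pi}{\sigma}\sin(\htheta)>0$, and it is bounded above by a constant since $|\Phi|^2\le C$.

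\emph{Lower bound for the second term.} The numerator is at most $C(1+|\nabla\Phi|^2)$, so $i\Lambda F_2\le C(1+|\nabla\Phi|^2)$. Multiplying by the bounded quantity $|\Phi|^2$ gives $-i\Lambda F_2|\Phi|^2\ge -C(1+|\nabla\Phi|^2)$, which is the claimed lower bound.

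\emph{Upper bound for the second term.} The numerator is $\ge -\cos(\htheta)\frac{4\pi}{\sigma}$, because the $\sin(\htheta)(1+\tfrac t2|\nabla\Phi|^2)$ part is nonnegative. Hence $i\Lambda F_2\ge -C$, and so $-i\Lambda F_2|\Phi|^2\le C$. Note that the $|\nabla\Phi|^2$ term enters this piece with a favorable sign, so it contributes nothing to the upper bound.

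Adding the two pieces gives
\[
-C(1+|\nabla\Phi|^2)\le\langle i\Lambda F\Phi,\Phi\rangle\le C+\tfrac12|\nabla\Phi|^2 ,
\]
with $C$ depending only on $\htheta$ and $\sigma$.
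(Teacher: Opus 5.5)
Your proposal is correct and follows essentially the paper's argument. You use the same splitting $i\Lambda F = i\Lambda F_1\otimes I - I\otimes i\Lambda F_2$, the explicit formulas of Lemmas~\ref{lem: H1CurvLem} and~\ref{lem: H2CurvLem}, the uniform bounds of Lemma~\ref{lem: conditionalPhiBound}, and Cauchy--Schwarz on the $|\langle \nabla\Phi,\Phi\rangle|^2$ term to obtain the sharp coefficient $\frac{1}{2}$; the only difference is bookkeeping, since you bound the $F_1$ and $F_2$ contributions separately while the paper isolates the two gradient terms after observing that everything else is uniformly bounded.
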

\begin{proof}
We have
\[
i\Lambda F= i\Lambda F_{1}\otimes I_{E_2^{\vee}} - I_{E_1} \otimes i\Lambda F_{h_2},
\]
and so, by Lemmas~\ref{lem: H1CurvLem} and~\ref{lem: H2CurvLem} we have
\[
\begin{aligned}
\langle i\Lambda F \Phi, \Phi \rangle &=\frac{-\cos(\hat{\theta})|\Phi|^4 +\sin(\hat{\theta})\left( |\Phi|^2 + \frac{t}{2}|\langle \nabla\Phi, \Phi \rangle|^2 \right) }{\left( \cos(\hat{\theta})+\sin(\hat{\theta})t|\Phi|^2\right)}\\
&\quad -\left(\frac{ -\cos(\hat{\theta})\left(\frac{4\pi}{\sigma} - |\Phi|^2\right) +\sin(\hat{\theta})\left(1 + \frac{t}{2}|\nabla \Phi|^2\right)}{\left(\cos(\hat{\theta}) + \left(\frac{4\pi}{\sigma}-t|\Phi|^2\right)\sin(\hat{\theta})\right)} \right) |\Phi|^2
\end{aligned}
\]
Now by Lemma~\ref{lem: conditionalPhiBound} the denominators are bounded uniformly away from zero and $|\Phi|$ is bounded uniformly from above, so we only need to consider the gradient terms;
\[
\frac{t}{2}\sin(\hat{\theta})\frac{|\langle \nabla \Phi, \Phi \rangle|^2}{\cos(\hat{\theta})+t\sin(\hat{\theta})|\Phi|^2} - \frac{t}{2}\sin(\hat{\theta})\frac{|\nabla \Phi|^2|\Phi|^2}{\cos(\hat{\theta}) + \left(\frac{4\pi}{\sigma} -t|\Phi|^2\right)\sin(\hat{\theta})}.
\]
The lower bound is clear.  For the upper bound, we observe that second term is negative along the continuity path~\eqref{eq: dimReduceDHYM-MOC}, and hence it suffices to bound the first term from above.  Since $\cos(\htheta)>0$ Cauchy-Schwarz yields
\[
\frac{t}{2}\sin(\hat{\theta})\frac{|\langle \nabla \Phi, \Phi \rangle|^2}{\cos(\hat{\theta})+t\sin(\hat{\theta})|\Phi|^2} < \frac{1}{2}|\nabla\Phi|^2.
\]
\end{proof}

Our next goal is to prove a $C^1$ bound for $\Phi$.  
\begin{prop}\label{prop: gradientBound}
Suppose that $\cos(\hat{\theta})>0$, and that $(h_1, h_2)$ solve ~\eqref{eq: dimReduceDHYM-MOC} for some $t\in[0,1]$ with $\inf_{X} |\Phi|^2 < \frac{4\pi}{t\sigma}$.  There is a uniform constant $C>0$, depending only on $\hat{\theta}, \sigma$ and $(X,g)$ so that
\[
\sup_{X}|\nabla \Phi|^2 \leq C.
\]
\end{prop}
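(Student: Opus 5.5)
We apply the maximum principle to a test quantity built from $|\nabla\Phi|^2$, using the uniform bound on $|\Phi|^2$ from Lemma~\ref{lem: conditionalPhiBound} and the one-sided curvature control of Corollary~\ref{cor: curvUpperBound}. Since $\Phi$ is holomorphic, $\dbar\nabla\Phi$ is a curvature term, so the Bochner formula reads
\[
\Delta|\nabla\Phi|^2 = |\overline{\nabla}\nabla\Phi|^2 + |\nabla\nabla\Phi|^2 - \langle \nabla\Phi, (i\Lambda F)\nabla\Phi\rangle - \langle \nabla\Phi, \nabla(i\Lambda F)\Phi\rangle + \mathrm{Ric}_g * \nabla\Phi*\nabla\Phi .
\]
On a Riemann surface the Ricci term is bounded by $C|\nabla\Phi|^2$. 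We compare this with
\[
\Delta|\Phi|^2 = |\nabla\Phi|^2 - \langle \Phi, i\Lambda F\Phi\rangle \geq \tfrac12|\nabla\Phi|^2 - C,
\]
where the inequality comes from the upper bound in Corollary~\ref{cor: curvUpperBound}. This gives a good positive term $\tfrac12|\nabla\Phi|^2$. We therefore take as test function
\[
G = \log|\nabla\Phi|^2 + A|\Phi|^2 \qquad\text{or}\qquad G=|\nabla\Phi|^2 + A|\Phi|^2 ,
\]
with $A$ large. Because $|\Phi|^2$ is bounded, an upper bound on $G$ gives the desired bound on $|\nabla\Phi|^2$.

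The key step is to express $i\Lambda F = i\Lambda F_1\otimes I - I\otimes i\Lambda F_2$ explicitly through the equations. We use Lemma~\ref{lem: H1CurveLemSimplfiedPositiveCos} for $i\Lambda F_1$ and Lemma~\ref{lem: H2CurvLem} for $i\Lambda F_2$. By Lemma~\ref{lem: conditionalPhiBound}(ii) the denominators are bounded away from zero, so
\[
i\Lambda F = B_0(\Phi) + B_1(\Phi)\bigl(\nabla\Phi\otimes\overline{\nabla\Phi}\bigr),
\]
with $B_0,B_1$ smooth and uniformly bounded; in particular $|i\Lambda F|\le C(1+|\nabla\Phi|^2)$.

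The quadratic term $-\langle\nabla\Phi,i\Lambda F\nabla\Phi\rangle$ is then of order $|\nabla\Phi|^4$ with no evident sign, and the term involving $\nabla(i\Lambda F)$ contains $\nabla\nabla\Phi\cdot\nabla\Phi\cdot\Phi$, which is also cubic. The second-order term will be absorbed by the good terms $|\nabla\nabla\Phi|^2$ and $|\overline{\nabla}\nabla\Phi|^2$ via Cauchy--Schwarz; note that $\overline\nabla\nabla\Phi$ is itself $F\Phi$. At a maximum point of $G$ we also have $\nabla|\nabla\Phi|^2 = -A|\nabla\Phi|^2\nabla|\Phi|^2$, which lets us trade $\nabla\nabla\Phi$ for lower-order quantities.

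The main obstacle is the quartic term. Using the explicit formulas, its leading part is
\[
\tfrac{t\alpha}{2}\bigl|(\nabla\Phi)^\dagger\nabla\Phi\bigr|^2
\]
and related expressions coming from $i\Lambda F_1$, together with $-\frac{t}{2}\sin\htheta\,|\nabla\Phi|^4/(\cdots)$ coming from $-i\Lambda F_2$. A sign is needed here, and the structure of Corollary~\ref{cor: curvUpperBound} suggests that the $F_2$ contribution enters $\Delta|\nabla\Phi|^2$ with a favorable sign. If the quartic terms cannot be signed outright, we fall back on the log test function. In that case the quartic term becomes $O(|\nabla\Phi|^2)$ relative to the Laplacian of $\log|\nabla\Phi|^2$, and $A\Delta|\Phi|^2 \geq \frac{A}{2}|\nabla\Phi|^2 - C$ with $A$ large dominates it. The maximum principle then gives $|\nabla\Phi|^2\le C$ at the maximum of $G$, and hence everywhere.
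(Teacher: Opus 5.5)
There is a genuine gap at the step where you let a large $A$ dominate. Your first choice, $G=|\nabla\Phi|^2+A|\Phi|^2$, does not work. The piece you point to for a sign does not help: where $\langle\nabla\Phi,\Phi\rangle=0$, the $\frac{t\alpha}{2}\nabla\Phi(\nabla\Phi)^{\dagger}$ part of $i\Lambda F_1$ (with $\alpha=\tan\htheta$) contributes $\frac{t}{2}\tan\htheta\,|\nabla\Phi|^4$ to $\langle i\Lambda F\nabla\Phi,\nabla\Phi\rangle$. That is more than the $F_2$ part removes, because $\cos\htheta+(\frac{4\pi}{\sigma}-t|\Phi|^2)\sin\htheta>\cos\htheta$. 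More fundamentally, the third-order terms have size $|\nabla\nabla\Phi|\,|\nabla\Phi|^2$. Absorbing them into $|\nabla\nabla\Phi|^2$ leaves an error $-C|\nabla\Phi|^4$, which $A\Delta|\Phi|^2\geq\frac{A}{2}|\nabla\Phi|^2-CA$ cannot beat. The critical-point condition does not remove them, since it only fixes the component of $\nabla\nabla\Phi$ along $\nabla\Phi$.

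So everything rests on the logarithmic fallback, and there the decisive term is missing from your accounting: $\Delta\log|\nabla\Phi|^2$ contains $-|\partial|\nabla\Phi|^2|^2/|\nabla\Phi|^4$. Since $\nabla\Phi$ is not holomorphic ($\overline{\nabla}\nabla\Phi=-F\Phi$), there is no Kato inequality with slack. At a maximum of $\log|\nabla\Phi|^2+A|\Phi|^2$ this term equals $A^2|\langle\nabla\Phi,\Phi\rangle|^2$, which can be as large as $A^2\sup_X|\Phi|^2\,|\nabla\Phi|^2$. The only thing that can offset it is $|\nabla\nabla\Phi|^2/|\nabla\Phi|^2$, via the critical-point relation, and that exchange is lossy. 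Three sources leave negative terms of order $A\,|\langle\nabla\Phi,\Phi\rangle|^2$, with constants you do not control: the correction $\langle\nabla\Phi,i\Lambda F\Phi\rangle$ in $\partial|\nabla\Phi|^2$, the fraction of $|\nabla\nabla\Phi|^2$ already spent on the third-order term, and the part of the third-order term proportional to $\partial|\nabla\Phi|^2$. With a linear weight, the only positive term left to absorb these is $\frac{A}{2}|\nabla\Phi|^2\geq\frac{A}{2\sup_X|\Phi|^2}|\langle\nabla\Phi,\Phi\rangle|^2$. It scales in $A$ exactly the same way, so enlarging $A$ does not help; the argument would close only if $C\sup_X|\Phi|^2$ happened to be small.

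The missing ingredient is a convex weight. The paper uses $Q=\log|\nabla\Phi|^2+H(|\Phi|^2)$ with $H(y)=e^{Ay}$, chosen so that $H'\geq 8C+2$ and $H''\geq 10CH'$. Here $C$ is fixed beforehand from Lemma~\ref{lem: conditionalPhiBound}, Corollary~\ref{cor: curvUpperBound} and the structure \eqref{eq: curvatureCrossTermSpecialForm}. Then $\Delta H(|\Phi|^2)$ carries the extra good term $H''|\langle\nabla\Phi,\Phi\rangle|^2$. The critical-point relation reads $H'|\langle\Phi,\nabla\Phi\rangle|\leq C|\langle\Phi,\nabla\Phi\rangle|+|\nabla\nabla\Phi|/|\nabla\Phi|$. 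One splits $(H')^2|\langle\nabla\Phi,\Phi\rangle|^2$ into a $\gamma$-piece and a $(1-\gamma)$-piece, applies the relation only to the latter with $\gamma=2\epsilon$, and takes $\epsilon=4C/H'$. All errors then reduce to two terms:
$-\frac{H'}{4}|\nabla\Phi|^2$, absorbed by $H'(\frac{1}{2}|\nabla\Phi|^2-C)$; and $-9CH'|\langle\nabla\Phi,\Phi\rangle|^2$, absorbed by $H''|\langle\nabla\Phi,\Phi\rangle|^2$.
This gives $|\nabla\Phi|^2\leq 4C$ at the maximum, and the global bound follows because $H(|\Phi|^2)$ is bounded.
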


\begin{proof}
To obtain the bound for $|\nabla\Phi|$, we argue by the maximum principle. Consider the quantity
\[
Q := \log |\nabla \Phi|^2 + H(|\Phi|^2)
\]
where $H(y) : \mathbb{R}_{\geq 0} \rightarrow \mathbb{R}$ is an increasing function which will be determined in the course of the proof.  Suppose that $x_*\in X$ is the point where $Q$ achieves its maximum and assume, without loss of generality, that $|\nabla \Phi|(x_*) >1$.  We fix coordinates near $x_*$ so that $g(x_*) = dz\otimes d\bar{z}$.  We compute
\[
\begin{aligned}
\Delta |\nabla \Phi|^2 &= g^{p\bar{j}}\del_p \left(g^{k\bar{\ell}}\langle \nabla_{\bar{j}} \nabla_k \Phi, \nabla_{\ell} \Phi \rangle + g^{k\bar{\ell}}\langle \nabla_k \Phi, \nabla_{j} \nabla_{\ell} \Phi\rangle \right)\\
&= -g^{p\bar{j}}\del_p \langle g^{k\bar{\ell}}F_{k\bar{j}} \Phi, \nabla_{\ell} \Phi \rangle +  |\nabla \nabla \Phi|^2 + g^{p\bar{j}}g^{k\bar{\ell}} \langle \nabla_k \Phi, \nabla_{\bar{p}} \nabla_j \nabla_{\ell} \Phi \rangle \\
\end{aligned}
\]
We will simplify the final term.  In the following calculation, $R$ denotes the curvature of the holomorphic cotangent bundle.  We have
\[
\begin{aligned}
g^{p\bar{j}}g^{k\bar{\ell}} \langle \nabla_k \Phi, \nabla_{\bar{p}} \nabla_j \nabla_{\ell} \Phi \rangle  &= g^{p\bar{j}}g^{k\bar{\ell}} \langle \nabla_k \Phi, -(F_{j\bar{p}}+R_{j\bar{p}})\nabla_{\ell} \Phi \rangle  +\langle \nabla_k \Phi, \nabla_j \nabla_{\bar{p}} \nabla_{\ell} \Phi\rangle \\
&= -g^{p\bar{j}}g^{k\bar{\ell}} \langle \nabla_k \Phi, (F_{j\bar{p}}+R_{j\bar{p}})\nabla_{\ell} \Phi \rangle  - g^{p\bar{j}}g^{k\bar{\ell}}\langle \nabla_k \Phi, \nabla_j (F_{\ell \bar{p}}\Phi)\rangle \\
&= -\langle \nabla \Phi, (i\Lambda F+i\Lambda R)\nabla \Phi \rangle  -g^{p\bar{j}}\del_{\bar{j}} \langle \nabla_{k} \Phi,  g^{\ell \bar{k}}F_{\ell\bar{p}}\Phi\rangle\\
&\quad  +  g^{p\bar{j}}g^{k\bar{\ell}}\langle \nabla_{\bar{j}} \nabla_k \Phi,  F_{\ell \bar{p}}\Phi\rangle  \\
&=-\langle \nabla \Phi, (i\Lambda F+i\Lambda R)\nabla \Phi \rangle  -g^{p\bar{j}}\del_{\bar{j}} \langle \nabla_{k} \Phi,  g^{\ell \bar{k}}F_{\ell\bar{p}}\Phi\rangle\\
&\quad  -g^{p\bar{j}}g^{k\bar{\ell}}\langle F_{k\bar{j}} \Phi, F_{\ell \bar{p}}\Phi\rangle.
\end{aligned}
\]
Since $X$ is a Riemann surface, we can write the final expression as
\[
\begin{aligned}
\Delta |\nabla \Phi|^2 &= -2{\rm Re}\left(g^{p\bar{j}}\del_p \langle g^{k\bar{\ell}}F_{k\bar{j}} \Phi, \nabla_{\ell} \Phi \rangle\right) +  |\nabla \nabla \Phi|^2 -|i\Lambda F\Phi|^2\\
&\quad -\langle \nabla \Phi, (i\Lambda F+i\Lambda R)\nabla \Phi \rangle.
\end{aligned}
\]
We now estimate the curvature terms, using Lemma~\ref{lem: H1CurveLemSimplfiedPositiveCos}, Lemma~\ref{lem: H1CurvLem}, Lemma~\ref{lem: H2CurvLem} and Lemma~\ref{lem: conditionalPhiBound}.  For the remainder of the proof we let $C$ denote a constant that depends only $(X,g), \hat{\theta}, \sigma$, but which can increase from line to line.  Since $|\nabla \Phi| \geq 1$ by assumption we have the crude bounds
\[
|i\Lambda F\Phi|^2 \leq C|\nabla \Phi|^4, \qquad 
|\langle i\Lambda F\nabla \Phi, \nabla \Phi\rangle| \leq C|\nabla \Phi|^4
\]
We need to bound the term involving $\nabla \langle \nabla \Phi, i\Lambda F \Phi \rangle$.  To do this we observe that from Lemma~\ref{lem: H1CurveLemSimplfiedPositiveCos} and Lemma~\ref{lem: H2CurvLem} we have
\begin{equation}\label{eq: curvatureCrossTermSpecialForm}
\langle i\Lambda F \Phi, \nabla \Phi \rangle = \langle \Phi, \nabla \Phi \rangle\left(f_1(|\Phi|^2,t) + f_2(|\Phi|^2,t)|\langle \nabla \Phi, \Phi \rangle |^2+f_3(|\Phi|^2,t)|\nabla \Phi|^2\right)
\end{equation}
where $f_1(y,t), f_2(y,t), f_3(y,t)$ are smooth, real-valued functions with bounded derivatives on the range of $|\Phi|^2$, as determined in Lemma~\ref{lem: conditionalPhiBound}.  Write $f_i:=f_i(|\Phi|^2,t)$ for simplicity and denote $f_i' = \frac{\del f_i}{\del y}$, then
\[
\begin{aligned}
\dbar \langle i\Lambda F \Phi, \nabla \Phi \rangle &= \langle \Phi, \nabla \nabla \Phi \rangle \left(f_1 + f_2|\langle \nabla \Phi, \Phi \rangle |^2+f_3|\nabla \Phi|^2\right)\\
&\quad +\langle \Phi, \nabla \Phi \rangle^2 \left(f_1'   + f_2' |\langle \nabla \Phi, \Phi \rangle |^2  + f_3'|\nabla \Phi|^2 \right)\\
&\quad +\langle \Phi, \nabla \Phi \rangle \left(f_2 \left(\langle \dbar \nabla \Phi, \Phi \rangle \langle \Phi, \nabla \Phi \rangle\right)  + f_2|\nabla \Phi|^2 \langle \Phi, \nabla \Phi \rangle + f_2 \langle \nabla \Phi, \Phi \rangle\langle \Phi, \nabla\nabla \Phi \rangle\right)\\
&\quad+ \langle \Phi, \nabla \Phi \rangle f_3 \dbar |\nabla \Phi|^2 \\
\end{aligned}
\]

To simplify the term on the third line we use $\langle \dbar \nabla \Phi, \Phi \rangle=-\langle i\Lambda F \Phi, \Phi \rangle$. To simplify the final term we recall that we are working at a maximum of $Q$ so that
\begin{equation}\label{eq: gradZeroMax}
\dbar|\nabla \Phi|^2 =- |\nabla \Phi|^2H'(|\Phi|^2) \langle \Phi, \nabla \Phi\rangle.
\end{equation}
All together we obtain
\[
\begin{aligned}
    -2{\rm Re}\left(g^{p\bar{j}}\del_p \langle g^{k\bar{\ell}}F_{k\bar{j}} \Phi, \nabla_{\ell} \Phi \rangle\right) &= -2{\rm Re}(\langle \Phi, \nabla \nabla \Phi \rangle)\left(f_1 + f_2|\langle \nabla \Phi, \Phi \rangle |^2+f_3|\nabla \Phi|^2\right)\\
    &\quad -2{\rm Re}(\langle \Phi, \nabla \Phi \rangle^2)\left(f_1'   + f_2' |\langle \nabla \Phi, \Phi \rangle |^2  + f_3'|\nabla \Phi|^2 \right)\\
    &\quad +2f_2\left({\rm Re}\langle \Phi, \nabla \Phi \rangle\right)^2\langle i\Lambda F\Phi, \Phi \rangle\\
    &\quad -2f_2|\nabla\Phi|^2{\rm Re}(\langle \Phi, \nabla\Phi\rangle^2)-2f_2|\langle \Phi, \nabla \Phi \rangle|^2{\rm Re}\left(\langle \Phi, \nabla \nabla \Phi \rangle \right)\\
    &\quad +2f_3|\nabla \Phi|^2H' {\rm Re}\left(\langle \Phi, \nabla \Phi\rangle^2\right)\\
    & \geq -C(|\nabla \nabla \Phi||\nabla \Phi|^2 + |\nabla \Phi|^4)-C|\nabla \Phi|^2|\langle \Phi, \nabla \Phi\rangle|^2H'\\
    &\geq -\epsilon|\nabla \nabla \Phi|^2 -C\epsilon^{-1}|\nabla \Phi|^4 -C|\nabla \Phi|^2|\langle \Phi, \nabla \Phi\rangle|^2H'
\end{aligned}
\]

for any $0<\epsilon<1$.  In total, we have that, at $x_*$ there holds
\[
\begin{aligned}
\Delta \log|\nabla \Phi|^2 &\geq \frac{(1-\epsilon)|\nabla \nabla \Phi|^2}{|\nabla \Phi|^2} - \frac{|\nabla|\nabla \Phi|^2|^2}{|\nabla \Phi|^4}  - C\epsilon^{-1}|\nabla \Phi|^2 -C|\langle \Phi, \nabla \Phi\rangle|^2H'\\
& = \frac{(1-\epsilon)|\nabla \nabla \Phi|^2}{|\nabla \Phi|^2} - (H')^2|\langle \nabla \Phi ,\Phi \rangle|^2 - C\epsilon^{-1}|\nabla \Phi|^2 -C|\langle \Phi, \nabla \Phi\rangle|^2H'
\end{aligned}
\]
where we used ~\eqref{eq: gradZeroMax} in passing from the first line to the second line.  We need to further analyze the resulting term involving $(H')^2$.  Expanding~\eqref{eq: gradZeroMax} we have
\[
- H'(|\Phi|^2) \langle \Phi, \nabla \Phi\rangle = \frac{\dbar |\nabla \Phi|^2}{|\nabla \Phi|^2} =\frac{- \langle i\Lambda F \Phi, \nabla \Phi \rangle + \langle \nabla \Phi, \nabla \nabla \Phi \rangle}{|\nabla \Phi|^2}
\]
We take the norm of both sides, and use~\eqref{eq: curvatureCrossTermSpecialForm} (recalling that $|\nabla \Phi|(x_*) \geq 1$) and Lemma~\ref{lem: conditionalPhiBound} to bound $|\langle i\Lambda F\Phi, \nabla \Phi\rangle| \leq C|\nabla \Phi|^2|\langle \Phi, \nabla \Phi\rangle|$, which leads to the estimate 
\[
\begin{aligned}
(H')|\langle \Phi, \nabla \Phi \rangle| &\leq \frac{|\langle i\Lambda F\Phi, \nabla \Phi\rangle| + |\nabla \Phi||\nabla \nabla \Phi|}{|\nabla \Phi|^2}\\
& \leq C|\langle\Phi, \nabla \Phi\rangle| + \frac{|\nabla \nabla \Phi|}{|\nabla \Phi|}.
\end{aligned}
\]
Therefore, for any $\delta>0$ we have
\[
(H')^2|\langle \Phi, \nabla \Phi \rangle|^2 \leq C^2(1+\delta^{-1})|\langle\Phi, \nabla \Phi\rangle|^2 +  (1+\delta)\frac{|\nabla \nabla \Phi|^2}{|\nabla \Phi|^2}.
\]
and so, for any $\gamma \in (0,1)$
\[
\begin{aligned}
(H')^2|\langle \Phi, \nabla \Phi \rangle|^2 &\leq \gamma(H')^2|\langle \Phi, \nabla \Phi \rangle|^2  + C^2(1-\gamma)(1+\delta^{-1})|\langle\Phi, \nabla \Phi\rangle|^2 \\
&\quad +  (1-\gamma)(1+\delta)\frac{|\nabla \nabla \Phi|^2}{|\nabla \Phi|^2}.
\end{aligned}
\]
Choose $\gamma =2\epsilon$ and $\delta =\epsilon$ so that $(1-\gamma)(1+\delta)<1-\epsilon$. We get the following bound, valid for any $0<\epsilon <1$:
\[
(1-\epsilon)\frac{|\nabla \nabla \Phi|^2}{|\nabla \Phi|^2} -(H')^2|\langle \Phi, \nabla \Phi \rangle|^2 \geq -C\epsilon^{-1}|\langle \Phi, \nabla \Phi \rangle|^2 - 2\epsilon(H')^2|\langle \Phi, \nabla \Phi \rangle|^2.
\]
Summarizing, we have obtained the estimate at $x_*$:
\[
\Delta \log|\nabla \Phi|^2 \geq - 2\epsilon(H')^2|\langle \nabla \Phi ,\Phi \rangle|^2 - C\epsilon^{-1}|\nabla \Phi|^2 -C|\langle \Phi, \nabla \Phi\rangle|^2H'.
\]
It only remains to compute the Laplacian of $H(|\Phi|^2)$.  This is straightforward
\[
\begin{aligned}
\Delta H(|\Phi|^2) &= g^{p\bar{j}} \del_p \left(H'(|\Phi|^2) \langle \Phi, \nabla_j \Phi \rangle\right)\\
&=H'' |\langle \nabla \Phi, \Phi\rangle|^2+H' |\nabla \Phi|^2 + H'g^{p\bar{j}} \langle \Phi, \nabla_{\bar{p}}\nabla_{j} \Phi\rangle\\
&=H'' |\langle \nabla \Phi, \Phi\rangle|^2+H' |\nabla \Phi|^2 - H' \langle \Phi, i\Lambda F \Phi\rangle 
\end{aligned}
\]
Recall that by Corollary~\ref{cor: curvUpperBound} we have $\langle \Phi, i\Lambda F \Phi\rangle \leq C + \frac{1}{2}|\nabla \Phi|^2$ 
for a uniform constant $C$, and so, since $H'>0$ we get
\[
\begin{aligned}
0 \geq \Delta Q(x_*) &\geq - 2\epsilon(H')^2|\langle \nabla \Phi ,\Phi \rangle|^2  - C\epsilon^{-1}|\nabla \Phi|^2 -C|\langle \Phi, \nabla \Phi\rangle|^2H'\\
&\quad +H'' |\langle \nabla \Phi, \Phi\rangle|^2+H'\left( |\nabla \Phi|^2 -(\frac{1}{2}|\nabla \Phi|^2 +C)\right)
\end{aligned}
\]
Choose $H$ so that $ H'>8C$, and take $2\epsilon = \frac{8C}{H'} \in(0,1)$.  Then we have 
\[
\begin{aligned}
0 \geq \Delta Q &\geq - 8C H'|\langle \nabla \Phi ,\Phi \rangle|^2  - \frac{1}{4}H'|\nabla \Phi|^2 -C|\langle \Phi, \nabla \Phi\rangle|^2H'\\
&\quad +H'' |\langle \nabla \Phi, \Phi\rangle|^2+H'\left( \frac{1}{2}|\nabla \Phi|^2 -C\right).
\end{aligned}
\]
Now suppose that we have chosen $H$ such that
\[
\begin{aligned}
H' &\geq 8C +2\\
H''&\geq 10CH'.
\end{aligned}
\]
in particular we can take $H(y)=e^{Ay}$ for $A$ large depending only on $C$.  Then at $x_*$ we have
\[
0 \geq C|\langle \nabla \Phi, \Phi \rangle |^2 + (\frac{1}{4}|\nabla \Phi|^2-C)  
\]
which yields $|\nabla \Phi|^2(x_*) \leq 4C$.  Hence $Q$ is uniformly bounded, and the bound for $|\nabla \Phi|$ follows.
\end{proof}

Finally, we have obtained the following bound:

\begin{cor}\label{cor: totalCurvBound}
    Suppose that $\cos(\hat{\theta})>0$, and that $(h_1, h_2)$ solve ~\eqref{eq: dimReduceDHYM-MOC} for some $t\in[0,1]$ with $\inf_{X}|\Phi|^2 < \frac{4\pi}{\sigma t}$.  There is a uniform constant $C>0$, depending only on $\hat{\theta}, \sigma$ and $(X,g)$ so that
    \[
    -CI \leq i\Lambda F_{1} \leq CI \quad \text{ and } -C \leq i\Lambda F_2 \leq C.
    \]
\end{cor}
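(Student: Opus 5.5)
We obtain the bounds directly from the explicit curvature formulas, now that $|\Phi|$, $|\nabla\Phi|$ and the relevant denominators are controlled. By Lemma~\ref{lem: conditionalPhiBound}, $\sup_X|\Phi|^2$ is bounded by a constant depending only on $\htheta,\sigma$. Part $(ii)$ of that lemma bounds the denominator $\cos(\htheta)+(\frac{4\pi}{\sigma}-t|\Phi|^2)\sin(\htheta)$ below by $\cos(\htheta)+\frac{2\pi}{\sigma}\sin(\htheta)>0$. By Proposition~\ref{prop: gradientBound}, $\sup_X|\nabla\Phi|^2\le C$.

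For $E_2$, which is a line bundle, Lemma~\ref{lem: H2CurvLem} expresses $i\Lambda F_2$ as a quotient. The numerator
\[
-\cos(\htheta)\left(\tfrac{4\pi}{\sigma}-|\Phi|^2\right)+\sin(\htheta)\left(1+\tfrac{t}{2}|\nabla\Phi|^2\right)
\]
is bounded in absolute value by the $C^0$ and $C^1$ bounds on $\Phi$. The denominator is bounded below as noted above. This gives $-C\le i\Lambda F_2\le C$.

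For $E_1$ we use the explicit inverse formula in Lemma~\ref{lem: H1CurveLemSimplfiedPositiveCos}, with $\alpha=\tan(\htheta)>0$ and $t\in[0,1]$:
\[
i\Lambda F_1 = \alpha I - \frac{1+t\alpha^2}{1+t\alpha|\Phi|^2}\Phi\Phi^\dagger + \frac{t\alpha}{2}\nabla\Phi(\nabla\Phi)^\dagger - \frac{t^2\alpha^2\{\Phi\Phi^\dagger,\nabla\Phi(\nabla\Phi)^\dagger\}}{2(2+t\alpha|\Phi|^2)} + \cdots
\]
Every denominator there is at least $1$, since $t\alpha|\Phi|^2\ge 0$. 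Each numerator is a product of the endomorphisms $\Phi\Phi^\dagger$ and $\nabla\Phi(\nabla\Phi)^\dagger$, or of the scalar $|\langle\nabla\Phi,\Phi\rangle|^2$, each of which has operator norm bounded by $|\Phi|^2$, $|\nabla\Phi|^2$, or $|\nabla\Phi|^2|\Phi|^2$. Each term is Hermitian; the anticommutator $\{A,B\}$ of Hermitian $A,B$ is Hermitian with norm at most $2|A||B|$. Hence the operator norm of $i\Lambda F_1$ is at most a uniform constant $C$, which is equivalent to $-CI\le i\Lambda F_1\le CI$.

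There is no real obstacle, since all the analytic work was done in the preceding lemmas. The only point needing care is that every denominator is uniformly bounded away from zero. For $E_1$ this is automatic from $\cos(\htheta),\sin(\htheta)>0$. For $E_2$ it is the hypothesis $\inf_X|\Phi|^2<\frac{4\pi}{\sigma t}$, which feeds Lemma~\ref{lem: conditionalPhiBound}$(ii)$.
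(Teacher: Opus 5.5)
Your proposal is correct and takes the same route as the paper. The paper reads off the bound for $i\Lambda F_1$ from the explicit formula in Lemma~\ref{lem: H1CurveLemSimplfiedPositiveCos} and the bound for $i\Lambda F_2$ from Lemma~\ref{lem: H2CurvLem}, using the bounds on $|\Phi|$ and $|\nabla\Phi|$ from Lemma~\ref{lem: conditionalPhiBound} and Proposition~\ref{prop: gradientBound}; your remarks on the denominators (at least $1$ for $E_1$, and at least $\cos(\htheta)+\frac{2\pi}{\sigma}\sin(\htheta)$ for $E_2$ via Lemma~\ref{lem: conditionalPhiBound}$(ii)$) make explicit what the paper leaves implicit.
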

\begin{proof}
    The bound for $F_1$ follows from Lemma~\ref{lem: H1CurveLemSimplfiedPositiveCos} together with the bounds for $|\Phi|$ and $|\nabla \Phi|$ obtained in Lemma~\ref{lem: conditionalPhiBound} and Proposition~\ref{prop: gradientBound} respectively.  Similarly, the bound for $F_2$ follows from the formula in Lemma~\ref{lem: H2CurvLem} together with the estimates in  Lemma~\ref{lem: conditionalPhiBound} and Proposition~\ref{prop: gradientBound}.
\end{proof}

\subsection{Estimates from stability}\label{subsec: EstFromStab}

We now prove upper and lower bounds for $h_1, h_2$ using stability.  This requires two arguments by contradiction. 

\begin{prop}\label{prop: UYargumentH}
Suppose that the triple $\cE= (E_1, E_2, \Phi)$ has $\cos(\htheta)>0$, $\tan(\htheta)\leq \frac{\sigma}{4\pi}$ and is stable in the sense of Definition~\ref{def: stabOnXxP} or, equivalently, Definition~\ref{defn: stabOfTriple}.  Suppose the $(h_1, h_2)$ are smooth solutions of~\eqref{eq: dimReduceDHYM-MOC} for $t\in [0, 1]$ satisfying $\inf_{X} |\Phi|^2 < \frac{4\pi}{\sigma t}$.  There is a uniform constant $C>0$ so that
\[
 h_t < Ch_0
\]
where $h_{t}= h_{1,t}h_{2,t}^{-1}$ is the metric on ${\rm Hom}(E_2, E_1)$.
\end{prop}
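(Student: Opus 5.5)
We argue by contradiction, following the Uhlenbeck--Yau strategy. Suppose there are times $t_k\in[0,1]$ and solutions $(h_{1,k},h_{2,k})$ of~\eqref{eq: dimReduceDHYM-MOC} with $\sup_X \mathrm{Tr}(s_k)=:M_k\to\infty$, where $h_{t_k}=h_0e^{s_k}$ is the induced metric on ${\rm Hom}(E_2,E_1)$. Lemma~\ref{lem: conditionalPhiBound}, Proposition~\ref{prop: gradientBound} and Corollary~\ref{cor: totalCurvBound} give uniform bounds on $|\Phi|$, $|\nabla\Phi|$, $i\Lambda F_1$ and $i\Lambda F_2$. Since $E_2$ is a line bundle, $h_{t}=h_1h_2^{-1}$, so $s_k$ is an endomorphism of $E_1$ (after twisting). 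Writing $i\Lambda F_{h_t}-i\Lambda F_{h_0}$ in terms of $s_k$ and using the uniform curvature bound, we get
\[
\Delta\,\mathrm{Tr}(s_k)\geq -C
\]
up to sign conventions. The mean value inequality on $X$ then gives $\sup_X|s_k|\le C(1+\|s_k\|_{L^1})$. We therefore normalize $u_k=s_k/\|s_k\|_{L^1}$. The standard Simpson/Uhlenbeck--Yau computation, using the bounded curvature and the identity
\[
\int\langle i\Lambda F_{h}-i\Lambda F_{h_0},s\rangle=\int\langle\Psi(s)\dbar s,\dbar s\rangle,
\]
shows that $u_k$ is bounded in $L^2_1$ and converges weakly to a nonzero limit $u_\infty$. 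For every smooth $\Psi$ with $\Psi(x,y)\le 1/(x-y)$ when $x>y$, the limit satisfies
\[
\int\langle\Psi(u_\infty)\dbar u_\infty,\dbar u_\infty\rangle\le 0.
\]
The eigenvalues of $u_\infty$ are then constant, and Uhlenbeck--Yau regularity produces weakly holomorphic projections $\pi_j$ onto the associated filtration $0\subsetneq S_1\subsetneq\dots\subsetneq E_1$, which are coherent subsheaves.

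The new ingredient is to show that the filtration is compatible with $\Phi$. We need $\Phi(E_2)\subset S_j$ for every $j$, so that each $(S_j,E_2,\Phi)$ is a subtriple of type A. To get this, we use the upper bound on $|\Phi|^2_{h_t}=\langle e^{s}\Phi,\Phi\rangle_{h_0}$ from Lemma~\ref{lem: conditionalPhiBound}. Dividing by the exponential blow-up of $e^{s_k}$ along directions with the largest eigenvalues forces the $h_0$-component of $\Phi$ in the top eigenspaces of $u_\infty$ to vanish. This means $\Phi$ takes values in the sum of the eigenspaces with smallest eigenvalue, which after reordering is the first piece of the filtration. This is exactly why the blow-up argument extracts a type A subtriple, as the paper indicates after Lemma~\ref{lem: typeBStabNecessary}.

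Next we test the equation against $\pi_j$, as in the proof of Proposition~\ref{prop: stabTypeANecessary}. We trace~\eqref{eq: dimReduceDHYM-MOC}, projected by $\pi_j^\perp$, against the blow-up, letting $k\to\infty$ and using $t_k\to t_\infty$. Taking the combination $\sum_j(\lambda_{j+1}-\lambda_j)(\cdot)$ and using that $\pi^\perp\Phi\to0$ in the limit, the second fundamental form and $\nabla\Phi$ terms should combine into a nonnegative quantity. We aim to obtain
\[
\sum_j(\lambda_{j+1}-\lambda_j)\left(\cos(\htheta)\deg(E_1/S_j)-\sin(\htheta)\tfrac{V_X}{2\pi}\mathrm{rk}(E_1/S_j)\right)\le 0 .
\]
On the other hand, Lemma~\ref{lem: phaseToRatioStab} applied to the type A subtriples $(S_j,E_2,\Phi)$ gives the reverse strict inequality, which is a contradiction.

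The main obstacle is this last step. The MOC equation for $F_1$ is nonlinear: it contains $\{i\Lambda F_1,\Phi\Phi^\dagger\}$ and $\nabla\Phi(\nabla\Phi)^\dagger$. These contribute cross terms of the form ${\rm Re}\langle\pi^\perp\nabla\Phi,\beta^\dagger\pi\Phi\rangle$, which have no sign and do not vanish along the diverging sequence. I would handle them with the Cauchy--Schwarz absorption from Lemma~\ref{lem: typeBStabNecessary}, absorbing into the $i\Lambda F_2$ term via~\eqref{eq: locF2eqStabNec}. This absorption requires $\cos(\htheta)\ge(\tfrac{4\pi}{\sigma}-t|\Phi|^2)\sin(\htheta)$, and it is the place where the hypothesis $\tan(\htheta)\le\frac{\sigma}{4\pi}$ enters.
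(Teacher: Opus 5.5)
Your skeleton is the right one: an Uhlenbeck--Yau contradiction, extraction of a type A subtriple, and absorption of the bad terms into the gradient term coming from the $E_2$ equation, which is where $\tan(\hat{\theta})\le\frac{\sigma}{4\pi}$ enters. But two steps fail as written.

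\textbf{First gap: the Simpson normalization does not give you type A subtriples.} Since $h_t=h_{1,t}h_{2,t}^{-1}$ carries no determinant normalization and you only want an upper bound, the contradiction hypothesis must be $\sup_X\lambda_{\max}(s_k)\to\infty$, not $\sup_X{\rm Tr}(s_k)\to\infty$. (Also, $\Delta{\rm Tr}(s_k)$ only controls $\log\det$; a sup bound on $s_k$ needs $\log{\rm Tr}\,e^{s_k}$.)

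After dividing by $\|s_k\|_{L^1}$, the bound $\langle e^{s_k}\Phi,\Phi\rangle_{h_0}\le C$ kills the $h_0$-components of $\Phi$ only in eigenspaces of $u_\infty$ with \emph{strictly positive} eigenvalue, and $u_\infty$ need not have any. For example, if the eigenvalues of $s_k$ are roughly $\sqrt{L_k},0,-L_k$, then $u_\infty\sim{\rm diag}(0,0,-1)$. The filtration is then just $S_1\subset E_1$ with $S_1$ the $(-1)$-eigenspace, and $\Phi$ may have a nonzero component along the middle direction. In that case $(S_1,0,0)$ is a type B subtriple, for which no inequality is available; this is exactly the difficulty behind Conjecture~\ref{conj: stabTypeB}. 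So ``$\Phi$ lies in the smallest eigenspace'' does not follow once $u_\infty$ has three or more eigenvalues, or no positive one.

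The paper avoids this by dividing $u_\ell=h_0^{-1}h_{t_\ell}$ by $m_\ell=\sup_X\lambda_{\max}(u_\ell)$ and taking $\tilde u_\ell^{\mathtt s}\to\tilde u_\infty^{\mathtt s}\to p$. This detects precisely the upward blow-up and yields a single subbundle $S=\ker p$ with $\tilde u_\infty\Phi=0$. Hence $\Phi\in S$; $S\neq E_1$ because $p\neq 0$; and $S\neq 0$ because $\Phi\not\equiv 0$.

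\textbf{Second gap: the decisive estimate is only asserted, and the mechanism you propose does not fit the blow-up.} Your limiting Simpson inequality should read $\le C$, not $\le 0$, since $i\Lambda F_{h_t}$ is only bounded. That is enough for regularity, but all sign information must then come from the equation.

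The cross term ${\rm Re}\langle\pi^\perp\nabla\Phi,\beta^\dagger\pi\Phi\rangle$ you name comes from the type B computation in Lemma~\ref{lem: typeBStabNecessary}. For a type A subbundle it does not arise, since $\pi^\perp\nabla\Phi=-\beta^\dagger\Phi$. What actually goes wrong is different:
\begin{itemize}
\item In the blow-up you can only test the equations for $h_\ell$ against $\tilde u_\ell^{\mathtt s}$, not against the limiting projection.
\item Bounds on $|\Phi|_{h_\ell}$ and $|\nabla^\ell\Phi|_{h_\ell}$ do not make $\Phi^{\dagger_\ell}$ or $(\nabla^\ell\Phi)^{\dagger_\ell}$ converge.
\item Consequently $\tfrac{t_\ell}{2}\sin(\hat{\theta}){\rm Tr}(\{i\Lambda F_{1,\ell},\Phi\Phi^{\dagger_\ell}\}\tilde u_\ell^{\mathtt s})$ and $-\tfrac{t_\ell}{2}\sin(\hat{\theta}){\rm Tr}(\nabla^\ell\Phi(\nabla^\ell\Phi)^{\dagger_\ell}\tilde u_\ell^{\mathtt s})$ survive in the limit, the latter with no pointwise sign.
\item Your Cauchy--Schwarz absorption needs a term $\cos(\hat{\theta})\int|\beta|^2$ to absorb into. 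In the blow-up, however, the $h_0$-second fundamental form is already used up in producing $\deg Q$, and what remains is only the Uhlenbeck--Yau excess.
\end{itemize}

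The paper handles this in three moves:
\begin{itemize}
\item It uses the integration by parts identity~\eqref{eq: integrationByPartsUY} to trade the $\nabla\Phi(\nabla\Phi)^{\dagger_\ell}$ term for $(IIa)+(IIb)+(IIc)$, where $(IIa)$ carries the positive gradient term from the $E_2$ equation.
\item It splits $(IIb)$ in an eigenframe of $\tilde u_\ell$ on a Lusin--Egorov set into pieces that vanish as $\ell\to\infty$ and then $\mathtt s\to 0$.
\item The remaining piece is absorbed by half of $(IIa)$, using $\tan(\hat{\theta})\le\frac{\sigma}{4\pi}$, together with $\cos(\hat{\theta})\mathcal{E}_{UY}$ via the refined lower bound of Lemma~\ref{lem: UYExcessFormula}.
\end{itemize}
Some mechanism of this kind is required. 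Your phrase ``should combine into a nonnegative quantity'' is exactly where the substance of the proof lies.
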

\begin{proof}
    Since the proof is quite long, we begin by outlining the idea. Arguing by contradiction, following the ideas of  Uhlenbeck-Yau \cite{UY}, we show that if there exists a sequence of metrics $h_{\ell}$ satisfying the assumptions but for which the bound $h_{0}^{-1}h_{\ell}$ blows up at some point of $X$, then one can produce a holomorphic subbundle $\cS\subset \cE$ of type A in the sense of Definition~\ref{defn: subObTypeA/B}.  To obtain a contradiction, it suffices to show that the subbundle $\cS$ destabilizes $\cE$.  This requires a rather intricate argument depending on some subtle cancellation together with integration by parts.  One of the main difficulties is that, while $|\Phi|_{h_{\ell}}$ and $|\nabla^{\ell}\Phi|_{h_{\ell}}$ are bounded independent of $\ell$, this does not imply the convergence to zero of $\Phi^{{\dagger}_{\ell}}$ or $(\nabla^{\ell}\Phi)^{\dagger_{\ell}}$.

    Let $h_{1,t}, h_{2,t}$ be metrics on $E_1$ and $E_2$ respectively solving~\eqref{eq: dimReduceDHYM-MOC}, and let $h_t= h_{1,t}h_{2,t}^{-1}$ be the metric on $E_1\otimes E_2^{\vee}$.  Denote by $F_{1,t}, F_{2,t}$ and $F_{t}$
    the curvatures of $h_{1,t}, h_{2,t}$ and $h_{t}= h_{1,t}h_{2,t}^{-1}$, respectively.  By Lemma~\ref{lem: conditionalPhiBound}, and Proposition~\ref{prop: gradientBound} each of these curvatures is uniformly bounded. Define the relative endomorphism of $E_1$ given by $u:=h_0^{-1}h_t$, which is positive definite and Hermitian with respect to both $h_0$ and $h_t$.  From now on we let $\hat{\nabla}$ denote the $(1,0)$ part of the Chern connections induced by $h_{1,0}, h_{2,0}$, and denote by $\nabla^t$ the Chern connections of $h_{1,t}, h_{2,t}$.  Then we have
    \begin{equation}\label{eq: curvatureRelation}
   i\Lambda \dbar(u^{-1}\hat{\nabla} u) = i\Lambda F_{0} - i\Lambda F_{t} = i\Lambda \dbar\left((\nabla^tu)u^{-1}\right) 
    \end{equation}
    
    Since $u$ is positive definite, for any $\mathtt{s} \in (0,1]$ we can define $u^{\mathtt{s}}$.  We have the following key formulas, due to Uhlenbeck-Yau \cite{UY} 
    \begin{equation}\label{eq: UYFormula1}
     \langle u^{-1}\hat{\nabla} u, \hat{\nabla}u^{\mathtt{s}}\rangle_{h_0} \geq |u^{-\mathtt{s}/2}\hat{\nabla}u^{\mathtt{s}}|^2_{h_0}
    \end{equation}
    \begin{equation}\label{eq: UYFormula2}
    \langle (\nabla^tu)u^{-1},\nabla^tu^{\mathtt{s}}\rangle_{h_t} \geq |(\nabla^t u^{\mathtt{s}})u^{-\mathtt{s}/2}|_{h_t}^2
    \end{equation}
    \begin{equation}\label{eq: UYFormula3}
    g^{j\bar{k}}\del_{\bar{k}}\langle u^{-1}\hat{\nabla}_j u, u^{\mathtt{s}} \rangle_{h_0} = \frac{1}{\mathtt{s}}\Delta {\rm Tr}(u^{\mathtt{s}}) = g^{j\bar{k}}\del_{\bar{k}}\langle u^{-1}\nabla^t_j u, u^{\mathtt{s}} \rangle_{h_t}
    \end{equation}
    Equations~\eqref{eq: UYFormula1} and ~\eqref{eq: UYFormula3} appear in \cite[Lemma 4.1]{UY}, while~\eqref{eq: UYFormula2} is essentially identical to~\eqref{eq: UYFormula1}. Since we will need to exploit some intermediate formulas obtained by Uhlenbeck-Yau, we shall recall the proof of~\eqref{eq: UYFormula1} and~\eqref{eq: UYFormula2} in Lemma~\ref{lem: UYFormulaImprovement} below. Combining these formulas we get
\[
\begin{aligned}
   \frac{1}{\mathtt{s}}\Delta {\rm Tr}(u^{\mathtt{s}})&\geq g^{j\bar{k}} \langle \del_{\bar{k}}(u^{-1}\hat{\nabla}_j u), u^{\mathtt{s}}\rangle_{h_0} +|u^{-\mathtt{s}/2}\hat{\nabla}u^{\mathtt{s}}|^2_{h_0}\\
   &= {\rm Tr}\left( \left(i\Lambda F_{0}-i\Lambda F_{t}\right) u^{\mathtt{s}}\right) +|u^{-\mathtt{s}/2}\hat{\nabla}u^{\mathtt{s}}|^2_{h_0} 
\end{aligned}
\]
    Let $m$ denote the maximum of the largest eigenvalue of $u$ and  set $\tilde{u}^{\mathtt{s}} =m^{-\mathtt{s}}u^{\mathtt{s}}$.  Then
    \begin{equation}\label{eq: LaplaceUsigmaLB}
    \begin{aligned}
    \frac{1}{\mathtt{s}}\Delta {\rm Tr}(\tilde{u}^{\mathtt{s}})&\geq {\rm Tr}\left( \left(i\Lambda F_{0}-i\Lambda F_{t}\right) \widetilde{u}^{\mathtt{s}}\right) + |\tilde{u}^{-\mathtt{s}/2}\hat{\nabla}\tilde{u}^{\mathtt{s}}|^2_{h_0}
   \end{aligned}
    \end{equation}
    Integrating this inequality over $X$, and using the uniform bound for the curvature, together with the fact that $\tilde{u}^{\mathtt{s}}\leq I$ we obtain
    \begin{equation}\label{eq: w12Bound}
\int_{X}|\hat{\nabla}\tilde{u}^{\mathtt{s}}|^2_{h_0} \leq C
    \end{equation}
    and hence $\tilde{u}^{\mathtt{s}}$ is uniformly bounded in $W^{1,2}({\rm End}(E_1\otimes E_2^{\vee}) , h_{0})$, henceforth denoted simply by $W^{1,2}$.  

    Suppose, for the sake of contradiction, that there exists a sequence $t_{\ell} \rightarrow T$ such that $u_\ell:= h_0^{-1}h_{t_\ell}$ has  $\lim_{\ell\rightarrow \infty}\sup_{X} {\rm Tr}(u_\ell) =+\infty$. Defining $\tilde{u}_\ell^{\mathtt{s}}$ as above, we deduce from~\eqref{eq: w12Bound} that, for any $\mathtt{s} \in (0,1]$, the sequence $\tilde{u}_\ell^{\mathtt{s}}$ is uniformly bounded in $W^{1,2}$.

    Fixing $\mathtt{s} =1$, we may take a subsequence (not relabeled) converging weakly in $W^{1,2}$, strongly in $L^2$, and pointwise a.e. to a limit $\tilde{u}_{\infty}$.  By the $C^0$ bound obtained in Lemma~\ref{lem: conditionalPhiBound} we have that $\tilde{u}_{\infty} \Phi =0 $ a.e.  We claim that for every $\mathtt{s} \in (0,1]$ the sequence $\lim_{\ell\rightarrow \infty}\widetilde{u}_\ell^{\mathtt{s}}= \tilde{u}_{\infty}^{\mathtt{s}}$.  This is clear since $\widetilde{u}_\ell^{\mathtt{s}}$ converges pointwise a.e. to $\tilde{u}_{\infty}^{\mathtt{s}}$, and by~\eqref{eq: w12Bound}, this convergence can be taken weakly in $W^{1,2}$ and strongly in $L^2$.  By lower semi-continuity of the norm along weakly converging sequences we have
    \begin{equation}\label{eq: W12bndUinfinity}
    \int_{X}|\hat{\nabla}\widetilde{u}_{\infty}^{\mathtt{s}}|_{h_0}^2\leq C
    \end{equation}
    We claim that $\widetilde{u}_{\infty}^{\mathtt{s}}$ is not identically zero.  Indeed, by~\eqref{eq: LaplaceUsigmaLB} we have $ \Delta{\rm Tr}(\widetilde{u}_\ell^{\mathtt{s}}) \geq -C $  and by definition of $\widetilde{u}_\ell$ we have $\sup_{X}{\rm Tr}(\widetilde{u}_\ell^{\mathtt{s}}) \geq 1$.  It follows easily from standard elliptic theory that
    \begin{equation}\label{eq: uinfinityNotZero}
    \int_{X}{\rm Tr}\widetilde{u}^{\mathtt{s}}_\ell > \delta
    \end{equation}
    for some $\delta>0$ independent of $\ell,\mathtt{s}$.  Since $\widetilde{u}^{\mathtt{s}}_\ell$ converges to $\widetilde{u}^{\mathtt{s}}_{\infty}$ in $L^2$ and pointwise almost everywhere, the claim follows.  

    Finally, we take a limit as $\mathtt{s} \rightarrow 0$.  Again~\eqref{eq: W12bndUinfinity} implies that $\lim_{\mathtt{s} \rightarrow 0}\widetilde{u}_{\infty}^{\mathtt{s}} = p$ weakly in $W^{1,2}$, strongly in $L^2$ and pointwise almost everywhere.  By~\eqref{eq: uinfinityNotZero}, the limit $p$ is not zero.  Let $\pi= I-p$.  We claim that $\pi^2=\pi$ almost everywhere.  To see this, we compute
    \[
    \pi^2 = (I-p)^2 =\lim_{\mathtt{s} \rightarrow 0} (I-\widetilde{u}_{\infty}^{\mathtt{s}})^2 =\lim_{\mathtt{s} \rightarrow 0} I-2\widetilde{u}_{\infty}^{\mathtt{s}} + \widetilde{u}_{\infty}^{2\mathtt{s}} = I-p= \pi
    \]
    Thus, $\pi$ is a $W^{1,2}$ projection.  A fundamental result of Uhlenbeck-Yau \cite{UY} says that $\pi$ defines a proper holomorphic subbundle $\pi(E_1):=S \subsetneq E_1$, and by the $C^0$ bound Lemma~\ref{lem: conditionalPhiBound}, $\Phi \in H^{0}(X,S\otimes E_{2}^{\vee})$.  The remainder of the argument is to show that the triple $(S,E_2,\Phi)$ destabilizes $(E_1, E_2,\Phi)$.  Consider the exact sequence
    \[
    0 \rightarrow S \rightarrow E_1 \rightarrow Q \rightarrow 0.
    \]
    Let $\beta_0 \in \Lambda^{0,1}\otimes S\otimes Q^{\vee}$ denote the second fundamental form of $S \subset E_1$, defined by $h_0$, and denote $h_{1, t_{\ell}}= h_{1,\ell}$ and $h_{2, t_{\ell}}= h_{2,\ell}$  for simplicity, and write $F_{1,\ell}, F_{2,\ell}$ for the curvatures of $h_{1,\ell}$ and $h_{2,\ell}$ respectively.  We begin by computing
    \begin{equation}\label{eq: UYtracePiFirstStep}
    \begin{aligned}
    \int_{X}{\rm Tr}(i\Lambda F_{1,0}\pi^{\perp}) &= 2\pi \deg(Q) -\int_{X}|\beta_0|_{h_0}^2\\
    \end{aligned}
    \end{equation}
    Since $i\Lambda F_{1,0} +\Phi\Phi^{\dagger_0} = \tan(\htheta)I>0$  we have
    \[
    0 < \tan(\htheta){\rm rk}(Q)V_{X} = \int_{X}{\rm Tr}(i\Lambda F_{1,0}\pi^{\perp})=2\pi \deg(Q) -\int_{X}|\beta_0|_{h_0}^2
    \]
    since $\pi^{\perp}\Phi=0$.  This implies $\deg(Q)>0$ and hence the holomorphic triple $\cQ= (Q,0,0)$ has ${\rm Im}(Z_{\cX}(\cQ))>0$.

From ~\eqref{eq: UYtracePiFirstStep} we have
\[
\begin{aligned}
\int_{X}{\rm Tr}(i\Lambda F_{1,0} \pi^{\perp}) &= \lim_{\mathtt{s} \rightarrow 0}\lim_{\ell\rightarrow \infty}\int_{X}{\rm Tr}(i\Lambda F_{1,0} \tilde{u}_{\ell}^{\mathtt{s}})
\end{aligned}
\]
Then, by~\eqref{eq: curvatureRelation} we have
\[
\begin{aligned}
    \int_{X}{\rm Tr}(i\Lambda F_{1,0} \widetilde{u}_{\ell}^{\mathtt{s}}) - \int_{X}i\Lambda F_{2,0} {\rm Tr}(\widetilde{u}_{\ell}^s)&= \int_{X}{\rm Tr}(i\Lambda F_{1,\ell} \widetilde{u}_{\ell}^{\mathtt{s}})- \int_{X}i\Lambda F_{2,\ell} {\rm Tr}(\widetilde{u}_{\ell}^s) \\
    &\quad + \int {\rm Tr} (\dbar (\widetilde{u}_\ell^{-1} \hat{\nabla} \widetilde{u}_\ell) \widetilde{u}_{\ell}^{\mathtt{s}})\\
    &= \int_{X}{\rm Tr}(i\Lambda F_{1,\ell} \widetilde{u}_{\ell}^{\mathtt{s}}) -\int_{X}i\Lambda F_{2,\ell} {\rm Tr}(\widetilde{u}_{\ell}^s)\\
    &\quad- \int {\rm Tr} ( (\widetilde{u}^{-1}_{\ell} \hat{\nabla} \widetilde{u}_\ell) \dbar(\widetilde{u}_{\ell}^{\mathtt{s}})).\\
\end{aligned}
\]
Now we use~\eqref{eq: UYFormula1}
\[
\int {\rm Tr} ( (\widetilde{u}^{-1}_\ell \hat{\nabla} \widetilde{u}_\ell) \dbar(\widetilde{u}_{\ell}^{\mathtt{s}})) \geq \int_{X}|\widetilde{u}_\ell^{-\mathtt{s}/2}\hat{\nabla} \widetilde{u}_\ell^{\mathtt{s}}|_{h_0}^2 \geq \int_{X}|\hat{\nabla}\widetilde{u}_\ell^{\mathtt{s}}|_{h_0}^2.
\] 
By lower semi-continuity of the norm along weakly converging sequences we have
\[
\lim_{\mathtt{s} \rightarrow 0}\lim_{\ell\rightarrow \infty} \int_{X}|\hat{\nabla}\widetilde{u}_\ell^{\mathtt{s}}|_{h_0}^2 \geq \int|\beta_0|^2_{h_0}.
\]
In particular, we have
\begin{equation}\label{eq: dominateSecondFundForm}
   \lim_{\mathtt{s} \rightarrow 0} \lim_{\ell \rightarrow \infty} \int_{X} {\rm Tr} ( (\widetilde{u}^{-1}_\ell \hat{\nabla} \widetilde{u}_\ell) \dbar(\widetilde{u}_{\ell}^{\mathtt{s}})) \geq \int_{X}|\beta_0|^2_{h_0}
\end{equation}
Summarizing, we have proven that
\[
\begin{aligned}
\cos(\hat{\theta})(2\pi \deg(Q)) &= \cos(\hat{\theta})\lim_{\mathtt{s} \rightarrow 0}\lim_{\ell\rightarrow \infty}\int_{X}{\rm Tr}(i\Lambda F_{1,\ell}\widetilde{u}_\ell^{\mathtt{s}})\\
&\quad +\cos(\htheta)\int_{X}\left(i\Lambda F_{2,\ell} - i\Lambda F_{2,0}\right){\rm Tr}(\widetilde{u}_{\ell}^s)\\
&\quad- \cos(\hat{\theta}) \left(\lim_{\mathtt{s} \rightarrow 0}\lim_{\ell \rightarrow \infty}\int_{X}{\rm Tr}\left((\widetilde{u}_\ell^{-1}\hat{\nabla} \widetilde{u}_\ell) \dbar \widetilde{u}_\ell^{\mathtt{s}})\right) - \int_{X}|\beta_0|^2\right)
\end{aligned}
\]
and by~\eqref{eq: dominateSecondFundForm}, the bracketed term on the third line is non-negative; in particular, the last line is non-positive, since $\cos(\htheta)>0$.  For the term on the second line we have
\[
\int_{X}\left(i\Lambda F_{2,\ell} - i\Lambda F_{2,0}\right){\rm Tr}(\widetilde{u}_{\ell}^s)=\int_{X}\left(i\Lambda F_{2,\ell} - i\Lambda F_{2,0}\right)({\rm Tr}(\widetilde{u}_{\ell}^s)- \rank(Q))
\]
since the difference $\left(i\Lambda F_{2,\ell} - i\Lambda F_{2,0}\right)$ is $\ddbar$-exact.  Then using the curvature bounds together with the strong $L^2$ convergence $\lim_{\mathtt{s}\rightarrow 0}\lim_{\ell \rightarrow \infty}\widetilde{u}_{\ell}^s = (I-\pi) $ we get
\[
\lim_{\mathtt{s}\rightarrow 0}\lim_{\ell \rightarrow \infty}\int_{X}\left(i\Lambda F_{2,\ell} - i\Lambda F_{2,0}\right)({\rm Tr}(\widetilde{u}_{\ell}^s)-\rank(Q))=0
\]

The remainder of the proof is dedicated to analyzing the sign of the remaining terms.  As a first step, we use the equation to obtain some useful formulas.  By~\eqref{eq: dimReduceDHYM-MOC} we have
\[
\begin{aligned}
\cos(\hat{\theta})\left(\int_{X}{\rm Tr}\left(i\Lambda F_{1,\ell}\widetilde{u}_{\ell}^{\mathtt{s}}\right) + \int_{X}\langle \widetilde{u}_{\ell}^{\mathtt{s}} \Phi, \Phi \rangle_{h_{\ell}}\right)
&= \sin(\hat{\theta})\left( \int_{X}{\rm Tr}(\widetilde{u}_{\ell}^{\mathtt{s}}) - \frac{t_{\ell}}{2}\int_{X}{\rm Tr}\left(\{i\Lambda F_{1,\ell}, \Phi \Phi^{\dagger_{\ell}}\} \widetilde{u}_{\ell}^{\mathtt{s}}  \right)\right)\\
&\quad + \sin(\hat{\theta}) \frac{t_{\ell}}{2}\int_{X}{\rm Tr}\left(\left(\nabla^{\ell} \Phi\right)\left(\nabla^{\ell}\Phi\right)^{\dagger_{\ell}} \widetilde{u}_{\ell}^{\mathtt{s}}\right)
\end{aligned}
\]
where, as usual, $\dagger_{\ell}$ denotes the adjoint with respect to $h_{1,\ell}h_{2,\ell}^{-1}$.  Clearly we have
\[
\lim_{\mathtt{s}\rightarrow 0}\lim_{\ell \rightarrow \infty} \int_{X}{\rm Tr}(\widetilde{u}_{\ell}^{\mathtt{s}})= V_{X}\rank(Q)
\]
and so
\begin{equation}\label{eq: stabilityContradiction}
\begin{aligned}
&\cos(\hat{\theta})(2\pi \deg(Q))- \sin(\hat{\theta})V_{X}\rank(Q)\\
&=-\cos(\htheta)\lim_{\mathtt{s}\rightarrow 0}\lim_{\ell \rightarrow \infty}\left(\int_{X}|\widetilde{u}_{\ell}^{-\mathtt{s}/2}\hat{\nabla}\widetilde{u}_{\ell}^{\mathtt{s}}|^2_{h_0}- \int_{X}|\beta_0|_{h_{0}}^2\right)\\
&\quad - \cos(\hat{\theta}) \left(\lim_{\mathtt{s} \rightarrow 0}\lim_{\ell \rightarrow \infty}\int_{X}{\rm Tr}\left((\widetilde{u}_\ell^{-1}\hat{\nabla} \widetilde{u}_\ell) \dbar \widetilde{u}_\ell^{\mathtt{s}})\right) - |\widetilde{u}_{\ell}^{-\mathtt{s}/2}\hat{\nabla}\widetilde{u}_{\ell}^{\mathtt{s}}|^2_{h_0}\right)\\
&\quad -\cos(\hat{\theta}) \lim_{\mathtt{s} \rightarrow 0}\lim_{\ell \rightarrow \infty}\int_{X}\langle \widetilde{u}_{\ell}^{\mathtt{s}} \Phi, \Phi \rangle_{h_{\ell}}\\
&\quad - \lim_{\mathtt{s} \rightarrow 0}\lim_{\ell \rightarrow \infty}\frac{t_{\ell}}{2}\sin(\hat{\theta})\left(\int_{X}{\rm Tr}\left(\{i\Lambda F_{1,\ell}, \Phi \Phi^{\dagger_{\ell}}\} \widetilde{u}_{\ell}^{\mathtt{s}}\right) - {\rm Tr}\left(\left(\nabla^{\ell} \Phi\right)\left(\nabla^{\ell}\Phi\right)^{\dagger_{\ell}} \widetilde{u}_{\ell}^{\mathtt{s}}\right)\right).
\end{aligned}
\end{equation}
By stability we have
\[
\cos(\hat{\theta})(2\pi \deg(Q))- \sin(\hat{\theta})V_{X}\rank(Q)>0
\]
and so, in order to obtain a contradiction, it suffices to show that the expression on the right is non-positive.  The first line on the right hand side of~\eqref{eq: stabilityContradiction} is non-positive by~\eqref{eq: dominateSecondFundForm}. We are therefore led to study the remaining terms, which are
\begin{equation}\label{eq: difficultTermUY}
(I):= \cos(\hat{\theta}){\rm Tr}(\Phi\Phi^{\dagger_{\ell}}\widetilde{u}_{\ell}^{\mathtt{s}}) + \frac{t_{\ell}}{2}\sin(\hat{\theta}){\rm Tr}\left(\{\Phi\Phi^{\dagger_{\ell}},i\Lambda F_{1,\ell}\} \widetilde{u}_{\ell}^{\mathtt{s}}\right) - \frac{t_{\ell}}{2}\sin(\hat{\theta}){\rm Tr}((\nabla^{\ell} \Phi)(\nabla^{\ell} \Phi)^{\dagger_{\ell}}\widetilde{u}_{\ell}^{\mathtt{s}})
\end{equation}
and
\begin{equation}\label{eq: UYexcess}
\mathcal{E}_{UY}={\rm Tr}\left((\widetilde{u}_\ell^{-1}\hat{\nabla} \widetilde{u}_\ell) \dbar \widetilde{u}_\ell^{\mathtt{s}})\right) - |\widetilde{u}_{\ell}^{-\mathtt{s}/2}\hat{\nabla}\widetilde{u}_{\ell}^{\mathtt{s}}|^2_{h_0}.
\end{equation}
We refer to $\mathcal{E}_{UY}$ as the {\em Uhlenbeck-Yau excess}.  To obtain a contradiction it would suffice to show that $(I) + \mathcal{E}_{UY} \geq 0$.  Unfortunately, as one can check using Lemma~\ref{lem: H1CurveLemSimplfiedPositiveCos}, $(I)$ is not pointwise positive.  More precisely, $(I)$ contains the problematic $-(\nabla^{\ell}\Phi)(\nabla^{\ell}\Phi)^{\dagger_{\ell}}$ term which cannot be pointwise controlled by other terms in $(I)$ nor any terms in $\mathcal{E}_{UY}$ (which does not depend on $\Phi$).  Instead, we need to use integration by parts together with some subtle cancellation to obtain a sign on the integral of $(I)$. We use the following simple integration by parts formula which holds for any Hermitian endomorphism $\tau$ of $E_1$.

\begin{equation}\label{eq: integrationByPartsUY}
\begin{aligned}
&\int_{X} \frac{1}{2}{\rm Tr}\left(\{ i\Lambda F_{1,\ell},\Phi\Phi^{\dagger}_{\ell}\} \tau\right) - \int_{X}{\rm Tr}\left((\nabla^{\ell}\Phi)( \nabla^{\ell}\Phi)^{\dagger_{\ell}}\tau\right)\\
&= \int_{X}{\rm Re}\left(\langle \nabla^{\ell}\Phi, (\nabla^{\ell}\tau) \Phi \rangle_{h_{\ell}}\right) + \int_{X}i\Lambda F_{2,\ell}\langle \Phi, \tau \Phi \rangle_{h_{\ell}}
\end{aligned}
\end{equation}
To see this we compute, using that $\Phi$ is holomorphic
\[
\begin{aligned}
\Delta \langle \Phi, \tau \Phi \rangle_{h_{\ell}} &= \del \langle \Phi, \nabla^{\ell} (\tau \Phi) \rangle_{h_{\ell}}\\
&= \langle \nabla^{\ell} \Phi, \nabla^{\ell} (\tau \Phi) \rangle_{h_{\ell}} + \langle \Phi, \dbar \nabla^{\ell} (\tau \Phi)\rangle_{h_{\ell}}\\
&= \langle \nabla^{\ell} \Phi, \tau \nabla^{\ell} \Phi \rangle_{h_{\ell}} + \langle \nabla^{\ell} \Phi, (\nabla^{\ell} \tau) \Phi \rangle_{h_{\ell}} -\langle \Phi, i\Lambda F_{1,\ell} (\tau \Phi)\rangle_{h_{\ell}}\\
&\quad + i\Lambda F_{2,\ell} \langle \Phi, \tau \Phi \rangle_{h_{\ell}} + \langle \Phi, \nabla^{\ell} \dbar (\tau \Phi) \rangle_{h_{\ell}}\\
&= \langle \nabla^{\ell} \Phi, \tau \nabla^{\ell} \Phi \rangle_{h_{\ell}} + \langle \nabla^{\ell} \Phi, (\nabla^{\ell} \tau) \Phi \rangle_{h_{\ell}} -\langle \Phi, i\Lambda F_{1,\ell} (\tau \Phi)\rangle_{h_{\ell}}\\
&\quad + i\Lambda F_{2,\ell} \langle \Phi, \tau \Phi \rangle_{h_\ell} + \dbar \langle \Phi,  (\dbar\tau) \Phi \rangle_{h_{\ell}}.
\end{aligned}
\]
Since $\tau$ is $h_{\ell}$ Hermitian we have
\[
\frac{1}{2}{\rm Tr}\left(\{ i\Lambda F_{1,\ell},\Phi\Phi^{\dagger}_{\ell}\} \tau\right) = {\rm Re}\left(\langle i\Lambda F_{1,\ell}\Phi, \tau \Phi \rangle_{h_{\ell}}\right)
\]
and so, integration over $X$ yields~\eqref{eq: integrationByPartsUY}.  Substituting this formula into $(I)$ we have
\[
\int_{X}(I) = \int_{X}(II)
\]
where
\begin{equation}\label{eq: IIsplit}
\begin{aligned}
(II) &= \underbrace{\cos(\htheta)\langle \widetilde{u}_{\ell}^{\mathtt{s}}\Phi, \Phi \rangle_{h_{\ell}} +t_{\ell}\sin(\htheta)i\Lambda F_{2,\ell}\langle \Phi, \widetilde{u}_{\ell}^{\mathtt{s}} \Phi \rangle_{h_{\ell}}}_{(IIa)} \\
&\quad +\underbrace{t_{\ell}\sin(\htheta){\rm Re}\left(\langle\nabla^{\ell}\Phi, (\nabla^{\ell} \widetilde{u}_{\ell}^{\mathtt{s}})\Phi \rangle_{h_{\ell}}\right)}_{(IIb)} +\underbrace{\frac{t_{\ell}}{2}\sin(\htheta)\langle \nabla^{\ell}\Phi, \widetilde{u}_{\ell}^{\mathtt{s}} \nabla^{\ell}\Phi \rangle_{h_{\ell}}}_{(IIc)}.
\end{aligned}
\end{equation}
Thus, it would suffice to show that $(II)$ is non-negative, or that its negative part decays to zero almost everywhere as $\ell \rightarrow \infty$ followed by $\mathtt{s} \rightarrow 0$.  Clearly $(IIc)$ is non-negative.  Furthermore, by~\eqref{eq: dimReduceDHYM-MOC} (or Lemma~\ref{lem: H2CurvLem}), we have
\[
\begin{aligned}
\frac{(IIa)}{\langle \Phi, \widetilde{u}_{\ell}^{\mathtt{s}}\rangle_{h_{\ell}}}&= \cos(\htheta) + t_{\ell}\sin(\htheta)\frac{\cos(\hat{\theta})(|\Phi|^2_{h_{\ell}} - \frac{4\pi}{\sigma}) + \sin(\hat{\theta})(1+\frac{t_{\ell}}{2}|\nabla^{\ell}\Phi|^2_{h_{\ell}})}{\cos(\hat{\theta}) + (\frac{4\pi}{\sigma} -t_{\ell}|\Phi|^2_{h_{\ell}})\sin(\hat{\theta})}\\
&= \frac{\cos^2(\htheta)+t_{\ell}\sin^2(\htheta) + \frac{4\pi}{\sigma}\sin(\htheta)\cos(\htheta)(1-t_{\ell}) + \frac{t_{\ell}^2}{2}\sin^2(\htheta)|\nabla^{\ell}\Phi|^2_{h_{\ell}}}{\cos(\htheta)+(\frac{4\pi}{\sigma}-t_{\ell}|\Phi|^2_{h_{\ell}})\sin(\htheta)}
\end{aligned}
\]
and so $(IIa)$ is also non-negative.  It will be critical to preserve the positive gradient term in the formula for $(IIa)$.  We record the following estimate
\begin{equation}\label{eq: (IIa)GoodTerm}
(IIa) \geq \frac{1}{2}\frac{t_{\ell}^2\sin^2(\htheta)}{\cos(\htheta + (\frac{4\pi}{\sigma}-t_{\ell}|\Phi|^2_{h_{\ell}})\sin(\htheta)}|\nabla^{\ell}\Phi|^2_{h_{\ell}}\langle \Phi, \widetilde{u}_{\ell}^{\mathtt{s}}\rangle_{h_{\ell}}
\end{equation}
The only problematic term is $(IIb)$.  What we will show is that
\begin{equation}\label{eq: IIbTargetEstimate}
\lim_{\mathtt{s}\rightarrow 0}\lim_{\ell \rightarrow \infty} \int_{X}\cos(\htheta)\mathcal{E}_{UY} + (IIa) + (IIb) \geq 0
\end{equation}
This suffices to complete the proof since
\[
\int_{X}\cos(\htheta)\mathcal{E}_{UY} +(I) \geq \int_{X}\cos(\htheta)\mathcal{E}_{UY} + (IIa)+(IIb)
\]
and so ~\eqref{eq: IIbTargetEstimate} leads to a contradiction to stability in~\eqref{eq: stabilityContradiction}.

What we will show is that, outside a set of measure $\epsilon$, the negative part of $(IIb)$ consists of terms decaying to zero and a term controlled by $(IIa)$ and the Uhlenbeck-Yau excess. 

\subsubsection{Proof of~\eqref{eq: IIbTargetEstimate} }
\smallskip

\noindent Let $0<\rank(S):= r' \leq r = \rank (E_1)$. For every $\mathtt{s} \in (0,1]$ we have
\[
\begin{aligned}
\lim_{\ell \rightarrow \infty} \widetilde{u}_{\ell}^{\mathtt{s}} &= \widetilde{u}_{\infty}^{\mathtt{s}}\\
\lim_{\mathtt{s} \rightarrow 0} \widetilde{u}_{\infty}^{\mathtt{s}} &= I-\pi
\end{aligned}
\]
where the convergence is in $L^2({\rm End}(E_1\otimes E_{2}^{\vee}), h_{0})$, and hence pointwise almost everywhere.  Let $\mu^{\ell}_i:= e^{\lambda_i^{\ell}}$ be the eigenvalues of $\widetilde{u}_{\ell}$, and assume that $0<\mu^{\ell}_1 \leq \mu^{\ell}_2 \leq \cdots \leq \mu^{\ell}_r\leq 1$.  We need the following easy lemma:

\begin{lem}\label{lem: pointwiseConvergenceLemma}
    For every $\epsilon >0$ there exists a measurable set $U_{\epsilon}\subset X$ with $|X\setminus U_{\epsilon}|<\epsilon$, and numbers $D=D(\epsilon)>0$, $\ell_0(\epsilon)\in \mathbb{N}$, depending on $\epsilon$, such that on $U_\epsilon$ we have 
\begin{equation}\label{eq: eigenvalueOrderUeps}
0 \geq \lambda^{\ell}_{r} \geq \cdots \geq \lambda^{\ell}_{r'+1} > -D \gg \lambda_{r'}^{\ell} \geq \cdots \geq \lambda_1^{\ell} \quad \text{ for all } \ell \geq \ell_0
\end{equation}
\begin{equation}\label{eq: eigenvalueGoToZero}
\lim_{\ell \rightarrow \infty}\sup_{U_{\epsilon}} \lambda^{\ell}_{j} = -\infty \quad \text{ for all }\quad  j \leq r'
\end{equation}
\end{lem}
\begin{proof}[Proof of Lemma~\ref{lem: pointwiseConvergenceLemma}]
By the theorems of Lusin and Egorov, we may choose a closed set $U_1\subset X$ such that  $|X\setminus U_1| < \frac{1}{2}\epsilon$ and $\widetilde{u}_{\infty}$ is continuous on $U_1$ and $\widetilde{u}_{\ell}$ converge uniformly to $\widetilde{u}_{\infty}$. Since $\pi$ has rank $r'$, and $I-\widetilde{u}_{\infty}^{\mathtt{s}}$ converges pointwise a.e. to $\pi$ as $\mathtt{s} \rightarrow 0$, we must have that $u_{\infty}^{\mathtt{s}}$ has rank $r-r'$ a.e. on $U_1$.  Thus, we can find a closed set $U_2 \subset U_1$ such that $|X\setminus U_2|<\epsilon$, and such that $\widetilde{u}_{\infty}$ has rank $r-r'$ everywhere on $U_2$.  In particular, the eigenvalues of $\widetilde{u}_{\infty}$ are given by
\[
\mu^{\infty}_{r} \geq \cdots \geq \mu^{\infty}_{r'+1} > 0  = \mu^{\infty}_{r'} = \cdots = \mu^{\infty}_{1}
\]
Let $e^{-D/2} = \inf_{U_2}\mu^{\infty}_{r'+1}$. Then $0<D<+\infty$ since $\widetilde{u}_{\infty}$ is continuous on $U_2\subset U_1$. Since $\widetilde{u}_{\ell}$ converges uniformly to $\widetilde{u}_{\infty}$ on $U_2$ we can find $\ell \geq \ell_0$ such that
\[
0 \geq \lambda^{\ell}_{r} \geq \cdots \geq \lambda^{\ell}_{r'+1} > -D > \lambda_{r'}^{\ell} \geq \cdots \geq \lambda_1^{\ell}  \quad \text{ on } U_2.
\]
and
\[
\lim_{\ell \rightarrow \infty} \sup_{U_{\epsilon}}\lambda^{\ell}_{j} = -\infty \quad \text{ for all }\quad  j \leq r' \quad \text{ on } U_{2}.
\]
\end{proof}

Fix $U_{\epsilon}$ as in Lemma~\ref{lem: pointwiseConvergenceLemma} and assume that $\ell \gg \ell_0$. The bounds for $\Phi, \nabla^{\ell}\Phi$ and $i\Lambda F_{1,\ell}, i\Lambda F_{2,\ell}$ yield a constant $C$, independent of $\ell, \mathtt{s}, \epsilon$ so that 
\begin{equation}\label{eq: compUeps}
\bigg|\int_{X\setminus U_{\epsilon}} \langle \nabla^{\ell} \Phi, (\nabla^{\ell}\widetilde{u}_{\ell}^{\mathtt{s}}) \Phi \rangle_{h_{\ell}}\bigg| \leq C\epsilon^{1/2}.
\end{equation}
In detail, we have
\[
\begin{aligned}
\langle \nabla^{\ell} \Phi, (\nabla^{\ell}\widetilde{u}_{\ell}^{\mathtt{s}}) \Phi \rangle_{h_{\ell}}&=\langle \nabla^{\ell} \Phi, (\nabla^{\ell}\widetilde{u}_{\ell}^{\mathtt{s}})\widetilde{u}_{\ell}^{-\mathtt{s}/2}(\widetilde{u}_{\ell}^{\mathtt{s}/2} \Phi )\rangle_{h_{\ell}}\\
& \leq |\nabla^{\ell}\Phi|_{h_{\ell}}|(\nabla^{\ell}\widetilde{u}_{\ell}^{\mathtt{s}})\widetilde{u}_{\ell}^{-\mathtt{s}/2}|_{h_{\ell}}|\widetilde{u}_{\ell}^{\mathtt{s}/2} \Phi|_{h_{\ell}}
\end{aligned}
\]
Since $0<\tilde{u}_{\ell}^{\mathtt{s}}<I$ we have
\[
\begin{aligned}
\bigg|\int_{X\setminus U_{\epsilon}} \langle \nabla^{\ell} \Phi, (\nabla^{\ell}\widetilde{u}_{\ell}^{\mathtt{s}}) \Phi \rangle_{h_{\ell}}\bigg| &\leq C|X\setminus U_{\epsilon}|^{1/2}\left(\int_{X}|(\nabla^{\ell}\widetilde{u}_{\ell}^{\mathtt{s}})\widetilde{u}_{\ell}^{-\mathtt{s}/2}|_{h_{\ell}}^2\right)^{1/2}\\
&\leq C\epsilon^{1/2}\left(\int_{X}\langle (\nabla^{\ell}\widetilde{u}_{\ell})\widetilde{u}_{\ell}^{-1}, \nabla^{\ell}\widetilde{u}_{\ell}^{\mathtt{s}}\rangle_{h_{\ell}}\right)^{1/2}\\
&\leq C\epsilon^{1/2{}}
\end{aligned}
\]
where we used ~\eqref{eq: UYFormula2} in passing from the first line to the second, and integration by parts and the curvature bounds in the final inequality.  That is, in the final line we used
\begin{equation}\label{eq: UYINBcurvBound}
0<\int_{X}\langle (\nabla^{\ell}\widetilde{u}_{\ell})\widetilde{u}_{\ell}^{-1}, \nabla^{\ell}\widetilde{u}_{\ell}^{\mathtt{s}}\rangle_{h_{\ell}}=\int_{X}{\rm Tr}(i\Lambda F_{\ell}\widetilde{u}_{\ell}^{\mathtt{s}}) -{\rm Tr}(i\Lambda F_0 \widetilde{u}_{\ell}^{\mathtt{s}}) \leq C
\end{equation}
It suffices to estimate the integral over $U_{\epsilon}$.  To this end, fix a point $p \in U_{\epsilon}$, let $e_1, \ldots e_r$ be a local $h_{\ell}$-orthonormal frame of $\widetilde{u}_{\ell}$ eigensections.  We assume that $\ell \geq \ell_0$ so that we may take the eigenvalues to be ordered according to~\eqref{eq: eigenvalueOrderUeps}.  We write
\[
\begin{aligned}
(\nabla^{\ell} \Phi)^{B} &= \sum_{\gamma > r'} (\nabla^{\ell} \Phi)^{\gamma}e_{\gamma} &(\nabla^{\ell} \Phi)^S &= \sum_{\gamma \leq r'} (\nabla^{\ell} \Phi)^{\gamma}e_{\gamma}\\
 \Phi^{B} &= \sum_{\gamma > r'} \Phi^{\gamma}e_{\gamma} &\Phi^S &= \sum_{\gamma \leq r'}  \Phi^{\gamma}e_{\gamma}.
\end{aligned}
\]
Here, the superscript $B$ stands for the component lying in the span of the eigensections with ``big" eigenvalues of $\widetilde{u}_{\ell}$, while the superscript $S$ stands for the component lying in the span of the eigensections with ``small" eigenvalues. Now we decompose the inner product appearing in the term $(IIb)$ by
\[
\begin{aligned}
\langle \nabla^{\ell} \Phi, (\nabla^{\ell}\widetilde{u}_{\ell}^{\mathtt{s}} )\Phi\rangle_{h_{\ell}} &=\underbrace{\langle (\nabla^{\ell} \Phi)^B, (\nabla^{\ell}\widetilde{u}_{\ell}^{\mathtt{s}} )(\Phi)^S\rangle_{h_{\ell}}}_{(IIIa)}+\underbrace{\langle \nabla^{\ell} (\Phi)^B, (\nabla^{\ell}\widetilde{u}_{\ell}^{\mathtt{s}} )(\Phi)^B\rangle_{h_{\ell}}}_{(IIIb)}\\
&\quad + \underbrace{\langle (\nabla^{\ell} \Phi)^S, (\nabla^{\ell}\widetilde{u}_{\ell}^{\mathtt{s}} )\Phi\rangle_{h_{\ell}}}_{(IIIc)}.
\end{aligned}
\]
Each of these terms will be estimated separately.
\smallskip

\noindent {\bf Estimate of  $(IIIa)$.}  Write
\[
(IIIa) = \langle (\nabla^{\ell} \Phi)^B, (\nabla^{\ell}\widetilde{u}_{\ell}^{\mathtt{s}} )\widetilde{u}_{\ell}^{-\mathtt{s}/2}\widetilde{u}_{\ell}^{\mathtt{s}/2}  (\Phi)^S\rangle_{h_{\ell}}.
\]
Since $\Phi$ is uniformly bounded with respect to $h_{\ell}$ by Lemma~\ref{lem: conditionalPhiBound}, and $|\nabla^{\ell}\Phi|_{h_{\ell}}$ is uniformly bounded  by Proposition~\ref{prop: gradientBound} we get 
\begin{equation}\label{eq: IIIa estimate}
\begin{aligned}
\bigg|(IIIa)\bigg|&\leq Ce^{\frac{\mathtt{s}}{2} \lambda^{\ell}_{r'}}|(\nabla^{\ell}\widetilde{u}_{\ell}^{\mathtt{s}})\widetilde{u}_{\ell}^{-\mathtt{s}/2} |_{h_{\ell}}\\
&\leq Ce^{\frac{\mathtt{s}}{2} \lambda^{\ell}_{r'}}\left(\langle (\nabla^{\ell}\widetilde{u}_{\ell}) \widetilde{u}_{\ell}^{-1}, \nabla^{\ell}\widetilde{u}_{\ell}^{\mathtt{s}}\rangle_{h_{\ell}} \right)^{\frac{1}{2}}
\end{aligned}
\end{equation}
where we used the Uhlenbeck-Yau bound~\eqref{lem: UYFormulaImprovement} in passing to the second line.
\smallskip

 \noindent {\bf Estimate of $(IIIb)$.}  We write $(IIIb)$ in terms of a local orthonormal frame of eigensections for $\widetilde{u}_{\ell}$.  Define the connection coefficients of the Chern connection $\nabla^{\ell}$ in this frame by
\[
\nabla^{\ell}_ze_{\alpha} = A_{z\, \alpha}^{\gamma} e_{\gamma}, \qquad  \overline{\nabla}^{\ell}_{\bar{z}}e_{\alpha} = A_{\bar{z}\, \alpha}^{\gamma} e_{\gamma}.
\]
Using the Uhlenbeck-Yau formula~\eqref{eq: coverDerivUSigma} below, together with the uniform bounds of $|\Phi|_{h_{\ell}}$ and $|\nabla^{\ell}\Phi|_{h_{\ell}}$ we have
\[
\begin{aligned}
\bigg|\langle (\nabla \Phi)^B, (\nabla^{\ell}\widetilde{u}_{\ell}^{\mathtt{s}} )  (\Phi)^B\rangle_{h_{\ell}}\bigg|^2 &=\bigg|\sum_{\alpha > r'} \mathtt{s}\, \overline{(\nabla \Phi)^{\alpha}}\,\Phi^{\alpha} e^{\mathtt{s}\lambda_{\alpha}^{\ell}}\del \lambda_{\alpha}^{\ell} + \sum_{\alpha >r'}\sum_{\gamma>r'}\overline{(\nabla \Phi)^{\gamma}} \Phi^{\alpha} (e^{\mathtt{s} \lambda_{\alpha}^{\ell}}-e^{\mathtt{s}\lambda_{\gamma}^{\ell}})A_{z\, \alpha}^{\gamma}\big|^2\\
& \leq C\left(\sum_{\alpha >r'} \mathtt{s}^2e^{2\mathtt{s} \lambda_{\alpha}^{\ell}}|\del \lambda_{\alpha}|^2 + \sum_{\alpha>r'}\sum_{\gamma>r'}|(\nabla^{\ell} \Phi)^{\gamma}|^2|\Phi^{\alpha}|^2(e^{\mathtt{s} \lambda_{\alpha}^{\ell}}-e^{\mathtt{s}\lambda_{\gamma}^{\ell}})^2|A_{z\, \alpha}^{\gamma}|^2\right)
\end{aligned}
\]
Now since $|\Phi|_{h_{\ell}}$ and $|\nabla^{\ell}\Phi|_{h_{\ell}}$ are bounded and $\lambda_{\alpha}\leq 0$, we obtain 
\[
\begin{aligned}
\bigg|\langle (\nabla \Phi)^B, (\nabla^{\ell}\widetilde{u}_{\ell}^{\mathtt{s}} )  (\Phi)^B\rangle_{h_{\ell}}\bigg|^2 & \leq C\left(\sum_{\alpha >r'} \mathtt{s}^2e^{2\mathtt{s} \lambda_{\alpha}^{\ell}}|\del \lambda_{\alpha}^{\ell}|^2 + \sum_{\alpha'>r}\sum_{\gamma>r}e^{-\mathtt{s} \lambda_{\alpha}^{\ell}}(e^{\mathtt{s} \lambda_{\alpha}^{\ell}}-e^{\mathtt{s}\lambda_{\gamma}^{\ell}})^2|A_{z\, \alpha}^{\gamma}|^2\right)\\
&=C\left(\sum_{\alpha >r'} \mathtt{s}^2e^{2\mathtt{s} \lambda_{\alpha}^{\ell}}|\del \lambda_{\alpha}^{\ell}|^2 + \sum_{\alpha'>r}\sum_{\gamma>r}(e^{\mathtt{s} \lambda_{\alpha}^{\ell}}-e^{\mathtt{s}\lambda_{\gamma}^{\ell}})(1-e^{\mathtt{s}(\lambda_{\gamma}^{\ell} -\lambda_{\alpha}^{\ell})})|A_{z\, \alpha}^{\gamma}|^2\right)
\end{aligned}
\]

Now we need the following elementary lemma
\begin{lem}\label{lem: calculus}
    Fix $D>0$ and $\mathtt{s} \in (0,1)$.  Then for $\tau=\tau(D,\mathtt{s})=\mathtt{s} e^{(1-\mathtt{s})D}$, the following holds: for all $x \in [0,D]$ we have
    \[
    1-e^{-\mathtt{s} x} \leq \tau\cdot  (1-e^{-x}) \quad \text{ and,} \quad  (e^{\mathtt{s} x} -1) \leq \tau\cdot (e^{x}-1)
    \]
\end{lem}
\begin{proof}[Proof of Lemma~\ref{lem: calculus}]
    Given $D, \mathtt{s}$, the desired inequalities evidently hold at $x=0$.  Plugging in $x=D$ we must choose
    \[
    \tau \geq \frac{1-e^{-\mathtt{s} D}}{1-e^{-D}},\quad  \text{ and } \quad \tau\geq \frac{e^{\mathtt{s} D}-1}{e^{D}-1}
    \]
    where the first bound is dominant.  With these choices the functions $f_{-}(x):= \tau\cdot (1-e^{-x})-(1-e^{-\mathtt{s} x})$ and $f_{+}(x):=\tau\cdot (e^{x}-1)-(e^{\mathtt{s} x} -1)$ are non-negative  at $x=0, D$.  Now we compute
    \[
    \frac{d f_{-}}{dx} = \tau e^{-x}-\mathtt{s} e^{-\mathtt{s} x}
    \]
    and this is non-negative provided $\tau \geq \mathtt{s} e^{(1-\mathtt{s})D}$. Computing similarly
    \[
    \frac{df_+}{dx} = \tau e^{x}-\mathtt{s} e^{\mathtt{s} x}
    \]
    which is non-negative as long as $\tau >\mathtt{s}$.  Since $\mathtt{s} e^{(1-\mathtt{s})D} \geq \frac{1-e^{-\mathtt{s} D}}{1-e^{-D}}$, the desired result holds.

\end{proof}

Now, for $\alpha > r'$ we know that $\lambda_{\alpha} \in [-D,0]$ on $U_{\epsilon}$ by Lemma~\ref{lem: pointwiseConvergenceLemma}. Thus, Lemma~\ref{lem: calculus} gives that, for all $\alpha, \gamma >r'$ there holds
    \[
    (e^{\mathtt{s} \lambda_{\alpha}^{\ell}}-e^{\mathtt{s}\lambda_{\gamma}^{\ell}})(1-e^{\mathtt{s}(\lambda_{\gamma}^{\ell} -\lambda_{\alpha}^{\ell})}) \leq \tau(D,\mathtt{s})
    (e^{\mathtt{s} \lambda_{\alpha}^{\ell}}-e^{\mathtt{s}\lambda_{\gamma}^{\ell}})(1-e^{(\lambda_{\gamma}^{\ell} -\lambda_{\alpha}^{\ell})})
    \]
As a consequence we obtain
\[
\bigg|(IIIb)\bigg|^2 \leq C\tau(D,\mathtt{s})\left(\sum_{\alpha >r'} \mathtt{s} e^{\mathtt{s} \lambda_{\alpha}}|\del \lambda_{\alpha}^{\ell}|^2 + \sum_{\alpha>r'}\sum_{\gamma>r'}(e^{\mathtt{s} \lambda_{\alpha}^{\ell}}-e^{\mathtt{s}\lambda_{\gamma}})(1-e^{(\lambda_{\gamma}^{\ell} -\lambda_{\alpha}^{\ell})})|A_{z\, \alpha}^{\gamma}|^2\right)
\]
where we used that $\mathtt{s}^2 e^{2\mathtt{s} \lambda_{\alpha}} \leq \mathtt{s} e^{\mathtt{s} \lambda_{\alpha}}$.  We now apply the Uhlenbeck-Yau formula~\eqref{eq: covarDerivLocalFormula} below, to get the following bound on $U_{\epsilon}$: 
\[
\bigg|\langle (\nabla \Phi)^B, (\nabla^{\ell}\widetilde{u}_{\ell}^{\mathtt{s}} )  (\Phi)^B\rangle_{h_{\ell}}\bigg|^2 \leq C\tau(D,\mathtt{s})\langle (\nabla^{\ell}\widetilde{u}_{\ell})\widetilde{u}_{\ell}^{-1}, \nabla^{\ell}\widetilde{u}^{\mathtt{s}}_{\ell}\rangle_{h_{\ell}} \quad \text{ on } U_{\epsilon}.
\]
Thus we obtain
\begin{equation}\label{eq: IIIb estimate}
\bigg|(IIIb)\bigg| \leq C \tau(D,\mathtt{s})^{1/2}\left(\langle (\nabla^\ell\widetilde{u}_{\ell})\widetilde{u}_{\ell}^{-1}, \nabla^{\ell}\widetilde{u}^{\mathtt{s}}_{\ell}\rangle_{h_{\ell}}\right)^{\frac{1}{2}}. 
\end{equation}
\smallskip

\noindent {\bf Estimate of $(IIIc)$.} The estimate of $(IIIc)$ is the most complicated.  We are interested in understanding the expression
\[
\overline{(IIIc)} = \langle (\nabla^{\ell} \widetilde{u}_{\ell}^{\mathtt{s}} ) \Phi, (\nabla^{\ell} \Phi)^S \rangle_{h_{\ell}}.
\]
We write this in coordinates using our local frame, as above,
\[
\overline{(IIIc)}= \underbrace{\mathtt{s} \sum_{\alpha \leq r'} \overline{(\nabla^{\ell}\Phi)^{\alpha}}\Phi^{\alpha}\del\lambda_{\alpha}^{\ell}e^{\mathtt{s}\lambda_{\alpha}^{\ell}}}_{\overline{(IIIc)_{1}}} + \underbrace{\sum_{\gamma \leq r'}\sum_{\alpha}\overline{(\nabla^{\ell}\Phi)^{\gamma}}(e^{\mathtt{s} \lambda_{\alpha}^{\ell}} -e^{\mathtt{s} \lambda_{\gamma}^{\ell}})\Phi^{\alpha}A_{z\, \alpha}^\gamma}_{\overline{(IIIc)}_2}
\]
The first term, $\overline{(IIIc)}_1$ is easily bounded.  By Cauchy-Schwarz we have
\[
|\overline{(IIIc)}_1| \leq \left(\mathtt{s} \sum_{\alpha}e^{\mathtt{s} \lambda_{\alpha}}|\del \lambda_{\alpha}|^2\right)^{\frac{1}{2}}\left(\mathtt{s} \sum_{\alpha \leq r'}e^{\mathtt{s} \lambda_{\alpha}}|(\nabla^{\ell} \Phi)^{\alpha}|^2|\Phi^{\alpha}|^2\right)^{\frac{1}{2}}
\]
By the Uhlenbeck-Yau formula \eqref{eq: covarDerivLocalFormula}, we get
\[
|\overline{(IIIc)}_1| \leq \mathtt{s}^{1/2}\left(\langle (\nabla^{\ell}\widetilde{u}_{\ell}) \widetilde{u}_{\ell}^{-1}, \nabla^{\ell}\widetilde{u}_{\ell}^{\mathtt{s}}\rangle_{h_{\ell}} \right)^{\frac{1}{2}} e^{\frac{\mathtt{s}}{2}\lambda^{\ell}_{r'}}(\sup_{X}|\Phi|_{h_{\ell}})( \sup_{X}|\nabla^{\ell}\Phi|_{h_{\ell}}).
\]
Next we estimate $\overline{(IIIc)}_{2}$.
\[
\overline{(IIIc)}_{2}:=\sum_{\gamma \leq r'}\sum_{\alpha} \overline{(\nabla^{\ell}\Phi)^{\gamma}}(e^{\mathtt{s}\lambda_{\alpha}^{\ell}}-e^{\mathtt{s}\lambda^{\ell}_{\gamma}})\Phi^{\alpha}A_{z}^{\gamma}\,_{\alpha}
\]
We decompose the sum according to whether $\alpha>r'$ or $\alpha \leq r'$.  Write
\[
\begin{aligned}
\overline{(IIIc)}_{2B}&:=\sum_{\gamma \leq r'}\sum_{\alpha>r'} \overline{(\nabla^{\ell}\Phi)^{\gamma}}(e^{\mathtt{s}\lambda_{\alpha}^{\ell}}-e^{\mathtt{s}\lambda^{\ell}_{\gamma}})\Phi^{\alpha}A_{z}^{\gamma}\,_{\alpha}\\
\overline{(IIIc)}_{2S} &:=\sum_{\gamma \leq r'}\sum_{\alpha\leq r'} \overline{(\nabla^{\ell}\Phi)^{\gamma}}(e^{\mathtt{s}\lambda_{\alpha}^{\ell}}-e^{\mathtt{s}\lambda^{\ell}_{\gamma}})\Phi^{\alpha}A_{z}^{\gamma}\,_{\alpha}
\end{aligned}
\]
First consider $\overline{(IIIc)}_{2B}$.  We apply Cauchy-Schwarz to get
\[
\begin{aligned}
\bigg|\overline{(\nabla^{\ell}\Phi)^{\gamma}}(e^{\mathtt{s}\lambda_{\alpha}^{\ell}}-e^{\mathtt{s}\lambda^{\ell}_{\gamma}})\Phi^{\alpha}A_{z}^{\gamma}\,_{\alpha} \bigg|&\leq \frac{t_{\ell}\sin(\htheta)}{4\cos(\htheta)}e^{s\lambda_{\alpha}^{\ell}}|(\nabla^{\ell} \Phi)^{\gamma}|^2|\Phi^{\alpha}|^2 \\
&\quad + \frac{\cos(\htheta)}{t_{\ell}\sin(\htheta)}e^{-s\lambda_{\alpha}^{\ell}}|A_{z}^{\gamma}\,_{\alpha}|^2(e^{\mathtt{s}\lambda_{\alpha}^{\ell}}-e^{\mathtt{s}\lambda^{\ell}_{\gamma}})^2\\
\end{aligned}
\]
Thus we have
\begin{equation}\label{eq: IIIc2Besimtate}
\begin{aligned}
|(IIIc)_{2B}| &\leq \frac{t_{\ell}\sin(\htheta)}{4\cos(\htheta)}\sum_{\gamma \leq r'}\sum_{\alpha >r'}e^{s\lambda_{\alpha}^{\ell}}|(\nabla^{\ell} \Phi)^{\gamma}|^2|\Phi^{\alpha}|^2 \\
&\quad + \frac{\cos(\htheta)}{t_{\ell}\sin(\htheta)}\sum_{\gamma \leq r'}\sum_{\alpha > r'}e^{-\mathtt{s}\lambda_{\alpha}^{\ell}}|A_{z}^{\gamma}\,_{\alpha}|^2(e^{\mathtt{s}\lambda_{\alpha}^{\ell}}-e^{\mathtt{s}\lambda^{\ell}_{\gamma}})^2\\
& \leq \frac{t_{\ell}\sin(\htheta)}{4\cos(\htheta)}|\nabla^{\ell}\Phi|^2\langle \Phi, \tilde{u}_{\ell}^{\mathtt{s}}\Phi \rangle_{h_{\ell}} + \frac{\cos(\htheta)}{t_{\ell}\sin(\htheta)}\underbrace{\sum_{\gamma \leq r'}\sum_{\alpha > r'}e^{-\mathtt{s}\lambda_{\alpha}^{\ell}}|A_{z}^{\gamma}\,_{\alpha}|^2(e^{\mathtt{s}\lambda_{\alpha}^{\ell}}-e^{\mathtt{s}\lambda^{\ell}_{\gamma}})^2}_{(E1)}
\end{aligned}
\end{equation}
It is convenient, at this point, to introduce the following notation:
\[
r_{\gamma\alpha}:= e^{\lambda_{\gamma}^{\ell} -\lambda_{\alpha}^{\ell}}
\]
so that we can write
\[
(E1) = \sum_{\gamma \leq r'}\sum_{\alpha > r'}e^{\mathtt{s}\lambda_{\alpha}^{\ell}}|A_{z}^{\gamma}\,_{\alpha}|^2(1-r_{\gamma \alpha}^\mathtt{s})^2.
\]
We need the following algebraic claim: for any $\epsilon>0$, $\mathtt{s}\in(0,\frac{1}{2})$ and $\ell$ sufficiently large, depending on $ \epsilon$ we have
\[
(1-r_{\gamma \alpha}^{\mathtt{s}})(1-r_{\gamma\alpha}^{1-\mathtt{s}}) -(1-r_{\gamma\alpha}^{\mathtt{s}})^2 = r_{\gamma\alpha}+r_{\gamma \alpha}^{\mathtt{s}}-r_{\gamma \alpha}^{2s}-r_{\gamma \alpha}^{1-\mathtt{s}} \geq 0 \quad \text{ on } U_{\epsilon} \text{ for all } \alpha >r', \gamma \leq r'
\]
To see this recall that for $\ell$ sufficiently large, if $\gamma \leq r' <\alpha$, then $\lambda_{\gamma}^{\ell} <-D < \lambda_{\alpha}^{\ell}<0$  on $U_{\epsilon}$ by Lemma~\ref{lem: pointwiseConvergenceLemma}.  Therefore, $r_{\gamma \alpha} \in(0,1)$.  The claim follows from the fact that, for any $\mathtt{s}\in(0,\frac{1}{2})$, the function $x\mapsto  x+x^{\mathtt{s}}-x^{2\mathtt{s}}-x^{1-\mathtt{s}}$ is non-negative for $x\in [0,1]$.

In particular, this leads to the following estimate
\begin{equation}\label{eq: E1estimate}
(E1) \leq \sum_{\gamma \leq r'}\sum_{\alpha > r'}e^{\mathtt{s}\lambda_{\alpha}^{\ell}}|A_{z}^{\gamma}\,_{\alpha}|^2(1-r_{\gamma \alpha}^{\mathtt{s}})(1-r_{\gamma\alpha}^{1-\mathtt{s}})
\end{equation}
for any fixed $\mathtt{s}\in (0,\frac{1}{2})$ and all $\ell$ sufficiently large depending on $\mathtt{s}, \epsilon$.

In what follows we shall use the additional easy fact that for $\mathtt{s}\in(0,1)$ we have
\[
(1-r_{\gamma\alpha}^{\mathtt{s}})(1-r_{\gamma \alpha}^{(1-\mathtt{s})})>0 \quad \text{ provided } r_{\gamma\alpha}\ne 1.
\]
We now estimate $(IIIc)_{2S}$ using another application of Cauchy-Schwarz.  Since the term in the sum defining $(IIIc)_{2S}$ vanishes when $\lambda_{\gamma}^{\ell} =\lambda_{\alpha}^{\ell}$, we may assume $\lambda_{\gamma}^{\ell} \ne \lambda_{\alpha}^{\ell}$.  We estimate
\[
\begin{aligned}
\bigg|\overline{(\nabla^{\ell}\Phi)^{\gamma}}(e^{\mathtt{s}\lambda_{\alpha}^{\ell}}-e^{\mathtt{s}\lambda^{\ell}_{\gamma}})\Phi^{\alpha}A_{z}^{\gamma}\,_{\alpha} \bigg|& \leq \frac{t_{\ell}\sin(\htheta)}{4\cos(\htheta)}\epsilon |(\nabla^{\ell} \Phi)^{\gamma}|^2|\Phi^{\alpha}|^2 + \frac{\cos(\htheta)}{t_{\ell}\sin(\htheta)\epsilon}|A_{z}^{\gamma}\,_{\alpha}|^2(e^{\mathtt{s}\lambda_{\alpha}^{\ell}}-e^{\mathtt{s}\lambda_{\gamma}^{\ell}})^2
\end{aligned}
\]
For reasons that will become clear later in the proof, we choose 
\[
\epsilon=\frac{e^{-\mathtt{s}\lambda_{\alpha}^{\ell}}(e^{\mathtt{s}\lambda_{\alpha}^{\ell}}-e^{\mathtt{s}\lambda_{\gamma}^{\ell}})^2}{(1-r_{\gamma\alpha}^{\mathtt{s}})(1-r_{\gamma \alpha}^{(1-\mathtt{s})})}
\]
noting that since we are assuming $\lambda_{\gamma}^{\ell} \ne \lambda_{\alpha}^{\ell}$, this is well-defined.  It is easy to verify that for $s\in(0,\frac{1}{2})$ we have
\[
\epsilon \leq \max\{e^{\mathtt{s}\lambda_{\alpha}^{\ell}}, e^{\mathtt{s}\lambda_{\gamma}^{\ell}}\} \leq e^{\mathtt{s}\lambda_{r'}^{\ell}}
\]
With this estimate we obtain
\begin{equation}\label{eq: IIIc2Sestimate}
|(IIIc)_{2S}| \leq e^{\mathtt{s}\lambda_{r'}^{\ell}}\frac{t_{\ell}\sin(\htheta)}{4\cos(\htheta)}|\nabla^{\ell}\Phi|^2_{h_{\ell}}|\Phi|^2_{h_{\ell}} + \frac{\cos(\htheta)}{t_{\ell}\sin(\htheta)}\sum_{\gamma \leq r'}\sum_{\alpha \leq r'}e^{\mathtt{s}\lambda_{\alpha}^{\ell}}|A_{z}^{\gamma}\,_{\alpha}
|^2(1-r_{\gamma\alpha}^{\mathtt{s}})(1-r_{\gamma \alpha}^{(1-\mathtt{s})})
\end{equation}
Combining the estimates~\eqref{eq: IIIc2Besimtate},\eqref{eq: E1estimate} and~\eqref{eq: IIIc2Sestimate} we obtain
\begin{equation}\label{eq: IIIcEstimate}
\begin{aligned}
|(IIIc)| &\leq e^{\mathtt{s}\lambda_{r'}^{\ell}}\frac{t_{\ell}\sin(\htheta)}{\cos(\htheta)}|\nabla^{\ell}\Phi|^2_{h_{\ell}}|\Phi|^2_{h_{\ell}} +  \frac{t_{\ell}\sin(\htheta)}{4\cos(\htheta)}|\nabla^{\ell}\Phi|^2\langle \Phi, \tilde{u}_{\ell}^{\mathtt{s}}\Phi \rangle_{h_{\ell}} \\
&\quad \frac{\cos(\htheta)}{t_{\ell}\sin(\htheta)}\sum_{\gamma \leq r'}\sum_{\alpha }e^{\mathtt{s}\lambda_{\alpha}^{\ell}}|A_{z}^{\gamma}\,_{\alpha}
|^2(1-r_{\gamma\alpha}^{\mathtt{s}})(1-r_{\gamma \alpha}^{(1-\mathtt{s})})\\
\end{aligned}
\end{equation}
Recall the Uhlenbeck-Yau excess $\mathcal{E}_{UY}$ defined in~\eqref{eq: UYexcess}. We apply the formula for $\mathcal{E}_{UY}$ obtained in Lemma~\ref{lem: UYExcessFormula} (noting Remark~\ref{rk: UYexcessHomog}) which gives
\[
\mathcal{E}_{UY} \geq \sum_{\gamma}\sum_{\alpha}e^{\mathtt{s}\lambda_{\alpha}^{\ell}}|A_{z}^{\gamma}\,_{\alpha}
|^2(1-r_{\gamma\alpha}^{\mathtt{s}})(1-r_{\gamma \alpha}^{(1-\mathtt{s})}).
\]
Furthermore, since $\tan(\htheta) \leq \frac{\sigma}{4\pi}$ we have
\[
\frac{t_{\ell}^2\sin^2(\htheta)}{4\cos(\htheta)}|\nabla^{\ell}\Phi|^2\langle \Phi, \tilde{u}_{\ell}^{\mathtt{s}}\Phi\rangle_{h_{\ell}}\leq \frac{1}{2}\frac{t_{\ell}^2\sin^2(\htheta)}{\cos(\htheta + (\frac{4\pi}{\sigma}-t_{\ell}|\Phi|^2_{h_{\ell}})\sin(\htheta)}|\nabla^{\ell}\Phi|^2_{h_{\ell}}\langle \Phi, \widetilde{u}_{\ell}^{\mathtt{s}}\rangle_{h_{\ell}} \leq (IIa)
\]
using the inequality~\eqref{eq: (IIa)GoodTerm}.
Therefore, combining our estimates we have
\begin{equation}\label{eq: IIIcEstimateWithExcess}
\begin{aligned}
|(IIIc)| &\leq e^{\mathtt{s}\lambda_{r'}^{\ell}}\frac{t_{\ell}\sin(\htheta)}{\cos(\htheta)}|\nabla^{\ell}\Phi|^2_{h_{\ell}}|\Phi|^2_{h_{\ell}} +  \frac{1}{t_{\ell}\sin(\htheta)}(IIa) +\frac{\cos(\htheta)}{t_{\ell}\sin(\htheta)}\mathcal{E}_{UY}
\end{aligned}
\end{equation}
We now combine our estimates.  From~\eqref{eq: IIIa estimate},~\eqref{eq: IIIb estimate} and ~\eqref{eq: IIIcEstimateWithExcess}, the uniform bounds for $|\Phi|_{h_{\ell}}$ and $|\nabla^{\ell}\Phi|_{h_{\ell}}$ and recalling the definition of $(IIb)$ in~\eqref{eq: IIsplit} we have that, on $U_{\epsilon}$ the following estimate holds: for all $\mathtt{s}\in (0, \frac{1}{2})$ and for all $\ell$ sufficiently large, depending on $\mathtt{s}, \epsilon$, we have
\[
\begin{aligned}
(IIa)+(IIb)+\cos(\htheta)\mathcal{E}_{UY} &\geq -C\left(e^{\frac{\mathtt{s}}{2}\lambda^{\ell}_{r'}} + \tau(D,\mathtt{s})^{1/2}\right)\left(1+\langle (\nabla^{\ell}\widetilde{u}_{\ell}) \widetilde{u}_{\ell}^{-1}, \nabla^{\ell}\widetilde{u}_{\ell}^{\mathtt{s}}\rangle_{h_{\ell}} \right)^{\frac{1}{2}}
\end{aligned}
\] 
Thus, all together we have
\[
(IIa)+(IIb) + \cos(\htheta)\mathcal{E}_{UY} \geq  -C\left(e^{\frac{\mathtt{s}}{2}\lambda^{\ell}_{r'}} + \tau(D,\mathtt{s})^{1/2}\right)\left(1+\langle (\nabla^{\ell}\widetilde{u}_{\ell}) \widetilde{u}_{\ell}^{-1}, \nabla^{\ell}\widetilde{u}_{\ell}^{\mathtt{s}}\rangle_{h_{\ell}} \right)^{\frac{1}{2}}
\]
By Cauchy-Schwarz we have
\[
\begin{aligned}
&\int_{U_{\epsilon}}\left(e^{\frac{\mathtt{s}}{2}\lambda^{\ell}_{r'}} + \tau(D,\mathtt{s})^{1/2}\right)\left(1+\langle (\nabla^{\ell}\widetilde{u}_{\ell}) \widetilde{u}_{\ell}^{-1}, \nabla^{\ell}\widetilde{u}_{\ell}^{\mathtt{s}}\rangle_{h_{\ell}} \right)^{\frac{1}{2}}\\
&\quad \leq C \left(\int_{U_{\epsilon}}e^{\mathtt{s}\lambda^{\ell}_{r'}} +\tau(D,\mathtt{s})\right)^{1/2} \left(\int_{X}1+\langle (\nabla^{\ell}\widetilde{u}_{\ell}) \widetilde{u}_{\ell}^{-1}, \nabla^{\ell}\widetilde{u}_{\ell}^{\mathtt{s}}\rangle_{h_{\ell}}\right)^{1/2}\\
&\quad \leq C\left(\int_{U_{\epsilon}}e^{\mathtt{s}\lambda^{\ell}_{r'}}+\tau(D,\mathtt{s})\right)^{1/2}
\end{aligned}
\]
using~\eqref{eq: UYINBcurvBound}.  Taking the limit as $\ell \rightarrow \infty$ and using that $\lambda_{r'}^{\ell} \rightarrow -\infty$ uniformly on $U_{\epsilon}$ yields
\[
\lim_{\ell\rightarrow \infty}\int_{U_{\epsilon}}\left((IIa)+(IIb) + \cos(\htheta)\mathcal{E}_{UY}\right) \geq -C \tau(D,\mathtt{s})^{1/2}
\]
for all $\mathtt{s}\in (0,\frac{1}{2})$.  We may now take the limit as $\mathtt{s} \rightarrow 0$, using that, for fixed $D$, $\tau(D,\mathtt{s})= O(s)$ as $s\rightarrow 0$.  We finally conclude
\[
\lim_{\mathtt{s}\rightarrow 0}\lim_{\ell\rightarrow \infty}\int_{U_{\epsilon}}\left((IIa)+(IIb) + \cos(\htheta)\mathcal{E}_{UY}\right) \geq 0.
\]
Finally, using~\eqref{eq: compUeps} we have that, for all $\epsilon>0$ 
\[
\lim_{\mathtt{s}\rightarrow 0}\lim_{\ell\rightarrow \infty}\int_{X}\left((IIa)+(IIb) + \cos(\htheta)\mathcal{E}_{UY}\right) \geq -C\epsilon^{1/2}.
\]
Taking the limit as $\epsilon \rightarrow 0$ yields~\eqref{eq: IIbTargetEstimate} and the desired contradiction.
\end{proof}

Proposition~\ref{prop: UYargumentH} shows that stability rules out the possibility of $h_1h_{2}^{-1}$ being unbounded from above along the method of continuity.  However, it is still possible that $h_1, h_2$ are not themselves bounded.  In order to rule this out we need a second argument by contradiction.

\begin{prop}\label{prop: UYargumentTypeB}
    Suppose that the triple $\cE= (E_1, E_2, \Phi)$ has $\cos(\htheta)>0$, $\tan(\htheta)\leq \frac{\sigma}{4\pi}$ and is stable in the sense of Definition~\ref{def: stabOnXxP} or, equivalently, Definition~\ref{defn: stabOfTriple}.  Suppose that $(h_{1,t}, h_{2,t})$ are smooth solutions of~\eqref{eq: dimReduceDHYM-MOC} for $t\in [0, 1]$ satisfying
    \begin{equation}\label{eq: normalizationCondition}
    \int_{X}\log h_{2,0}^{-1}h_{2,t} + \log (\det(h_{1,0}^{-1}h_{1,t})) =0
    \end{equation}
    and $\inf_{X}|\Phi|^2 \leq \frac{4\pi}{\sigma t}$. Then there is a uniform constant $C>0$ depending on $\cE, X$ and $(h_{1,0}, h_{2,0})$ so that
\[
 C^{-1} h_{1,0} \leq h_{1,t} \leq  Ch_{1,0} \qquad C^{-1}h_{2,0} \leq  h_{2,t}\leq Ch_{2,0}.
\]
\end{prop}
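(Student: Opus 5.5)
We argue by contradiction, in the spirit of Proposition~\ref{prop: UYargumentH}, but now aiming to extract a destabilizing subtriple of type B (or to show the normalized metrics on $E_2$ stay bounded). First, the curvature bounds of Corollary~\ref{cor: totalCurvBound} give $|\Delta \log h_{2,0}^{-1}h_{2,t}| \leq C$, since $E_2$ is a line bundle. Hence $\log h_{2,0}^{-1}h_{2,t}$ has oscillation bounded by a uniform constant. Write $\log h_{2,0}^{-1}h_{2,t} = c_t + O(1)$ with $c_t\in\mathbb{R}$. Proposition~\ref{prop: UYargumentH} gives $h_t\le Ch_0$, i.e.\ $h_{1,0}^{-1}h_{1,t}\le C e^{c_t}$. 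Similarly, $\mathrm{Tr}\log$ and $\log\det$ of $h_{1,0}^{-1}h_{1,t}$ satisfy $|\Delta \log\det(h_{1,0}^{-1}h_{1,t})|\le C$, so $\log\det$ also has bounded oscillation. Combining these with the normalization~\eqref{eq: normalizationCondition}, the whole problem reduces to bounding $|c_t|$ together with a lower bound for $h_{1,t}$. Once $c_t$ is bounded, the upper bound for $h_{1,t}$ follows from Proposition~\ref{prop: UYargumentH}. The lower bound then follows because $\det$ is bounded below and every eigenvalue is bounded above.

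\emph{Case $c_{t_\ell}\to+\infty$.} Here the upper bound $h_{1,t}\le Ce^{c_t}h_{1,0}$ gives $\log\det(h_{1,0}^{-1}h_{1,t})\le r(c_t+C)$. The normalization then forces $\log\det \approx -c_t$, so $\log\det(h_{1,0}^{-1}h_{1,t})\to-\infty$, and $h_{1,t}$ degenerates from below. In that situation we run the Uhlenbeck--Yau argument on $v_\ell := h_{1,\ell}^{-1}h_{1,0}$ (i.e.\ on the dual $E_1^\vee$), normalized by its supremum. As in Proposition~\ref{prop: UYargumentH}, the $W^{1,2}$ bound comes from the curvature bound, and a limit projection $\pi$ gives a proper saturated subbundle $S\subset E_1$, namely the directions in which $h_{1,\ell}$ stays comparatively large.

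The key extra input is that $|\Phi|_{h_\ell}$ is bounded while $h_{2,\ell}\sim e^{c_\ell}\to\infty$. This forces the component of $\Phi$ in the directions where $h_{1,\ell}$ is small to be compatible with a quotient, so that $(S,0,0)$, or more precisely the quotient data, gives a subtriple of type B. We expect $\Phi(E_2)\not\subset S$ in general, and that is exactly the source of the problematic cross term~\eqref{eq: typeBbadTerm}.

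\emph{Case $c_{t_\ell}\to-\infty$.} Here $h_{2,\ell}\to 0$. The normalization forces $\det h_{1,\ell}$ large, and we blow up $u_\ell=h_{1,0}^{-1}h_{1,\ell}$ directly, producing a subbundle $S$ with $\pi^\perp\Phi=0$ along the limit. This is again of type A or type B depending on whether $\Phi$ is absorbed.

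In each case we integrate the first equation of~\eqref{eq: dimReduceDHYM-MOC} against $\widetilde u_\ell^{\mathtt s}$, as in~\eqref{eq: stabilityContradiction}, to obtain
\[
\cos(\htheta)2\pi\deg(\cdot)-\sin(\htheta)V_X\rank(\cdot)\le 0,
\]
which contradicts Lemma~\ref{lem: phaseToRatioStab}. The $E_2$-equation via Lemma~\ref{lem: H2CurvLem} controls the $\Phi$ contributions, and the Uhlenbeck--Yau excess absorbs the second fundamental form terms.

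\textbf{Main obstacle.} The main obstacle is the type-B case. There, as discussed after Lemma~\ref{lem: typeBStabNecessary}, the term $\mathrm{Re}\langle\pi^\perp\nabla\Phi,\beta^\dagger\pi\Phi\rangle$ is linear in $\pi\Phi$. We plan to handle it exactly as in the proof of Lemma~\ref{lem: typeBStabNecessary}(ii): apply pointwise Cauchy--Schwarz with weight $\cos(\htheta)$, absorbing one part by the UY excess (replacing $|\beta|^2$) and the other by the positive gradient term in~\eqref{eq: locF2eqStabNec}. This absorption is available precisely because $\tan(\htheta)\le\frac{\sigma}{4\pi}$, and it is carried out on the good set $U_\epsilon$ of Lemma~\ref{lem: pointwiseConvergenceLemma}, with the small-eigenvalue contributions decaying as $\ell\to\infty$ and then $\mathtt s\to0$.
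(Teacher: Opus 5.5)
There is a genuine gap, in the case $c_{t_\ell}\to+\infty$. Your reduction is fine: bounded oscillation of $\log h_{2,0}^{-1}h_{2,t}$ and $\log\det(h_{1,0}^{-1}h_{1,t})$, reducing to ruling out $c_{t_\ell}\to\pm\infty$, and getting the lower bounds from the upper bounds plus \eqref{eq: normalizationCondition}.

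\textbf{The normalization.} Normalizing $v_\ell=h_{1,\ell}^{-1}h_{1,0}$ by its own supremum only detects the spread of the eigenvalues of $h_{1,\ell}$ inside $E_1$. But here the degeneration is driven by $h_{2,\ell}$. Suppose $u_{1,\ell}=e^{-c_\ell/r}w_\ell$ with $w_\ell$ uniformly bounded above and below; this is compatible with every constraint you list. Then the normalized limit is invertible and the Uhlenbeck--Yau projection is trivial, so no subbundle is produced. Yet this is exactly the configuration destabilized by $(E_1,0,0)$, which your phrase ``proper saturated subbundle'' excludes.

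The missing idea is to rescale $u_{1,\ell}$ and $u_{2,\ell}$ by the \emph{same} constant $m_\ell=C_0\sup_X u_{2,\ell}$. This leaves $h_\ell=h_{1,\ell}h_{2,\ell}^{-1}$ unchanged, so Proposition~\ref{prop: UYargumentH} gives $\widetilde u_{1,\ell}\le I_{E_1}$. The Green's function oscillation bound gives $\widetilde u_{2,\ell}\ge\delta'>0$. The normalization then forces $\pi_1\ne 0$, and you test against $(S,0,0)$ with $S=\pi_1(E_1)$, allowing $S=E_1$. Note also that the holomorphic subbundle is spanned by the directions where the rescaled metric degenerates, not where $h_{1,\ell}$ stays large.

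\textbf{The anticipated obstacle does not arise.} Once $\widetilde u_{2,\ell}\ge\delta'$ and $\widetilde u_{1,\ell}\le I_{E_1}$, an eigenframe computation together with $\sup_{x\in[0,1]}x(1-x^{\mathtt s})^2\le C\mathtt s^2$ gives the pointwise bound $|\Phi^{\dagger_\ell}(I_{E_1}-\widetilde u_{1,\ell}^{\mathtt s})|^2_{h_\ell}\le C\mathtt s^2$. Integrate the first equation of \eqref{eq: dimReduceDHYM-MOC} against $I_{E_1}-\widetilde u_{1,\ell}^{\mathtt s}$. Every term containing $\Phi\Phi^{\dagger_\ell}$, including the anticommutator with $i\Lambda F_{1,\ell}$, is then $O(\mathtt s)$, and the $\nabla^\ell\Phi(\nabla^\ell\Phi)^{\dagger_\ell}$ term is nonnegative.

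So no cross term like \eqref{eq: typeBbadTerm} appears, and $\tan(\htheta)\le\frac{\sigma}{4\pi}$ enters only through Proposition~\ref{prop: UYargumentH}. The Cauchy--Schwarz absorption you borrow from Lemma~\ref{lem: typeBStabNecessary} is not needed, and it is not justified as stated. It uses the second fundamental form of $S$ with respect to an actual solution metric, and the gradient term of \eqref{eq: locF2eqStabNec} weighted by $|\pi\Phi|^2$. In the blow-up limit you only control the $h_0$ second fundamental form and the Uhlenbeck--Yau excess.

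\textbf{The case $c_{t_\ell}\to-\infty$ is vacuous.} Proposition~\ref{prop: UYargumentH} gives $u_{1,\ell}\le C_0u_{2,\ell}\to 0$, so both $\log u_{2,\ell}$ and $\log\det u_{1,\ell}$ tend to $-\infty$. This contradicts \eqref{eq: normalizationCondition} at once, so the type-A blow-up you sketch there is unnecessary.
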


\begin{proof}
    We employ a second blow-up argument.  Let $u_{1,t} = h_{1,0}^{-1}h_{1,t}$ be the relative endomorphism of $E_1$ relating $h_{1,0}$ and $h_{1,t}$.  As in the proof of Proposition~\ref{prop: UYargumentH}, we note that $u_{1,t}$ is Hermitian with respect to both $h_{1,t}$ and $h_{1,0}$, and $u_{1,t}$ is positive definite.  Similarly we define $u_{2,t} = h_{2,0}^{-1}h_{2,t}$. We first prove the upper bounds. Assume there exists a sequence of times $t_{\ell} \in [0,1]$ with $t_{\ell}\rightarrow T$ such that either:
    \begin{itemize}
    \item[$(i)$] The largest eigenvalue of $u_{1,t_{\ell}} \rightarrow \infty$, and/or
        \item[$(ii)$] $u_{2,t_{\ell}}\rightarrow +\infty$.
    \end{itemize}
    To ease notation, let us denote $u_{1,t_{\ell}}= u_{1,\ell}$, and $u_{2,t_{\ell}}= u_{2,\ell}$. By Proposition~\ref{prop: UYargumentH} we have that
    \[
    u_{1,\ell} \leq C_0 I_{E_1}u_{2,\ell}
    \]
    for a uniform constant $C_0$, and so if $(i)$ holds, then necessarily so does $(ii)$.  Let $m_{\ell}=C_0\sup_{X}u_{2,\ell}$ and define
    \[
    \widetilde{u}_{1,\ell} = m_{\ell}^{-1}u_{1,\ell} \qquad \widetilde{u}_{2,\ell} = m_{\ell}^{-1}u_{2,\ell} 
    \]
   Note that this rescaling preserves the metric $h_{\ell} = h_{1,\ell}h_{2,\ell}^{-1}$ on $E_1\otimes E_2^{\vee}$. Then we have $ \widetilde{u}_{1,\ell} \leq I_{E_1}$.  As in the proof of Proposition~\ref{prop: UYargumentH} we follow the Uhlenbeck-Yau technique, and consider $\widetilde{u}_{1,\ell}^{\mathtt{s}}$, $\widetilde{u}_{2,\ell}^{\mathtt{s}}$, for $\mathtt{s} \in (0,1]$.  Using the curvature bounds from Corollary~\ref{cor: totalCurvBound}, and arguing as in the proof of Proposition~\ref{prop: UYargumentH}, the results of Uhlenbeck-Yau \cite{UY} yield that, along a subsequence
    \[
    \begin{aligned}
    \lim_{\mathtt{s}\rightarrow 0}\lim_{\ell \rightarrow \infty} \widetilde{u}_{1,\ell}^{\mathtt{s}} &= 1-\pi_1 \qquad \text{ weakly in } \quad W^{1,2}(X,{\rm End}(E_1), h_{1,0})\\
    \lim_{\mathtt{s}\rightarrow 0}\lim_{\ell \rightarrow \infty} \widetilde{u}_{2,\ell}^{\mathtt{s}} &= 1-\pi_2\qquad \text{ weakly in } \quad W^{1,2}(X)
    \end{aligned}
    \]
    where the convergence takes place weakly in $W^{1,2}$, strongly in $L^2$ and pointwise a.e.  By the curvature bounds, $\pi_2$ defines a proper subbundle of $E_2$, which is necessarily the zero subbundle since $E_2$ has rank $1$.  

    We claim that $\pi_1$ defines a non-zero sub-bundle $S\subset E_1$, which may possibly be all of $E_1$.  To see this first recall that if $\Delta f \in L^{\infty}$ on a compact Riemann surface then using the Green's function we have
    \[
    \osc_{X} f \leq C\|\Delta f\|_{L^{\infty}(X)}.
    \]
    Now we observe that 
    \[
    \Delta \log u_{2,\ell} = i\Lambda F_{2,0}-i\Lambda F_{2,\ell}
    \]
     is uniformly bounded by Corollary~\ref{cor: totalCurvBound}.  Thus if $\sup_{X} u_{2,\ell} =C_0^{-1}m_{\ell} \gg 1$, then
    \[
    \int_{X}\log u_{2,\ell} \geq V_X(\log(C_{0}^{-1}m_{\ell})-C)
    \]
    for a uniform constant $C$.  It follows from the normalization condition~\eqref{eq: normalizationCondition} that
    \[
    \int_{X}\log\det u_{1,\ell}\leq  -V_X(\log(C_{0}^{-1}m_{\ell})-C)
    \]
and hence there exists a point $p\in X$ where $u_{1,\ell}$ has an eigenvalue less than or equal to $1$. Since
\[
\Delta \log \det u_{1,\ell}= {\rm Tr}(i\Lambda F_{1,0}) - {\rm Tr}(i\Lambda F_{1,\ell})
\]
is bounded by Corollary~\ref{cor: totalCurvBound}, there exists a $\delta >0$ and an open set $U$  with $|U|>\delta$ such that $\log \det u_{1,\ell} \leq 0$ on $U$.  It follows that $1-\pi_1 \neq I_{E_1}$, and hence $\pi_1$ defines a non-zero subbundle $S\subset E_1$.

Furthermore, since $\sup_{X}\widetilde{u}_{2,\ell} = C_{0}^{-1}$ the preceding Green's function bound yields a constant $\delta '>0$, independent of $\ell$ so that we have
\[
\widetilde{u}_{2,\ell} \geq \delta' >0.
\]

To extract a contradiction we appeal to stability of $\cE$ with respect to the saturated subtriple $(S,0,0)$.  Let $\beta_{1,0}$ denote the second fundamental form of $S\subset E_1$ defined using the metric $h_{1,0}$.  We have
\[
\begin{aligned}
2\pi\deg(S) &= \int_{X}{\rm Tr}(i\Lambda F_{1,0}\pi) - \int_{X}|\beta_{1,0}|^2_{h_{1,0}}\\
&=  \lim_{\mathtt{s} \rightarrow 0}\lim_{\ell \rightarrow \infty} \int_{X}{\rm Tr}(i\Lambda F_{1,0}(I_{E_{1}}-\widetilde{u}_{1,\ell}^{\mathtt{s}} )) - \int_{X}|\beta_{1,0}|^2_{h_{1,0}}\\
& = \lim_{\mathtt{s} \rightarrow 0}\lim_{\ell \rightarrow \infty} \left[\int_{X}{\rm Tr}(i\Lambda F_{1,\ell}(I_{E_1}-\widetilde{u}_{1,\ell}^{\mathtt{s}} )) +\int_{X}{\rm Tr}\left(\dbar\left(u_{1,\ell}^{-1}\left( \nabla^{1,0}u_{1,\ell}\right)\right)(I_{E_1}-\widetilde{u}_{1,\ell}^{\mathtt{s}})\right)\right]\\
&\quad - \int_{X}|\beta_{1,0}|^2_{h_{1,0}}\\
&=  \lim_{\mathtt{s} \rightarrow 0}\lim_{\ell \rightarrow \infty} \left[\int_{X}{\rm Tr}(i\Lambda F_{1,\ell}(I_{E_1}-\widetilde{u}_{1,\ell}^{\mathtt{s}} )) +\int_{X}{\rm Tr}\left(u_{1,\ell}^{-1}\left( \nabla^{1,0}u_{1,\ell}\right)\dbar \widetilde{u}_{1,\ell}^{\mathtt{s}}\right)\right]\\
&\quad - \int_{X}|\beta_{1,0}|^2_{h_{1,0}}\\
&\geq  \lim_{\mathtt{s} \rightarrow 0}\lim_{\ell \rightarrow \infty} \int_{X}{\rm Tr}(i\Lambda F_{1,\ell}(I_{E_1}-\widetilde{u}_{1,\ell}^{\mathtt{s}} ))
\end{aligned}
\]
where in the last line we used ~\eqref{eq: UYFormula1}, together with the bound $\widetilde{u}_{1,\ell}^{\mathtt{s}} \leq I_{E_1}$ and the lower semi-continuity of the $W^{1,2}({\rm End}(E_1), h_{1,0})$ norm along weakly converging sequences.  Applying stability, specifically Lemma~\ref{lem: phaseToRatioStab}, for the saturated subtriple $(S,0,0)$ we have that
\[
2\pi \deg(S) < \frac{\sin(\htheta)}{\cos(\htheta)} \rank(S) V_{X}
\]
and so we will obtain a contradiction if we can show that
\[
\lim_{\mathtt{s} \rightarrow 0}\lim_{\ell \rightarrow \infty} \int_{X}{\rm Tr}(i\Lambda F_{1,\ell}(I_{E_1}-\widetilde{u}_{1,\ell}^{\mathtt{s}} )) \geq \frac{\sin(\htheta)}{\cos(\htheta)}{\rm rk}(S)V_{X}.
\]
We apply the equation ~\eqref{eq: dimReduceDHYM-MOC} for $F_{1,\ell}$  which yields
\begin{equation}\label{eq: UYcontradictionPart2CurvTerm1}
\begin{aligned}
\cos(\htheta)\int_{X}{\rm Tr}(i\Lambda F_{1,\ell}(I_{E_1}-\widetilde{u}_{1,\ell}^{\mathtt{s}} ))&=-\cos(\htheta)\int_{X}{\rm Tr}(\Phi\Phi^{\dagger_{\ell}}(I_{E_1}-\widetilde{u}_{1,\ell}^{\mathtt{s}} ))\\
&\quad +\sin(\htheta) \int_{X}{\rm Tr}(I_{E_1}-\widetilde{u}_{1,\ell}^{\mathtt{s}} )\\
&\quad -\frac{1}{2}\sin(\htheta)\int_{X}{\rm Tr}\left(\{i\Lambda F_{1,\ell}, \Phi\Phi^{\dagger_{\ell}}\}(I_{E_1}-\widetilde{u}_{1,\ell}^{\mathtt{s}} )\right)\\
&\quad +\frac{1}{2}\sin(\htheta)\int_{X}{\rm Tr}\left((\nabla^{\ell} \Phi)(\nabla^{\ell}\Phi)^{\dagger_{\ell}}(I_{E_1}-\widetilde{u}_{1,\ell}^{\mathtt{s}} )\right)
\end{aligned}
\end{equation}
Note that we can freely take real parts on the left and right, since both sides are real.

\begin{lem}\label{lem: PhiDaggerGoesToZero}
    In the above setting, there is a uniform constant $C>0$ independent of $\ell$ so that 
\[
|\Phi^{\dagger_{\ell}}(I_{E_1}-\widetilde{u}_{1,\ell}^{\mathtt{s}})|^2_{h_{\ell}} \leq Cs^2
\]
\end{lem}
\begin{proof}
Let $e_{\alpha}$ be an $h_{1,0}$ orthonormal frame of eigenvectors of $\widetilde{u}_{1,\ell}$, and let $\xi$ be an $h_{2,0}$ section of $E_2$ with length $1$.  Then we have
\[
\tilde{u}_{1,\ell} = \mu_{1,\ell}^{\alpha} e_\alpha \otimes e_{\alpha}^* \qquad \widetilde{u}_{2,\ell} = \mu_{2,\ell}  \xi \otimes \xi^{*}
\]
We compute
\[
\Phi^{\dagger_{\ell}}= \sum_{\alpha} \overline{\Phi^{\alpha}}\frac{\mu_{1,\ell}^{\alpha}}{\mu_{2,\ell}} e_{\alpha}^*
\]
and so
\[
\Phi^{\dagger_{\ell}}(I_{E_1}-\widetilde{u}_{1,\ell}^{s}) = \sum_{\alpha} \overline{\Phi^{\alpha}}\frac{\mu_{1,\ell}^{\alpha}}{\mu_{2,\ell}}(1-(\mu_{1,\ell}^{\alpha})^{s}) e_{\alpha}^*
\]
With respect to $h_{\ell}$ we have 
\[
\langle e_{\alpha}^*\otimes \xi, e_{\alpha}^*\otimes \xi\rangle_{h_{\ell}} =  \frac{\mu_{2,\ell}}{\mu_{1,\ell}^{\alpha} }.
\]
Thus,
\[
\begin{aligned}
|\Phi^{\dagger_{\ell}}(I_{E_1}-\widetilde{u}_{1,\ell}^{s})|_{h_{\ell}}^2 &= \sum_{\alpha} |\Phi^{\alpha}|^2\left(\frac{\mu_{1,\ell}^{\alpha}}{\mu_{2,\ell}}\right)^2(1-(\mu_{1,\ell}^{\alpha})^{s})^2\frac{\mu_{2,\ell}}{\mu_{1,\ell}^{\alpha} }\\
&=\sum_{\alpha} |\Phi^{\alpha}|^2\left(\frac{\mu_{1,\ell}^{\alpha}}{\mu_{2,\ell}}\right)(1-(\mu_{1,\ell}^{\alpha})^{s})^2
\end{aligned}
\]
Now since $0<\delta' < \mu_{2,\ell} \leq C_0^{-1}$, and $\mu_{1,\ell}^{\alpha}\in(0,1)$ we have
\[
\sum_{\alpha} |\Phi^{\alpha}|^2\left(\frac{\mu_{1,\ell}^{\alpha}}{\mu_{2,\ell}}\right)(1-(\mu_{1,\ell}^{\alpha})^{s})^2 \leq C |\Phi|_{h_0}^2 s^2
\]
where we used that, for any $s\in (0,1)$
\[
\sup_{x\in[0,1]}x(1-x^s)^2 \leq Cs^2
\]
for a uniform constant $C$.
\end{proof}

Now we can finish the proof. From Lemma~\ref{lem: PhiDaggerGoesToZero} and the curvature bounds (Corollary~\ref{cor: totalCurvBound}) and the upper bound for $|\Phi|_{h_{\ell}}$ (Lemma~\ref{lem: conditionalPhiBound}) that we have
\[
\begin{aligned}
&\lim_{\mathtt{s} \rightarrow 0}\lim_{\ell\rightarrow\infty}\int_{X}{\rm Tr}(\Phi\Phi^{\dagger_{\ell}}(I_{E_1}-\widetilde{u}_{1,\ell}^{\mathtt{s}} ))=0\\
&\lim_{\mathtt{s} \rightarrow 0}\lim_{\ell \rightarrow\infty}\int_{X}\Tr\left(i\Lambda F_{1,\ell}\Phi\Phi^{\dagger_\ell}(I_{E_1}-\widetilde{u}_{\ell}^{\mathtt{s}})\right)=0.
\end{aligned}
\]
This leads to the following simplification of~\eqref{eq: UYcontradictionPart2CurvTerm1}
\[
\begin{aligned}
\lim_{\mathtt{s}\rightarrow 0}\lim_{\ell \rightarrow \infty}\cos(\htheta)
&\int_{X}{\rm Re}\left({\rm Tr}(i\Lambda F_{1,\ell}(I_{E_1}-\widetilde{u}_{1,\ell}^{\mathtt{s}} ))\right)= \sin(\htheta) \rank(S)V_{X}\\
&\quad -\frac{1}{2}\sin(\htheta) \lim_{\mathtt{s}\rightarrow 0}\lim_{\ell \rightarrow \infty}\underbrace{\int_{X}{\rm Re}\left({\rm Tr}\left(\Phi\Phi^{\dagger_{\ell}}i\Lambda F_{1,\ell}(I_{E_1}-\widetilde{u}_{1,\ell}^{\mathtt{s}} )\right)\right)}_{(I)}\\
&+ \frac{1}{2}\sin(\htheta) \lim_{\mathtt{s}\rightarrow 0}\lim_{\ell \rightarrow \infty}\underbrace{\int_{X}{\rm Re}\left({\rm Tr}\left(\nabla^{\ell} \Phi(\nabla^{\ell}\Phi)^{\dagger_{\ell}}(I_{E_1}-\widetilde{u}_{1,\ell}^{\mathtt{s}} )\right)\right)}_{(II)}
\end{aligned}
\]
For the remaining terms, observe that
\[
\begin{aligned}
\overline{{\rm Tr}\left(\Phi\Phi^{\dagger_{\ell}}i\Lambda F_{1,\ell}(I_{E_1}-\widetilde{u}_{1,\ell}^{\mathtt{s}} )\right)} &= {\rm Tr}\left(\left(\Phi\Phi^{\dagger_{\ell}}i\Lambda F_{1,\ell}(I_{E_1}-\widetilde{u}_{1,\ell}^{\mathtt{s}} )\right)^{\dagger_{\ell}}\right)\\
&={\rm Tr}\left( (I_{E_1}-\widetilde{u}_{1,\ell}^{\mathtt{s}} )i\Lambda F_{1,\ell}\Phi\Phi^{\dagger_{\ell}}\right)\\
&={\rm Tr}\left(i\Lambda F_{1,\ell}\Phi\Phi^{\dagger_{\ell}}(I_{E_1}-\widetilde{u}_{1,\ell}^{\mathtt{s}} )\right)
\end{aligned}
\]
and so, by Lemma~\ref{lem: PhiDaggerGoesToZero} there is a uniform constant $C>0$, independent of $s,\ell$ so that
\[
|(I)| \leq C\sup_{X}|\Phi|_{h_{\ell}}\cdot \sup_{X}|i\Lambda F_{1,\ell}|_{h_{\ell}} \cdot s \leq Cs
\]
using again the uniform bounds for $|\Phi|_{h_{\ell}},$ and $|i\Lambda F_{1,\ell}|_{h_{\ell}}$.  The term $(II)$ is plainly positive, since $\tilde{u}_{1,\ell}^{s} \leq I_{E_1}$.

In total, we conclude that
\[
\lim_{\mathtt{s}\rightarrow 0}\lim_{\ell \rightarrow \infty}\cos(\htheta)
\int_{X}{\rm Re}\left({\rm Tr}(i\Lambda F_{1,\ell}(I_{E_1}-\widetilde{u}_{1,\ell}^{\mathtt{s}} ))\right) \geq  \sin(\htheta) \rank(S)V_{X}
\]
and so $
\left( \cos(\htheta) \deg(S) - \frac{V}{2\pi}\sin(\htheta) {\rm rk}(S)\right) \geq 0$  contradicting stability.  This establishes the upper bounds
\[
h_{1,t} \leq Ch_{1,0}, \qquad h_{2,t} \leq Ch_{2,0}
\]
for a uniform constant $C$.  The lower bounds follow immediately from the upper bounds from Corollary~\ref{cor: totalCurvBound} and the normalization condition~\eqref{eq: normalizationCondition}.

\end{proof}

Combining our results yields estimates to all orders.

\begin{prop}\label{prop: stableImpliesEstimatesToAllOrders}
   Suppose that $\cE= (E_1, E_2, \Phi)$ has $\cos(\htheta)>0$, $\tan(\htheta) \leq \frac{\sigma}{4\pi}$ and $\cE$ is stable in the sense of Definition~\ref{def: stabOnXxP} or equivalently Definition~\ref{defn: stabOfTriple}.  Suppose that $(h_{1,t}, h_{2,t})$ are solutions of~\eqref{eq: dimReduceDHYM-MOC} for some $t\in [0, 1]$ satisfying $\inf_{X}|\Phi|^2 < \frac{4\pi}{\sigma t}$.  Let $(h_{1,0}, h_{2,0})$ be fixed reference metrics and normalize $(h_{1,t}, h_{2,t})$ by
   \[
    \int_{X}\log h_{2,0}^{-1}h_{2,t} + \log (\det(h_{1,0}^{-1}h_{1,t})) =0.
  \]
   For each $k\in \mathbb{Z}_{>0}$ and each $\alpha \in (0,1)$ there is a uniform constant $C_k>0$ depending on $k, \alpha, X, g, E_1, E_2, \Phi$, $(h_{1,0},h_{2,0})$ so that
\[
 \|h_{1,t}\|_{C^{k,\alpha}(X,g, h_{1,0})} + \|h_{2,t}\|_{C^{k,\alpha}(X,g, h_{2,0})} \leq C_{k}. 
\]
\end{prop}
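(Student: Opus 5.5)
The plan is to bootstrap from the $C^0$ bounds of Proposition~\ref{prop: UYargumentTypeB}, the $C^0$ and $C^1$ bounds for $\Phi$ (Lemma~\ref{lem: conditionalPhiBound}, Proposition~\ref{prop: gradientBound}), and the curvature bounds of Corollary~\ref{cor: totalCurvBound}. We regard $u_{i,t}=h_{i,0}^{-1}h_{i,t}$ as the unknowns. Proposition~\ref{prop: UYargumentTypeB} gives $C^{-1}I\le u_{i,t}\le CI$. The identities
\[
i\Lambda\dbar\big(u_{i,t}^{-1}\hat\nabla u_{i,t}\big)=i\Lambda F_{i,0}-i\Lambda F_{i,t}
\]
express each $u_{i,t}$ as the solution of a quasilinear elliptic system. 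Its principal part is the Laplacian, plus a term quadratic in $\hat\nabla u$ weighted by $u^{-1}$, and its right-hand side is bounded in $L^\infty$ by Corollary~\ref{cor: totalCurvBound}.

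The first step is a $C^1$ (or $W^{2,p}$) bound on $u_{1,t}$. For the line bundle $E_2$ this is immediate: $\Delta\log u_{2,t}$ is bounded, so $\log u_{2,t}\in W^{2,p}$ for all $p$, hence lies in $C^{1,\alpha}$. For $E_1$ we would first obtain a $W^{1,2}$ bound by integrating $\Delta\,{\rm Tr}(u_{1,t})$ against the bounded curvature and using~\eqref{eq: UYFormula3}, as in the proof of Proposition~\ref{prop: UYargumentH}. To upgrade to $C^{1}$ we would use the standard blow-up argument of Donaldson/Simpson: if $|\hat\nabla u_{1,t}|$ were unbounded, rescale at the maximum point. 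The $C^0$ bound together with bounded $i\Lambda F$ gives a limiting non-constant bounded solution of $\dbar(u^{-1}\del u)=0$ on $\mathbb{C}$, that is, a harmonic map into the symmetric space. This contradicts the Liouville theorem. Alternatively, since $X$ has real dimension $2$ and the metrics are uniformly equivalent, we can pass to a local holomorphic frame and write $i\Lambda F_{1,t}=-i\Lambda\dbar(H^{-1}\del H)$ with $H$ the matrix of $h_{1,t}$. The quadratic term is then controlled by the $W^{1,2}$ bound via the estimate $\Delta{\rm Tr}\,u\ge |\hat\nabla u|^2/C-C$, which yields a Moser/$W^{2,p}$ bootstrap. \textbf{This $C^1$ step is the main obstacle}. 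I would try the blow-up/Liouville argument first, because it needs only $C^0$ control of $u$ and $L^\infty$ control of $i\Lambda F$.

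Once $u_{i,t}\in C^{1,\alpha}$ uniformly, the equation $\Delta u=u\,(i\Lambda F_0-i\Lambda F_t)+\hat\nabla u\, u^{-1}\bar\partial u$ has right-hand side bounded in $L^\infty$, so $u\in W^{2,p}$ and hence $C^{1,\alpha}$. To go higher we must also regularize the right-hand side $i\Lambda F_{i,t}$. By Lemma~\ref{lem: H1CurveLemSimplfiedPositiveCos} and Lemma~\ref{lem: H2CurvLem}, this curvature is a smooth function of $t$, of $\Phi$, $\Phi^{\dagger_t}$, $\nabla^t\Phi$ and $(\nabla^t\Phi)^{\dagger_t}$, with denominators bounded away from zero by Lemma~\ref{lem: conditionalPhiBound}(ii). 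Here $\nabla^t\Phi=\hat\nabla\Phi+(\text{first derivatives of } u_{1,t},u_{2,t})\cdot\Phi$ and $\Phi^{\dagger_t}$ involves $u_{i,t}$ algebraically. So $C^{k,\alpha}$ control of $u$ gives $C^{k-1,\alpha}$ control of $i\Lambda F_t$. The system then gives $C^{k+1,\alpha}$ control of $u$ by Schauder theory, since after the first step the quadratic gradient term is already in $C^{k-1,\alpha}$.

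Inducting on $k$ yields the uniform bounds $\|h_{i,t}\|_{C^{k,\alpha}}\le C_k$. The constants depend only on the $C^0$ constants and the fixed data, and they are independent of $t\in[0,1]$ because all coefficients depend smoothly on $t$.
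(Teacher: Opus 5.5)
Your proposal is correct and follows the same route as the paper. The paper's proof simply combines the uniform equivalence $C^{-1}h_{i,0}\le h_{i,t}\le Ch_{i,0}$ from Proposition~\ref{prop: UYargumentTypeB} with the curvature bounds of Corollary~\ref{cor: totalCurvBound}, and then invokes an Uhlenbeck--Yau bootstrap. Your blow-up/Liouville argument for the $C^1$ step, together with your observation that $i\Lambda F_t$ depends on only first derivatives of $u$ (so each Schauder step gains one derivative), makes explicit what the paper leaves implicit.
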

\begin{proof}
We have established in Corollary~\ref{cor: totalCurvBound} that the curvatures of $h_{1,t}$ and $h_{2,t}$  are uniformly bounded.  By Proposition~\ref{prop: UYargumentTypeB}, the metrics $h_{i,t}$ are uniformly equivalent to $h_{i,0}$.  Under these assumptions the curvature becomes a uniformly elliptic operator of divergence form and  the higher order regularity now follows from a boot-strap argument as in Uhlenbeck-Yau \cite{UY}.  
\end{proof}

\section{Fixed point theorem and completion of the proof}\label{sec: fixedPoint}

In this section we complete the proof of Theorem~\ref{thm: mainTheorem} by an application of the Leray-Schauder degree theory. The first step is to recast the system~\eqref{eq: dimReduceDHYM-MOC} for $\cos(\htheta)>0$ as a fixed point problem for a certain non-linear operator. 

To begin, fix metrics $(h_{1,0},h_{2,0})$ on $(E_1,E_2)$ solving the system~\eqref{eq: dimReduceDHYM-MOC} at time $t=0$.  Since we are in the regime where $\cos(\htheta)>0$ we can recast the system~\eqref{eq: dimReduceDHYM-MOC} using Lemma~\ref{lem: H1CurveLemSimplfiedPositiveCos} and Lemma~\ref{lem: H2CurvLem} as
\begin{equation}\label{eq: systemForSchauder}
    \begin{aligned}
    i\Lambda F_1 &= \alpha I - \frac{1+t\alpha^2}{1+t\alpha |\Phi|^2}\Phi\Phi^{\dagger}+ \frac{t\alpha}{2}\nabla \Phi(\nabla \Phi)^{\dagger} - \frac{t^2\alpha^2}{2(2+t\alpha|\Phi|^2)}\{\Phi\Phi^{\dagger}, \nabla \Phi(\nabla \Phi)^{\dagger}\}\\
&\quad + \frac{t^3\alpha^3 |\langle \nabla \Phi, \Phi\rangle|^2}{2(2+t\alpha |\Phi|^2)(1+t\alpha|\Phi|^2)}\Phi\Phi^{\dagger} \\
\,\\
i\Lambda F_{2} &=\left(\frac{ -\cos(\hat{\theta})\left(\frac{4\pi}{\sigma} - |\Phi|^2\right) +\sin(\hat{\theta})\left(1 + \frac{t}{2}|\nabla \Phi|^2\right)}{\left(\cos(\hat{\theta}) + \left(\frac{4\pi}{\sigma}-t|\Phi|^2\right)\sin(\hat{\theta})\right)} \right) 
\end{aligned}
\end{equation}
Given metrics $h_1$ on $E_1$ and $h_2$ on $E_2$ we write
\[
h_1= h_{1,0}e^{u_1} \qquad h_{2}= h_{2,0}e^{u_2}
\]
where $u_1 \in {\rm Hom}(E_1,E_1)$ is $h_{1,0}$-Hermitian, and $u_{2}$ is a real valued function.  We can rewrite the curvature of $h_1$ and $h_2$ in terms of the curvature of $h_{1,0}$  and $h_{2,0}$ as follows
\[
i\Lambda F_{1} = i\Lambda F_{h_{1,0}} - i\Lambda \dbar (e^{-u_1}\hat{\nabla}e^{u_1}) \qquad i\Lambda F_{2} = i\Lambda F_{h_{2,0}} - i\Lambda \dbar \del u_2
\]
where $\hat{\nabla}$ denotes the Chern connection defined by the background metrics $h_{i,0}$ for $i=1,2$; see~\eqref{eq: curvatureRelation}. We need the following elementary lemma.

\begin{lem}\label{lem: PoperatorCurvature}
\[
  i\Lambda \dbar (e^{-u_1}\hat{\nabla}e^{u_1})= \mathtt{P}({\rm ad}_{u_1})\Delta u + Q(u_1, \hat{\nabla}u_1)
\]
where $Q$ is quadratic, and $\mathtt{P}$ is defined by the convergent power series
\[
\mathtt{P}(z) = \frac{1-e^{-z}}{z} = \sum_{k=1}^{\infty}\frac{(-z)^{k-1}}{k!}
\]  
\end{lem}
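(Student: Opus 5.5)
The plan is to compute the derivative of the matrix exponential explicitly and then take the divergence $i\Lambda\dbar$ of the result. Write $u=u_1$, $h_1=h_{1,0}e^{u}$, and let $\hat{\nabla}$ denote the $(1,0)$ part of the background Chern connection. The first step is the standard Duhamel formula
\[
e^{-u}\hat{\nabla}e^{u} = \int_0^1 e^{-\tau u}(\hat{\nabla}u)e^{\tau u}\,d\tau = \int_0^1 e^{-\tau\,{\rm ad}_u}(\hat{\nabla}u)\,d\tau = \mathtt{P}({\rm ad}_u)(\hat{\nabla}u).
\]
To prove it, differentiate $\tau\mapsto e^{-\tau u}\hat{\nabla}e^{\tau u}$: its $\tau$-derivative is $e^{-\tau u}(\hat{\nabla}u)e^{\tau u}$, and it vanishes at $\tau=0$. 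Then use $e^{-\tau u}Ae^{\tau u}=e^{-\tau{\rm ad}_u}A$ and integrate the power series term by term, $\int_0^1 e^{-\tau z}d\tau=\frac{1-e^{-z}}{z}$. Since $\mathtt{P}$ is entire, the series in ${\rm ad}_u$ converges absolutely in operator norm, so this manipulation is legitimate for any smooth (or $W^{2,p}$) Hermitian $u$.

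The second step applies $i\Lambda\dbar$ to $\sum_{k\ge1}\frac{(-1)^{k-1}}{k!}{\rm ad}_u^{k-1}(\hat{\nabla}u)$. By the Leibniz rule,
\[
\dbar\big({\rm ad}_u^{k-1}(\hat{\nabla}u)\big)={\rm ad}_u^{k-1}(\dbar\hat{\nabla}u)+\sum_{j=0}^{k-2}{\rm ad}_u^{j}\,{\rm ad}_{\dbar u}\,{\rm ad}_u^{k-2-j}(\hat{\nabla}u).
\]
After contracting with $i\Lambda$, the first term gives $\mathtt{P}({\rm ad}_u)(\Delta u)$, where $\Delta u := i\Lambda\dbar\hat{\nabla}u$ is the rough Laplacian in the sign convention of the paper. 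The remaining terms are bilinear in $(\dbar u,\hat{\nabla}u)$, with coefficients that are polynomials in ${\rm ad}_u$. Collecting them defines $Q(u,\hat{\nabla}u)$, which is quadratic in the first derivatives, with coefficients that are real-analytic functions of $u$.

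Convergence of $Q$ follows from the bound
\[
\Big\|\sum_j{\rm ad}_u^j{\rm ad}_{\dbar u}{\rm ad}_u^{k-2-j}\Big\|\le (k-1)(2|u|)^{k-2}\,2|\dbar u|,
\]
so the series is dominated by $\sum_k \frac{(k-1)(2|u|)^{k-2}}{k!}$, which converges. Equivalently, $Q$ can be written in closed form by diagonalizing $u$ pointwise in an $h_{1,0}$-orthonormal eigenframe. In that frame, $\mathtt{P}({\rm ad}_u)$ acts on the $(\alpha,\gamma)$ matrix entry by $\mathtt{P}(\lambda_\alpha-\lambda_\gamma)$, in the same spirit as the eigenframe calculations used in the proof of Proposition~\ref{prop: UYargumentH}.

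The only real subtlety is bookkeeping. Differentiating the coefficient operator ${\rm ad}_u$ must land entirely in $Q$, and one must check that the sign and normalization conventions for $\Delta$ agree with~\eqref{eq: curvatureRelation}. Because the identity is purely algebraic, I would also verify it in the commuting case ($u$ a scalar function, so that $Q=0$ and $\mathtt{P}(0)=1$) as a consistency check. The essential content needed later is that the principal part of the operator is $\mathtt{P}({\rm ad}_u)\Delta u$. Since $u$ is Hermitian, ${\rm ad}_u$ is self-adjoint with real spectrum, and $\mathtt{P}>0$ on $\mathbb{R}$. Hence $\mathtt{P}({\rm ad}_u)$ is positive definite, which gives the ellipticity used in the degree argument.
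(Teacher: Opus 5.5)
Your proposal is correct and follows the paper's route. The paper also differentiates $s\mapsto e^{-su_1}\hat{\nabla}e^{su_1}$ to get $e^{-s\,{\rm ad}_{u_1}}\hat{\nabla}u_1$, integrates over $[0,1]$ to obtain $\mathtt{P}({\rm ad}_{u_1})\hat{\nabla}u_1$, and then applies $\dbar$; your Leibniz expansion and convergence bound for the terms where $\dbar$ falls on ${\rm ad}_{u_1}$ just make explicit the step the paper compresses into ``after applying $\dbar$ we obtain the desired result.''
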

\begin{proof}
Consider
\[
f(s) = e^{-su_1}\hat{\nabla}e^{su_1}
\]
and recall that ${\rm ad}_u(A) = [u,A]$. Differentiating $f(s)$ with respect to $s$ yields
\[
\begin{aligned}
f'(s) &= -u_1e^{-su_1}\hat{\nabla}e^{su_1} + e^{-su_1}\hat{\nabla}(u_1e^{su_1})\\
&= -u_1e^{-su_1}\hat{\nabla}e^{su_1} + e^{-s{\rm ad}_u}\hat{\nabla} u_1 + u_1e^{-su_1}\hat{\nabla} e^{su_1}
\end{aligned}
\]
using that $u_1$ and $e^{\pm su_1}$ commute.  Canceling the first and last terms yields
\[
f'(s) = e^{-s{\rm ad}_u}\hat{\nabla} u_1
\]
Thus, since $f(0)=0$, we obtain
\[
f(1) = \int_0^1f'(s) ds = \left(\int_{0}^1e^{-s{\rm ad}_{u_1}}ds\right)\hat{\nabla} u_1.
\]
The operator ${\rm ad}_{u_1}$ acts linearly so we can integrate this directly to obtain
\[
f(1) = \mathtt{P}({\rm ad}_{u_1})\hat{\nabla} u_1.
\]
After applying $\dbar$ we obtain the desired result.
\end{proof}

Consider the following Banach spaces:

\[
\mathcal{B}_{k,\alpha}
=\left\{(u_1,u_2)\in C^{k,\alpha}\left(\Herm_{h_{1,0}}(E_1)\right)\oplus C^{k,\alpha}(X,\R):
\int_X (\Tr(u_1)+u_2)=0\right\}.
\]
where the $C^{k,\alpha}$ norms are defined by the metrics $h_{1,0}$ and $h_{2,0}$. Define the linear elliptic operator
\[
\mathcal{L}(u_1,u_2)
=
\begin{pmatrix}
\cos(\htheta)\left(-i\Lambda\dbar\hat{\nabla}u_1+\Phi\Phi^{\dagger_0}u\right)\\
-\left(\cos(\htheta)+\frac{4\pi}{\sigma}\sin(\htheta)\right)i\Lambda\dbar\del u_2-\cos(\htheta)\Phi^{\dagger_0}u\Phi
\end{pmatrix}
\]
where $\dagger_{0}$ denotes the adjoint defined by the initial metrics $h_{i,0}$, and $u=u_1-u_2I$. The following lemma shows that $\mathcal{L}$ maps  $h_{i,0}$-Hermitian endomorphisms to $h_{i,0}$-Hermitian endomorphisms.  This is not a priori clear since $\rank(E_1)>1$ and the operator $u\mapsto \Phi\Phi^{\dagger_0}u$ does not preserve Hermitian endomorphisms.

\begin{lem}
\label{lem:sec10-linear-Hermitianity}
Let $u_1$ be $h_{1,0}$-Hermitian and let $u_2$ be real-valued.  If $(h_{1,0},h_{2,0})$ solve the equation~\eqref{eq: dimReduceDHYM-MOC} at $t=0$, then $-i\Lambda\dbar\hat\nabla u_1+\Phi\Phi^{\dagger_0}u$ is $h_{1,0}$-Hermitian.
\end{lem}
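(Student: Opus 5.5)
We need to show that $-i\Lambda\dbar\hat\nabla u_1+\Phi\Phi^{\dagger_0}u$ is $h_{1,0}$-Hermitian, where $u=u_1-u_2 I$. The plan is to compute the $h_{1,0}$-adjoint of each piece and check that the anti-Hermitian part vanishes, using the $t=0$ equation to identify $\Phi\Phi^{\dagger_0}$ with a curvature term.

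For the Laplacian term, write $-i\Lambda\dbar\hat\nabla u_1$. For a Hermitian $u_1$, its adjoint is $-i\Lambda\del\hat{\dbar}u_1$ with the order of the derivatives reversed. The commutator identity $[\dbar,\hat\nabla]$ acting on endomorphisms gives
\[
-i\Lambda\dbar\hat\nabla u_1-(-i\Lambda\dbar\hat\nabla u_1)^{\dagger_0}=\pm[i\Lambda F_{h_{1,0}},u_1].
\]
So the anti-Hermitian part of the first term is $\tfrac12[i\Lambda F_{1,0},u_1]$, up to a sign that I will fix by a local computation in a unitary frame. The scalar $u_2$ contributes nothing here.

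For the second term, $u_2 I$ commutes with everything, so $\Phi\Phi^{\dagger_0}u_2$ is Hermitian. The anti-Hermitian part of $\Phi\Phi^{\dagger_0}u_1$ is $\tfrac12[\Phi\Phi^{\dagger_0},u_1]$. Now invoke the $t=0$ equation~\eqref{eq:t0-vortex}, namely $i\Lambda F_{1,0}+\Phi\Phi^{\dagger_0}=\tan(\htheta)I$, which gives
\[
[\Phi\Phi^{\dagger_0},u_1]=-[i\Lambda F_{1,0},u_1].
\]
Consequently the two anti-Hermitian contributions cancel, provided the sign from the commutator identity comes out as $+\tfrac12[i\Lambda F_{1,0},u_1]$.

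The only real obstacle is the sign and normalization convention in the commutation formula $\dbar\hat\nabla-\hat\nabla\dbar=F$ on $\mathrm{End}(E_1)$, where $F$ acts by $\mathrm{ad}$. I would verify this directly, using $\hat\nabla u=\del u+[h^{-1}\del h,u]$ in a holomorphic frame, and compare with the Uhlenbeck--Yau convention already used in~\eqref{eq: curvatureRelation}. That convention is consistent with $\Delta\mathrm{Tr}$ being real, which pins down the sign. If the sign turned out the other way, the operator would not be Hermitian, so the structure of $\mathcal L$ itself confirms that the cancellation is the intended one.
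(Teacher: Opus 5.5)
Your plan is the same as the paper's. You compare $h_{1,0}$-adjoints and use $i\Lambda F_{h_{1,0}}+\Phi\Phi^{\dagger_0}=\tan(\htheta)I$ to trade $[\Phi\Phi^{\dagger_0},u_1]$ for $-[i\Lambda F_{h_{1,0}},u_1]$.

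The gap is the sign of the curvature commutator coming from the Laplacian. That sign is the only nontrivial point of the lemma, and you leave it open. Neither of the ways you suggest for settling it works:
\begin{itemize}
\item Reality of $\Delta\Tr$ cannot detect the sign, because $\Tr[i\Lambda F_{h_{1,0}},u_1]=0$ either way.
\item Arguing that the sign must be the one making $\mathcal L$ preserve Hermitian endomorphisms is circular, since that is what is being proved.
\end{itemize}
Nor is the sign convention-free. Suppose you read $\dbar\hat\nabla u_1$ literally as the exterior-calculus form $\hat\nabla_{\bar z}\hat\nabla_z u_1\,d\bar z\wedge dz$. Then $-i\Lambda\dbar\hat\nabla u_1=g^{z\bar z}\hat\nabla_{\bar z}\hat\nabla_z u_1$ is a nonpositive operator. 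Its anti-Hermitian part is $-\tfrac12[i\Lambda F_{h_{1,0}},u_1]$, which adds to the anti-Hermitian part of $\Phi\Phi^{\dagger_0}u_1$ instead of cancelling it.

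What fixes the sign is the paper's convention that $-i\Lambda\dbar\hat\nabla$ is the \emph{nonnegative} Laplacian. This is built into \eqref{eq: curvatureRelation}, which you rightly point to. It is also used explicitly in Lemma~\ref{lem:sec10-linear-isomorphism} via $\int_X\Tr((-i\Lambda\dbar\hat\nabla u_1)u_1)=\int_X|\hat\nabla u_1|^2$.

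In a local coordinate this convention means $X:=-i\Lambda\dbar\hat\nabla u_1=-g^{z\bar z}\hat\nabla_{\bar z}\hat\nabla_z u_1$. The computation then runs as follows:
\begin{itemize}
\item Metric compatibility gives $(\hat\nabla_{\bar z}\hat\nabla_z u_1)^{\dagger_0}=\hat\nabla_z\hat\nabla_{\bar z}u_1$ for Hermitian $u_1$.
\item Write $F_{h_{1,0}}=F_{z\bar z}\,dz\wedge d\bar z$, so that $[\hat\nabla_z,\hat\nabla_{\bar z}]u_1=[F_{z\bar z},u_1]$ and $i\Lambda F_{h_{1,0}}=g^{z\bar z}F_{z\bar z}$.
\item Together these give $X-X^{\dagger_0}=[i\Lambda F_{h_{1,0}},u_1]$.
\end{itemize}
This is exactly the sign you need; it is the paper's identity $X^{\dagger_0}=X+[u_1,i\Lambda F_{h_{1,0}}]$. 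With it, your cancellation goes through.
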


\begin{proof}
By the definition of $u$ we have
\[
\Phi\Phi^{\dagger_0}u=\Phi\Phi^{\dagger_0}u_1-u_2\Phi\Phi^{\dagger_0}.
\]
Since $u_2$ is real and $\Phi\Phi^{\dagger_0}$ is $h_{1,0}$-Hermitian, we have
\[
\left(\Phi\Phi^{\dagger_0}u_1-u_2\Phi\Phi^{\dagger_0}\right)^{\dagger_0}
=u_1\Phi\Phi^{\dagger_0}-u_2\Phi\Phi^{\dagger_0}
=\Phi\Phi^{\dagger_0}u_1-u_2\Phi\Phi^{\dagger_0}
+[u_1,\Phi\Phi^{\dagger_0}].
\]
We also have the following curvature equality:
\[
\left(-i\Lambda\dbar\hat\nabla u_1\right)^{\dagger_0}
=-i\Lambda\dbar\hat\nabla u_1+[u_1,i\Lambda F_{h_{1,0}}].
\]
Using that the metrics $h_{i,0}$ solve the $t=0$ equation we have
\[
i\Lambda F_{h_{1,0}}=\frac{\sin(\htheta)}{\cos(\htheta)}I-\Phi\Phi^{\dagger_0},
\]
and so we obtain $[u_1,i\Lambda F_{h_{1,0}}]=-[u_1,\Phi\Phi^{\dagger_0}]$. Therefore
\[
\begin{aligned}
\left(-i\Lambda\dbar\hat\nabla u_1+\Phi\Phi^{\dagger_0}u_1-u_2\Phi\Phi^{\dagger_0}\right)^{\dagger_0}
&=-i\Lambda\dbar\hat\nabla u_1-[u_1,\Phi\Phi^{\dagger_0}]\\
&\quad+\Phi\Phi^{\dagger_0}u_1-u_2\Phi\Phi^{\dagger_0}
+[u_1,\Phi\Phi^{\dagger_0}]\\
&=-i\Lambda\dbar\hat\nabla u_1+\Phi\Phi^{\dagger_0}u_1-u_2\Phi\Phi^{\dagger_0}.
\end{aligned}
\]
which is the desired result.
\end{proof}

\begin{lem}
\label{lem:sec10-linear-isomorphism}
Suppose that the triple $\cE = (E_1, E_2,\Phi)$ is stable in the sense of Definition~\ref{def: stabOnXxP}. Then the operator $\cL$ defines an isomorphism
\[
\cL:\mathcal{B}_{k+2,\alpha}\longrightarrow \mathcal{B}_{k,\alpha}.
\]
\end{lem}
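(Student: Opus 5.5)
The plan is to show that $\cL$ is a formally self-adjoint elliptic operator whose quadratic form is the second variation of the functional $\mathcal{M}$ from Lemma~\ref{lem:t0-functional-convexity}. Then its kernel on the full space $C^{k+2,\alpha}(\Herm_{h_{1,0}}(E_1))\oplus C^{k+2,\alpha}(X,\R)$ consists of the endomorphisms of the triple. Proposition~\ref{prop: stableImpliesSimple} cuts that kernel down to the constants $(\lambda I_{E_1},\lambda)$. Fredholm theory and the normalization $\int_X(\Tr(u_1)+u_2)=0$ then give the isomorphism.

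\emph{Step 1 (well-definedness).} By Lemma~\ref{lem:sec10-linear-Hermitianity} the first component of $\cL$ is $h_{1,0}$-Hermitian. It remains to check that $\cL$ maps $\mathcal{B}_{k+2,\alpha}$ into $\mathcal{B}_{k,\alpha}$.
\begin{itemize}
\item The term $\Tr(-i\Lambda\dbar\hat\nabla u_1)$ is a Laplacian of $\Tr(u_1)$, so it integrates to zero; likewise $i\Lambda\dbar\del u_2$ integrates to zero.
\item Pointwise $\Tr(\Phi\Phi^{\dagger_0}u)=\Phi^{\dagger_0}u\Phi$, so the zeroth-order terms cancel in $\Tr(\text{first})+\text{second}$.
\end{itemize}
Hence $\int_X \Tr(\cL_1)+\cL_2=0$.

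\emph{Step 2 (the quadratic form).} Pair $\cL(u_1,u_2)$ with $(v_1,v_2)$ via $\langle\cdot,\cdot\rangle=\int_X \Tr(u_1v_1)+u_2v_2$ and integrate by parts. Using $\Tr(v_1\Phi\Phi^{\dagger_0}u)-v_2\Phi^{\dagger_0}u\Phi=\Phi^{\dagger_0}u\,(v_1-v_2)\Phi$, one should obtain
\[
\langle \cL(u_1,u_2),(v_1,v_2)\rangle=\cos(\htheta)\int_X\langle\hat\nabla u_1,\hat\nabla v_1\rangle+\Big(\cos(\htheta)+\tfrac{4\pi}{\sigma}\sin(\htheta)\Big)\int_X\langle\del u_2,\del v_2\rangle+\cos(\htheta)\int_X\langle u\Phi,v\Phi\rangle ,
\]
where $u=u_1-u_2I$ and $v=v_1-v_2I$, up to taking real parts. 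This form is symmetric, so $\cL$ is formally self-adjoint. On the diagonal it is, up to the factor $\cos(\htheta)>0$, exactly the second variation in Lemma~\ref{lem:t0-functional-convexity}; in particular it is nonnegative.

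\emph{Step 3 (kernel).} Suppose $\cL(u_1,u_2)=0$. Then each term of the form vanishes:
\begin{itemize}
\item $\hat\nabla u_1=0$, and since $u_1$ is Hermitian this means $\dbar u_1=0$, so $u_1$ is a holomorphic endomorphism;
\item $u_2$ is constant;
\item $u_1\Phi=\Phi u_2$.
\end{itemize}
So $(u_1,u_2)$ is a global endomorphism of the triple in the sense of~\eqref{eq: commutesWithPhi}. By Proposition~\ref{prop: stableImpliesSimple} we get $(u_1,u_2)=(\lambda I_{E_1},\lambda)$ with $\lambda\in\R$. The constraint $\int_X(\Tr(u_1)+u_2)=0$ reads $\lambda(\rank(E_1)+1)V_X=0$, so $\lambda=0$. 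Hence $\cL$ is injective on $\mathcal{B}_{k+2,\alpha}$.

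\emph{Step 4 (surjectivity).} The principal part of $\cL$ is a diagonal Laplace-type operator, so Schauder theory makes $\cL$ Fredholm from $C^{k+2,\alpha}$ to $C^{k,\alpha}$ on the full space. By self-adjointness, its image is the $L^2$-orthogonal complement of its kernel $\mathrm{span}\{(I_{E_1},1)\}$. That complement is precisely the set $\{\int_X\Tr(f_1)+f_2=0\}$, i.e.\ $\mathcal{B}_{k,\alpha}$. Given $f\in\mathcal{B}_{k,\alpha}$, pick any preimage and subtract the appropriate multiple of $(I_{E_1},1)$ to land in $\mathcal{B}_{k+2,\alpha}$. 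The inverse is bounded by the open mapping theorem.

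\emph{Main obstacle.} The delicate point is Step 2: verifying symmetry and the exact form of the zeroth-order term. The operator $u\mapsto\Phi\Phi^{\dagger_0}u$ is not Hermitian-preserving on its own, and the cancellation in Lemma~\ref{lem:sec10-linear-Hermitianity} depends on the $t=0$ equation. I would check carefully that the commutator terms $[u_1,\Phi\Phi^{\dagger_0}]$ and $[u_1,i\Lambda F_{h_{1,0}}]$ drop out of the real pairing, so that only $|u\Phi|^2$ survives.
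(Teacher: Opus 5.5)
Your proposal is correct and follows the same route as the paper. Both arguments first integrate by parts to get the nonnegative form $\cos(\htheta)\int_X|\hat\nabla u_1|^2+\bigl(\cos(\htheta)+\frac{4\pi}{\sigma}\sin(\htheta)\bigr)\int_X|\del u_2|^2+\cos(\htheta)\int_X|u\Phi|^2$, then use Proposition~\ref{prop: stableImpliesSimple} and the normalization to kill the kernel, and finish with self-adjoint elliptic theory; your Step~4 (kernel $\mathrm{span}\{(I_{E_1},1)\}$ on the full space, with its $L^2$-orthogonal complement being exactly the normalization defining $\mathcal{B}_{k,\alpha}$) just spells out what the paper compresses into its final sentence.
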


\begin{proof}
Recall that by Proposition~\ref{prop: stableImpliesSimple} if $\cE$ is stable, then it is simple in the sense of Definition~\ref{defn: PhiSimple}.  This is all we will need in the proof. First we check that the image of $\cL$ lies in $\mathcal{B}_{k,\alpha}$.
By integration by parts we have:
\[
\int_X\Tr\left((\cL u)_1\right)+(\cL u)_2
=-i\cos(\hat\theta)\int_X \Tr \dbar\nabla u_1 -i\left(\cos(\hat\theta) + \frac{4\pi}{\sigma}\sin(\hat\theta)\right)\int_X \Tr \dbar\nabla u_2 =0.
\]
Thus $\cL u\in \mathcal{B}_{k,\alpha}$. We now investigate the kernel. By integration-by-parts
\[
\int_X\Tr\left(\left(-i\Lambda\dbar\hat\nabla u_1\right)u_1\right)
=\int_X|\hat\nabla u_1|^2,
\]
and similarly
\[
\int_X\left(-i\Lambda\dbar\del u_2\right)u_2
=\int_X|\del u_2|^2.
\]
Further we have
\[
\int_X\Tr\left((\Phi\Phi^{\dagger_0}u)u_1\right)
-\int_X(\Phi^{\dagger_0}u\Phi)u_2
=\int_X|u\Phi|^2.
\]
Combining these identities gives
\[
\begin{aligned}
\left\langle \cL u,u\right\rangle=\cos(\htheta)\int_X |\hat{\nabla}u_1|^2+\left(\cos(\htheta)+\frac{4\pi}{\sigma}\sin(\htheta)\right)\int_X |\del u_2|^2+\cos(\htheta)\int_X |u\cdot\Phi|^2.
\end{aligned}
\]
Hence if $\cL u=0$, we have:
\[
\hat{\nabla}u_1=0,\qquad \del u_2=0,\qquad u\Phi=0.
\]
Thus $u_2$ is constant, and $u_1$ is a $h_{1,0}$-parallel, Hermitian endomorphism of $E_1$ which implies that $u_1$ is holomorphic.  Furthermore, since $u\Phi=0$, $u$ defines an endomorphism of the triple $(E_1,E_2,\Phi)$. Since the triple is simple by assumption we have
\[
u_1=\lambda I_{E_1},\qquad u_2=\lambda
\]
for some $\lambda\in \R$.  Using the normalization restriction of $\mathcal{B}_{k+2,\alpha}$ we have:
\[
0=\int_X\Tr(u_1)+u_2
=\lambda(\rank(E_1)+1)\vol(X),
\]
Thus $\lambda=0$ and $\ker(\cL)=0$ on $\mathcal{B}_{k+2,\alpha}$. Since $\cL$ is elliptic, self-adjoint and the kernel vanishes, $\cL$ is an isomorphism by the standard elliptic regularity for uniformly elliptic divergence-type systems~\cite{ChenWu}.
\end{proof}

\subsection{The fixed point problem}
\label{sec:sec10-compact-map}

We now return to the nonlinear system. We write the system~\eqref{eq: systemForSchauder} as $\cF_t=0$, where $\cF_t=(\cF_{1,t},\cF_{2,t})$ is given by
\[
\cF_{1,t}=i\Lambda F_{1}+R_{1,t}(h_{1,t},h_{2,t},\Phi,\nabla^t\Phi),\qquad
\cF_{2,t}=i\Lambda F_{2}+R_{2,t}(h_{1,t},h_{2,t},\Phi,\nabla^t\Phi).
\]
The nonlinear operators $R_{1,t},R_{2,t}$ contain no second derivatives of the metric.  From now on we suppress the dependence on $t$, with it understood that, unless otherwise noted, all quantities are computed using the metrics $(h_{1,t},h_{2,t})$.  We will be more precise about the operators $R_{1,t},R_{2,t}$ since it will be needed later.  The system~\eqref{eq: dimReduceDHYM-MOC} can be written in terms of the operators 
\begin{equation}\label{eq: frakF_1def}
\mathfrak{F}_{1,t}(u_1,u_2):= T_{\Phi,t}(i\Lambda F_1) -\left(\sin(\htheta) I - \cos(\htheta)\Phi\Phi^{\dagger} + \frac{t}{2}\sin(\htheta)\nabla \Phi (\nabla \Phi)^{\dagger}\right)
\end{equation}
and
\begin{equation}\label{eq: frakF_2def}
\begin{aligned}
\mathfrak{F}_{2,t}(u_1,u_2) :=& \left(\cos(\hat{\theta}) + \left(\frac{4\pi}{\sigma}-t|\Phi|^2\right)\sin(\hat{\theta})\right)i\Lambda F_{2}\\
&\quad -\left( -\cos(\hat{\theta})\left(\frac{4\pi}{\sigma} - |\Phi|^2\right) +\sin(\hat{\theta})\left(1 + \frac{t}{2}|\nabla \Phi|^2\right) \right) 
\end{aligned}
\end{equation}
where $T_{\Phi,t}$ is the linear operator introduced in~\eqref{eq: TPhiOperator}.  By integration by parts we have
\[
\int_{X}\Tr(\mathfrak{F}_{1,t}(u_1,u_2))+\mathfrak{F}_{2,t}(u_1,u_2)=0.
\]
Then
\[
\cF_{1,t}=T_{\Phi,t}^{-1}(\mathfrak{F}_{1,t}), \qquad \cF_{2,t} = \frac{\mathfrak{F}_{2,t}}{\left(\cos(\hat{\theta}) + \left(\frac{4\pi}{\sigma}-t|\Phi|^2\right)\sin(\hat{\theta})\right)}
\]
From these expressions we can find the operators $R_{1,t}$ and $R_{2,t}$ explicitly.  In terms of the operator $T_{\Phi,t}$ introduced in~\eqref{eq: TPhiOperator} and its inverse, as computed in Lemma~\ref{lem: TinverseFormula} we have
\[
\begin{aligned}
R_{1,t} &= -T_{\Phi,t}^{-1}\left(\sin(\htheta) I - \cos(\htheta)\Phi\Phi^{\dagger} + \frac{t}{2}\sin(\htheta)\nabla \Phi (\nabla \Phi)^{\dagger}\right)\\
R_{2,t} &=-\frac{\left( -\cos(\hat{\theta})\left(\frac{4\pi}{\sigma} - |\Phi|^2\right) +\sin(\hat{\theta})\left(1 + \frac{t}{2}|\nabla \Phi|^2\right) \right)}{\left(\cos(\hat{\theta}) + \left(\frac{4\pi}{\sigma}-t|\Phi|^2\right)\sin(\hat{\theta})\right)}
\end{aligned}
\]
Evidently $\mathcal{F}_{1,t}$ is $h_{1,t}$-Hermitian, while $\cF_{2,t}$ is real valued.  We define the corrected nonlinear error term by
\[
\mathtt{E}_t(u)=
\begin{pmatrix}
\cos(\htheta)\left(-i\Lambda\dbar\hat{\nabla}u_1+\Phi\Phi^{\dagger_0}u
-\mathtt{S}({\rm ad}_{u_1})^{-1}e^{u_1/2}\cF_{1,t}(u)e^{-u_1/2}\right)\\
-\left(\cos(\htheta)+\frac{4\pi}{\sigma}\sin(\htheta)\right)i\Lambda\dbar\del u_2
-\cos(\htheta)\Phi^{\dagger_0}u\Phi
-\left(\cos(\htheta)+\frac{4\pi}{\sigma}\sin(\htheta)\right)\cF_{2,t}(u)
\end{pmatrix}.
\]
where $\mathtt{S}({\rm ad}_{u_1})$ is defined by the convergent power series associated to
\[
\mathtt{S}(z)=\frac{e^{z/2}-e^{-z/2}}{z}.
\]
The reason for introducing the operator $\mathtt{S}({\rm ad}_{u_1})$ is to preserve the $h_{i,0}$-Hermitian property of the operator.
\begin{lem}
\label{lem:sec10-s-operator}
If \(u_1\) is \(h_{1,0}\)-Hermitian, then
\(\mathtt S(\operatorname{ad}_{u_1})\) is invertible on
\(\operatorname{End}(E_1)\), maps the space of \(h_{1,0}\)-Hermitian endomorphisms to
itself, and depends smoothly on \(u_1\).
\end{lem}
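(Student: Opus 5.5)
The plan is to use spectral calculus for $\operatorname{ad}_{u_1}$ on $\operatorname{End}(E_1)$, done pointwise on $X$. Fix $x\in X$ and choose an $h_{1,0}$-orthonormal eigenbasis $e_1,\dots,e_r$ of $u_1(x)$, with real eigenvalues $\lambda_1,\dots,\lambda_r$; these are real because $u_1$ is $h_{1,0}$-Hermitian. In this basis the elementary endomorphisms $E_{\alpha\beta}=e_\alpha\otimes e_\beta^*$ form a basis of $\operatorname{End}(E_1)_x$. They satisfy $\operatorname{ad}_{u_1}E_{\alpha\beta}=(\lambda_\alpha-\lambda_\beta)E_{\alpha\beta}$. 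Since $\mathtt S$ is entire, with Taylor series $\sum_{k\ge0}z^{2k}/(2^{2k}(2k+1)!)$, the power series $\mathtt S(\operatorname{ad}_{u_1})$ converges absolutely in operator norm. It acts diagonally:
\[
\mathtt S(\operatorname{ad}_{u_1})E_{\alpha\beta}=\mathtt S(\lambda_\alpha-\lambda_\beta)E_{\alpha\beta}, \qquad \mathtt S(z)=\frac{2\sinh(z/2)}{z}.
\]
For real $z$ we have $\mathtt S(z)\ge 1>0$, so every eigenvalue of $\mathtt S(\operatorname{ad}_{u_1})$ is a positive real number. Hence $\mathtt S(\operatorname{ad}_{u_1})$ is invertible, and its inverse acts by $E_{\alpha\beta}\mapsto \mathtt S(\lambda_\alpha-\lambda_\beta)^{-1}E_{\alpha\beta}$. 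This gives the invertibility claim.

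Next I check that Hermitian endomorphisms are preserved. The $h_{1,0}$-adjoint satisfies $(\operatorname{ad}_{u_1}A)^{\dagger_0}=[A^{\dagger_0},u_1]=-\operatorname{ad}_{u_1}(A^{\dagger_0})$. Because $\mathtt S$ is an even function, only even powers of $\operatorname{ad}_{u_1}$ appear in its series, and each even power commutes with taking $\dagger_0$. It follows that $\mathtt S(\operatorname{ad}_{u_1})(A)^{\dagger_0}=\mathtt S(\operatorname{ad}_{u_1})(A^{\dagger_0})$, so Hermitian $A$ is sent to a Hermitian endomorphism. The eigenbasis description gives the same conclusion: $A$ Hermitian means $A_{\beta\alpha}=\overline{A_{\alpha\beta}}$, and the coefficients $\mathtt S(\lambda_\alpha-\lambda_\beta)$ are real and symmetric in $\alpha,\beta$. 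The same argument applies to the inverse, since its coefficients are also real and symmetric.

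For smooth dependence on $u_1$, I use that $u_1\mapsto \operatorname{ad}_{u_1}$ is linear and bounded, in particular as a map $C^{k,\alpha}\to C^{k,\alpha}$ acting by multiplication. An entire function of a bounded operator, given by its power series, depends real-analytically on the operator. Therefore $u_1\mapsto \mathtt S(\operatorname{ad}_{u_1})$ is smooth; in the Hölder setting this holds because $C^{k,\alpha}$ is a Banach algebra and the series converges in that norm. Inversion is smooth on the open set of invertible operators, and this includes the Hölder-space operators since the pointwise inverse is given by a smooth function of $u_1$. So $\mathtt S(\operatorname{ad}_{u_1})^{-1}$ is also smooth in $u_1$.

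The main obstacle is uniformity of the inverse when eigenvalues of $u_1$ cross, because the eigenbasis is not smooth there. This is resolved by writing the inverse intrinsically as $(1/\mathtt S)(\operatorname{ad}_{u_1})$. The function $1/\mathtt S(z)=\frac{z/2}{\sinh(z/2)}$ is analytic on the strip $|\operatorname{Im} z|<2\pi$, and the spectrum of $\operatorname{ad}_{u_1}$ is real. The holomorphic functional calculus, via a Cauchy integral around the real interval $[-2\|u_1\|,2\|u_1\|]$, then expresses $(1/\mathtt S)(\operatorname{ad}_{u_1})$ with no reference to an eigenbasis and shows it depends smoothly on $u_1$.
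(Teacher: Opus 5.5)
Your proposal is correct and follows the paper's proof: you diagonalize $\operatorname{ad}_{u_1}$ pointwise with real eigenvalues $\lambda_\alpha-\lambda_\beta$, use positivity of $\mathtt S$ on the real line to get invertibility, and use evenness of $\mathtt S$ (only even powers of $\operatorname{ad}_{u_1}$ appear) to show that $h_{1,0}$-Hermitian endomorphisms are preserved. For smooth dependence, you write the inverse as $(1/\mathtt S)(\operatorname{ad}_{u_1})$ with $1/\mathtt S$ holomorphic on $|\operatorname{Im} z|<2\pi$; this spells out what the paper states in one line via the functional calculus or the convergent power series.
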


\begin{proof}
Observe that the power series defining \(\mathtt S\) contains only even powers of $z$. Precisely,
\[
\mathtt S(z)
=\frac{2\sinh(z/2)}{z}
=\sum_{m=0}^{\infty}\frac{z^{2m}}{2^{2m}(2m+1)!},
\]
with \(\mathtt S(0)=1\).  In a local \(h_{1,0}\)-unitary frame $\{e_1,\ldots, e_{r}\}$ diagonalizing \(u_1\),
say \(u_1=\operatorname{diag}(\lambda_1,\ldots,\lambda_r)\), the operator
\(\operatorname{ad}_{u_1}\) acts on the matrix unit \(E_{ij}=e_i\otimes e_{j}^{\vee}\) by
\[
\operatorname{ad}_{u_1}(E_{ij})=(\lambda_i-\lambda_j)E_{ij}.
\]
The eigenvalue differences \(\lambda_i-\lambda_j\) are real, and
\[
\mathtt S(z)=\frac{2\sinh(z/2)}{z}>0
\]
for every real \(z\), with the value at \(z=0\) understood by continuity.  Hence
\(\mathtt S(\operatorname{ad}_{u_1})\) has no zero eigenvalues and is invertible.

Since \(\operatorname{ad}_{u_1}\) sends Hermitian endomorphisms to anti-Hermitian
endomorphisms and vice-versa, even
powers of \(\operatorname{ad}_{u_1}\) preserve the space of Hermitian endomorphisms.  The power series defining $\mathtt{S}$ contains only even powers, so \(\mathtt S(\operatorname{ad}_{u_1})\), and therefore
its inverse, preserve \(h_{1,0}\)-Hermitian endomorphisms.  Smooth dependence follows
easily from the same functional calculus, or equivalently from the convergent power
series.
\end{proof}

A slightly inconvenient issue is that we need to restrict our consideration to subspaces for which the denominator $\cos(\htheta)+(\frac{4\pi}{\sigma}-t|\Phi|^2)\sin(\htheta)$ appearing in the formula for $i\Lambda F_2$ is bounded away from zero.  This is necessary in order to ensure that our operators define continuous maps.  Related issues also arise in the a priori estimates of Section~\ref{sec: Estimates}.  For this reason we introduce
\[
\Omega_{k,\alpha,t} = \mathfrak{B}_{k,\alpha} \cap \bigg\{ (h_{1,0}e^{u_1},h_{2,0}e^{u_2}) : \sup_{X}|\Phi|_{h_0e^{u}}^2 \leq \frac{3\pi}{\sigma t}\bigg\}.
\]
where we have denoted $h_0e^{u} = h_0e^{u_1-u_{2}I_{E_1}}$.
\begin{lem}
\label{lem:sec10-second-order-cancellation}
The operator $\mathtt{E}_t$ contains only zeroth and first order derivatives of $(u_1,u_2)$.  More precisely,
for each $k \geq 1$, $\mathtt{E}_{t}$ defines a continuous, nonlinear map
\[
\mathtt{E}_{t}: \Omega_{k,\alpha,t}\rightarrow C^{k-1,\alpha}(\Herm_{h_{1,0}}(E_1))\oplus C^{k-1,\alpha}(\mathbb{R}).
\]
\end{lem}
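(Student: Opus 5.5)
The plan is to show that the second-order terms in the two components of $\mathtt{E}_t$ cancel exactly. What remains is then an algebraic expression in $u$, $\hat\nabla u$, $\Phi$ and $\hat\nabla\Phi$, which is smooth on the region where the denominators are controlled.

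\emph{First component.} Write $h_1=h_{1,0}e^{u_1}$. By~\eqref{eq: curvatureRelation} and Lemma~\ref{lem: PoperatorCurvature},
\[
i\Lambda F_1=i\Lambda F_{h_{1,0}}-\mathtt{P}({\rm ad}_{u_1})(i\Lambda\dbar\hat\nabla u_1)+Q(u_1,\hat\nabla u_1),
\]
where the curvature is expressed in the $h_{1,0}$-picture. To compare with $h_1$-Hermitian quantities, conjugate by $e^{u_1/2}$. Since $\mathtt{P}(z)=e^{-z/2}\mathtt{S}(z)$ and $e^{u_1/2}Ae^{-u_1/2}=e^{{\rm ad}_{u_1}/2}A$, we get
\[
e^{u_1/2}\big(\mathtt{P}({\rm ad}_{u_1})B\big)e^{-u_1/2}=\mathtt{S}({\rm ad}_{u_1})B .
\]
Applying $\mathtt{S}({\rm ad}_{u_1})^{-1}$ therefore recovers exactly $-i\Lambda\dbar\hat\nabla u_1$, up to first-order terms $\mathtt{S}^{-1}(e^{u_1/2}Qe^{-u_1/2})$ and the zeroth-order term $\mathtt{S}^{-1}(e^{u_1/2}i\Lambda F_{h_{1,0}}e^{-u_1/2})$. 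This cancels the leading term $-i\Lambda\dbar\hat\nabla u_1$ in $\mathtt{E}_t$. (The conjugation convention, namely whether it is $e^{u_1/2}$ or $e^{-u_1/2}$ that matches the ordering $e^{-u_1}\hat\nabla e^{u_1}$, has to be checked. The operator $\mathtt{S}$ was clearly designed so that this works, and Lemma~\ref{lem:sec10-s-operator} provides invertibility and smooth dependence.)

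The remaining part of $\cF_{1,t}$ is $R_{1,t}=-T_{\Phi,t}^{-1}(\cdots)$. By Lemma~\ref{lem: TinverseFormula} it involves only $\Phi$, $\Phi^\dagger$ (with $\dagger$ taken for $h_0e^{u}$, hence algebraic in $e^{\pm u_1}$ and $e^{\pm u_2}$) and
\[
\nabla^t\Phi=\hat\nabla\Phi+(e^{-u}\hat\nabla e^{u})\Phi ,
\]
which is first order in $u$. The denominators $\cos\htheta+\frac t2\sin\htheta|\Phi|^2$ and $\cos\htheta+t\sin\htheta|\Phi|^2$ are positive because $\cos\htheta>0$.

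\emph{Second component.} We have $i\Lambda F_2=i\Lambda F_{h_{2,0}}-i\Lambda\dbar\del u_2$, while
\[
\cF_{2,t}=i\Lambda F_2+R_{2,t},\qquad R_{2,t}=-\frac{N}{D},\qquad D=\cos\htheta+\Big(\tfrac{4\pi}{\sigma}-t|\Phi|^2\Big)\sin\htheta,
\]
with $N$ the numerator in Lemma~\ref{lem: H2CurvLem}. The second-order term of $-(\cos\htheta+\frac{4\pi}{\sigma}\sin\htheta)\cF_{2,t}$ is $+(\cos\htheta+\frac{4\pi}{\sigma}\sin\htheta)\,i\Lambda\dbar\del u_2$, which cancels the first term of the second component. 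On $\Omega_{k,\alpha,t}$ we have $t|\Phi|^2\le 3\pi/\sigma$, hence
\[
D\ge \cos\htheta+\tfrac{\pi}{\sigma}\sin\htheta>0 .
\]
So $R_{2,t}$ is a smooth function of $|\Phi|^2_{h_0e^u}$ and $|\nabla^t\Phi|^2$, i.e.\ of $u$ and $\hat\nabla u$.

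\emph{Continuity.} After the cancellations, $\mathtt{E}_t(u)=G(x,u,\hat\nabla u)$ for a function $G$ smooth in $(u,\hat\nabla u)$ on the set where $D$ is bounded below and $u$ ranges over compact sets. The coefficients of $G$ are built from the fixed smooth data $\Phi$, $\hat\nabla\Phi$, $F_{h_{i,0}}$ and the functional calculus maps $u_1\mapsto e^{\pm u_1/2}$ and $\mathtt{S}({\rm ad}_{u_1})^{-1}$, which are analytic on Hermitian matrices. Composition of a smooth function with a $C^{k-1,\alpha}$ argument is continuous into $C^{k-1,\alpha}$ (the standard Nemytskii/composition lemma for Hölder spaces, $k\ge1$), and $u\mapsto\hat\nabla u$ is continuous $C^{k,\alpha}\to C^{k-1,\alpha}$. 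Together these give the claim. The main obstacle is the first step: getting the conjugation convention exactly right so that $\mathtt{S}({\rm ad}_{u_1})^{-1}$ produces precisely $-i\Lambda\dbar\hat\nabla u_1$ with the correct sign and ordering. A secondary issue is confirming that the quadratic remainder $Q$ genuinely involves only $\hat\nabla u_1$ and no second derivatives.
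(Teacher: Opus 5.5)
Your cancellation argument is the paper's. The identity $e^{{\rm ad}_{u_1}/2}\mathtt{P}({\rm ad}_{u_1})=\mathtt{S}({\rm ad}_{u_1})$ turns $\mathtt{S}({\rm ad}_{u_1})^{-1}e^{u_1/2}\cF_{1,t}(u)e^{-u_1/2}$ into $-i\Lambda\dbar\hat\nabla u_1$ plus lower-order terms, and $i\Lambda\dbar\del u_2$ cancels directly in the second component. The conjugation convention you flagged is correct exactly as you wrote it. The remainder $Q$ is harmless: $\dbar$ falling on the coefficient $\mathtt{P}({\rm ad}_{u_1})$ only produces terms bilinear in $\dbar u_1$ and $\hat\nabla u_1$, with coefficients analytic in $u_1$. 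Your lower bound $D\geq\cos(\htheta)+\frac{\pi}{\sigma}\sin(\htheta)$ on $\Omega_{k,\alpha,t}$ and your H\"older composition argument supply details the paper leaves implicit.

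What is missing is the part of the statement asserting that the first component takes values in $\Herm_{h_{1,0}}(E_1)$. Writing $\mathtt{E}_t(u)=G(x,u,\hat\nabla u)$ says nothing about this. After the cancellation the surviving pieces are not separately Hermitian: $(\Phi\Phi^{\dagger_0}u_1)^{\dagger_0}=u_1\Phi\Phi^{\dagger_0}$, and $e^{u_1/2}(i\Lambda F_{h_{1,0}})e^{-u_1/2}$ is not $h_{1,0}$-Hermitian in general. The paper therefore argues on the uncancelled expression, in three steps.
\begin{enumerate}
\item $\cF_{1,t}(u)$ is $h_1$-Hermitian, and conjugation by $e^{u_1/2}$ carries $h_1$-Hermitian endomorphisms to $h_{1,0}$-Hermitian ones, because $h_1=h_{1,0}e^{u_1}$.
\item $\mathtt{S}({\rm ad}_{u_1})^{-1}$ preserves $h_{1,0}$-Hermitian endomorphisms because $\mathtt{S}$ is even (Lemma~\ref{lem:sec10-s-operator}).
\item $X:=-i\Lambda\dbar\hat\nabla u_1+\Phi\Phi^{\dagger_0}u$ is $h_{1,0}$-Hermitian (Lemma~\ref{lem:sec10-linear-Hermitianity}).
\end{enumerate}
The third step is the substantive one. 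Neither summand of $X$ is Hermitian; one has $X^{\dagger_0}-X=[u_1,\,i\Lambda F_{h_{1,0}}+\Phi\Phi^{\dagger_0}]$. This vanishes only because $(h_{1,0},h_{2,0})$ were chosen to solve the $t=0$ equation, so that $i\Lambda F_{h_{1,0}}+\Phi\Phi^{\dagger_0}=\tan(\htheta)I$.

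Your proposal never uses this choice of background metrics. For general $(h_{1,0},h_{2,0})$ the first component of $\mathtt{E}_t$ is not Hermitian. Then $\Pi_{\mathfrak{B}}\mathtt{E}_t$ would not lie in $\mathcal{B}_{k,\alpha}$, the space on which $\cL^{-1}$ is defined.
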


\begin{proof}
Recall that by Lemma~\ref{lem: PoperatorCurvature} we have
\[
i\Lambda\dbar\left(e^{-u_1}\hat\nabla e^{u_1}\right)
=P({\rm ad}_{u_1})\,i\Lambda\dbar\hat\nabla u_1+Q(u_1,\hat\nabla u_1),
\]
where $Q$ is some quadratic function and $\mathtt{P}$ is the operator whose power series is defined by
\[
\mathtt{P}(z)=\frac{1-e^{-z}}{z}.
\]
For $h_1=h_{1,0}e^{u_1}$, the curvature transformation formula gives
\[
i\Lambda F_{1}
=i\Lambda F_{h_{1,0}}-i\Lambda\dbar\left(e^{-u_1}\hat\nabla e^{u_1}\right).
\]
Moreover,
\[
e^{-u_1}\hat\nabla e^{u_1}=\mathtt{P}({\rm ad}_{u_1})\hat\nabla u_1.
\]
After applying $\dbar$ and contracting, this becomes
\[
i\Lambda\dbar\left(e^{-u_1}\hat\nabla e^{u_1}\right)
=\mathtt{P}({\rm ad}_{u_1})\,i\Lambda\dbar\hat\nabla u_1+Q(u_1,\hat\nabla u_1),
\]
where $Q$ is universal and depends only on $u_1$ and first derivatives of $u_1$.  In
particular, $Q$ maps bounded subsets of $C^{k,\alpha}$ continuously to $C^{k-1,\alpha}$.

Since
\[
\mathtt{S}(z)=e^{z/2}\mathtt{P}(z),
\]
we have
\[
e^{{\rm ad}_{u_1}/2}\mathtt{P}({\rm ad}_{u_1})=\mathtt{S}({\rm ad}_{u_1}).
\]
Therefore
\[
\begin{aligned}
&\mathtt{S}({\rm ad}_{u_1})^{-1}
e^{u_1/2}\cF_{1,t}(u)e^{-u_1/2}\\
&\quad =
-i\Lambda\dbar\hat\nabla u_1
+\mathtt{S}({\rm ad}_{u_1})^{-1}
e^{{\rm ad}_{u_1/2}}
\left(i\Lambda F_{h_{0,1}}+R_{1,t}(\Phi,\nabla\Phi)-Q(u_1,\hat\nabla u_1)\right).
\end{aligned}
\]
The only second order term on the right is the displayed
$-i\Lambda\dbar\hat\nabla u_1$.  Hence this term cancels in the expression
\[
-i\Lambda\dbar\hat\nabla u_1+\Phi\Phi^{\dagger_0}u
-\mathtt{S}({\rm ad}_{u_1})^{-1}e^{u_1/2}\cF_{1,t}(u)e^{-u_1/2}.
\]
All remaining terms involve only $u_1,u_2$, and first derivatives of $u_1,u_2$ through
$\nabla\Phi$, and the fixed background data.  Thus the first component of $\mathtt{E}_t$ extends to a
well-defined map from $B_{k,\alpha}$ to $C^{k-1,\alpha}$.

Finally, $e^{u_1/2}\cF_{1,t}(u)e^{-u_1/2}$ is $h_{1,0}$-Hermitian because
$\cF_{1,t}(u)$ is $h_1$-Hermitian, and $\mathtt{S}({\rm ad}_{u_1})^{-1}$ preserves
$h_{1,0}$-Hermitian endomorphisms.  The other terms are $h_{1,0}$-Hermitian by Lemma~\ref{lem:sec10-linear-Hermitianity}.  Hence the image lies in $C^{k-1,\alpha}(\Herm_{h_{1,0}}(E_1))$.  The conclusion for the second component of $\mathtt{E}_t$ is clear.
\end{proof}

We can now cast the system~\eqref{eq: systemForSchauder} as a fixed point problem.  Consider the operator
\begin{equation}\label{eq: protoFixedPointOperator}
\cL u-\mathtt{E}_t(u)=
\begin{pmatrix}
\cos(\htheta)\mathtt{S}({\rm ad}_{u_1})^{-1}e^{u_1/2}\cF_{1,t}(u)e^{-u_1/2}\\
\left(\cos(\htheta)+\frac{4\pi}{\sigma}\sin(\htheta)\right)\cF_{2,t}(u)
\end{pmatrix}.
\end{equation}
Clearly if $\cL u-\mathtt{E}_t(u)=0$, then $\mathcal{F}_{1,t}(u)=0$ and $\mathcal{F}_{2,t}(u)=0$, and hence $h_i=h_{i,0}e^{u_i}$ solve the system~\eqref{eq: dimReduceDHYM-MOC} at time $t$.  Since $(E_1,E_2,\Phi)$ is $Z$-stable $\mathcal{L}$ is invertible by Lemma~\ref{lem:sec10-linear-isomorphism}. Therefore, solutions of the equation $\cL u-\mathtt{E}_t(u)=0$ are fixed points of the operator $u \mapsto \mathcal{L}^{-1}\mathtt{E}_t(u)$.  The only caveat is that $\mathtt{E}_t$ may not map $\Omega_{k+1,\alpha,t}$ to $\mathfrak{B}_{k,\alpha}$, as $\int_{X}\Tr(\mathtt{E}_t(u))$ may not vanish.  We therefore project onto this normalized space.  Let
\[
\ell(f)=\int_X\Tr(f_1)+f_2,
\]
and set
\[
\rho=\begin{pmatrix}\cos(\htheta)I_{E_1}\\
\cos(\htheta)+\frac{4\pi}{\sigma}\sin(\htheta)\end{pmatrix}.
\]
Since
\[
\ell(\rho)=\left((\rank(E_1)+1)\cos(\htheta)+\frac{4\pi}{\sigma}\sin(\htheta)\right)\vol(X),
\]
which is nonzero, we define
\[
\Pi_{\mathfrak{B}}(f)=f-\frac{\ell(f)}{\ell(\rho)}\rho.
\]
Then $\Pi_{\mathfrak{B}}(f)\in\mathcal{B}_{k,\alpha}$ and $\Pi_{\mathfrak{B}}$ restricts to the identity on
$\mathcal{B}_{k,\alpha}$.  We define the fixed point map by
\[
T_t(u):=\cL^{-1}\circ \Pi_{\mathfrak{B}}\circ \mathtt{E}_t(u).
\]
For $R>0$ let
\[
\Omega_{1,\alpha, t}(R)=\{u\in \Omega_{1,\alpha,t}:\|u\|_{C^{1,\alpha}}<R \}.
\]
We claim that the corrected error term has the mapping property
\[
\Pi_{\mathfrak{B}}\circ\mathtt{E}_t:\Omega_{1,\alpha,t}(R)\subset \mathcal{B}_{1,\alpha}\longrightarrow \mathcal{B}_{0,\alpha}.
\]
Indeed, this follows from Lemma~\ref{lem:sec10-second-order-cancellation}. So, we consider
\[
T_t=\cL^{-1}\circ\Pi_{\mathfrak{B}}\circ \mathtt{E}_t:\Omega_{1,\alpha,t}(R)\longrightarrow \mathcal{B}_{2,\alpha}.
\]
Since the inclusion $\mathcal{B}_{2,\alpha}\hookrightarrow \mathcal{B}_{1,\alpha}$ is compact, $T_t$ is a compact mapping.  Moreover, the dependence on $t\in[0,1]$ is
continuous in the $C^{1,\alpha}$ topology.  We claim that fixed points of $T_t$ solve system~\eqref{eq: dimReduceDHYM-MOC}.  By definition if $T_t(u)=u$, then
\[
\mathcal{L}(u)-(E_{t}(u) - \lambda\rho)=0
\]
On the other hand, since
\[
I_{E_1} = \mathtt{S}({\rm ad_{u_1}})^{-1}e^{u_1/2}I_{E_1}e^{-u_1/2}
\]
we obtain from~\eqref{eq: protoFixedPointOperator} that fixed points of $T_t$ satisfy the equation
\[
\mathcal{F}_{1,t}(u)= \lambda I_{E_1} \qquad \mathcal{F}_{2,t}= \lambda.
\]
The following lemma shows that $\lambda=0$
\begin{lem}
\label{lem:sec10-scalar-residual}
Suppose that, for some $t\in[0,1]$, a pair of metrics $(h_1,h_2)$ satisfies the equations
\[
\cF_{1,t}=\lambda I_{E_1},\qquad \cF_{2,t}=\lambda
\]
for a real constant $\lambda$.  Then $\lambda=0$ and $(h_1, h_2)$ solve~\eqref{eq: dimReduceDHYM-MOC}.
\end{lem}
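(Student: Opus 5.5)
The plan is to use the integral identity already recorded just after \eqref{eq: frakF_2def}, namely
\[
\int_{X}\Tr(\mathfrak{F}_{1,t}(u_1,u_2))+\mathfrak{F}_{2,t}(u_1,u_2)=0,
\]
which holds for \emph{any} pair of metrics because it comes purely from integration by parts. We convert the hypotheses $\cF_{1,t}=\lambda I_{E_1}$, $\cF_{2,t}=\lambda$ into statements about $\mathfrak{F}_{1,t},\mathfrak{F}_{2,t}$. Since $\cF_{1,t}=T_{\Phi,t}^{-1}(\mathfrak{F}_{1,t})$, we get
\[
\mathfrak{F}_{1,t}=\lambda T_{\Phi,t}(I_{E_1})=\lambda\left(\cos(\htheta)I_{E_1}+t\sin(\htheta)\Phi\Phi^{\dagger}\right).
\]
Similarly,
\[
\mathfrak{F}_{2,t}=\lambda\left(\cos(\htheta)+\left(\tfrac{4\pi}{\sigma}-t|\Phi|^2\right)\sin(\htheta)\right).
\]

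Taking traces and adding, the $t\sin(\htheta)|\Phi|^2$ terms cancel exactly, since $\Tr(\Phi\Phi^{\dagger})=|\Phi|^2$. This gives
\[
\Tr(\mathfrak{F}_{1,t})+\mathfrak{F}_{2,t}=\lambda\left((\rank(E_1)+1)\cos(\htheta)+\tfrac{4\pi}{\sigma}\sin(\htheta)\right),
\]
which is a positive constant times $\lambda$ because $\cos(\htheta),\sin(\htheta)>0$. Integrating over $X$ and using the vanishing identity yields $\lambda\,\ell(\rho)=0$, so $\lambda=0$.

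With $\lambda=0$ we have $\mathfrak{F}_{1,t}=0$ and $\mathfrak{F}_{2,t}=0$, and these are exactly \eqref{eq: dimReduceDHYM-MOC} rewritten via \eqref{eq: dimReduceMOCUsingToperator}. The only point needing care is to check the integral identity itself, since the paper states it without proof. We expand $T_{\Phi,t}(i\Lambda F_1)$ and collect terms. The combination $\cos(\htheta)\left(\Tr(i\Lambda F_1)+i\Lambda F_2\right)$ integrates to $2\pi\cos(\htheta)(\deg E_1+\deg E_2)$. The terms $\frac{4\pi}{\sigma}\sin(\htheta)\,i\Lambda F_2$ and the constants $\sin(\htheta)(\rank E_1+1)$, $\frac{4\pi}{\sigma}\cos(\htheta)$ integrate to topological quantities. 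The remaining terms are
\[
t\sin(\htheta)\left(\Tr\left(i\Lambda F_1\Phi\Phi^{\dagger}\right)-|\Phi|^2 i\Lambda F_2\right)-\tfrac{t}{2}\sin(\htheta)|\nabla\Phi|^2-\tfrac{t}{2}\sin(\htheta)|\nabla\Phi|^2 .
\]
Their integral vanishes by the Bochner identity $\int_X|\nabla\Phi|^2=\int_X\langle(i\Lambda F_1-i\Lambda F_2)\Phi,\Phi\rangle$, as in the computation leading to \eqref{eq: sec3CentralCharge}.

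The main obstacle is the constant term, which vanishes exactly when $\htheta$ is the phase of $Z_{\cX}(\cE)$. This is the content of \eqref{eq: sec3CentralCharge}: the phase condition ${\rm Im}(e^{-i\htheta}Z_{\cX}(\cE))=0$ is precisely the vanishing of the integrated trace of \eqref{eq: dimReduceDHYM}. Since the $t$-dependent terms integrate to zero for every $t$, the same cancellation holds along the whole path.
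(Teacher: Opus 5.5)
Your proposal is correct and follows the paper's argument: you apply $T_{\Phi,t}$ to get $\mathfrak{F}_{1,t}=\lambda\left(\cos(\htheta)I_{E_1}+t\sin(\htheta)\Phi\Phi^{\dagger}\right)$ and the analogous formula for $\mathfrak{F}_{2,t}$, observe that the $t\sin(\htheta)|\Phi|^2$ terms cancel after tracing, and use $\int_X\Tr(\mathfrak{F}_{1,t})+\mathfrak{F}_{2,t}=0$ to force $\lambda=0$. Your extra check of that integral identity is also correct: the $t$-dependent terms vanish by the Bochner formula, and the constant term vanishes by the phase condition ${\rm Im}(e^{-i\htheta}Z_{\cX}(\cE))=0$. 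This fills in a step the paper only asserts, though it depends on that phase condition and not on integration by parts alone, as your opening sentence suggests.
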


\begin{proof}
Applying $T_{\Phi,t}$ to $\cF_{1,t}$ we have
\[
\mathfrak{F}_{1,t} = \lambda (\cos(\htheta) I_{E} + t\sin(\htheta) \Phi\Phi^{\dagger})
\]
where $\mathfrak{F}_{1,t}$ is defined in~\eqref{eq: frakF_1def}.  Similarly we have
\[
\mathfrak{F}_{2,t} = \lambda \left(\cos(\htheta) + (\frac{4\pi}{\sigma}-t|\Phi|^2)\sin(\htheta)\right)
\]
 Since we have  $\int_X\Tr(\mathfrak{F}_{1,t})+\mathfrak{F}_{2,t}=0$,
 we must have $\lambda(\cos(\htheta)(\rank(E_1)+1) + \frac{4\pi}{\sigma}\sin(\htheta))=0$. Since $\sin(\htheta), \cos(\htheta)>0$ we deduce $\lambda=0$.

\end{proof}

We can now prove the main theorem.  We use an index theory argument, based on the Leray-Schauder topological degree, which was kindly suggested to us by S. Brendle who also pointed out to us the reference \cite{ChangGursky} where a similar argument is used.  The reference for the material used in this section is contained in Nirenberg's lecture notes \cite{Nirenberg}

\subsection{Proof of Theorem~\ref{thm: mainTheorem}}

\begin{proof}[Proof of Theorem~\ref{thm: mainTheorem}]
We consider two cases, according to whether ${\rm Re}(Z_{\cX}(\cE))$ is positive or negative.
\\
\bigskip

{\bf Case 1}:  Suppose that ${\rm Re}(Z_{\cX}(\cE))\leq 0$.  Then the theorem is a consequence of Corollary~\ref{cor: classifySolRealNeg}.
\bigskip

{\bf Case 2}: Suppose $\cE=(E_1,E_2,\Phi)$ has ${\rm Re}(Z_{\cX}(\cE)) >0$ and admits a solution of~\eqref{eq: dimReduceDHYM}+~\eqref{eq: dimReduceDHYM-Pos} with $\tan(\htheta)\leq \frac{\sigma}{4\pi}$.  Then $\cE$ is $Z$-stable by Proposition~\ref{prop: stabNecessaryCosPos}.
\bigskip

Conversely, suppose $\cE=(E_1,E_2,\Phi)$ is a triple of vortex type that is stable in the sense of Definition~\ref{def: stabOnXxP}.  By Theorem~\ref{thm: solveInitialEquation} there exists a unique solution of the system~\eqref{eq: dimReduceDHYM-MOC} at  time $t=0$.  If $\tan (\htheta) > \frac{2 \pi}{\sigma}$ then let $\mu(t):[0,1]\rightarrow \mathbb{R}_{>0}$ be a monotone decreasing, piecewise constant function such that
\[
(1.25)\frac{4\pi}{\sigma f(\htheta, t)}\leq \mu(t)\leq (1.5)\frac{4\pi}{\sigma f(\htheta, t)}
\]
where $f$ is defined in Lemma~\ref{lem: conditionalPhiBound}.  If $\tan(\htheta) \leq \frac{2\pi}{\sigma}$ we take $\mu(t)= \frac{3\pi}{\sigma}$ to be constant.  We choose $\mu$ so that it takes on only finitely many values
\[
\mu(t) = \mu_i \quad \text{ for } t \in [t_i,t_{i+1}) \quad  0 \leq i \leq N
\]
where $\mu_0\geq \mu_1 \geq \cdots \geq \mu_{N}= \frac{3\pi}{\sigma}$ and we set $\mu(1)=\mu_{N}$.  For each $i\in\{0,\ldots, N\}$ and $R>0$ define
\[
\widehat{\Omega}_{1,\alpha}(\mu_i,R) = \bigg\{(u_1,u_2)\in \mathcal{B}_{1,\alpha} : \sup_{X}|\Phi|^2_{h_0e^{u}} < \mu_i \quad \text{ and } \quad \|(u_1,u_2)\|_{C^{1,\alpha}(h_{1,0},h_{2,0})} < R\bigg\}
\]
From our choice of $\mu$ we have that
\[
\widehat{\Omega}_{1,\alpha}(\mu_i, R) \subset \Omega_{1,\alpha,t}(R) \quad \text{ for } t \in [t_i,t_{i+1}]
\]
Since the upper bound $\sup_{X}|\Phi|^2_{h_0e^{u}} < \mu_i$ certainly implies $\inf_{X}|\Phi|^2 < \frac{4\pi}{\sigma t}$, we may apply Lemma~\ref{lem: conditionalPhiBound} and Proposition~\ref{prop: stableImpliesEstimatesToAllOrders} to find a uniform $R>0$ such that any solution of the system~\eqref{eq: dimReduceDHYM-MOC} for $t\in [t_i,t_{i+1}]$ (or $t\in [t_{N},1]$) lies in $\widehat{\Omega}_{1,\alpha}(\mu_i, R)$.  For $t\in [t_i,t_{i+1}]$, we consider the compact operator
\[
T_{t}: \widehat{\Omega}_{1,\alpha}(\mu_i, 2R) \rightarrow \mathcal{B}_{1,\alpha}.
\]
By Theorem~\ref{thm: solveInitialEquation}, $T_{0}$ has a unique fixed point and a short calculation shows that the Leray-Schauder degree is $1$. By Lemma~\ref{lem:sec10-scalar-residual}, any fixed point of $T_{t}$ solves ~\eqref{eq: dimReduceDHYM-MOC} and, for any such solution
\[
\sup_{X}|\Phi|^2_{h}\leq \frac{4\pi}{\sigma f(\htheta,t)} < \mu_0
\]
thanks to Lemma~\ref{lem: conditionalPhiBound}.  The homotopy invariance of the Leray-Schauder degree yields that $T_{t}:\widehat{\Omega}_{1,\alpha}(\mu_i, 2R) \rightarrow \mathcal{B}_{1,\alpha}$ has degree $1$ for all $t\in[0,t_1]$.  On the other hand, for any fixed point of $T_{t_1}$ we have
\[
\sup_{X}|\Phi|^2_{h}\leq \frac{4\pi}{\sigma f(\htheta,t_1)} < \mu_1 < \mu_0
\]
and hence, by excision, $T_{t_1}:\widehat{\Omega}_{1,\alpha}(\mu_1, 2R) \rightarrow \mathcal{B}_{1,\alpha} $ has Leray-Schauder degree $1$.  Continuing in this way we obtain that $T_{1}: \widehat{\Omega}_{1,\alpha}(\mu_N, 2R) \rightarrow \mathcal{B}_{1,\alpha} $ has Leray-Schauder degree $1$, and hence a solution of the system~\eqref{eq: dimReduceDHYM} exists.  Finally, by Lemma~\ref{lem: positivityPreservedAlongMOC} this solution automatically satisfies~\eqref{eq: dimReduceDHYM-Pos} and the theorem follows.
\end{proof}

\section{Positivity notions, GIT, and Bridgeland stability} \label{sec: comments}
In this section we comment on some features of the solutions we have produced and formulate some questions for the future.

\subsection{Positivity notions and ellipticity}
We first discuss some analytic features of our solutions.  Recall the following notion introduced by Dervan-McCarthy-Sektnan \cite{DMS}, which we formulate in dimension $2$.

\begin{defn}\label{defn: Zpositive}
Let $\cE \rightarrow \cX$ be a holomorphic vector bundle over a complex surface, and suppose that $\cE$ admits a solution of the dHYM equation with $\htheta \in (0,\pi)$.  We say that a solution of the deformed Hermitian-Yang-Mills equation
\[
{\rm Im}(e^{-i\htheta}(\omega\otimes I_{\cE}-F)^2)=0
\]
is $Z$-positive if, for every point $x\in \cX$ and every $\xi \in \Lambda^{0,1}_{x}\otimes {\rm End}(\cE)_x$ we have
\[
{\rm Tr} \left[{\rm Im}\left(ie^{-i\htheta}(\omega \otimes I_{\cE} -F)\right) \wedge i\xi^{\dagger}\wedge  \xi\right]+{\rm Tr} \left[i\xi^{\dagger}\wedge{\rm Im}\left(ie^{-i\htheta}(\omega \otimes I_{\cE} -F)\right) \wedge  \xi\right]>0
\]
\end{defn}
As explained in \cite{DMS}, this notion is related to the ellipticity of the dHYM system.  In our $SU(2)$ equivariant setting we can use the calculations in Section~\ref{sec: dimReduction} to find
\[
{\rm Im}(ie^{-i\htheta}(\widetilde{\omega}_{\sigma}\otimes I_{\cE} - F))= A\otimes \omega_{X} + B\otimes \omega_{FS} + \mathcal{C}_{mixed}
\]
where
\begin{equation}\label{eq: Amatrix}
A=\begin{pmatrix}\cos(\htheta) I_{E_1} +\sin(\htheta)i\Lambda F_1 & 0\\ 0 & \cos(\htheta) + \sin(\htheta)i\Lambda F_2\end{pmatrix}
\end{equation}
\begin{equation}\label{eq: Bmatrix}
B=\frac{\sigma}{2\pi} \begin{pmatrix} \cos(\htheta)I_{E_1}+\sin(\htheta)\Phi\Phi^{\dagger} & 0 \\ 0 & \cos(\htheta)+(\frac{4\pi}{\sigma}-|\Phi|^2)\sin(\htheta)\end{pmatrix}
\end{equation}
and 
\begin{equation}\label{eq: Cmatrix}
\mathcal{C}_{mixed} = \begin{pmatrix} 0 & i\sin(\htheta)\nabla \Phi \otimes \alpha\\
-i\sin(\htheta)(\nabla \Phi)^{\dagger} \otimes \alpha^{\dagger}\end{pmatrix}
\end{equation}
with $\alpha$ as in Section~\ref{sec: dimReduction}. Fix a point and choose a coordinate $w$ on $X$ and $z$ on $\mathbb{P}^1$ so that $\omega_{X}= idw\wedge d\bar{w}$, $\omega_{FS}= idz\wedge d\bar{z}$ and $\alpha = \sqrt{\frac{\sigma}{2\pi}}d\bar{z}$.  Let $\xi = \chi d\bar{w} + \eta d\bar{z}$ where $\chi, \eta \in {\rm End}(E_1\oplus E_2)_{x}$.  Let $C=\begin{pmatrix} 0 & -i\sqrt{\frac{\sigma}{2\pi}}\sin(\htheta)\nabla \Phi\\ 0 & 0\end{pmatrix}$.  Then $Z$-positivity amounts to the weak positivity of the block matrix,
\[
\mathcal{M} = \begin{pmatrix} A & C \\ C^{\dagger} & B \end{pmatrix}
\]
in the following sense:
\begin{equation}\label{eq: positivityTraceCondition}
{\rm Tr}(\mathcal{M}P)>0 \quad \text{ for all matrices } P = \begin{pmatrix}   \eta^{\dagger}\eta+\eta\eta^{\dagger} & \eta^{\dagger}\chi + \eta \chi^{\dagger}\\
\chi^{\dagger}\eta + \chi\eta^{\dagger} & \chi^{\dagger}\chi+\chi\chi^{\dagger}  \end{pmatrix}.
\end{equation}
This condition is implied by the positive definiteness of $\mathcal{M}$, but is in general significantly weaker.  For example, the matrix $\mathcal{M} = \begin{pmatrix} 1 & i\lambda\\ -i\lambda & 1\end{pmatrix}$ is weakly positive in the above sense for all $\lambda \in \mathbb{R}$.  The remainder if this section is spent examining these positivty notions for solutions.  Similar calculations were carried out by Ballal-Pingali \cite{AshPin} for the case of the vector bundle Monge-Amp\`ere equation.

\begin{lem}\label{lem: positivityProperties}
    Suppose that $\cE= (E_1,E_2, \Phi)$ is a bundle of vortex type with an $SU(2)$-equivariant solution of the dHYM system~\eqref{eq: dimReduceDHYM}+~\eqref{eq: dimReduceDHYM-Pos} with $\inf_{X}|\Phi|^2 < \frac{4\pi}{\sigma}$. Then:
    \begin{itemize}
        \item[(i)] The matrices $A$ and $B$, viewed as endomorphisms of $\cE_{x} \simeq (E_1\oplus E_2)_{x}$ are positive definite.
        \item[(ii)] If $\cos(\htheta)\leq 0$, then $C=0$, and hence $\mathcal{M}$ is positive definite.  In particular, any solution of the deformed Vortex equations is $Z$-positive for $\htheta \in [\frac{\pi}{2}, \pi]$.
    \end{itemize}
\end{lem}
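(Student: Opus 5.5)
The plan is to read off positivity of $A$ and $B$ directly from Lemma~\ref{lem: simplifiedPos} and Lemma~\ref{lem: postivityConstraints}/Lemma~\ref{lem: conditionalPhiBound}, and then to handle $C$ in the case $\cos(\htheta)\leq 0$ using the rigidity established in Proposition~\ref{prop: charSolCosNeg}.

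For (i), first treat the $E_2$ blocks, which are scalars. The lower right entry of $B$ is $\frac{\sigma}{2\pi}\left(\cos(\htheta)+(\frac{4\pi}{\sigma}-|\Phi|^2)\sin(\htheta)\right)$. When $\cos(\htheta)\le 0$ this is positive by Lemma~\ref{lem: postivityConstraints}. When $\cos(\htheta)>0$, the hypothesis $\inf_X|\Phi|^2<\frac{4\pi}{\sigma}$ lets us apply Lemma~\ref{lem: conditionalPhiBound}(ii) with $t=1$, which gives positivity again. For the $E_2$ entry of $A$, I would use formula~\eqref{eq: locF2eqStabNec} from Lemma~\ref{lem: H2CurvLem}. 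Its numerator is positive and its denominator is exactly the quantity just shown positive, so $\cos(\htheta)+\sin(\htheta)i\Lambda F_2>0$.

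Next come the $E_1$ blocks. For $B$, the entry $\cos(\htheta)I+\sin(\htheta)\Phi\Phi^\dagger$ is clearly positive when $\cos(\htheta)>0$. When $\cos(\htheta)\le0$, Proposition~\ref{prop: charSolCosNeg} says $E_1$ has rank one, and Lemma~\ref{lem: postivityConstraints} gives $\cos(\htheta)+\sin(\htheta)|\Phi|^2>0$. For $A$, I would split $E_1$ pointwise into $\mathrm{span}\{\Phi\}$ and its orthogonal complement. The cleanest route is to rewrite $\cos(\htheta)I+\sin(\htheta)i\Lambda F_1$ via the first equation of~\eqref{eq: dimReduceDHYM}. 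Combining the two real combinations, one has
\[
\cos(\htheta)\,\mathrm{Re}+\sin(\htheta)\,\mathrm{Im}\text{-part}=\cos(\htheta)\Big(I-\tfrac12\{i\Lambda F_1,\Phi\Phi^\dagger\}+\tfrac12\nabla\Phi(\nabla\Phi)^\dagger\Big)+\sin(\htheta)\big(i\Lambda F_1+\Phi\Phi^\dagger\big),
\]
which is positive by~\eqref{eq: dimReduceDHYM-Pos}. So I would try to express $A_{11}$ through this. On vectors $V\perp\Phi$, Lemma~\ref{lem: H1CurvLem} gives $\cos(\htheta)\langle A_{11}V,V\rangle=\langle V,V\rangle+\frac{\sin^2(\htheta)}{2}|\langle V,\nabla\Phi\rangle|^2$ times positive factors when $\cos(\htheta)>0$. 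On $\Phi$ itself, \eqref{eq: curvH_1PhiPhi} gives $\cos(\htheta)|\Phi|^2+\sin(\htheta)\langle i\Lambda F_1\Phi,\Phi\rangle$ with positive numerator over $\cos(\htheta)+\sin(\htheta)|\Phi|^2>0$.

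The mixed terms $\langle A_{11}\Phi,V\rangle$ are the main obstacle. Rather than estimating them, I would avoid them by writing $A_{11}=T_{\Phi,1}^{-1}(\text{positive})$-type identities. Concretely, I would apply $T_\Phi$ as in the proof of Lemma~\ref{lem: positivityPreservedAlongMOC}:
\[
T_\Phi(\cos(\htheta)I+\sin(\htheta)i\Lambda F_1)=\cos^2(\htheta)I+\sin^2(\htheta)I+\tfrac12\sin^2(\htheta)\nabla\Phi(\nabla\Phi)^\dagger>0.
\]
Since $T_\Phi$ preserves positive definite endomorphisms, its inverse does too for $\cos(\htheta)>0$ (via a continuity/eigenvalue argument, or directly from Lemma~\ref{lem: TinverseFormula}). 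This yields $A_{11}>0$. When $\cos(\htheta)\le 0$ the rank is one and the scalar computation above suffices.

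For (ii), with $\cos(\htheta)\le0$, Proposition~\ref{prop: charSolCosNeg} gives $|\Phi|^2\equiv\frac{2\pi}{\sigma}$ and $\nabla\Phi\equiv0$ (shown in its proof). Hence $C=0$, and $\mathcal M=\mathrm{diag}(A,B)$ is positive definite by (i). Positive definiteness then implies~\eqref{eq: positivityTraceCondition}, since $P$ is positive semidefinite and nonzero whenever $\xi\neq0$. This gives $Z$-positivity for $\htheta\in[\frac{\pi}{2},\pi]$.
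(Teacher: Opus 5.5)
Your proof is correct in substance. Everything except positivity of $A_{11}=\cos(\htheta)I+\sin(\htheta)i\Lambda F_1$ for $\cos(\htheta)>0$ is handled as in the paper. For $\cos(\htheta)\le 0$ the paper reads positivity off the explicit constants in Proposition~\ref{prop: charSolCosNeg}, while you use rank one and Lemma~\ref{lem: postivityConstraints}; these are equivalent.

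For $A_{11}$ the routes differ. The paper substitutes the explicit formula of Lemma~\ref{lem: H1CurveLemSimplfiedPositiveCos}, writes $I+\tan(\htheta)i\Lambda F_1=Q_1+Q_2$, and completes a square in $Q_2$. You instead compute $T_\Phi(A_{11})=I+\tfrac12\sin^2(\htheta)\nabla\Phi(\nabla\Phi)^\dagger$ and pull positivity back through $T_\Phi^{-1}$. These are the same decomposition in disguise, since $\cos(\htheta)Q_1=T_\Phi^{-1}(I)$ and $\cos(\htheta)Q_2=T_\Phi^{-1}\big(\tfrac12\sin^2(\htheta)\nabla\Phi(\nabla\Phi)^\dagger\big)$. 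Your version replaces the square-completion by one structural fact and needs no inverse formula. The paper's version yields explicit matrix entries, which it reuses in the $Z$-positivity computations of Section~\ref{sec: comments}.

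The justification you give for that structural fact does not work as written. ``$T_\Phi$ preserves positive definite endomorphisms, hence so does its inverse'' is not a valid implication. Moreover, its premise is false in rank $\ge 2$, even though the paper asserts it after \eqref{eq: TPhiOperator}. Take $\Phi\Phi^\dagger=p\,e_1\otimes e_1^*$ with $p>0$ and $X=\begin{pmatrix}1&b\\ b&1\end{pmatrix}$ with $0<b<1$. Then
\[
\det T_\Phi(X)=\cos(\htheta)\big(\cos(\htheta)+p\sin(\htheta)\big)-\big(\cos(\htheta)+\tfrac{p}{2}\sin(\htheta)\big)^2b^2,
\]
which is negative for $b$ close to $1$. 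Nor is positivity of $T_\Phi^{-1}$ visible ``directly'' from Lemma~\ref{lem: TinverseFormula}.

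What you need is only the inverse statement, and the eigenvalue argument you mention gives it in one line. If $X$ is Hermitian and $Xv=\mu v$, then
\[
\langle T_\Phi(X)v,v\rangle=\mu\left(\cos(\htheta)|v|^2+\sin(\htheta)|\Phi^\dagger v|^2\right).
\]
So when $\cos(\htheta),\sin(\htheta)>0$, $T_\Phi(X)>0$ forces every eigenvalue of $X$ to be positive. Equivalently, $T_\Phi(X)=MX+XM$ with $M=\tfrac12(\cos(\htheta)I+\sin(\htheta)\Phi\Phi^\dagger)>0$, and $X=\int_0^\infty e^{-sM}T_\Phi(X)e^{-sM}\,ds$. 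With this inserted your argument is complete. This inverse direction is also exactly what is used in Lemma~\ref{lem: positivityPreservedAlongMOC}.
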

\begin{proof}
    The assumption $\inf_{X}|\Phi|^2 < \frac{4\pi}{\sigma}$ implies $|\Phi| \leq \frac{2\pi}{\sigma}$ by Lemma~\ref{lem: conditionalPhiBound}. Let us first deal with the case that $\cos(\htheta)\leq 0$.  In this case, by Proposition~\ref{prop: charSolCosNeg} we have $E_1 \simeq E_2$ and $\Phi$ is nonvanishing with $|\Phi|^2 \equiv \frac{2\pi}{\sigma}$, and 
    \[
    i\Lambda F_1= i\Lambda F_2= \frac{\sin(\htheta) -\frac{2\pi}{\sigma}\cos(\htheta)}{\cos(\htheta)+\frac{2\pi}{\sigma}\sin(\htheta)}
    \]
    and $\cos(\htheta) +\frac{2\pi}{\sigma}\sin(\htheta)>0$. This yields the result.

    Now assume that $\cos(\htheta)>0$.  That $B$ is positive follows from Lemma~\ref{lem: H2CurvLem} and the $C^0$ bound obtained in Lemma~\ref{lem: conditionalPhiBound}.  Now consider the matrix $A$.  The lower right block $\cos(\htheta)+i\sin(\htheta) i\Lambda F_2$ is positive by Lemma~\ref{lem: H2CurvLem} and Lemma~\ref{lem: conditionalPhiBound}.  To check the positivity of $\cos(\htheta) I_{E_11} + \sin(\htheta)i\Lambda F_1$ requires more work.  Let $\alpha = \tan(\htheta)$, $x=\sqrt{\alpha}\Phi$ and $y= \sqrt{\frac{\alpha}{2}}\nabla \Phi$.  By Lemma~\ref{lem: H1CurveLemSimplfiedPositiveCos} we have
    \begin{equation}\label{eq: F_1curveformulaxybasis}
    (I+ \alpha i\Lambda F_1) = \underbrace{(1+\alpha^2)I - \frac{(1+\alpha^2)}{1+|x|^2}xx^{\dagger}}_{Q_1}+\underbrace{\alpha yy^{\dagger}-\frac{\alpha}{2+|x|^2}\{xx^{\dagger},yy^{\dagger}\} + \frac{\alpha |\langle x,y \rangle|^2}{(2+|x|^2)(1+|x|^2)}xx^{\dagger}}_{Q_{2}}
    \end{equation}
    Now if $v$ is a unit vector in the fiber of $E_1$ we have
    \[
v^{\dagger}Q_{1}v = (1+\alpha^2) -\frac{(1+\alpha^2)}{1+|x|^2}|\langle x,v\rangle|^2 >0
    \]
    Similarly,
    \[
    v^{\dagger}Q_{2}v = \alpha |\langle v,y \rangle |^2 - \frac{2\alpha}{2+|x|^2}{\rm Re}(\langle v,x \rangle\langle x,y\rangle \langle y,v \rangle) + \frac{\alpha|\langle x,y \rangle|^2}{(2+|x|^2)(1+|x|^2)}|\langle x,v \rangle|^2
    \]
    We can complete a square in $v^{\dagger}Q_{2}v$ to get
    \[
    v^{\dagger}Q_{2}v= \alpha \left| \langle y, v\rangle - \frac{\langle y, x\rangle}{2+|x|^2}\langle x, v\rangle \right|^2 + \alpha |\langle x, y\rangle|^2 |\langle x, v\rangle|^2 \left[ \frac{1}{(2+|x|^2)(1+|x|^2)} - \frac{1}{(2+|x|^2)^2} \right]
    \]
    and since $\frac{1}{(2+|x|^2)(1+|x|^2)} - \frac{1}{(2+|x|^2)^2}>0$, the lemma follows.
\end{proof}

Lemma~\ref{lem: positivityProperties} shows that solutions of the deformed Vortex system satisfy the $Z$-positivity property in the ``hypercritical" phase regime when $\hat{\theta} \in [\frac{\pi}{2}, \pi]$.  On the other hand, it seems unlikely that $Z$-positivity holds when $\hat{\theta} \in (0, \frac{\pi}{2})$.  To understand this we need the following 

\begin{lem}
For $\{i,j\} \in \{1,2\}$, let $A_{ij}, B_{ij}, C_{ij}$ be the block matrix decompositions of the matrices $A,B,C$ defined in~\eqref{eq: Amatrix},~\eqref{eq: Bmatrix}, and~\eqref{eq: Cmatrix}. $\mathcal{M}$ satisfies~\eqref{eq: positivityTraceCondition} if and only if the following conditions hold:
\begin{itemize}
\item[$(i)$]
\begin{equation}\label{eq: Zpositive-scalar-equation}
A_{11}+A_{22}I_{E_1} > \frac{C_{12}C_{12}^{\dagger}}{B_{22}}
\end{equation}
and, 
\item[$(ii)$] if we set $\mathcal{T}_{B} = B_{22}I_{E_1} + B_{11}$, then the Hermitian linear map $K: {\rm End}(E_1)_{x} \rightarrow {\rm End}(E_1)_{x}$ given by
\begin{equation}\label{eq: Zpositive-matrix-equation}
K(\eta) = \eta A_{11} + A_{11}\eta - C_{12}C_{12}^{\dagger}\eta \mathcal{T}_{B}^{-1} - \mathcal{T}_{B}^{-1}\eta C_{12}C_{12}^{\dagger}.
\end{equation}
is positive definite in the sense that ${\rm Tr}(K(\eta) \eta^{\dagger})>0$ for all $\eta \ne 0$.
\end{itemize}
\end{lem}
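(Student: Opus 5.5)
The plan is to expand ${\rm Tr}(\mathcal{M}P)$ explicitly in terms of the block data and then minimize over the free variables to isolate the two conditions. Write $\eta,\chi\in{\rm End}(E_1\oplus E_2)_x$ in block form. Since $A$ and $B$ are block diagonal and $C$ has only the $(1,2)$ block $C_{12}$ (a map $E_2\to E_1$), ${\rm Tr}(\mathcal{M}P)$ splits as
\[
{\rm Tr}\big(A(\eta^{\dagger}\eta+\eta\eta^{\dagger})\big)+{\rm Tr}\big(B(\chi^{\dagger}\chi+\chi\chi^{\dagger})\big)+2{\rm Re}\,{\rm Tr}\big(C(\chi^{\dagger}\eta+\chi\eta^{\dagger})\big).
\]
The first step is to write out each term blockwise, and to observe that the cross term couples only specific blocks of $\eta$ and $\chi$: $C$ pairs the $E_2\to E_1$ and $E_1\to E_2$ components of $\chi$ against blocks of $\eta$. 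The quadratic form therefore decouples into independent pieces. I will identify (a) a piece involving a scalar/vector block, namely the $E_2\to E_1$ component of $\eta$ coupled to the $E_2\to E_2$ component of $\chi$, and (b) a piece involving the $E_1\to E_1$ block of $\eta$ coupled to the off-diagonal components of $\chi$. The remaining blocks enter only through manifestly positive terms $A_{ii}+A_{jj}$ and $B_{ii}+B_{jj}$, which are positive by Lemma~\ref{lem: positivityProperties}(i).

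In each coupled piece, the $\chi$-variables enter through a positive-definite quadratic term, weighted by $B_{22}$ or by $\mathcal{T}_B=B_{22}I_{E_1}+B_{11}$, plus a term linear in $\chi$ through $C$. The second step is to minimize over $\chi$ by completing the square, i.e.\ taking a Schur complement. This is legitimate because $B_{22}>0$ and $\mathcal{T}_B>0$. The minimum in piece (a) is positive for all nonzero arguments exactly when $A_{11}+A_{22}I_{E_1}-B_{22}^{-1}C_{12}C_{12}^{\dagger}>0$, which gives \eqref{eq: Zpositive-scalar-equation}. For piece (b), the optimal $\chi$ component is $\mathcal{T}_B^{-1}$ applied (on the appropriate side) to $C_{12}^{\dagger}\eta$ or $\eta C_{12}$. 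Substituting it back leaves ${\rm Tr}(K(\eta)\eta^{\dagger})$ with $K$ as in \eqref{eq: Zpositive-matrix-equation}, which gives condition (ii).

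The third step handles both directions at once. Since the form is a sum of decoupled pieces, strict positivity for all nonzero $(\eta,\chi)$ holds if and only if each piece is positive. Necessity follows by choosing $\eta,\chi$ supported in a single piece with $\chi$ equal to the minimizer. The main obstacle is bookkeeping rather than analysis. In piece (b) one must track on which side $\mathcal{T}_B^{-1}$ acts, and one must check that the left and right multiplications combine into the symmetric expression defining $K$; this uses that $B_{11}$, $\mathcal{T}_B$ and $C_{12}C_{12}^{\dagger}$ are Hermitian. One must also confirm that no cross term links pieces (a) and (b), so that the minimizations are genuinely independent. I would verify this decoupling first by writing every block product explicitly before completing squares.
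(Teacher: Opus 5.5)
Your overall strategy is the paper's: expand ${\rm Tr}(\mathcal{M}P)$ blockwise, split it into independent quadratic forms, and take Schur complements. Your piece (b) is exactly the paper's $Q_{(III)}$, with minimizers $\chi_{12}=-\mathcal{T}_B^{-1}\eta_{11}C_{12}$ and $\chi_{21}=-C_{12}^{\dagger}\eta_{11}\mathcal{T}_B^{-1}$.

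Piece (a), however, is misidentified, and the error changes the answer. The cross term contains $2{\rm Re}\,{\rm Tr}\bigl(C_{12}(\chi_{22}^{\dagger}\eta_{21}+\chi_{22}\eta_{12}^{\dagger})\bigr)$, so $\chi_{22}$ couples to \emph{both} off-diagonal blocks of $\eta$. The only blocks that enter purely positively are $\eta_{22}$ and $\chi_{11}$; $\eta_{21}$ is not among them. Write $\eta_{12}=v$, $\eta_{21}=w^{\dagger}$, $\chi_{22}=c$, $\gamma=C_{12}$ and $\mathcal{T}_A=A_{11}+A_{22}I_{E_1}$. The correct piece is
\[
v^{\dagger}\mathcal{T}_Av+w^{\dagger}\mathcal{T}_Aw+2B_{22}|c|^2+2{\rm Re}\bigl(c\,v^{\dagger}\gamma+\bar{c}\,w^{\dagger}\gamma\bigr),
\]
where the weight $2B_{22}$ comes from $\chi_{22}^{\dagger}\chi_{22}+\chi_{22}\chi_{22}^{\dagger}$. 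If you drop $w$, as your decomposition does, minimizing over $c$ gives $v^{\dagger}\bigl(\mathcal{T}_A-\gamma\gamma^{\dagger}/(2B_{22})\bigr)v$. That is the condition with $2B_{22}$ in place of $B_{22}$, which is strictly weaker than (i). So the equivalence you assert does not follow from the piece you set up.

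To repair this, keep both off-diagonal blocks of $\eta$ in piece (a). There are two ways to finish.
\begin{itemize}
\item[(1)] The paper's route is to fix $\chi_{22}$ and minimize separately over $\eta_{21}$ and $\eta_{12}$. Each contributes $-|c|^2\gamma^{\dagger}\mathcal{T}_A^{-1}\gamma$, so positivity of the piece is equivalent to $B_{22}>C_{12}^{\dagger}\mathcal{T}_A^{-1}C_{12}$. This is (i) by the Schur complement of $\begin{pmatrix}\mathcal{T}_A & C_{12}\\ C_{12}^{\dagger} & B_{22}\end{pmatrix}$, using $\mathcal{T}_A>0$ and $B_{22}>0$.
\item[(2)] If you prefer to minimize over $c$, you are left with $v^{\dagger}\mathcal{T}_Av+w^{\dagger}\mathcal{T}_Aw-|\gamma^{\dagger}v+w^{\dagger}\gamma|^2/(2B_{22})$, and you need one more step. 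The bound $|\gamma^{\dagger}v+w^{\dagger}\gamma|^2\le 2|\gamma^{\dagger}v|^2+2|\gamma^{\dagger}w|^2$ gives sufficiency of (i). Equality holds at $w=v$ with $\gamma^{\dagger}v$ real, and that choice gives necessity.
\end{itemize}
With this correction the rest of your argument goes through.
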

\begin{proof}
    Let 
    \[
    \eta = \begin{pmatrix} \eta_{11} & \eta_{12}\\ \eta_{21} & \eta_{22} \end{pmatrix} \qquad \chi = \begin{pmatrix} \chi_{11} & \chi_{12} \\ \chi_{21} & \chi_{22} \end{pmatrix} 
    \]
    be the block decomposition of $\eta, \chi$.  Let also $A_{ij}, B_{ij}, C_{ij}$ denote the blocks of the matrices $A,B,C$. Then weak positivity is the condition that
    \[
    \begin{aligned}
{\rm Tr}(\mathcal{M} P)&= {\rm Tr}(A_{11}(\eta_{11}^{\dagger}\eta_{11}+\eta_{11}\eta_{11}^{\dagger})) + {\rm Tr}(A_{11}(\eta_{21}^{\dagger}\eta_{21} + \eta_{12}\eta_{12}^{\dagger}))\\
&\quad+ {\rm Tr}(A_{22}(\eta_{22}^{\dagger}\eta_{22}+\eta_{22}\eta_{22}^{\dagger})) + {\rm Tr}(A_{22}(\eta_{12}^{\dagger}\eta_{12} + \eta_{21}\eta_{21}^{\dagger}))\\
&\quad+{\rm Tr}(B_{11}(\chi_{11}^{\dagger}\chi_{11}+\chi_{11}\chi_{11}^{\dagger})) + {\rm Tr}(B_{11}(\chi_{21}^{\dagger}\chi_{21} + \chi_{12}\chi_{12}^{\dagger}))\\
&\quad+ {\rm Tr}(B_{22}(\chi_{22}^{\dagger}\chi_{22}+\chi_{22}\chi_{22}^{\dagger})) + {\rm Tr}(B_{22}(\chi_{12}^{\dagger}\chi_{12} + \chi_{21}\chi_{21}^{\dagger}))\\
&\quad+{\rm Tr}(C_{12}(\chi_{12}^{\dagger}\eta_{11}+\chi_{22}^{\dagger}\eta_{21}+\chi_{21}\eta_{11}^{\dagger} +\chi_{22}\eta_{12}^{\dagger}))\\
&\quad+ {\rm Tr}(C_{12}^{\dagger}(\eta_{11}^{\dagger}\chi_{12} + \eta_{21}^{\dagger}\chi_{22}  + \eta_{11}\chi_{21}^{\dagger}+\eta_{12}\chi_{22}^{\dagger}))\\
&= Q_{(I)}(\eta_{22},\chi_{11})+Q_{(II)}(\chi_{22}, \eta_{21},\eta_{12})+Q_{(III)}(\eta_{11}, \chi_{21},\chi_{12})
\end{aligned}
    \]
where,
\[
\begin{aligned}
Q_{(I)}(\eta_{22},\chi_{11}) &=  {\rm Tr}(A_{22}(\eta_{22}^{\dagger}\eta_{22}+\eta_{22}\eta_{22}^{\dagger}))+{\rm Tr}(B_{11}(\chi_{11}^{\dagger}\chi_{11}+\chi_{11}\chi_{11}^{\dagger})) \\
Q_{(II)}(\chi_{22}, \eta_{21},\eta_{12}) &= {\rm Tr}(B_{22}(\chi_{22}^{\dagger}\chi_{22}+\chi_{22}\chi_{22}^{\dagger}))\\
&\quad + {\rm Tr}(A_{11}(\eta_{21}^{\dagger}\eta_{21} + \eta_{12}\eta_{12}^{\dagger}))
+{\rm Tr}(A_{22}(\eta_{12}^{\dagger}\eta_{12} + \eta_{21}\eta_{21}^{\dagger}))\\
&\quad +{\rm Tr}(C_{12}(\chi_{22}^{\dagger}\eta_{21}+\chi_{22}\eta_{12}^{\dagger})) + {\rm Tr}(C_{12}^{\dagger}(\eta_{21}^{\dagger}\chi_{22}+\eta_{12}\chi_{22}^{\dagger}))\\
Q_{(III)}(\eta_{11}, \chi_{21},\chi_{12})&= {\rm Tr}(A_{11}(\eta_{11}^{\dagger}\eta_{11}+\eta_{11}\eta_{11}^{\dagger}))\\
&\quad+{\rm Tr}(B_{11}(\chi_{21}^{\dagger}\chi_{21} + \chi_{12}\chi_{12}^{\dagger})+{\rm Tr}(B_{22}(\chi_{12}^{\dagger}\chi_{12} + \chi_{21}\chi_{21}^{\dagger}))\\
&\quad +{\rm Tr}(C_{12}(\chi_{12}^{\dagger}\eta_{11}+\chi_{21}\eta_{11}^{\dagger})) + {\rm Tr}(C_{12}^{\dagger}(\eta_{11}^{\dagger}\chi_{12} + \eta_{11}\chi_{21}^{\dagger}))
\end{aligned}
\]
To minimize this expression we can set $\eta_{22}=\chi_{11}=0$, since $Q_{(I)}$ is positive by Lemma~\ref{lem: positivityProperties}.  Since $Q_{(II)}$ and $Q_{(III)}$ are functions of independent variables, we need each term to be positive independently.  Let us focus on $Q_{(II)}$.  We write
\[
Q_{(II)} = {\rm Tr}(B_{22}(\chi_{22}^{\dagger}\chi_{22}+\chi_{22}\chi_{22}^{\dagger})) + Q_{(IIa)}(\chi_{22},\eta_{21}) + Q_{(IIb)}(\chi_{22}, \eta_{12})
\]
where
\[
\begin{aligned}
&Q_{(IIa)}(\chi_{22},\eta_{21})= {\rm Tr}(A_{11}\eta_{21}^{\dagger}\eta_{21})  + {\rm Tr}(A_{22}\eta_{21}\eta_{21}^{\dagger})+ {\rm Tr}(C_{12}\chi_{22}^{\dagger}\eta_{21}) + {\rm Tr}(C_{12}^{\dagger}\eta_{21}^{\dagger}\chi_{22} )\\
&Q_{(IIb)}(\chi_{22},\eta_{12}) = {\rm Tr}(A_{11}\eta_{12}\eta_{12}^{\dagger}) + {\rm Tr}(A_{22}\eta_{12}^{\dagger}\eta_{12})+ {\rm Tr}(C_{12}\chi_{22}\eta_{12}^{\dagger}) + {\rm Tr}(C_{12}^{\dagger}\eta_{12}\chi_{22}^{\dagger})
\end{aligned}
\]
Since $Q_{IIb}(\chi_{22},0)=0$, and $Q_{(IIa)}(\chi_{22},0)=0$, we can again check positivity of $Q_{(II)}$ independently for $Q_{(IIa)}$ and $Q_{(IIb)}$.  Let
\[
\mathcal{T}_A= A_{22}I_{E_1}+A_{11} \qquad \mathcal{T}_{B}= B_{22}I_{E_1}+B_{11}
\]
Using that $A_{22} \in \mathbb{R}_>0$ is a scalar, and completing the square with respect to $\eta_{21}$ yields
\[
\min_{\eta_{21}}Q_{(IIa)} = -{\rm Tr}(C_{12}^{\dagger}\mathcal{T}_{A}^{-1}C_{12})|\chi_{22}|^2
\]
Minimization of $Q_{(IIb)}$ over $\eta_{12}$ yields the same result, and hence $Q_{(II)}$ is positive if and only if
\[
B_{22}-C_{12}^{\dagger}\mathcal{T}_{A}^{-1}C_{12}>0
\]
which is equivalent to
\[
A_{11}+A_{22}I_{E_1} > \frac{C_{12}C_{12}^{\dagger}}{B_{22}}.
\]
We apply similar analysis to $Q_{(III)}$.  Write
\[
\begin{aligned}
    Q_{(IIIa)}(\eta_{11},\chi_{21}) &= {\rm Tr}(B_{11}\chi_{21}^{\dagger}\chi_{21})+ {\rm Tr}(B_{22}\chi_{21}\chi_{21}^{\dagger})+ {\rm Tr}(C_{12}\chi_{21}\eta_{11}^{\dagger}) + {\rm Tr}(C_{12}^{\dagger}\eta_{11}\chi_{21}^{\dagger})\\
    Q_{(IIIb)}(\eta_{11},\chi_{12}) &= {\rm Tr}(B_{11}\chi_{12}\chi_{12}^{\dagger})+{\rm Tr}(B_{22}\chi_{12}^{\dagger}\chi_{12}) + {\rm Tr}(C_{12}\chi_{12}^{\dagger}\eta_{11}) + {\rm Tr}(C_{12}^{\dagger}\eta_{11}^{\dagger}\chi_{12})
\end{aligned}
\]
Minimizing $Q_{(IIIa)}$ over $\chi_{21}$ and $Q_{(IIIb)}$ over $\chi_{12}$ yields that
\[
Q_{(III)}(\eta_{11},\chi_{12},\chi_{21}) \geq {\rm}Tr(K(\eta_{11}) \eta_{11}^{\dagger})
\]
where $K: {\rm End}(E_1)_{x}\rightarrow {\rm End}(E_1)_{x}$ is the linear map
\[
K(\eta) = \eta A_{11} + A_{11}\eta - C_{12}C_{12}^{\dagger}\eta \mathcal{T}_{B}^{-1} - \mathcal{T}_{B}^{-1}\eta C_{12}C_{12}^{\dagger}.
\]
\end{proof}

We can now check the positivity properties using our specific matrices, together with the a priori estimate $|\Phi|^2 \leq \frac{2\pi}{\sigma}$ obtained in Lemma~\ref{lem: conditionalPhiBound}.  This is somewhat computationally intensive, but the complexity is reduced by noting that  if we set $W= {\rm span}\{\Phi, \nabla \Phi\}$, then positivity is automatic on ${\rm Hom}(W^{\perp}, W^{\perp})$.  

\subsubsection{Checking~\eqref{eq: Zpositive-scalar-equation}} Write $W= {\rm span}\{\Phi, \nabla \Phi\}$. The operator $A_{11}+A_{22}I_{E_1}-\frac{C_{12}C_{12}^{\dagger}}{B_{22}}$ preserves $W$ and $W^{\perp}$, and is plainly positive on the latter. So it suffices to check the positivity of the $2\times 2$-Hermitian matrix in ${\rm Hom}(W,W)$.  This can be done symbolically and one finds that~\eqref{eq: Zpositive-scalar-equation} holds uniformly for $\htheta \in (0,\frac{\pi}{2})$, $\sigma >0$ under the condition $|\Phi|^2 \leq \frac{2\pi}{\sigma}$, which holds for any solution thanks to Lemma~\ref{lem: conditionalPhiBound}.

\subsubsection{Checking~\eqref{eq: Zpositive-matrix-equation}} We will   check the positivity of the $4\times 4$-Hermitian matrix acting on ${\rm Hom}(W,W)$.  We consider minimizing the quadratic form in~\eqref{eq: Zpositive-matrix-equation}. We work in the basis $x=\sqrt{\alpha}\Phi$ and $y=\sqrt{\frac{\alpha}{2}} \nabla \Phi$ with $\alpha =\tan(\htheta)$ and consider only ${\rm Span}\{x,y\}$ which we assume is $2$-dimensional. Fix an orthonormal basis $\{e_1, e_2\}$ with $e_1$ parallel to $\frac{x}{|x|}$. We may assume that $y = ae_1+be_2$ for $a,b \in \mathbb{R}_{\geq 0}$.  Let $p=|x|^2$ and $L=2 + \frac{4\pi\alpha}{\sigma}$.  Then
\[
\mathcal{T}_{B} = B_{11} + B_{22}I = \frac{\sigma}{2\pi}\cos(\htheta) \mathcal{D} \qquad \text{ where } \quad\mathcal{D}:= \begin{pmatrix}L & 0 \\ 0 & (L-p) \end{pmatrix}.
\]
Using the formula~\eqref{eq: F_1curveformulaxybasis} and changing basis we get
\[
I+\alpha i\Lambda F_1= \mathcal{A}:=\begin{pmatrix}\frac{1+\alpha^2 +\alpha a^2}{1+p} & \frac{2\alpha ab}{2+p}\\
\frac{2\alpha ab}{2+p} & 1+\alpha^2 +\alpha b^2 \end{pmatrix}.
\]
Canceling factors of $\frac{\sigma}{2\pi}$ we can write
\[
{\rm Tr}(K(\eta)\eta^{\dagger}) = \cos(\htheta){\rm Tr}\left(\mathcal{A}\eta + \eta \mathcal{A} -2 \alpha \mathcal{D}^{-1}\eta yy^{\dagger} - 2\alpha yy^{\dagger}\eta \mathcal{D}^{-1}\right)
\]
Write $\eta= \begin{pmatrix} u & v \\ w & z\end{pmatrix}$ this leads to the quadratic form
\[
\begin{aligned}
\widetilde{Q}(u,v,w,z) &= 2\frac{1+\alpha^2 +\alpha a^2}{1+p}|u|^2 + \left(\frac{1+\alpha^2 +\alpha a^2}{1+p}+1+\alpha^2+\alpha b^2\right)|v|^2+|w|^2\\
&\quad + 2(1+\alpha^2+\alpha b^2)|z|^2 + \frac{4\alpha ab}{2+p}{\rm Re}(\bar{u}v+\bar{u}w + \bar{v}z+\bar{w}z)\\
&-2\alpha \left(\frac{|au+bv|^2+|au+bw|^2}{L} + \frac{|aw+bz|^2 + |av+bz|^2}{L-p}\right)
\end{aligned}
\]
$Z$-positivity of solutions with the bounds $\sup_{X}|\Phi|^2 \leq \frac{2\pi}{\sigma}$ and $\tan(\htheta)\leq\frac{\sigma}{4\pi}$ then amounts to this quadratic form being positive on the domain where $\alpha>0$, $2<L\leq 3$, $0<p<\frac{L-2}{2}$, $a\geq 0$, $b \geq 0$.  One can check this is not the case.  For example, when
\[
\sigma =4\pi,\quad \alpha=1, \quad L=3 \quad p=2/5, \quad a=8, \quad b=\frac{1}{10}, \quad u=w=z=0, \quad v=1
\]
the quadratic form is strictly negative.  By continuity this also holds for nearby values of the parameters.  The conclusion is that $Z$-positivity is not an algebraic consequence of the solvability of the equation, even after adding in the global estimate $\sup_{X}|\Phi|^2 \leq \frac{2\pi}{\sigma}$.  On the other hand, as we have emphasized, the behaviour of a solution to a nonlinear PDE is not purely local, so $Z$-positivity may still hold.  This would seem to depend on a non-trivial quantitative estimate for $|\nabla \Phi|^2$.  The second author has carried out some numerical experiments for solutions of the deformed Vortex equations on the flat torus $\mathbb{C}/\Lambda$ for which $Z$-positivity does seem to hold.

We can formulate a different notion of positivity, which is strictly weaker than the one in Definition~\ref{defn: Zpositive}.  This notion of positivity was also studied by Chen-Ghosh \cite{ChenGhosh}.
\begin{defn}\label{defn: GriffithsZpositive}
Let $\cE \rightarrow \cX$ be a holomorphic vector bundle over a complex surface, and suppose that $\cE$ admits a solution of the dHYM equation with $\htheta \in (0,\pi)$.  We say that a solution of the deformed Hermitian-Yang-Mills equation
\[
{\rm Im}(e^{-i\htheta}(\omega\otimes I_{\cE}-F)^2)=0
\]
is Griffiths-$Z$-positive if, for every point $x\in \cX$ and every pure tensor $\xi \in \Lambda^{0,1}_{x}\otimes {\rm End}(\cE)_x$ we have
\[
{\rm Tr} \left[{\rm Im}\left(ie^{-i\htheta}(\omega \otimes I_{\cE} -F)\right) \wedge i\xi^{\dagger}\wedge  \xi\right]+{\rm Tr} \left[i\xi^{\dagger}\wedge{\rm Im}\left(ie^{-i\htheta}(\omega \otimes I_{\cE} -F)\right) \wedge  \xi\right]>0
\]
\end{defn}

The difference between Definition~\ref{defn: GriffithsZpositive} and Definition~\ref{defn: Zpositive} is akin to the difference between Griffiths positivity and Nakano positivity of a holomorphic vector bundle.  In terms of checking Griffiths-$Z$-positivity, it amounts to checking~\eqref{eq: positivityTraceCondition} in the special case that $\eta, \chi$ are proportional. One can easily check, by specializing the preceding analysis, that this amounts to checking the positivity of the following $3\times 3$  matrix on ${\rm span}\{\Phi, \nabla \Phi\} \oplus E_2$.
\[
\mathcal{M}'_{t}=\begin{pmatrix} A_{11}+tB_{11} & \sqrt{t}C_{12}\\
\sqrt{t}C_{12}^{\dagger} & A_{22}+tB_{22}
\end{pmatrix}
\]
where $t\in [0,\infty]$ (where the value at $t=\infty$ is interpreted in the obvious way).  We can check the positivity of $\mathcal{M}'_t$ directly.  Again it suffices to check the span of $\{\Phi, \nabla \Phi\}$.  Using the same basis $\{e_1,e_2\}$ and coefficients $p, L, \alpha, a,b$ as before we compute
\[
\mathcal{M}_{t'}= \begin{pmatrix} \frac{1 +\alpha^2 + \alpha a^2}{1+p} + \frac{2\alpha}{L-2}(1+p)t & \frac{2\alpha ab}{2+p} & \frac{2\alpha \sqrt{t}}{\sqrt{L-2}}a\\
\frac{2\alpha ab}{2+p} & 1+\alpha^2+\alpha b^2 + \frac{2\alpha}{L-2}t & \frac{2\alpha \sqrt{t}}{\sqrt{L-2}}b\\
\frac{2\alpha \sqrt{t}}{\sqrt{L-2}}a & \frac{2\alpha \sqrt{t}}{\sqrt{L-2}}b& h(t)\end{pmatrix}
\]
where
\[
h(t)= \frac{1+\alpha^2+\alpha(a^2+b^2)}{L-1-p} + \frac{2\alpha}{L-2}(L-1-p)t
\]
Now $L-1-p = \frac{1}{\cos(\htheta)}( \cos(\htheta) + (\frac{4\pi}{\sigma} - |\Phi|^2)\sin(\htheta)) > 1+\frac{2\pi}{\sigma}\alpha$ by Lemma~\ref{lem: conditionalPhiBound}.  Thus $h(t) >0$ and $h(t)\approx ct$ for $c>0$ as $t\rightarrow +\infty$.  To check positivity it suffices to check the positivity of the Schur complement
\[
P_{t} = h(t) \begin{pmatrix} \frac{1 +\alpha^2 + \alpha a^2}{1+p} + \frac{2\alpha}{L-2}(1+p)t & \frac{2\alpha ab}{2+p} \\
\frac{2\alpha ab}{2+p} & 1+\alpha^2+\alpha b^2 + \frac{2\alpha}{L-2}t \end{pmatrix} - \frac{4\alpha^2 t}{L-2}\begin{pmatrix}a^2 & ab \\ ab & b^2\end{pmatrix}
\]
Write
\[
P_{t}= P_0 +t P_1 + t^2P_{2}
\]
$P_0$ is positive by Lemma~\ref{lem: positivityProperties}.  $P_{2}$ is manifestly positive.  We claim that $P_1$ is also positive.  Explicitly
\[
P_1= \frac{2\alpha}{L-2}(L-1-p)P_0 + h(0)\begin{pmatrix}  \frac{2\alpha}{L-2}(1+p) & 0 \\
0 & \frac{2\alpha}{L-2} \end{pmatrix} - \frac{4\alpha^2}{L-2}\begin{pmatrix}a^2 & ab \\ ab & b^2\end{pmatrix}
\]
We now factor out the common factor of $\frac{2\alpha}{L-2}$ and isolate the terms depending on $a,b$.
\[
\frac{L-2}{2\alpha}P_1= (1+\alpha^2)\begin{pmatrix} \frac{(L-1-p)}{1+p} + \frac{1+p}{L-1-p} & 0\\
&0 & L-1-p + \frac{1}{L-1-p} \end{pmatrix} + \alpha T
\]
where 
\[
T= \begin{pmatrix} \frac{a^2(L-1-p)}{1+p} + \frac{(a^2+b^2)(1+p)}{L-1-p}-2a^2 & (L-1-p)\frac{2ab}{2+p} -2ab \\ (L-1-p)\frac{2ab}{2+p} -2ab & \frac{a^2}{L-1-p} + \frac{b^2(L-2-p)^2}{(L-1-p)}. \end{pmatrix}
\]
The first matrix in this expression is clearly positive definite for $|\Phi|^2 \leq \frac{2\pi}{\sigma}$ (which is equivalent to $L-1-p \geq 1+p$).  Since the diagonal entries of $T$ are clearly positive, it suffices to compute the determinant which can be carried out using, for example, Mathematica.  One finds
\[
\det T= a^{4} r_1(L,p) + b^{4}r_{2}(L,p) + a^2b^2r_{3}(L,p)
\]
where $r_1, r_2, r_3$ are non-negative rational functions. Precisely, if we introduce the symbol $\delta = L-2-2p\geq 0$, where non-negativity follows from Lemma~\ref{lem: conditionalPhiBound}, then
\[
\begin{aligned}
r_1&=\frac{\delta^2}{(1+p)(L-1-p)^2}, \quad r_2=\frac{(1+p)(\delta+p)^2}{(L-1-p)^2}\\
r_3&= \frac{p^2\delta^2(p+\delta)^2+(1+p)^2(p^2+8p\delta +8\delta^2)}{(1+p)(2+p)^2(L-1-p)^2}
\end{aligned}
\]
We conclude that $P_1\geq 0$.  Positivity at $t=\infty$ follows from the strict positivity of $P_2$.  In summary we have proved
\begin{prop}
    The solutions to the deformed Hermitian-Yang-Mills equation produced by Theorem~\ref{thm: mainTheorem} are Griffiths-$Z$-positive for any $\htheta \in (0,\frac{\pi}{2})$.  They are $Z$-positive when $\htheta \in [\frac{\pi}{2}, \pi]$.
\end{prop}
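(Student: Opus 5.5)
The statement splits into two regimes, and I would treat them separately, reusing the analysis preceding it. For $\htheta\in[\frac{\pi}{2},\pi]$ there is little to do. Proposition~\ref{prop: charSolCosNeg} shows every solution has $E_1\simeq E_2$ line bundles, $\nabla\Phi\equiv 0$ (so $C=0$), $|\Phi|^2\equiv\frac{2\pi}{\sigma}$ and constant curvature. Lemma~\ref{lem: positivityProperties}(ii) then gives that $\mathcal{M}$ is positive definite. Positive definiteness implies the weak trace positivity~\eqref{eq: positivityTraceCondition}, because $P$ is a nonzero positive semidefinite combination. This yields $Z$-positivity, and a fortiori Griffiths-$Z$-positivity.

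For $\htheta\in(0,\frac{\pi}{2})$, the solutions produced by Theorem~\ref{thm: mainTheorem} satisfy $\inf|\Phi|^2<\frac{4\pi}{\sigma}$ along the continuity path, so Lemma~\ref{lem: conditionalPhiBound} applies with $t=1$. It gives $|\Phi|^2\le\frac{2\pi}{\sigma}$ when $\tan\htheta\le\frac{2\pi}{\sigma}$, which covers our range $\tan\htheta\le\frac{\sigma}{4\pi}$ only after one checks compatibility; otherwise I would use the general bound, which still gives $L-1-p\ge 1+p$, i.e.\ $\delta\ge0$, via $f(\htheta,1)=2$. Griffiths-$Z$-positivity requires~\eqref{eq: positivityTraceCondition} only for proportional $\eta,\chi$. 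Under that restriction the quadratic form reduces to positivity of the family $\mathcal{M}'_t$ for $t\in[0,\infty]$.

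\begin{enumerate}
\item On $W^\perp\oplus 0$, where $W={\rm span}\{\Phi,\nabla\Phi\}$, the blocks $A_{11}+tB_{11}$ act by positive scalars and the mixed term vanishes. So positivity holds there, and it suffices to work on $W\oplus E_2$.
\item In the basis $\{e_1,e_2,\xi\}$ with parameters $p,L,\alpha,a,b$, the entry $h(t)$ is positive, by Lemma~\ref{lem: H2CurvLem} together with $L-1-p>1$. Positivity of $\mathcal{M}'_t$ is then equivalent to positivity of the Schur complement $P_t=P_0+tP_1+t^2P_2$.
\item $P_0\succ0$ follows from Lemma~\ref{lem: positivityProperties}(i), and $P_2\succ0$ holds by inspection.
\item The main obstacle is $P_1\succeq0$, since it contains the negative rank-one term $-\frac{4\alpha^2}{L-2}yy^\dagger$. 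I would write $\frac{L-2}{2\alpha}P_1=(1+\alpha^2)D+\alpha T$ with $D$ diagonal. $D$ is positive because $\frac{L-1-p}{1+p}+\frac{1+p}{L-1-p}>0$. For $T$, the diagonal entries are nonnegative once $L-1-p\ge1+p$ is used. The remaining step is to show $\det T\ge0$. I would compute it symbolically and reorganize in terms of $\delta=L-2-2p\ge0$ into the form $a^4r_1+b^4r_2+a^2b^2r_3$ with manifestly nonnegative rational coefficients. This algebra is where I expect the real work, and I would lean on a computer algebra check.
\item Combining the pieces, $P_t\succ0$ for every $t\in[0,\infty)$. At $t=\infty$ positivity is read off from $P_2\succ0$. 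Positivity of all $\mathcal{M}'_t$, hence Griffiths-$Z$-positivity, follows.
\end{enumerate}

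The degenerate case where $\Phi,\nabla\Phi$ are parallel or vanish at a point is handled by the same formulas with $b=0$ or $a=b=0$.
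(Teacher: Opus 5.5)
Your proposal is correct and follows the paper's own argument essentially step for step. For $\htheta\in[\frac{\pi}{2},\pi]$ you get $Z$-positivity from Lemma~\ref{lem: positivityProperties}(ii). For $\htheta\in(0,\frac{\pi}{2})$ you reduce to $\mathcal{M}'_t$ on ${\rm span}\{\Phi,\nabla\Phi\}\oplus E_2$, pass to the Schur complement $P_t=P_0+tP_1+t^2P_2$, and prove $P_1\succeq 0$ via $(1+\alpha^2)D+\alpha T$ with $\det T=a^4r_1+b^4r_2+a^2b^2r_3$ expressed through $\delta=L-2-2p\ge 0$. Your hedge about compatibility of the $\tan\htheta$ ranges is unnecessary: at $t=1$ one has $f(\htheta,1)=2$, so $\sup_X|\Phi|^2\le\frac{2\pi}{\sigma}$ (equivalently $\delta\ge 0$) holds for every $\htheta\in(0,\frac{\pi}{2})$.
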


\subsection{Infinite dimensional GIT}

It is natural to formulate the following conjecture:

\begin{conj}\label{conj: fullRange}
    Suppose $\cE=(E_1,E_2,\Phi)$ is a holomorphic triple of vortex type.  Suppose that ${\rm Re}(Z_{\cX}(\cE))>0$ and ${\rm Im}(Z_{\cX}(\cE))>0$ and $\tan(\htheta)\in[\frac{\sigma}{4\pi},+\infty)$.  Then $\cE$ admits a solution of the deformed Vortex equations~\eqref{eq: dimReduceDHYM}+~\eqref{eq: dimReduceDHYM-Pos} if and only if $\cE$ is $Z$-stable in the sense of Definition~\ref{def: stabOnXxP}.
\end{conj}

In our arguments, the assumption $\tan(\htheta) \in (0,\frac{\sigma}{4\pi}]$ appears in two places. The first appearance is in the proof of Lemma~\ref{lem: typeBStabNecessary} establishing the necessity of stability for sub-objects of type B (in the sense of Definition~\ref{defn: subObTypeA/B}). The second appearance is in the proof of Proposition~\ref{prop: UYargumentH}, where it is used to obtain a contradiction in a Uhlenbeck-Yau type blow-up argument which produces a sub-object of type A.  In both cases the assumption is used to exploit positive terms in $i\Lambda F_2$ to control unsigned second fundamental form-type terms. 

As discussed at the end of Section~\ref{sec: stability}, it seems unlikely that establishing necessity of stability, that is Conjecture~\ref{conj: stabTypeB}, can be proven by direct methods starting with the equations~\eqref{eq: dimReduceDHYM}+~\eqref{eq: dimReduceDHYM-Pos}.  Instead one may need to develop a variational approach to existence, as in the work of Collins-Yau \cite{CY, CYarX}; see also \cite{DMS} for discussion in the setting of general $Z$-critical connections.  Given a vortex bundle $\cE$ (or more generally any holomorphic vector bundle) consider the space $\mathcal{H}$ of ``almost calibrated metrics"; that is, Hermitian metrics $h$ on $\cE$ such that
\[
{\rm Re}(e^{-i\htheta}(\omega\otimes I_{\cE} - F_{h})^2)>0.
\]
On the space $\mathcal{H}$ we can define a Riemannian structure as follows: given $\dot{h}_1, \dot{h}_2 \in T_{h}\mathcal{H}$ we define
\[
\langle \dot{h}_1, \dot{h}_2\rangle_{h} = \frac{1}{2}\int_{X}{\rm Tr}\left(\left(h^{-1}\dot{h}_1h^{-1}\dot{h}_{2} +h^{-1}\dot{h}_2h^{-1}\dot{h}_{1}\right) {\rm Re}(e^{-i\htheta}(\omega I_{\cE} - F_{h})^2)\right).
\]
This Riemannian metric leads to a notion of geodesics in $\mathcal{H}$.

Consider also the $1$-forms  $\delta\mathcal{J}, \delta\mathcal{C}$ defined on $\mathcal{H}$ (or more generally defined on the space of all Hermitian metrics on $\cE$) given by
\[
\begin{aligned}
\delta \mathcal{J}(h) &= -\int_{X} {\rm Tr}(h^{-1}\dot{h}{\rm Im}(e^{-i\htheta}(\omega\otimes I_{\cE}-F_h)^2)),\\
\delta\mathcal{C}(h) &=  -\int_{X} {\rm Tr}(h^{-1}\dot{h}{\rm Re}(e^{-i\htheta}(\omega\otimes I_{\cE}-F_h)^2))
\end{aligned}
\]
Collins-Yau \cite{CY} introduced these $1$-forms in the case of line bundles, and their higher rank generalizations were introduced by McCarthy \cite{JBM}; both are related by mirror symmetry to functionals discovered by Solomon \cite{Solomon}, following on work of Thomas \cite{Th}.  An argument of McCarthy \cite[Section 4.5.1]{JBM} shows that these formulas integrate to well-defined functionals $J, \mathcal{C}: \mathcal{H}\rightarrow \mathbb{R}$.  The functional $\mathcal{J}$, which has critical points at solutions of the dHYM equation, should be compared with the Donaldson functionals used to study the Hermitian-Yang-Mills equation \cite{Do85, Do87}, the Higgs equations \cite{Simpson} and the Vortex equations \cite{Bradlow} (and also in Section~\ref{sec: initialEquation}). Importantly, we expect that $\mathcal{J}$ is convex along geodesics in $\mathcal{H}$, while $\mathcal{C}$ is affine.  It may even be the case that there is a suitable notion of ``sub-geodesic" along which $\mathcal{J}$ is convex.  We expect that producing geodesics may be highly non-trivial; see \cite{CY} for the case of line bundles on general K\"ahler manifolds in the hypercritical phase regime.  On the other hand, in the case of the deformed Vortex equations these equations should simplify drastically, and in any case producing sub-geodesics may suffice for our geometric purposes.  We note that McCarthy \cite{JBM} proved the convexity of $\mathcal{J}$ along standard $1$-parameter subgroups $h(t) = h_0e^{t\sigma}$ as long as $h(t)$ is $Z$-positive in the sense of Definition~\ref{defn: Zpositive}. 

Given a vector bundle $S\subset E_1$, consider $\cS \subset \cE$ to be the holomorphic sub-bundle generated by the triple $(S,0,0)$. It is not hard to write down families of metrics which degenerate along $E_1/S$ and one can compute explicitly the limit slope of these degenerations to find that if the limit slope is positive then
\[
{\rm Im}\left(\frac{Z_{\cX}(\cS)}{Z_{\cX}(\cE)}\right)<0
\]
which is precisely the desired inequality.  On the other hand, it may be non-trivial to determine if these degenerations correspond to points in the (formal) boundary $\del \mathcal{H}$.  In any event, assuming this can be understood, one could then prove the necessity of stability using (sub)-geodesics to connect a solution of the dHYM equation with the degenerating sequence and arguing as in \cite{CYarX}.

We expect that establishing necessity of stability in the full range of parameters will shed significant light on the existence problem as well.  Another question we have not addressed, which could be approached this way, is the issue of uniqueness.  We certainly expect that our solutions are unique modulo rescaling. Such a result would follow either from the strict convexity of $\mathcal{J}$ restricted to a level set of $\mathcal{C}$ (as in the case of line bundles \cite{CY, CYarX}), or by analyzing geodesics along which $\mathcal{J}$ is constant.

\subsection{Bridgeland stability conditions}

The existence of Bridgeland stability conditions on $D^{b}Coh(\cX)$ with the central charge $Z_{{\cX}}$ was established by Arcara-Bertram \cite{AB}. One can restrict their construction to the derived category of $SU(2)$-equivariant sheaves $D^{b}Coh^{SU(2)}(\cX)$; we will outline this below.  \'Alvarez-C\'onsul-Garc\'ia-Prada \cite{ACGPQuiver} showed very generally how to describe  equivariant coherent sheaves in terms of holomorphic chains on $X$ (or more generally quiver bundles), and hence the construction of Arcara-Bertram \cite{AB} leads to a notion of Bridgeland stability on the derived category of holomorphic chains.  Bridgeland stability conditions for holomorphic triples have been obtained in \cite{BRH, MRRHR, RHthesis}.

Let us briefly recall the construction of Arcara-Bertram \cite{AB} in our context.  We use the conventions and notation of \cite{AB}.  For background on the topic of Bridgeland stability conditions we refer the reader to \cite{MacriSchmidt}. Let us denote by $\mathcal{D}:= {\rm Coh}^{SU(2)}(\cX)$. Consider the torsion pair
\[
\mathcal{F}= \{\cE \in \mathcal{D} : (1)\, \cE \text{ is torsion free, and } (2) \text{ if  } 0\hookrightarrow \cS \hookrightarrow \cE  \text{ then } {\rm Im}(Z_{\cX}(\cS))\leq 0\}
\]
\[
\mathcal{T}=\{\cE \in \mathcal{D} : \text{ either } (1)\,  \cE\twoheadrightarrow \cQ \ne 0  \Rightarrow {\rm Im}(Z_{\cX}(\cQ))>0, \text{ or } (2)\,\,  \mathcal{E} \text{ is torsion }\}
\]
The pair $(\mathcal{T},\mathcal{F})$ form a torsion-pair in the sense that,
\[
{\rm Hom}_{\mathcal{D}}(\mathcal{T}, \mathcal{F})= \emptyset
\]
and, if $\cE \in \mathcal{D}$ then it follows from the uniqueness of Harder-Narasimhan filtrations that  there is an exact sequence
\[
0 \hookrightarrow \mathcal{T}_{\cE} \rightarrow \cE \rightarrow \mathcal{F}_{\cE}\rightarrow 0 
\]
with $\cF_{\cE}\in \cF$ and $\mathcal{T}_{\cE} \in \mathcal{T}$.  Note that the torsion class defined in Definition~\ref{defn: torsionClass} is precisely the same as torsion class $\mathcal{T}$ introduced above.   We define the {\em tilted heart}
\[
\mathcal{A}:= \langle \cF[1],\mathcal{T}\rangle.
\]
This is another abelian category with objects given by two term complexes.
\[
{\rm ob}(\mathcal{A}) = \{ E \in D^b\mathcal{D} : \mathcal{H}^{-1}(E) \in \mathcal{F}, \mathcal{H}^0(E)\in \mathcal{T}, \mathcal{H}^{j}(E)=0 \text{ else} \}
\]
and furthermore, $D^{b}\mathcal{A} \simeq D^{b}\mathcal{D}$. The key point \cite{AB} is that, thanks to the Bogomolov-Gieseker inequality we have $Z_{\cX}: \cA \rightarrow \mathbb{H}$. Using this, Arcara-Bertram show that $\cA$ is the heart of a Bridgeland stability condition.  Recall \cite{Br} that $\cE \in \cA$ is Bridgeland stable if, 
\[
\cE \twoheadrightarrow \cQ \text{ in } \cA \implies \Theta(\cQ) >\Theta(\cE).
\]
Any equivariant coherent sheaf $\cE \in \mathcal{T} \subset \mathcal{D}$ defines an object in $\cA$ and so we can compare  Bridgeland stability with $Z$-stability.  The main difficulty is the following: suppose we have an exact sequence in $\mathcal{D}$
\[
0\rightarrow \cS\rightarrow \cE \rightarrow \cQ \rightarrow 0
\]
with $\cE, \cQ \in \mathcal{T}$ and $Z_{\cX}(\cS) \in \mathbb{H}$. This exact sequence is not an exact sequence in the tilted heart $\cA$ unless $\cS \in \mathcal{T}$. Indeed, in general, we decompose $\cS$ by
\[
0\rightarrow \cS_{\mathcal{T}} \rightarrow \cS \rightarrow \cS_{\cF}\rightarrow 0
\]
and by taking distinguished triangles we get the following sequence in $\cA$:
\[
0\rightarrow \cS_{\mathcal{T}} \rightarrow \cE \rightarrow \cQ \rightarrow \cS_{\cF}[1]\rightarrow 0.
\]
This introduces two complications when comparing the notions of stability:
\begin{itemize}
    \item[(i)] Not all the exact sequences arising in the definition of $Z$-stability are exact in $\cA$.
    \\
    \item[(ii)] There are exact sequences in $\cA$ of the form $0\rightarrow \cS \rightarrow \cE \rightarrow \cQ \rightarrow 0$ where $\cE, \cS \in \mathcal{T}$, but $\cQ$ is a two-term complex, hence $\cQ \notin \mathcal{D}$ and the map $\cS \rightarrow \cE$ is not injective as a map of coherent sheaves.
\end{itemize} 

In our setting, we can prove the following refinement of $Z$-stability, which says that in Definition~\ref{def: stabOnXxP} it suffices to check the smaller class of exact sequences for which $\cS,\cQ$ lie in the torsion class.

\begin{lem}\label{lem: StabreplaceTorsion}
    If $\cE=(E_1,E_2,\Phi)$ is a holomorphic triple of vortex type then $\cE$ is $Z$-stable if and only if:
    \begin{itemize}
        \item[(i)] $\cE \in \mathcal{T}$, and
        \item[(ii)] for every exact sequence $0\rightarrow \cS \rightarrow \cE \rightarrow \cQ\rightarrow 0$ with $\cS, \cQ \in \mathcal{T}$ we have
        \[
        \Theta(\cS) < \Theta(\cE) \quad \text{ or, equivalently } \quad \Theta(\cE) < \Theta(\cQ).
        \]
    \end{itemize}
\end{lem}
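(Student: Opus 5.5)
The forward direction is immediate. If $\cE$ is $Z$-stable in the sense of Definition~\ref{def: stabOnXxP}, then $\cE\in\mathcal{T}$. Any exact sequence with $\cS,\cQ\in\mathcal{T}$ has $Z_{\cX}(\cS)\in\mathbb{H}$, because Lemma~\ref{lem: torsionIsTorsion} and the definition of $\mathcal{T}$ give ${\rm Im}\,Z_{\cX}(\cS)>0$ or $\cS$ torsion. Such a sequence is therefore admissible, and the angle inequality holds.

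For the converse, assume (i) and (ii) and test against an admissible sequence $0\to\cS\to\cE\to\cQ\to 0$. Since $\cE\in\mathcal{T}$, we have $\cQ\in\mathcal{T}$ by Remark~\ref{rk: torsionClassQuotients}, so only $\cS$ may fail to lie in $\mathcal{T}$. Split the argument according to ${\rm Re}\,Z_{\cX}(\cE)$.

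\emph{Case ${\rm Re}\,Z_{\cX}(\cE)\le 0$.} The proof of Lemma~\ref{lem: stabForTorsion}(ii) only uses the quotients $\cE\twoheadrightarrow\iota_*\mathcal{O}_F$ and $\cE\twoheadrightarrow\iota_*\iota^*\cE$. Their kernels $\cS$ satisfy ${\rm Im}\,Z_{\cX}(\cS)>0$, and I would check that these kernels lie in $\mathcal{T}$. A quotient of the kernel is either torsion, or a quotient of a vector bundle agreeing with $\cE$ away from a fibre, so its imaginary part is controlled by $\cE\in\mathcal{T}$ up to a correction $r\le 2$, and the same degree positivity used in that proof should suffice. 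Condition (ii) then already forces the rigid structure ($E_1\simeq E_2$, $\Phi$ nowhere vanishing, phase bound), and Lemma~\ref{lem: stabForVeryRestr} gives full $Z$-stability.

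\emph{Case $\Theta(\cE)\in(0,\pi/2)$.} By Lemma~\ref{lem: phaseToRatioStab} it suffices to prove ${\rm Im}(Z_{\cX}(\cS)/Z_{\cX}(\cE))<0$ for every saturated subtriple $\cS$. Fix such an $\cS$ and take its torsion-pair decomposition $0\to\cS_{\mathcal{T}}\to\cS\to\cS_{\cF}\to 0$, with $\cS_{\cF}\in\cF$, so every subsheaf of $\cS_{\cF}$ (in particular $\cS_{\cF}$ itself) has ${\rm Im}\,Z_{\cX}\le 0$. Then:
\begin{itemize}
\item $\cS_{\mathcal{T}}\subset\cE$ has quotient $\cE/\cS_{\mathcal{T}}\in\mathcal{T}$, being a quotient of $\cE$.
\item If $\cS_{\mathcal{T}}\neq 0$, condition (ii) gives ${\rm Im}(Z_{\cX}(\cS_{\mathcal{T}})/Z_{\cX}(\cE))<0$.
\item The term $Z_{\cX}(\cS_{\cF})$ contributes ${\rm Im}(Z_{\cX}(\cS_{\cF})/Z_{\cX}(\cE))=\cos\htheta\,{\rm Im}\,Z_{\cX}(\cS_{\cF})-\sin\htheta\,{\rm Re}\,Z_{\cX}(\cS_{\cF})$ up to a positive factor, and this is $\le 0$ provided ${\rm Re}\,Z_{\cX}(\cS_{\cF})\ge 0$.
\end{itemize}

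\textbf{Main obstacle.} The key step is showing ${\rm Re}\,Z_{\cX}(\cS_{\cF})\ge 0$ (strictly, or with $\cS_{\mathcal{T}}\ne 0$). I would establish it using the explicit form of equivariant subsheaves. $\cS_{\cF}$ is torsion free, hence by equivariance a vector bundle given by a triple $(F_1,F_2,\cdot)$. If $\rank F_2=0$, then by~\eqref{eq: subTripleCentralChargeTypeB} ${\rm Re}\,Z_{\cX}$ is a positive multiple of $\rank F_1$, which is fine. If $\rank F_2=1$, ${\rm Re}\,Z_{\cX}<0$ would require $\deg F_2$ large, namely $\frac{4\pi}{\sigma}\deg F_2>\frac{V_X}{2\pi}(\rank F_1+1)$, while ${\rm Im}\,Z_{\cX}(\cS_{\cF})\le 0$. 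Its subsheaf $(0,F_2,0)$, with $F_2$ of positive degree, would then have ${\rm Im}\,Z_{\cX}>0$, contradicting $\cS_{\cF}\in\cF$. The remaining edge case $\cS_{\mathcal{T}}=0$ with $\cS=\cS_{\cF}$ and $Z_{\cX}(\cS)$ real is excluded by the requirement $Z_{\cX}(\cS)\in\mathbb{H}$ together with the previous sign analysis.
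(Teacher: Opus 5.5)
You have a genuine gap in the case ${\rm Re}\,Z_{\cX}(\cE)\le 0$. Your forward direction is fine, and your outline for ${\rm Re}\,Z_{\cX}(\cE)>0$ is close to the paper's. The problem is that the kernels of the quotients $\cE\twoheadrightarrow\iota_*\mathcal{O}_F$ used in Lemma~\ref{lem: stabForTorsion} need not lie in $\mathcal{T}$, so condition (ii) tells you nothing about them.

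For example, take $E_1=E_2\oplus Q$ with $\deg Q=1$, $\deg E_2=d\gg 0$ and $\Phi=({\rm id},0)$. Every proper subtriple has strictly smaller ${\rm Im}\,Z_{\cX}$, so $\cE\in\mathcal{T}$, and ${\rm Re}\,Z_{\cX}(\cE)\le 0$ once $d$ is large. Since $\Phi$ is nowhere zero and $E_1(x)/\Phi(x)\cong Q(x)$, every surjection onto $\iota_*\mathcal{O}_{F_x}$ factors through $E_1\to Q\to Q|_x$. Its kernel $(E_2\oplus Q(-x),E_2,\Phi)$ has the quotient $(Q(-x),0,0)$ with ${\rm Im}\,Z_{\cX}=0$, so for every fibre the kernel is not in $\mathcal{T}$. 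The same failure occurs for $\Phi\equiv 0$, $\rank(E_1)=1$, $\deg E_1=1$.

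Your heuristic about a ``correction $r\le 2$'' runs the wrong way. Subtracting $r\frac{\sigma}{2\pi}$ from an imaginary part that is only known to be positive, and can equal $\frac{\sigma}{2\pi}$, gives no positivity. The paper instead uses test objects whose membership in $\mathcal{T}$ follows from $\deg E_2>0$, which ${\rm Re}\,Z_{\cX}(\cE)\le 0$ forces:
\begin{enumerate}
\item $(0,E_2,0)$, to rule out $\Phi\equiv 0$;
\item $\cS_{\Phi}=(L,E_2,\Phi)$ with $L={\rm sat}\,\Phi(E_2)$, whose quotient $(E_1/L,0,0)$ has phase in $(0,\frac{\pi}{2})$, to force $\rank(E_1)=1$;
\item $(E_1(-x),E_2,\Phi')$ at a zero $x$ of $\Phi$, to show $\Phi$ is nowhere vanishing;
\item $(E_2(-x),E_2(-x),1)$, to get $\cos(\htheta)+\frac{2\pi}{\sigma}\sin(\htheta)>0$.
\end{enumerate}
After these steps Lemma~\ref{lem: stabForVeryRestr} applies.

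In the case ${\rm Re}\,Z_{\cX}(\cE)>0$ there is a smaller slip. $(0,F_2,0)$ is a subtriple of $\cS_{\cF}=(F_1,F_2,\Phi_F)$ only if $\Phi_F\equiv 0$; otherwise it is a quotient. Use instead $(L',F_2,\Phi_F)$ with $L'={\rm sat}\,\Phi_F(F_2)$. For this subtriple ${\rm Im}\,Z_{\cX}\propto \deg L'+\deg F_2+\frac{2V_X}{\sigma}\ge 2\deg F_2+\frac{2V_X}{\sigma}>0$, contradicting $\cS_{\cF}\in\cF$. This shows ${\rm Re}\,Z_{\cX}(\cS_{\cF})>0$ whenever $\cS_{\cF}\ne 0$. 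That also gives the strict inequality when $\cS_{\mathcal{T}}=0$, so your edge-case remark becomes unnecessary.

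With this repair, your route differs from the paper's only in where the contradiction lands. The paper uses ${\rm Re}\,Z_{\cX}(\cS_{\cF})<0$ to show $\deg E_2$ is large, then exhibits $\cS_\Phi\in\mathcal{T}$ as a destabilizer of $\cE$. You contradict $\cS_{\cF}\in\cF$ directly, which spares you the check that $\cS_\Phi\in\mathcal{T}$.
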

\begin{proof}
    We have essentially already proved this result in Section~\ref{sec: stability}.  First assume that ${\rm Re}\left(Z_{\cX}(\cE)\right)>0$.  Since $Z$-stability implies $(i)$ and $(ii)$, it suffices to prove the converse.  We will show that if $\cE$ is not $Z$-stable, then it can be destabilized by some $0\rightarrow \cS\rightarrow \cE$ with $\cS \in \mathcal{T}$.

    Let $0 \rightarrow \cS \rightarrow \cE$ be a destabilizing sub-sheaf in $\mathcal{D}$ with $Z_{\cX}(\cS) \in \mathbb{H}$.  We may assume $\cS$ is a saturated sub-sheaf by Lemma~\ref{lem: Re>0ReduceStabSaturated}.  We decompose
    \[
    0\rightarrow \cS_{\mathcal{T}}\rightarrow \mathcal{S}\rightarrow \mathcal{S}_{\cF}\rightarrow 0
    \]
    Then we get $0\rightarrow \cS_{\mathcal{T}}\rightarrow \cE$.  If $\Theta(S_{\mathcal{T}}) \geq \Theta(\cE)$, then we are done, so we assume this is not the case.  Then
    \[
Z_{\cX}(\cS)= Z_{\cX}(\cS_{\mathcal{T}}) + Z_{\cX}(\cS_{\cF})
    \]
    By the definition of $\cF$ we have $\cS_{\cF}$ is torsion-free and by the Bogomolov-Gieseker inequality, $-Z_{\cX}(\cS_{\cF}) \in \mathbb{H}$, \cite{AB}.  Since $\cS$ is destabilizing we must have
    \[
    {\rm Im}\left(\frac{Z_{\cX}(\cS_{\cF})}{Z_{\cX}(\cE)}\right) >0
    \]
    Since ${\rm Re}(Z_{\cX}(\cE))>0$, we must have that ${\rm Re}(Z_{\cX}(\cS_{\cF})) < 0$.  From~\eqref{eq: subTripleCentralChargeTypeB} this implies $\cS_{\cF}= (S_F, E_2,\Phi)$ is a holomorphic triple of type A.  On the other hand, in the proof of Lemma~\ref{lem: phaseToRatioStab} we showed that if there is a triple $\cS_{\cF}= (S_F, E_2,\Phi)$ of type $A$, with ${\rm Re}(Z_{\cX}(\cS_{\cF})) < 0$ and ${\rm Im}\left( Z_{\cX}(\cS_{\cF})\right) \leq 0$, then $\cE$ will be $Z$-destabilized by the triple $\cS_{\Phi} = (L,E_2,\Phi)$ where $L={\rm sat}(\Phi(E_2))\subset E_1$. Indeed, we showed in Lemma~\ref{lem: phaseToRatioStab} that in this case $\Theta(\cS_{\Phi}) \in [\frac{\pi}{2}, \pi]$. It now suffices to observe that the triple $\cS_{\Phi}\in \mathcal{T}$.  By additivity of the central charge, this is equivalent to proving ${\rm Im}(Z_{\cX}(\cS_{\Phi}') < {\rm Im}(Z_{\cX}(\cS_{\Phi}) $ for every proper subtriple $\cS_{\Phi}'\subset \cS_{\Phi}$.  The only subtriples $\cS_{\Phi}' \subset \cS_{\Phi}$ are of the form
    \[
    (L(-D_1), E_2(-D_2), \Phi') \quad \text{ or } (L(-D_1), 0,0)
    \]
    where $D_1, D_2$ are effective divisors. In either case we have the desired inequality ${\rm Im}(Z_{\cX}(\cS_{\Phi}')) \leq {\rm Im}(Z_{\cX}(\cS_{\Phi}))$ with equality only if $\cS_{\Phi}' \simeq \cS_{\Phi}$. This concludes the case ${\rm Re}(Z_{\cX}(\cE))>0$.  
\smallskip

    We now assume ${\rm Re}(Z_{\cX}(\cE))\leq 0$.  Assume that $\cE$ satisfies $(i)$ and $(ii)$. The proof proceeds by analyzing three surjections. This should be compared with the proof of Lemma~\ref{lem: stabForTorsion}
    \begin{itemize}
        \item[(a)] To conclude $\rank(E_1)=1$ we consider $\cE/\cS_{\Phi}$.
        \item[(b)] To conclude that $\Phi$ is non-vanishing we consider $\cE \twoheadrightarrow p_1^*E_1\otimes \mathcal{O}_{F}$ for $F\simeq \mathbb{P}^1$ a fiber of $\cX\rightarrow X$.
        \item[(c)] To show that $\cos(\Theta(\cE)) + \frac{2\pi}{\sigma}\sin(\htheta)>0$ we consider the surjection $\cE\rightarrow \iota_*\iota^*\cE$ where $\iota: F \rightarrow \cX$ is the inclusion of a $\mathbb{P}^1$ fiber.
        \end{itemize}
        With these results we can appeal to Lemma~\ref{lem: stabForVeryRestr} to conclude $Z$-stability.  The main thing is to prove that, for each of these maps ${\rm Ker}(\cE\twoheadrightarrow \cQ) \in \mathcal{T}$. 
    
    We first consider $(a)$. Since ${\rm Re}(Z_{\cX}(\cE))\leq 0$ we have $\deg(E_2) \geq \frac{\sigma V_{X}}{8\pi^2}(\rank(E_1)+1)>0$.  As we argued above, this implies $\cS_{\Phi} \in \mathcal{T}$.  It is easy to show, as in Lemma~\ref{lem: PhiCannotVanishIDentically}, that $\Phi$ cannot vanish identically. Indeed, if $\Phi \equiv 0$, then $(0,E_2,0) \in \mathcal{T}$ is a subtriple of $\cE$ and the quotient triple $(E_1,0,0)$ has central charge lying in the first quadrant, contradicting $(ii)$. 
    
    We claim that $\rank(E_1)=1$.  Indeed, if $\rank(E_1)>1$, then consider the quotient $\cE/\cS_{\Phi} = (E_1/L,0,0)$.  Now
    \[
    {\rm Re}(Z_{\cX}(\cE/\cS_{\Phi}))\propto \rank(E_1/L) >0,
    \]
    but this yields $\Theta(\cE/\cS_{\Phi}) \in (0,\frac{\pi}{2})$, contradicting $(ii)$.  Therefore $\rank(E_1)=1$. Since $\Phi \ne 0$ we conclude $\deg(E_1)\geq \deg(E_2)>0$.

    Now we consider $(b)$ and conclude that $\{\Phi=0\} =\emptyset$. Suppose there is a point $x\in X$ such that $\Phi(x)=0$. $\Phi$ factors through the subtriple $\cS_{x}:= (E_1(-x), E_2, \Phi')$.  It is straightforward to show that $\cS_{x}\in \mathcal{T}$. Then
    \[
    \Theta(\cE/\cS_{x}) =\frac{\pi}{2}
    \]
    contradicting the stability of $\cE$. So $\cE= (E_2, E_2, 1)$ with $\deg(E_2)>0$.
    
    Finally, we consider $(c)$. Consider the subtriple $\cS_{(x,x)}:= (E_2(-x), E_2(-x), 1)$.  Since $\deg(E_2(-x))=\deg(E_2)-1 \geq 0$ it is easy to show that $\cS_{(x,x)}\in \mathcal{T}$. Now by considering the quotient $\cE/\cS_{(x,x)}$ we conclude that  $\cos(\htheta)+\frac{2\pi}{\sigma}\sin(\htheta)>0$. By appealing to Lemma~\ref{lem: stabForVeryRestr} we conclude that $\cE$ is $Z$-stable.
\end{proof}

From Lemma~\ref{lem: StabreplaceTorsion} and the preceding discussion we conclude that Bridgeland stability implies $Z$-stability.
\begin{cor}
    Let $\cE=(E_1,E_2,\Phi)$ be a holomorphic triple of vortex type with $\cE \in \mathcal{T}$. If $\cE$ is Bridgeland stable in the tilted heart $\cA$, then $\cE$ is $Z$-stable.
\end{cor}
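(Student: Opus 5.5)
The plan is to deduce the corollary from Lemma~\ref{lem: StabreplaceTorsion}. That lemma reduces $Z$-stability of $\cE$ to two conditions: $\cE\in\mathcal{T}$, which holds by hypothesis, and the phase inequality $\Theta(\cS)<\Theta(\cE)$ for every exact sequence $0\rightarrow \cS\rightarrow\cE\rightarrow\cQ\rightarrow 0$ in $\mathcal{D}={\rm Coh}^{SU(2)}(\cX)$ with $\cS,\cQ\in\mathcal{T}$. So the whole task is to show that each such sequence is also a short exact sequence in the tilted heart $\cA=\langle\cF[1],\mathcal{T}\rangle$. Once that is done, Bridgeland stability of $\cE$ in $\cA$ gives $\Theta(\cQ)>\Theta(\cE)$ directly.

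\textbf{Step 1: torsion objects lie in $\cA$.} Every object of $\mathcal{T}$ is a sheaf concentrated in degree $0$, with $\mathcal{H}^{-1}=0\in\cF$ and $\mathcal{H}^0\in\mathcal{T}$. Hence $\cS,\cE,\cQ$ are all objects of $\cA$.

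\textbf{Step 2: the sequence is exact in $\cA$.} The sequence gives a distinguished triangle $\cS\rightarrow\cE\rightarrow\cQ\rightarrow\cS[1]$ in $D^b\mathcal{D}$. Since $\cA$ is the heart of a bounded t-structure, a distinguished triangle whose three vertices lie in $\cA$ is a short exact sequence in $\cA$. I would check this directly as well: the long exact sequence of cohomology sheaves shows $\mathcal{H}^{-1}$ vanishes at every vertex and $\mathcal{H}^0$ is the original sequence. Consequently $\cE\twoheadrightarrow\cQ$ is a surjection in $\cA$ with $\cQ\neq 0$.

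\textbf{Step 3: apply Bridgeland stability.} Bridgeland stability of $\cE$ in $\cA$ gives $\Theta(\cQ)>\Theta(\cE)$. The phase $\Theta$ agrees on both sides because it is the same central charge $Z_{\cX}$, and for objects of $\mathcal{T}$ we have $Z_{\cX}\in\mathbb{H}$ by Lemma~\ref{lem: torsionIsTorsion} and the definition of $\mathcal{T}$. By additivity of $Z_{\cX}$ with all charges in $\mathbb{H}$, this is equivalent to $\Theta(\cS)<\Theta(\cE)$. Lemma~\ref{lem: StabreplaceTorsion} then concludes that $\cE$ is $Z$-stable.

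The main obstacle is Step 2, namely being careful that exactness in $\cA$ matches exactness in $\mathcal{D}$ when all terms lie in $\mathcal{T}$. The substantive work, reducing $Z$-stability to sequences with both $\cS$ and $\cQ$ in $\mathcal{T}$, has already been done in Lemma~\ref{lem: StabreplaceTorsion}.
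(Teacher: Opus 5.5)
Your proposal is correct and is essentially the paper's argument. Both reduce, via Lemma~\ref{lem: StabreplaceTorsion}, to sequences $0\rightarrow\cS\rightarrow\cE\rightarrow\cQ\rightarrow 0$ with $\cS,\cQ\in\mathcal{T}$, and then apply Bridgeland stability in $\cA$. The only difference is that you spell out why such a sequence is short exact in the heart $\cA$, which the paper leaves implicit; the paper phrases it for subsheaves $\cS\in\mathcal{T}$, tacitly using that $\mathcal{T}$ is closed under quotients so that $\cQ\in\mathcal{T}$ automatically.
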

\begin{proof}
    If $\cE \in \mathcal{T}$ is Bridgeland stable in the tilted heart then, for any sub-sheaf $0\rightarrow \cS \rightarrow \cE$ with $\cS \in \mathcal{T}$ we have $\Theta(\cS) < \Theta(\cE)$.  By Lemma~\ref{lem: StabreplaceTorsion} this implies $\cE$ is $Z$-stable.
\end{proof}

This result should be compared with the results of \cite{CShi, CLSY} which show that, for $Bl_{p}\mathbb{P}^2$ and Weierstrass elliptic $K3$ surfaces, Bridgeland stability does not imply solvability of the dHYM equation.

It is natural to conjecture that the notion of $Z$-stability for holomorphic triples $\cE= (E_1,E_2,\Phi)$ defined in Definition~\ref{def: stabOnXxP} is equivalent to Bridgeland stability in the category $D^{b}{\rm Coh}^{SU(2)}(\cX)$.  In order to prove that $Z$-stability implies Bridgeland stability one needs to rule out the possibility that $Z$-stable triples of vortex type can be destabilized by quotient objects consisting of two term complexes.

\appendix

\section{Uhlenbeck-Yau Formulas}
We give a proof of the Uhlenbeck-Yau formulas~\eqref{eq: UYFormula1} and ~\eqref{eq: UYFormula2}.

\begin{lem}\label{lem: UYFormulaImprovement}
Let $h$ be a Hermitian metric on $E_1$ and assume that $u$ is a positive definite Hermitian endomorphism of $E_1$.  Then, for any $\sigma \in (0, 1]$, the following formulas hold:
\begin{equation}\label{eq: UYFormula1Lem}
     \langle u^{-1}\nabla u, \nabla u^{\sigma}\rangle_{h} \geq |u^{-\sigma/2}\nabla u^{\sigma}|^2_{h}
    \end{equation}
    \begin{equation}\label{eq: UYFormula2Lem}
   \langle (\nabla u)u^{-1},\nabla u^{\sigma}\rangle_{h} \geq |(\nabla u^{\sigma})u^{-\sigma/2}|_{h}^2
    \end{equation}
    \end{lem}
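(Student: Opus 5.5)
We argue pointwise in a local $h$-orthonormal frame $\{e_1,\dots,e_r\}$ of eigensections of $u$, writing $u=\sum_\alpha e^{\lambda_\alpha}e_\alpha\otimes e_\alpha^*$, so that $u^{\sigma}=\sum_\alpha e^{\sigma\lambda_\alpha}e_\alpha\otimes e_\alpha^*$. (Where eigenvalues cross, the frame is only measurable; since the final identities are expressed invariantly, this is harmless, as in \cite{UY}.) Define connection coefficients by $\nabla e_\alpha=A^{\gamma}_{\alpha}e_\gamma$. Differentiating $u^{\sigma}e_\alpha=e^{\sigma\lambda_\alpha}e_\alpha$ gives the matrix entries of $\nabla u^{\sigma}$ in this frame:
\begin{equation*}
(\nabla u^{\sigma})^{\gamma}{}_{\alpha}=\sigma e^{\sigma\lambda_\alpha}\partial\lambda_\alpha\,\delta^{\gamma}_{\alpha}+(e^{\sigma\lambda_\alpha}-e^{\sigma\lambda_\gamma})A^{\gamma}_{\alpha}.
\end{equation*}
This is the formula referred to earlier as \eqref{eq: coverDerivUSigma}. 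In particular, the diagonal part carries the derivatives of the eigenvalues and the off-diagonal part carries differences of powers of eigenvalues.

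Next we compute both sides of \eqref{eq: UYFormula1Lem} in this frame. The left side is
\begin{equation*}
\langle u^{-1}\nabla u,\nabla u^{\sigma}\rangle=\sum_\alpha\sigma e^{\sigma\lambda_\alpha}|\partial\lambda_\alpha|^2+\sum_{\alpha\neq\gamma}e^{-\lambda_\gamma}(e^{\lambda_\alpha}-e^{\lambda_\gamma})(e^{\sigma\lambda_\alpha}-e^{\sigma\lambda_\gamma})|A^\gamma_\alpha|^2.
\end{equation*}
The right side is
\begin{equation*}
|u^{-\sigma/2}\nabla u^{\sigma}|^2=\sum_\alpha\sigma^2e^{\sigma\lambda_\alpha}|\partial\lambda_\alpha|^2+\sum_{\alpha\neq\gamma}e^{-\sigma\lambda_\gamma}(e^{\sigma\lambda_\alpha}-e^{\sigma\lambda_\gamma})^2|A^\gamma_\alpha|^2.
\end{equation*}
This should be the local formula \eqref{eq: covarDerivLocalFormula}.

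The diagonal terms compare immediately, since $\sigma\ge\sigma^2$ for $\sigma\in(0,1]$. For the off-diagonal terms, set $r=r_{\alpha\gamma}=e^{\lambda_\alpha-\lambda_\gamma}>0$ and factor out $e^{\sigma\lambda_\gamma}$. The required inequality then reduces to the scalar estimate
\begin{equation*}
(r-1)(r^{\sigma}-1)\ge(r^{\sigma}-1)^2,
\end{equation*}
that is, $(r^{\sigma}-1)(r-r^{\sigma})\ge0$. This holds for all $r>0$ because $r^{\sigma}-1$ and $r-r^{\sigma}$ have the same sign when $\sigma\le1$. The difference of the two sides is exactly the quantity with factors $(1-r^{\sigma})(1-r^{1-\sigma})$, which gives the Uhlenbeck--Yau excess formula of Lemma~\ref{lem: UYExcessFormula}.

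The second inequality \eqref{eq: UYFormula2Lem} follows by the same computation with $u^{-1}$ on the right, which swaps the roles of $\alpha$ and $\gamma$ in the weights. Alternatively, it follows by taking $h$-adjoints of the first inequality, since $(u^{-1}\nabla u)^{\dagger}=(\overline{\nabla}u)u^{-1}$.

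The main obstacle is bookkeeping: keeping the index conventions for $A^\gamma_\alpha$ and the weights $e^{-\lambda}$ in the norm consistent, so that the off-diagonal terms genuinely reduce to the one-variable inequality above.
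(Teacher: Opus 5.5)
Your proposal is correct and is essentially the paper's proof: the same eigenframe formula for $\nabla u^{\sigma}$, the diagonal comparison $\sigma\geq\sigma^{2}$, and the off-diagonal terms reduced to the scalar inequality $(r^{\sigma}-1)(r-r^{\sigma})\geq 0$ (the paper writes out the second inequality and calls the first identical, while you do the reverse). One small caution on your alternative route: taking $h$-adjoints of the first inequality gives the $\overline{\nabla}$-version of the second, so that shortcut also needs the first inequality for $\overline{\nabla}$, which the same frame computation supplies.
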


    \begin{proof}
        The proof of both inequalities is identical, and follows verbatim from the calculation of Uhlenbeck-Yau, see \cite[Equation (4.6)]{UY}. We will give only the proof of ~\eqref{eq: UYFormula2Lem}.  Let $e_{\alpha}$ be an $h$-orthonormal frame of eigenvectors for $u$.  Write
        \[
        \begin{aligned}
        \nabla e_{\alpha} &= A_{z\,\alpha}^{\beta}e_{\beta}\\
        \dbar e_{\alpha} &= A_{\bar{z}\, \alpha}^{\beta}e_{\beta}
        \end{aligned}
        \]
        In this frame we have
        \[
        u = e^{\lambda_{\alpha}}e_{\alpha}\otimes e_{\alpha}^{*}
        \]
        Then we compute
        \begin{equation}\label{eq: covarDerivU}
        \nabla  u = e^{\lambda_{\alpha}}\del \lambda_{\alpha} e_{\alpha}\otimes e_{\alpha}^* + (e^{\lambda_{\alpha}}-e^{\lambda_{\gamma}})A_{z\, \alpha}^{\gamma}e_{\gamma}\otimes e_{\alpha}^*. 
        \end{equation}
        Multiplying on the right by $u^{-1}$ yields
        \[
        \begin{aligned}
         (\nabla u)u^{-1} &= \del \lambda_{\alpha}e_{\alpha}\otimes e_{\alpha}^*  + (1-e^{\lambda_{\gamma} - \lambda_{\alpha}})A_{z\,\alpha}^{\gamma}e_{\gamma}\otimes e_{\alpha}^*,\\
         u^{-1}(\nabla u) &=\del \lambda_{\alpha}e_{\alpha}\otimes e_{\alpha}^*  + (e^{\lambda_{\alpha}-\lambda_{\gamma}}-1)A_{z\,\alpha}^{\gamma}e_{\gamma}\otimes e_{\alpha}^* 
         \end{aligned}
        \]
        The formula for $\nabla u^{\sigma}$ can be obtained from~\eqref{eq: covarDerivU} by replacing $\lambda_{\alpha}$ with $\sigma\lambda_{\alpha}$, hence
        \begin{equation}\label{eq: coverDerivUSigma}
        \nabla u^{\sigma} =  \sigma e^{\sigma\lambda_{\alpha}}\del \lambda_{\alpha} e_{\alpha}\otimes e_{\alpha}^* + (e^{\sigma \lambda_{\alpha}}-e^{\sigma\lambda_{\gamma}})A_{z\, \alpha}^{\gamma}e_{\gamma}\otimes e_{\alpha}^*.
        \end{equation}
        Finally, this yields
        \[
        \begin{aligned}
       (\nabla u^{\sigma}) u^{-\sigma/2} &= \sigma e^{\frac{\sigma}{2}\lambda_{\alpha}}\del \lambda_{\alpha} e_{\alpha}\otimes e_{\alpha}^* + e^{-\frac{\sigma}{2}\lambda_{\alpha}}(e^{\sigma \lambda_{\alpha}}-e^{\sigma\lambda_{\gamma}})A_{z\, \alpha}^{\gamma}e_{\gamma}\otimes e_{\alpha}^*\\
       u^{-\sigma/2}(\nabla u^{\sigma}) &= \sigma e^{\frac{\sigma}{2}\lambda_{\alpha}}\del \lambda_{\alpha} e_{\alpha}\otimes e_{\alpha}^*+ e^{-\frac{\sigma}{2}\lambda_{\gamma}}(e^{\sigma \lambda_{\alpha}}-e^{\sigma\lambda_{\gamma}})A_{z\, \alpha}^{\gamma}e_{\gamma}\otimes e_{\alpha}^*
       \end{aligned}
        \]
        We now compute
        \begin{equation}\label{eq: covarDerivLocalFormula}
        \begin{aligned}
        \langle (\nabla u)u^{-1} ,\nabla u^{\sigma}\rangle_{h} &= \sigma \sum_{\alpha} e^{\sigma \lambda_{\alpha}}|\del\lambda_{\alpha}|^2 + \sum_{\alpha \ne \gamma}(e^{\sigma\lambda_{\alpha}}-e^{\sigma \lambda_{\gamma}})(1-e^{\lambda_{\gamma}-\lambda_{\alpha}})|A_{z\, \alpha}^{\gamma}|^2\\
        \langle u^{-1}(\nabla u) ,\nabla u^{\sigma}\rangle_{h}&= \sigma \sum_{\alpha} e^{\sigma \lambda_{\alpha}}|\del\lambda_{\alpha}|^2 + \sum_{\alpha \ne \gamma}(e^{\sigma\lambda_{\alpha}}-e^{\sigma \lambda_{\gamma}})(e^{\lambda_{\alpha}-\lambda_{\gamma}}-1)|A_{z\, \alpha}^{\gamma}|^2
        \end{aligned}
        \end{equation}
        and
        \[
        \begin{aligned}
        |(\nabla u^{\sigma})u^{-\sigma/2}|^2_{h} &= \sigma^2 e^{\sigma\lambda_{\alpha}}|\del\lambda_{\alpha}|^2 +\sum_{\alpha\ne \gamma}e^{-\sigma \lambda_{\alpha}}(e^{\sigma\lambda_{\alpha}}-e^{\sigma\lambda_{\gamma}})^2|A_{z\, \alpha}^{\gamma}|^2\\
        |u^{-\sigma/2}(\nabla u^{\sigma})|^2_{h} &= \sigma^2 e^{\sigma\lambda_{\alpha}}|\del\lambda_{\alpha}|^2 +\sum_{\alpha\ne \gamma}e^{-\sigma \lambda_{\gamma}}(e^{\sigma\lambda_{\alpha}}-e^{\sigma\lambda_{\gamma}})^2|A_{z\, \alpha}^{\gamma}|^2
        \end{aligned}
        \]
        To complete the proof one observes that
        \[
        \begin{aligned}
        e^{-\sigma \lambda_{\alpha}}(e^{\sigma\lambda_{\alpha}}-e^{\sigma\lambda_{\gamma}})^2 &\leq (e^{\sigma\lambda_{\alpha}}-e^{\sigma \lambda_{\gamma}})(1-e^{\lambda_{\gamma}-\lambda_{\alpha}})\\
        e^{-\sigma \lambda_{\gamma}}(e^{\sigma\lambda_{\alpha}}-e^{\sigma\lambda_{\gamma}})^2 &\leq (e^{\sigma\lambda_{\alpha}}-e^{\sigma \lambda_{\gamma}})(e^{\lambda_{\alpha}-\lambda_{\gamma}}-1)
        \end{aligned}
        \]
    \end{proof}

    We shall also need a related formula for the ``Uhlenbeck-Yau excess".  Before stating our result, let us fix notation.  Suppose that $h,h_0$ are metrics on a holomorphic vector bundle $E$.  We denote by $\nabla$ the $(1,0)$ part of the Chern connection of $h$ and $\hat{\nabla}$ the $(1,0)$ part of the Chern connection of $h_0$. Let $u$ be defined by $u=h_0^{-1}h$.  Then $u$ is Hermitian with respect to both $h$ and $h_0$.  Choose an $h$-orthonormal frame $\{e_{\alpha}\}$ so that
    \[
    u= e^{\lambda_{\alpha}} e_{\alpha}\otimes e_{\alpha}^*, \qquad \nabla e_{\alpha}= A_{z}^{\gamma}\,_{\alpha}e_{\gamma}.
    \]
    Then, we have the following lemma
    \begin{lem}\label{lem: UYExcessFormula}
    In the above setting, for any $\mathtt{s} \in (0,1)$ we have
        \[
        \langle u^{-1}\hat{\nabla}u, \hat{\nabla}u^{\mathtt{s}} \rangle_{h_0} - |u^{-\mathtt{s}/2}\hat{\nabla}u^{\mathtt{s}}|^2_{h_0} \geq \sum_{\alpha \ne \gamma}e^{\mathtt{s}\lambda_{\alpha}}(1-r_{\gamma\alpha}^{\mathtt{s}})(1-r_{\gamma\alpha}^{(1-\mathtt{s})})|A_{z}^{\gamma}\,_{\alpha}|^2
        \]
        where $ r_{\gamma \alpha}= e^{\lambda_{\gamma}-\lambda_{\alpha}}$.
    \end{lem}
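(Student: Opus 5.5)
The plan is to reduce the lemma to a pointwise computation in the $h$-orthonormal eigenframe $\{e_\alpha\}$, exactly as in the proof of Lemma~\ref{lem: UYFormulaImprovement}, and then compare the resulting coefficients term by term.

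The first step is to express the $h_0$-quantities through $h$-quantities. Since $h=h_0u$, the $(1,0)$ parts of the Chern connections are related by $\nabla=\hat\nabla+u^{-1}\hat\nabla u$ acting on endomorphisms by commutator. From this one gets
\[
\nabla u=u^{-1}(\hat\nabla u)u,\qquad \hat\nabla u^{\mathtt{s}}=u(\nabla u^{\mathtt{s}})u^{-1},\qquad u^{-1}\hat\nabla u=(\nabla u)u^{-1}.
\]
Adjoints are related by $B^{\dagger_{h_0}}=uB^{\dagger_h}u^{-1}$, so $\langle A,B\rangle_{h_0}={\rm Tr}(AuB^{\dagger_h}u^{-1})$. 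Substituting these relations, $\langle u^{-1}\hat\nabla u,\hat\nabla u^{\mathtt{s}}\rangle_{h_0}$ and $|u^{-\mathtt{s}/2}\hat\nabla u^{\mathtt{s}}|^2_{h_0}$ become weighted sums in the frame $\{e_\alpha\}$, with weights given by powers of $e^{\lambda_\alpha},e^{\lambda_\gamma}$. This uses~\eqref{eq: covarDerivU} and~\eqref{eq: coverDerivUSigma}, i.e.\ $\nabla u^{\mathtt{s}}=\mathtt{s}e^{\mathtt{s}\lambda_\alpha}\partial\lambda_\alpha e_\alpha\otimes e_\alpha^*+(e^{\mathtt{s}\lambda_\alpha}-e^{\mathtt{s}\lambda_\gamma})A_{z\,\alpha}^{\gamma}e_\gamma\otimes e_\alpha^*$.

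The second step is to split the difference into diagonal and off-diagonal parts. The diagonal part is $(\mathtt{s}-\mathtt{s}^2)\sum_\alpha e^{\mathtt{s}\lambda_\alpha}|\partial\lambda_\alpha|^2\ge 0$, and I will simply discard it. For the off-diagonal part I expect an expression of the form
\[
\sum_{\alpha\neq\gamma}\Big[(e^{\mathtt{s}\lambda_\alpha}-e^{\mathtt{s}\lambda_\gamma})(1-r_{\gamma\alpha})-w_{\alpha\gamma}(e^{\mathtt{s}\lambda_\alpha}-e^{\mathtt{s}\lambda_\gamma})^2\Big]|A_{z\,\alpha}^{\gamma}|^2,
\]
where $w_{\alpha\gamma}$ is $e^{-\mathtt{s}\lambda_\alpha}$ or $e^{-\mathtt{s}\lambda_\gamma}$, depending on how the conjugations by $u$ distribute. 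Writing $e^{\mathtt{s}\lambda_\alpha}-e^{\mathtt{s}\lambda_\gamma}=e^{\mathtt{s}\lambda_\alpha}(1-r^{\mathtt{s}}_{\gamma\alpha})$, each bracket factors as $e^{\mathtt{s}\lambda_\alpha}(1-r^{\mathtt{s}}_{\gamma\alpha})$ times a scalar function of $r=r_{\gamma\alpha}$.

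The third step, which I expect to be the main obstacle, is the elementary one-variable inequality showing that this scalar function dominates $1-r^{1-\mathtt{s}}$ for all $r>0$. The key point is that $(1-r^{\mathtt{s}})$ and $(1-r)$ have the same sign, so every term is nonnegative. For instance, with weight $e^{-\mathtt{s}\lambda_\alpha}$ the bracket is $(1-r)-(1-r^{\mathtt{s}})=r^{\mathtt{s}}-r=r^{\mathtt{s}}(1-r^{1-\mathtt{s}})$. This carries an extra factor $r^{\mathtt{s}}$, so a direct termwise bound is not immediate.

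If the exact weights do not give the bound directly, I will try two things. First, I will symmetrize over the pairs $(\alpha,\gamma)$ and $(\gamma,\alpha)$, using $r_{\alpha\gamma}=r_{\gamma\alpha}^{-1}$. Second, I will adjust which eigenvalue is factored out front (that is, $e^{\mathtt{s}\lambda_\alpha}$ versus $e^{\mathtt{s}\lambda_\gamma}$), so that the product $(1-r^{\mathtt{s}})(1-r^{1-\mathtt{s}})$ appears with a coefficient at least one. Once the scalar inequality is in place, summing over $\alpha\neq\gamma$ gives the claimed bound.
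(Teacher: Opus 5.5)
Your skeleton matches the paper's proof: rewrite the $h_0$-quantities through $h$, expand in the $h$-orthonormal eigenframe, discard the diagonal term $\mathtt{s}(1-\mathtt{s})\sum_\alpha e^{\mathtt{s}\lambda_\alpha}|\partial\lambda_\alpha|^2$, and compare off-diagonal coefficients. The conversion rules in your first step are also correct. The gap is that you never carry those rules through, and the weight you then predict is wrong.

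From $B^{\dagger_{h_0}}=uB^{\dagger_h}u^{-1}$ and $\hat\nabla u^{\mathtt{s}}=u(\nabla u^{\mathtt{s}})u^{-1}$ one gets $(\hat\nabla u^{\mathtt{s}})^{\dagger_{h_0}}=(\nabla u^{\mathtt{s}})^{\dagger_h}$, and hence
\[
|u^{-\mathtt{s}/2}\hat\nabla u^{\mathtt{s}}|^2_{h_0}={\rm Tr}\bigl(u^{1-\mathtt{s}}(\nabla u^{\mathtt{s}})\,u^{-1}(\nabla u^{\mathtt{s}})^{\dagger_h}\bigr)=|u^{(1-\mathtt{s})/2}(\nabla u^{\mathtt{s}})u^{-1/2}|^2_h .
\]
So the weight is $w_{\alpha\gamma}=e^{(1-\mathtt{s})\lambda_\gamma-\lambda_\alpha}=e^{-\mathtt{s}\lambda_\alpha}r_{\gamma\alpha}^{1-\mathtt{s}}$, not $e^{-\mathtt{s}\lambda_\alpha}$ or $e^{-\mathtt{s}\lambda_\gamma}$. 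The latter are the weights of $|(\nabla u^{\mathtt{s}})u^{-\mathtt{s}/2}|^2_h$ and $|u^{-\mathtt{s}/2}\nabla u^{\mathtt{s}}|^2_h$ from Lemma~\ref{lem: UYFormulaImprovement}, which is what you get if you forget the conjugation by $u$. With the correct weight the bracket is $(1-r)-r^{1-\mathtt{s}}(1-r^{\mathtt{s}})=1-r^{1-\mathtt{s}}$. The off-diagonal part therefore coincides \emph{identically} with the right-hand side. The only inequality in the lemma is the discarded diagonal term, and there is no one-variable inequality to prove.

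Your third step, as you set it up, cannot be completed. With weight $e^{-\mathtt{s}\lambda_\alpha}$ you correctly find the coefficient $e^{\mathtt{s}\lambda_\alpha}r^{\mathtt{s}}(1-r^{\mathtt{s}})(1-r^{1-\mathtt{s}})$. For $0<r_{\gamma\alpha}<1$ this is strictly smaller than the target coefficient. These are exactly the pairs $\gamma\le r'<\alpha$ for which the lemma is used in Proposition~\ref{prop: UYargumentH}.

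Symmetrizing over $(\alpha,\gamma)\leftrightarrow(\gamma,\alpha)$ cannot compensate, because $A_{z}{}^{\gamma}{}_{\alpha}$ and $A_{z}{}^{\alpha}{}_{\gamma}$ are independent. Unitarity of the frame only gives $A_{\bar z}=-A_z^{*}$; it does not constrain $A_z$ against itself. For instance, take $h$ flat and $u=\begin{pmatrix}1&\epsilon z\\ \epsilon\bar z&2\end{pmatrix}$ with $\epsilon>0$. At $z=0$ one has $A_{z}{}^{1}{}_{2}=\epsilon$, $A_{z}{}^{2}{}_{1}=0$, $\partial\lambda_\alpha=0$ and $r_{12}=\tfrac12$, so your predicted left-hand side is strictly below the right-hand side there.

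Factoring out $e^{\mathtt{s}\lambda_\gamma}$ instead of $e^{\mathtt{s}\lambda_\alpha}$ only rewrites the same quantity, while the right-hand side stays fixed. The missing ingredient is simply to carry out the conjugation by $u$ correctly. Once that is done, your argument becomes the paper's.
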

    \begin{proof}
    The first step in the proof is to write the quantities on the right hand side of the inequality in terms of $\nabla$. Indeed, by the formula for the change of the Chern connection, together with the fact that $u$ and $u^{\mathtt{s}}$ commute we have
    \[
    \nabla u^{\mathtt{s}}= u^{-1}(\hat{\nabla} u^{\mathtt{s}})u
    \]
    for any $\mathtt{s}$, and so
        \[
\begin{aligned}
\langle u^{-1}(\hat{\nabla} u), \hat{\nabla}u^{\mathtt{s}}\rangle_{h_0} &={\rm Tr}\left( u^{-1}(\hat{\nabla} u)(\dbar u^{\mathtt{s}}) \right) \\
&\quad = {\rm Tr}\left( (\nabla u)u^{-1}(\dbar u^{\mathtt{s}}) \right)\\
&\quad = \langle (\nabla u)u^{-1}, \nabla^{\ell} u^{\mathtt{s}}\rangle_{h}.
\end{aligned}
\]
For the second term on the right hand side we compute
\[
\begin{aligned}
|u^{-\mathtt{s}/2}\hat{\nabla}u^{s}|_{h_0}^2 &= {\rm Tr}\left(u^{-\mathtt{s}/2}\hat{\nabla}u^{s}(\dbar u^{s})u^{-\mathtt{s}/2}\right)\\
&={\rm Tr}\left(u^{-\mathtt{s}/2}u(\nabla u^{s})u^{-1}(\dbar u^{s})u^{-\mathtt{s}/2}\right)\\
&={\rm Tr}\left(u^{(1-\mathtt{s})/2}(\nabla^{\ell}u^{s})u^{-1/2}u^{-1/2}(\dbar u^{s})u^{(1-\mathtt{s})/2}\right)\\
&= |u^{(1-\mathtt{s})/2}(\nabla^{\ell}u^{s})u^{-1/2}|^2_{h}
\end{aligned}
\]
Now we write each of these quantities in an $h$ orthonormal frame using the formulas from Lemma~\ref{lem: UYFormulaImprovement}.
\[
\begin{aligned}
    (\nabla u)u^{-1} &= \del \lambda_{\alpha} e_{\alpha} \otimes e_{\alpha}^{*} + (1-e^{(\lambda_{\gamma} - \lambda_{\alpha})})A_{z}^{\gamma}\,_{\alpha}e_{\gamma}\otimes e_{\alpha}^{*}\\
    \nabla u^{\mathtt{s}} &= \mathtt{s}e^{\mathtt{s}\lambda_{\alpha}}\del\lambda_{\alpha} e_{\alpha}\otimes e_{\alpha}^* + (e^{\mathtt{s} \lambda_{\alpha}}-e^{\mathtt{s}\lambda_{\gamma}})A_{z}^{\gamma}\,_{\alpha} e_{\gamma}\otimes e_{\alpha}^{*}.
\end{aligned}
\]
Combining these formulas gives
\[
\langle (\nabla u)u^{-1}, \nabla u^{\mathtt{s}}\rangle_{h}=\mathtt{s}{\sum_{\alpha}}e^{\mathtt{s}\lambda_{\alpha}}|\del \lambda_{\alpha}|^2 +\sum_{\alpha \ne \gamma}(1-e^{(\lambda_{\gamma}-\lambda_{\alpha})})(e^{\mathtt{s}\lambda_{\alpha}}-e^{\mathtt{s}\lambda_{\gamma}})|A_{z}^{\gamma}\,_{\alpha}|^2.
\]
For the second term we compute
\[
u^{(1-\mathtt{s})/2}(\nabla u^{s}) = \mathtt{s}e^{\frac{(1+\mathtt{s})}{2}\lambda_{\alpha}}\del \lambda_{\alpha}e_{\alpha}\otimes e_{\alpha}^* + e^{\frac{(1-\mathtt{s})}{2}\lambda_{\gamma}}(e^{\mathtt{s} \lambda_{\alpha}}-e^{\mathtt{s}\lambda_{\gamma}})A_{z}^{\gamma}\,_{\alpha} e_{\gamma}\otimes e_{\alpha}^{*}
\]
and so
\[
u^{(1-\mathtt{s})/2}(\nabla u^{s})u^{-1/2} = \mathtt{s}e^{\frac{\mathtt{s}}{2}\lambda_{\alpha}} \del \lambda_{\alpha} e_{\alpha}\otimes e_{\alpha}^* + e^{\frac{(1-\mathtt{s})}{2}\lambda_{\gamma}}(e^{\mathtt{s} \lambda_{\alpha}}-e^{\mathtt{s}\lambda_{\gamma}})A_{z}^{\gamma}\,_{\alpha}e^{-\frac{\lambda_{\alpha}}{2}} e_{\gamma}\otimes e_{\alpha}^{*}.
\]
Thus we have
\[
\begin{aligned}
|u^{(1-\mathtt{s})/2}(\nabla u^{s})u^{-1/2}|^2_{h}&= \mathtt{s}^2\sum_{\alpha}e^{\mathtt{s}\lambda_{\alpha}}|\del \lambda_{\alpha}|^2 + \sum_{\alpha \ne \gamma}e^{(1-\mathtt{s})\lambda_{\gamma}}(e^{\mathtt{s} \lambda_{\alpha}}-e^{\mathtt{s}\lambda_{\gamma}})^2e^{-\lambda_{\alpha}^{\gamma}}|A_{z}^{\gamma}\,_{\alpha}|^2\\
&= \mathtt{s}^2\sum_{\alpha}e^{\mathtt{s}\lambda_{\alpha}}|\del \lambda_{\alpha}|^2 + \sum_{\alpha \ne \gamma}e^{(1-\mathtt{s})\lambda_{\gamma}}(e^{\mathtt{s} \lambda_{\alpha}}-e^{\mathtt{s}\lambda_{\gamma}})^2e^{-\lambda_{\alpha}}|A_{z}^{\gamma}\,_{\alpha}|^2\\
\end{aligned}
\]
We now compute the difference, using the notation $r_{\gamma\alpha} = e^{\lambda_{\gamma}-\lambda_{\alpha}}$.  We get
\[
\begin{aligned}
    &\langle (\nabla u)u^{-1}, \nabla u^{\mathtt{s}}\rangle_{h}-|u^{(1-\mathtt{s})/2}(\nabla u^{s})u^{-1/2}|^2_{h} \\
    &= \mathtt{s}(1-\mathtt{s})\sum_{\alpha}e^{\mathtt{s}\lambda_{\alpha}}|\del \lambda_{\alpha}|^2 +\sum_{\alpha\ne \gamma}e^{\mathtt{s}\lambda_{\alpha}}(1-r_{\gamma\alpha})(1-r^{s}_{\gamma\alpha})|A_{z}^{\gamma}\,_{\alpha}|^2\\
    &\quad -  \sum_{\alpha \ne \gamma}e^{\mathtt{s}\lambda_{\alpha}}r_{\gamma\alpha}^{(1-\mathtt{s})}(1-r_{\gamma\alpha}^s)^2|A_{z}^{\gamma}\,_{\alpha}|^2.
\end{aligned}
\]
Now we have
\[
(1-r_{\gamma\alpha})(1-r^{s}_{\gamma\alpha})-r_{\gamma\alpha}^{(1-\mathtt{s})}(1-r_{\gamma\alpha}^{\mathtt{s}})^2=(1-r_{\gamma\alpha}^{\mathtt{s}})(1-r_{\gamma \alpha}^{(1-\mathtt{s})})
\]
which gives the desired result.
    \end{proof}

\begin{rk}\label{rk: UYexcessHomog}
    Note that the estimate in Lemma~\ref{lem: UYExcessFormula} continues to hold if we replace $u$ by $\tilde{u}= Cu$ for a constant $C>0$.  This is clear since both sides of the inequality are homogeneous of degree $\mathtt{s}$ in $u$. 
\end{rk}

\end{document}